\documentclass[11pt]{amsart}

\makeatletter

\renewcommand\subsubsection{\@secnumfont}{\normalfont\bfseries}%
\renewcommand\subsubsection{\@startsection{subsubsection}{3}
	\z@{.5\linespacing\@plus.7\linespacing}{-.5em}%
	{\normalfont\bfseries}}

\makeatother

\usepackage{etoolbox}

\makeatletter
\patchcmd{\@setaddresses}{\indent}{\noindent}{}{}
\patchcmd{\@setaddresses}{\indent}{\noindent}{}{}
\patchcmd{\@setaddresses}{\indent}{\noindent}{}{}
\patchcmd{\@setaddresses}{\indent}{\noindent}{}{}
\makeatother

\usepackage{graphicx}
\usepackage{amssymb}
\usepackage{amsmath}
\usepackage{amsthm}
\usepackage{comment}
\usepackage{colortbl}
\usepackage{blkarray, bigstrut} %
\usepackage{xcolor}
\usepackage{bm}
\usepackage{placeins}
\usepackage{verbatim}
\usepackage{multirow}
\usepackage{float}
\usepackage{enumitem}
\usepackage[a4paper,left=1.2in,right=1.2in,top=1in,bottom=1in]{geometry}
\usepackage{url}
\usepackage{enumitem}
\usepackage{tikz-cd}
\usepackage[all]{xy}
\usepackage{url}
\usepackage{array,booktabs}
\usepackage[hidelinks]{hyperref}
\usepackage{etex}
\usepackage{tikz}
\usepackage{nicematrix}
\usepackage[width=0.9\textwidth]{caption}
\usetikzlibrary{decorations.pathmorphing} 
\usetikzlibrary{arrows}
\usetikzlibrary{shapes}
\usepackage{needspace} 
\usepackage{caption}
\DeclareCaptionType{algorithm}[Algorithm][List of Algorithms]
\usepackage{pinlabel}
\usepackage{subfig}
\makeatletter
\DeclareRobustCommand*{\bfseries}{%
	\not@math@alphabet\bfseries\mathbf
	\fontseries\bfdefault\selectfont
	\boldmath
}
\makeatother
\usepackage{xargs}                      
\usepackage[colorinlistoftodos,textwidth=2.5cm, textsize=tiny,backgroundcolor=red!10]{todonotes}

\makeatletter%
\@mparswitchfalse%
\makeatother%
\normalmarginpar

\setlist[itemize]{leftmargin=1.6em}
\setlist[enumerate]{leftmargin=1.6em} 

\newcommandx{\change}[2][1=]{\todo[linecolor=red,backgroundcolor=red!25,bordercolor=red,#1]{#2}}
\newcommandx{\unsure}[2][1=]{\todo[linecolor=blue,backgroundcolor=blue!25,bordercolor=blue,#1]{#2}}
\newcommandx{\info}[2][1=]{\todo[linecolor=OliveGreen,backgroundcolor=OliveGreen!25,bordercolor=OliveGreen,#1]{#2}}
\newcommandx{\improvement}[2][1=]{\todo[linecolor=Plum,backgroundcolor=Plum!25,bordercolor=Plum,#1]{#2}}
\newcommandx{\thiswillnotshow}[2][1=]{\todo[disable,#1]{#2}}
\makeatletter
\providecommand\@dotsep{5}
\renewcommand{\listoftodos}[1][\@todonotes@todolistname]{%
	\@starttoc{tdo}{#1}}
\makeatother

\numberwithin{equation}{section} 

\newtheorem{theorem}[equation]{Theorem}

\newtheorem*{thm:main}{Main Theorem}

\newtheorem{proposition}[equation]{Proposition}
\newtheorem{lemma}[equation]{Lemma}
\newtheorem{corollary}[equation]{Corollary}

\newtheorem{conjecture}[equation]{Conjecture}
\newtheorem{experimental}[equation]{Experimental Result}

\theoremstyle{definition}
\newtheorem{definition}[equation]{Definition}

\newtheorem{remark}[equation]{Remark}
\newtheorem{example}[equation]{Example}
\newtheorem*{example*}{Example}
\newtheorem{construction}[equation]{Construction}
\newtheorem{convention}[equation]{Convention}
\newtheorem{procedure}[equation]{Procedure}
\newtheorem{observation}[equation]{Observation}

\newtheorem*{remark*}{Remark}

\newtheorem{question}[equation]{Question}

\makeatletter
\theoremstyle{plain}
\newtheorem*{rep@theorem}{\rep@title}

\newcommand{\newreptheorem}[2]{%
	\newenvironment{rep#1}[1]{%
		\def\rep@title{#2~\ref{##1}}%
		\begin{rep@theorem}}%
		{\end{rep@theorem}}}
\makeatother

\newreptheorem{theorem}{Theorem}
\newreptheorem{proposition}{Proposition}
\newreptheorem{experimental}{Experimental Result}
\newreptheorem{conjecture}{Conjecture}

\newcommand{\rr}{\mathbb{R}}
\newcommand{\zz}{\mathbb{Z}}
\newcommand{\V}{\mathcal{V}}

\newcommand{\T}{\mathcal{T}}

\newcommand{\C}{\mathcal{C}}
\newcommand{\A}{\mathcal{A}}

\newcommand{\Q}{\mathcal{Q}}

\newcommand{\E}{\mathcal{E}}
\newcommand{\F}{\mathcal{F}}
\newcommand{\W}{\mathcal{W}}

\newcommand{\HH}{\mathcal{H}}

\newcommand{\liftedFlow}{\widetilde{\Psi}}
\newcommand{\univN}{\widetilde{N}}
\newcommand{\orbit}{\mathcal{O}}
\newcommand{\drilledOrbit}{\mathcal{P}\mathcal{O}}
\newcommand{\X}{\mathcal{X}}
\newcommand{\loom}{\mathcal{L}}

\newcommand{\completedLoom}{\overline{\overline{\mathcal{L}}}}
\newcommand{\completedLoomS}{\overline{\overline{\mathcal{L}^+}}}

\newcommand{\completedLoomSU}{\overline{\overline{\mathcal{L}^\pm}}}
\newcommand{\loomGamma}{\mathcal{L}_\gamma}
\newcommand{\pLoom}{\mathcal{P}\mathcal{L}}
\newcommand{\loomif}{\mathsf{Loom}}
\newcommand{\liftGamma}{\tilde{\tilde{\gamma}}}
\newcommand{\liftZeta}{	\tilde{\tilde{\zeta}}}
\newcommand{\eleZeta}{\widehat{\zeta}}

\newcommand{\hull}{\mathsf{Hull}}

\newcommand{\Sing}{\mathsf{Sing}}
\newcommand{\Per}{\mathsf{Per}}

\newcommand{\TRect}{\mathsf{TRect}}
\newcommand{\FTRect}{\mathsf{FTRect}}
\newcommand{\ERect}{\mathsf{ERect}}
\newcommand{\FERect}{\mathsf{FERect}}
\newcommand{\gammaTRect}{\gamma\text{-}\mathsf{TRect}}
\newcommand{\gammaFTRect}{\gamma\text{-}\mathsf{FTRect}}
\newcommand{\Ideal}{\mathsf{Ideal}}

\newcommand{\NL}{\mathrm{NL}}
\newcommand{\NN}{\mathrm{NN}}
\renewcommand{\NG}{\mathrm{NG}} 

\newcommand{\univV}{\widetilde{\V}}
\newcommand{\univT}{\widetilde{\T}}

\newcommand{\Rsharp}{R^\sharp}
\newcommand{\Stab}{\mathsf{Stab}}

\newcommand{\bez}{-}

\newcommand{\SSW}{S_{\text{SW}}}
\newcommand{\SSE}{S_{\text{SE}}}
\newcommand{\SNW}{S_{\text{NW}}}
\newcommand{\SNE}{S_{\text{NE}}}
\newcommand{\bNW}{b_{\text{NW}}}
\newcommand{\bSE}{b_{\text{SE}}}

\DeclareFontFamily{U}{mathx}{}
\DeclareFontShape{U}{mathx}{m}{n}{ <-> mathx10 }{}
\DeclareSymbolFont{mathx}{U}{mathx}{m}{n}
\DeclareFontSubstitution{U}{mathx}{m}{n}
\DeclareMathAccent{\widecheck}{0}{mathx}{"71}

\makeatletter
\@namedef{subjclassname@2020}{%
	\textup{2020} Mathematics Subject Classification}
\makeatother
\usepackage{doc}
\makeatletter
\def\maketitle{\par
	\begingroup
	\def\thefootnote{\fnsymbol{footnote}}%
	\setcounter{footnote}\z@
	\def\@makefnmark{\hbox to\z@{$\m@th^{\@thefnmark}$\hss}}%
	\long\def\@makefntext##1{\noindent
		\ifnum\c@footnote>\z@\relax
		\hbox to1.8em{\hss$\m@th^{\@thefnmark}$}##1%
		\else
		\hbox to1.8em{\hfill}%
		\parbox{\dimexpr\linewidth-1.8em}{\raggedright ##1}%
		\fi}
	\if@twocolumn\twocolumn[\@maketitle]%
	\else\newpage\global\@topnum\z@\@maketitle\fi
	\thispagestyle{titlepage}\@thanks\endgroup
	\setcounter{footnote}\z@
	\gdef\@date{\today}\gdef\@thanks{}%
	\gdef\@author{}\gdef\@title{}}

\makeatother

\makeatletter
\def\@settitle{%
	\begin{center}%
		\baselineskip=1.3\baselineskip
		\bfseries
		\uppercasenonmath\@title
		\@title
	\end{center}%
}
\makeatother

\hypersetup{
	colorlinks=false,
	pdfborder={0 0 0}
}

\begin{document}

	\numberwithin{equation}{section}
	\title{
		Drilling veering triangulations \\ with applications to pseudo-Anosov flows}
	
	\author{Anna Parlak}
	\address{Max Planck Institute for Mathematics in the Sciences, 04103 Leipzig, Germany}
	\email{parlak@mis.mpg.de}

	\author{Henry Segerman}
	\address{Department of Mathematics \\
		Oklahoma State University, Stillwater, OK 74078, USA}
	\email{henry.segerman@okstate.edu}

	\keywords{veering triangulations, pseudo-Anosov flows, drilling, Dehn surgery, Birkhoff sections, almost orbit equivalence} 
	\subjclass[2020]{Primary 57Q15; Secondary  37D20, 57K32} 
	

	\begin{abstract}
		
		We develop an algorithm that takes as input a veering triangulation $\V$ and a flow cycle $\zeta$ of $\V$, and outputs a \emph{veering parent} $\V_\zeta$ of $\V$. Constructing veering parents combinatorializes drilling out closed orbits of transitive pseudo-Anosov flows. The algorithm relies on locally approximating the structure of the loom space associated to~$\V$ in sufficient detail to carry out a certain modification of the Agol-Gu\'eritaud construction.
		
		We use an implementation of the drilling algorithm to construct layered parents of non-layered veering triangulations (and hence Birkhoff sections for transitive pseudo-Anosov flows) and geometric parents of non-geometric veering triangulations. We also find closed orbits of (drilled) pseudo-Anosov flows that are not ambiently isotopic to the geodesics in their free homotopy classes, and show that certain veering triangulations encode almost orbit equivalent flows. In particular, we provide new computational evidence for a conjecture of Ghys asserting that all transitive Anosov flows with orientable invariant foliations are almost orbit equivalent.

	\end{abstract}

	\maketitle%
	\setcounter{tocdepth}{1}
	
	\vspace{1cm}
	\tableofcontents

	\newpage
	
	\section{Introduction}
	
	Veering triangulations are ideal triangulations of hyperbolic 3-manifolds with torus cusps that combinatorially encode transitive pseudo-Anosov flows. More precisely, a veering triangulation $\V = \V(\Psi, \Lambda)$ is determined by a pair $(\Psi, \Lambda)$, where $\Psi$ is a transitive pseudo-Anosov flow on a closed, orientable 3-manifold and $\Lambda$ is a finite collection of closed orbits of $\Psi$ that satisfy a number of conditions; see Definition \ref{def:perfect:set}. We call $\Lambda$ a \emph{perfect set} for $\Psi$. The triangulation $\V$ can be explicitly constructed via the Agol-Gu\'eritaud construction; see \cite[Section~4]{LMT_flow} or Section \ref{subsec:loom:to:veering} here. If $N$ is the manifold underlying $\Psi$ then  $\V$ is a triangulation of $N \bez \Lambda$. The triangulation $\V$ encodes $\Psi$ in the sense that its combinatorics and the Dehn filling slopes that recover $N$ from $N\bez \Lambda$ are sufficient to reconstruct $\Psi$ \cite[Theorem 5.1]{Tsang-Agol}. Further results connecting  veering triangulations with pseudo-Anosov flows can be found in \cite{Agol_veer, LMT_flow, LMT-transverse-surfaces, SchleimSeg-dynamic-pairs, SchleimSegLinks, Tsang-branched-surfaces}. Moreover, veering triangulations have already been used as a tool to establish a few  new results about pseudo-Anosov flows; see \cite{Huang-Taylor-depth1, LMT-transverse-surfaces, Parlak-mutations, Tsang-Birkhoff, Tsang-genus-one}.
	
	Importantly, a perfect set for a transitive pseudo-Anosov flow is not unique. If $\Lambda$ is a perfect set for $\Psi$ then, for every closed orbit $\gamma$ of $\Psi$, the set $\Lambda_\gamma = \Lambda \cup \lbrace \gamma \rbrace$ is also a perfect set for $\Lambda$. Consequently, if $\gamma \notin \Lambda$, the Agol-Gu\'eritaud construction yields another veering triangulation $\V_\gamma = \V(\Psi, \Lambda_\gamma)$ that encodes $\Psi$. We call $\V_\gamma$ a \emph{veering parent} of $\V$, and refer to the process of constructing $\V_\gamma$ from $\V$ as \emph{drilling} $\V$ along $\gamma$. 
	
	While drilling a simple closed curve from a $3$-manifold, and even constructing a triangulation of the resulting manifold, is straightforward, constructing the particular veering triangulation $\V_\gamma$ is much more subtle. In particular, drilling can affect the veering structure globally and lead to significant changes that are difficult to predict a priori; see Appendix \ref{appendix:examples} for several examples of veering parents.
	
	There are multiple reasons why the ability to construct veering parents is worthwhile. 
	
	First, a veering parent $\V_\gamma$ may have `better' properties than the original triangulation~$\V$. For instance, $\V_\gamma$ may have fewer tetrahedra than $\V$ (see Appendix \ref{app:ex:decreasing}), or may be \emph{layered} when $\V$ is not (see Section \ref{subsec:layered:parents}). In particular, working with a veering parent may be helpful when studying quantitative properties of the flow whose bounds depend on the complexity of the veering triangulation that encodes it; see Section~\ref{further:descreasing}.
	
	Second, several questions concerning pseudo-Anosov flows naturally involve drilling out periodic orbits. These include the Ghys conjecture, which states that all transitive Anosov flows with transversely orientable invariant foliations are \emph{almost orbit equivalent}, and the problem of constructing  \emph{Birkhoff sections} for pseudo-Anosov flows. A good understanding of veering parents would be useful in approaching these questions using the machinery of veering triangulations; see Sections~\ref{subsec:layered:parents} and \ref{subsec:ghys} for a discussion of how veering parents arise in such setups.
	
	Finally, constructing veering parents provides experimental data for understanding how veering triangulations arising from the same pseudo-Anosov flow are related combinatorially.
	This is important if one hopes to use the veering census~\cite{VeeringCensus} to extract some form of comprehensive census of pseudo-Anosov flows. The precise relationship between the two censuses is somewhat involved, as every veering triangulation encodes infinitely many pseudo-Anosov flows, and every pseudo-Anosov flow can be encoded by infinitely many veering triangulations. But at a minimum one would like to be able to recognize when two veering triangulations can be constructed from the same flow. This holds, in particular, whenever the triangulations can be connected by a finite sequence of drillings. Until now, there was no way to certify the existence of such a sequence for a given pair of triangulations. Our drilling algorithm changes this: using its implementation, we were able to connect in this way many triangulations in the census; see Experimental Result~\ref{exp:ancestry:experiments}.
	
	\subsection{Theoretical results}
	The main goal of this paper is to describe an algorithm that constructs veering parents of an arbitrary finite veering triangulation. To specify a closed orbit along which to drill, we use \emph{flow cycles}, introduced by Landry--Minsky--Taylor in \cite[Section~3]{LMT_flow}. Roughly speaking, every flow cycle $\zeta$ of $\V=\V(\Psi,\Lambda)$ encodes a closed orbit $\gamma$ of $\Psi$. Our algorithm takes as input~$\V$ and a flow cycle $\zeta$, and outputs the corresponding veering parent $\V_\gamma$. For simplicity, we also denote this veering parent by $\V_\zeta$.

	The key steps of the algorithm are shown in Figure \ref{fig:flowchart}. The correctness of the algorithm is the main result of the paper.
	
	\begin{reptheorem}{thm:algorithm:correct}
		Given a veering triangulation $\V$ and a flow cycle $\zeta$ of $\V$, the algorithm presented in Figure~\ref{fig:flowchart} terminates and outputs
		\begin{itemize}[topsep=0pt, leftmargin=1.6em]
			\item $\V$ if $\zeta$ is peripheral;
			\item the veering parent $\V_\zeta$ if $\zeta$ is non-peripheral.
		\end{itemize}
	\end{reptheorem}

\emph{Peripheral} flow cycles mentioned in the above theorem encode closed orbits of the flow that are already drilled out in $\V$. Naturally, they cannot be drilled out again. The algorithm recognizes this case early in the process and terminates returning the original triangulation.
	
Much of the length of the paper comes from developing the technical framework underlying the algorithm and establishing various properties of the auxiliary objects involved in it. Below we outline the main ideas and highlight some of the results obtained along the way that may be of independent interest.

\subsubsection{Pseudo-Anosov flows and loom spaces}
Our first intermediate result connects pseudo-Anosov flows with \emph{loom spaces}, introduced by Schleimer and the second author in~\cite{SchleimSegLoom}. A loom space is a special type of a bi-foliated plane whose foliations satisfy several additional conditions; see Definition \ref{def:loom}.

Every pseudo-Anosov flow $\Psi$ determines a  triple $(\orbit, \orbit^+, \orbit^-)$, where $\orbit$ is a plane, and~$\orbit^\pm$ is a pair of transverse foliations of $\orbit$. The triple $(\orbit, \orbit^+, \orbit^-)$ is called the \emph{orbit space} of $\Psi$ and is never itself a loom space. We prove that if~$\Psi$ is transitive and~$\Lambda$ is a perfect set for $\Psi$ (see Definition \ref{def:perfect:set}) then  puncturing~$\orbit$ at the set $\widetilde{\Lambda}$ of points determined by $\Lambda$, and then taking the universal cover, yields a loom space.

\begin{repproposition}{prop:loom:space}
		Let $\Lambda$ be a perfect set  for a transitive pseudo-Anosov flow~$\Psi$. Let $(\loom, \loom^+, \loom^-)$ be the universal cover of the orbit space $(\orbit, \orbit^+, \orbit^-)$ of $\Psi$ punctured at~$\widetilde{\Lambda}$. 
	Then   $(\loom, \loom^+, \loom^-)$ is a loom space.
	\end{repproposition}

It is well-known that the bi-foliated space $(\loom, \loom^+, \loom^-)$ can be used to construct a veering triangulation $\V(\Psi, \Lambda)$ via the \emph{Agol-Gu\'eritaud construction}; see \cite[Section~4]{LMT_flow}. Showing more specifically that $(\loom, \loom^+, \loom^-)$ is a loom space allows us to freely use all properties of loom spaces established in \cite{SchleimSegLoom} to prove several technical results needed to justify the correctness of the drilling algorithm.

Proposition \ref{prop:loom:space} is known to those working with pseudo-Anosov flows and veering triangulations. However, to our knowledge, a proof that uses only the flow and a perfect set $\Lambda$ has not previously appeared in the literature. In \cite[Example 2.19]{SchleimSegLoom} the fact that one can get a loom space from  the pair $(\Psi, \Lambda)$ is stated in the special case when $\Psi$ has \emph{no perfect fits} and $\Lambda$ is the set of singular orbits of $\Psi$, with a reference to \cite[Theorem 10.51]{SchleimSegLinks}. While the generalization to the more general setup is immediate, that reference only contains a proof that the \emph{link space} associated to the veering triangulation $\V(\Psi, \Lambda)$ is a loom space, without a proof that this  link space is loom-isomorphic to $(\loom, \loom^+, \loom^-)$. Thus, strictly speaking, \cite[Theorem 10.51]{SchleimSegLinks} is not currently known to be equivalent to Proposition~\ref{prop:loom:space}. 
For us, the advantage of working directly with $(\loom, \loom^+, \loom^-)$, rather than with another loom space obtained through additional layers of combinatorial constructions, is that it is entirely clear what it means to puncture $(\loom, \loom^+, \loom^-)$ at points corresponding to additional periodic orbits of $\Psi$, and that doing so can be a step towards constructing other veering triangulations that encode $\Psi$.

Before continuing, we introduce some terminology related to loom spaces. The foliations $\loom^+, \loom^-$ are called the \emph{stable} and the \emph{unstable} foliation of $(\loom, \loom^+, \loom^-)$, respectively. A \emph{rectangle} in $(\loom, \loom^+, \loom^-)$ is the image $R$ of an embedding  $(0,1)^2 \hookrightarrow \loom$ that maps each vertical segment into a leaf of $\loom^+$ and each horizontal segment into a leaf of~$\loom^-$.  The embedding equips $R$ with a \emph{vertical} direction, along the leaves of the stable foliation $\loom^+ \cap R$, and a \emph{horizontal} direction, along the leaves of the unstable foliation $\loom^- \cap R$. We also fix a convention for assigning cardinal directions within $R$; see Convention~\ref{convention:cardinal:directions}. Thus, given two points in $R$ or its boundary, it makes sense to say that one lies to the east or west of the other and, analogously, to the north or south of the other. 

We emphasize that all rectangles in $\loom$ are open. Therefore, throughout the paper, statements about a point lying in a rectangle or a leaf intersecting a rectangle always refer to the open rectangle and not to its boundary. 

\subsubsection{Veering parents via a finite restriction of the Agol-Gu\'eritaud construction}\leavevmode\\ \label{subsubsec:finite:AG}
With $\Psi, \Lambda$, and $(\loom, \loom^+, \loom^-)$ as before, set $\V = \V(\Psi, \Lambda)$. One of the key features of $(\loom, \loom^+, \loom^-)$ is that every simplex $s$ of the universal cover $\univV$ of $\V$ determines a rectangle $\loomif(s) \subset \loom$ called the \emph{loomification} of $s$; see Section \ref{subsec:loomification}.  Any rectangle in $(\loom, \loom^+, \loom^-)$ arising in this way is called a \emph{simplex rectangle}. A simplex rectangle has two, three, or four ideal points in its boundary, depending on whether it is the loomification of an edge, a face, or a tetrahedron of $\univV$. It is called an \emph{edge}, \emph{face}, or \emph{tetrahedron rectangle}, respectively. The ideal points in the boundary of a simplex rectangle are called its \emph{cusps}. 
Given a collection $\mathcal{S}$ of simplices of~$\univV$, by $\loomif(\mathcal{S})$ we denote the union of the loomifications of $s \in \mathcal{S}$.

Let $M$ be the manifold underlying $\V$. 
Fix a closed orbit $\gamma$ of~$\Psi$ that is not in $\Lambda$. Set $\Lambda_\gamma = \Lambda \cup \lbrace \gamma \rbrace$ and $\V_\gamma = \V(\Psi, \Lambda_\gamma)$. Denote the loom space associated to $(\Psi, \Lambda_\gamma)$ by $(\loomGamma, \loomGamma^+, \loomGamma^-)$ and the manifold underlying $\V_\gamma$ by $M_\gamma$. The standard Agol-Gu\'eritaud construction of $\V_\gamma$ uses the action of $\pi_1(M_\gamma)$ on $(\loomGamma, \loomGamma^+, \loomGamma^-)$; see Construction \ref{const:finite:veering}. The key idea behind our drilling algorithm is to use the action of $\pi_1(M)$ on $(\loom, \loom^+, \loom^-)$ instead. Specifically, the orbit $\gamma$ determines a closed, discrete, $\pi_1(M)$-invariant set $\liftGamma \subset \loom$. Puncturing $\loom$ at $\liftGamma$ yields a bi-foliated space $(\pLoom, \pLoom^+, \pLoom^-)$ equipped with an action of $\pi_1(M)$. 
In Lemma \ref{lem:ploom} we show that although $(\pLoom, \pLoom^+, \pLoom^-)$ is not a loom space, it retains enough of the relevant structure to carry out an Agol-Gu\'eritaud-like construction. In Proposition \ref{prop:parent:from:intermediate:cover} we prove that the resulting veering triangulation is precisely the veering parent $\V_\gamma$. 

In Section \ref{subsec:finite:parent}, we furthermore show that it is not necessary to consider the entire loom space $\loom$ to construct $\V_\gamma$. Instead, it suffices to determine the points of $\liftGamma$ contained in a finite collection of tetrahedron rectangles in $\loom$, one for each tetrahedron of $\V$. We call these the \emph{fundamental tetrahedron rectangles}. Each fundamental tetrahedron rectangle $R$ contains finitely many \emph{$\gamma$-simplex rectangles}, defined as the images of simplex rectangles under the inclusion $\pLoom \hookrightarrow \loom$.
In Proposition \ref{prop:finite:veering:parent}, we prove that the tetrahedra of $\V_\gamma$ are in bijective correspondence with the $\pi_1(M)$-equivalence classes of $\gamma$-tetrahedron rectangles contained in the fundamental tetrahedron rectangles of $(\loom, \loom^+, \loom^-)$. Moreover, each face pairing of $\V_\gamma$ can be represented by a pair $\lbrace (R_\gamma, Q_\gamma), (R_\gamma', Q_\gamma')\rbrace$ where $R_\gamma$, $R_\gamma'$ are \mbox{$\gamma$-tetrahedron} rectangles contained in the fundamental tetrahedron rectangles, and $Q_\gamma$, $Q_\gamma'$ are $\pi_1(M)$-equivalent face subrectangles of $R_\gamma$, $R_\gamma'$, respectively, such that $Q_\gamma$ is as tall as $R_\gamma$ and $Q_\gamma'$ is as wide as $R_\gamma'$. This reduces the construction of the veering parent $\V_\gamma$ to finding all $\gamma$-tetrahedron rectangles contained in the fundamental tetrahedron rectangles and determining their $\pi_1(M)$-equivalence classes as well as the $\pi_1(M)$-equivalence classes of their $\gamma$-face subrectangles. We refer to this approach to constructing $\V_\gamma$ as a \emph{finite restriction of the Agol-Gu\'eritaud construction}. 

The finite restriction of the Agol-Gu\'eritaud construction is the key ingredient of our drilling algorithm. However, to carry it out without having direct access to $(\loom,\loom^+,\loom^-)$, we first need to approximate the local structure of $(\loom,\loom^+,\loom^-)$ using only the combinatorics of $\V$. This requirement to obtain a sufficiently detailed approximation of the loom space from purely combinatorial data accounts for most of the technical complexity of the algorithm.

\subsubsection{Locating the points of $\liftGamma$ using the combinatorics of $\univV$}

Any fundamental tetrahedron rectangle $R$ is the loomification of some tetrahedron $t$ of $\univV$. The veering structure of $t$ encodes  the horizontal and vertical orders on the cusps in the boundary of~$R$; see Figure \ref{fig:tet:rects}. Detecting the points of $\liftGamma$ inside $R$ and determining their horizontal and vertical orders  using only the combinatorics of $\univV$ is much more involved.

To encode the orbit $\gamma$ combinatorially, we use \emph{flow cycles}, introduced by Landry--Minsky--Taylor in \cite[Section~4.3]{LMT}. Roughly speaking, $\gamma$ can be encoded by a finite cyclic sequence $\zeta$ of tetrahedra of $\V$, called a flow cycle; see Definition \ref{defn:flow:cycle}. The preimage of~$\zeta$ under the universal covering map $\univV \rightarrow \V$ consists of a $\pi_1(M)$-invariant collection of bi-infinite sequences of tetrahedra in $\univV$, called \emph{elevations} of $\zeta$. Each such elevation $\eleZeta$ forms a \emph{flow line} (see Definition \ref{def:flow:line}). The intersection of the loomifications of all tetrahedra of~$\eleZeta$ is a point of $\liftGamma$ \cite[Fact 4.4 and Proposition 6.12]{LMT_flow}.
Thus  elevations of $\zeta$ represent the points of $\liftGamma$. Flow cycles of $\V$ and their relationship with the orbits of the flow encoded by $\V$ are discussed in more detail in Section \ref{sec:flow:cycles}.

Since our algorithm must terminate after finitely many steps, it can involve only finitely many tetrahedra of $\univV$. Therefore instead of considering the entire flow line $\eleZeta$, we consider only its finite subsequences, called \emph{flow intervals}. A disadvantage of this simplification is that intersecting the loomifications of the tetrahedra of a flow interval yields a rectangle in $\loom$ rather than a point. Thus we cannot precisely locate the point represented by $\eleZeta$. Instead, we only obtain its rectangular approximations. 

The flow intervals used in the algorithm are obtained by intersecting flow lines with a (sufficiently nice) finite subtriangulation $\C$ of $\univV$. We may assume that $\C$ contains at least one lift of every tetrahedron of $\V$, so that $\loomif(\C)$ contains a set of fundamental tetrahedron rectangles. Given $x \in \liftGamma$, denote by $\eleZeta_x$ the flow line representing $x$, by $\eleZeta_x^\C$  its restriction to $\C$, and by $N_\C(x)$ the resulting rectangular approximation of~$x$ in $\loom$.

The main technical difficulty of the algorithm lies in constructing a finite subtriangulation $\C \subset \univV$ that is sufficiently large to ensure that, for every fundamental tetrahedron rectangle \mbox{$R \subset \loomif(\C)$}, the following three conditions hold.
\begin{enumerate}[leftmargin=2.4em]
	\item $\eleZeta_x^\C \neq \emptyset$ for every $x \in R \cap \liftGamma$.
	\item The rectangular approximations $\lbrace N_\C(x) \ | \ x\in R \cap \liftGamma\rbrace$ are disjoint from the leaves of~$\loom^\pm$ originating from the cusps of $R$.
	\item There is no leaf of $\loom^\pm$ that intersects both $N_\C(x)$ and $N_\C(y)$ for distinct $x,y \in R \cap \liftGamma$.
\end{enumerate}

Condition (1) ensures that we do not miss any points of $\liftGamma$ contained in $R$. Since we do not know a priori how many such points there are or where they are located, it is not immediately clear which parts of $\univV$ must be included in $\C$ to guarantee that (1) holds.

We solve this problem as follows. Fix $x \in \liftGamma$. The stabilizer of $x$ in $\pi_1(M)$ is cyclic; let~$\gamma_x$ be its generator. We say that a subset $A \subset \loom$ is \emph{adapted to $x$} if every edge rectangle in $\loom$ containing $x$ has a $\langle \gamma_x \rangle$-translate in $A$. If~$A$ is adapted to $x$, then every edge rectangle $P$ containing a point $y \in \liftGamma$ has a $\pi_1(M)$-translate in $A$ that contains $x$; see Lemma \ref{lem:adapted}. Thus we can determine $P \cap \liftGamma$ by translating $x$ to $P$ from all $\pi_1(M)$-translates of $P$ contained in $A$. Since tetrahedron rectangles are covered by edge rectangles, an adapted set can  be used to find all points of $\liftGamma$ contained in each fundamental tetrahedron rectangle.

In Proposition \ref{prop:adapted:general} we give an explicit description of a set $A$ that is adapted to~$x$ and covered by finitely many edge rectangles. Starting from a  subtriangulation $\C$ such that $\loomif(\C)$ contains $A$, to ensure (1) it suffices to search through finitely many edge rectangles in $\loomif(\C)$ and perform finitely many expansions of $\C$, dictated by the necessary translations of $\eleZeta_x$. The theoretical background for this construction is developed in Section \ref{sec:adapted:set}, while its precise implementation is described in Section~\ref{subsec:adapted:continent}. 

Conditions~(2) and (3) require the rectangular approximations  of the points $R \cap \liftGamma$ to be `small enough' so that the horizontal and vertical orders of these points, relative both to one another and to the cusps of $R$, can be determined unambiguously. In Lemma \ref{lem:expanding:until:on:the:same:side} we prove that by \emph{expanding} $\C$ along the relevant flow lines (see Definition \ref{def:expanding:along:flowline}) one can always obtain approximations that are sufficiently small. The real difficulty lies in being able to decide, from the combinatorics of the subtriangulation $\C$ alone, whether conditions~(2) and (3) have already been achieved.

The sides of the rectangular approximations $N_\C(x)$ are always contained in some \emph{cusp leaves} of the foliations $\loom^\pm$. Therefore checking if these rectangles are sufficiently small is a special case of the more general problem of determining the horizontal order of the stable cusp leaves and the vertical order of the unstable cusp leaves intersecting a common rectangle. Solving this problem requires two ingredients: a way to encode the cusp leaves of $\loom^\pm$ using only the combinatorics of $\univV$, and a subtriangulation $\C \subset \univV$ that interacts with that encoding sufficiently well so that the relative order of two encoded cusp leaves can be determined from the combinatorics of $\C$ alone.

To encode the stable and unstable cusp leaves, we use \emph{upper} and \emph{lower rivers} in~$\univV$. These are certain sequences of faces of $\univV$ satisfying several conditions; see Section \ref{subsec:convexity}. Rivers were originally introduced in \cite[Section~4.4]{SchleimSegLinks}, although we use a more restrictive definition here. An upper (respectively, lower) river starting at a face $f$ encodes the unique stable (respectively, unstable) cusp leaf that originates at a cusp of $\loomif(f)$ and intersects $\loomif(f)$; see Lemma \ref{lem:bracketing:cusps}.

Rivers interact particularly well with \emph{continents} in $\univV$. These are a special type of subtriangulations that were introduced in \cite[Section~4.14]{SchleimSegLinks}. Below we outline several new results about continents, extending those of \cite{SchleimSegLinks}, that allow us to combinatorially order certain cusp leaves of $\loom^\pm$.

\subsubsection{Ordering cusp leaves using convex continents}

A \emph{continent} in $\univV$ is a subtriangulation $\C \subset \univV$ which is \emph{face-connected} and \emph{layered}; see Definitions \ref{def:face:connected} and~\ref{def:layering}. We first establish a key result about the loomification of $\C$.

\begin{repproposition}{prop:convex:disk}
	Let $\C$ be a nonempty continent in $\univV$. Then the closure $\overline{\loomif(\C)}$ of $\loomif(\C)$ in $\loom$ is a convex, finite-sided closed disk with finitely many boundary points removed.
\end{repproposition}

We refer the reader to Section \ref{subsec:convexity} for a discussion of convexity in loom spaces. Proposition \ref{prop:convex:disk}  implies that any leaf intersecting  $\loomif(\C)$ separates $\loomif(\C)$  into two connected components. In particular, if~$\ell$ is a cusp leaf that originates at a cusp of $\loomif(\C)$ and intersects $\loomif(\C)$ then $\ell$ intersects the boundary of $\loomif(\C)$ at a unique point~$\epsilon(\ell)$. We call $\ell$ a \emph{continental cusp leaf} and $\epsilon(\C)$ its \emph{(continental) exit point}.

Let $\Omega(\C)$ be the set of cusps of $\C$. 
Denote the set of exit points of all stable (respectively, unstable) continental cusp leaves of $\C$ by $\E^+(\C)$ (respectively, $\E^-(\C)$), and set $\E(\C) = \E^+(\C) \cup \E^-(\C)$. The disk structure on $\loomif(\C)$ and its orientation inherited from $(\loom, \loom^+, \loom^-)$  induces a circular order $\Theta: (\Omega(\C) \cup \E(\C))^3\rightarrow \lbrace-1, 0, 1\rbrace$, that we call the \emph{loom-compatible circular order} on $\Theta(\C) \cup \E(\C)$.  It extends the circular order on the cusps of $\C$ that was established in \cite[Corollary 4.20]{SchleimSegLinks}. We use $\Theta$ to horizontally order certain stable continental cusp leaves and vertically order certain unstable continental cusp leaves as follows.

For every continental cusp leaf~$\ell$ we define a \emph{side function} \mbox{$\Theta_\ell \colon \Omega(\C) \rightarrow \lbrace -1,0,1\rbrace$} by setting, for every $c \in \Omega(\C)$, \[\Theta_\ell(c)=\Theta(\iota(\ell), \epsilon(\ell), c),\] where $\iota(\ell)$ denotes the cusp from which $\ell$ originates.   The side function of $\ell$ detects which cusps of $\C$ are separated by~$\ell$; see Lemma \ref{lem:theta:l:function}. Thus, once cardinal directions are fixed near $\ell$, $\Theta_\ell$ can be used to determine whether a stable continental cusp leaf lies to the west or east of $\ell$ when $\ell$ is a leaf of~$\loom^+$, and whether an unstable continental cusp leaf lies to the south or north of $\ell$ when $\ell$ is a leaf of~$\loom^-$.
We never need to determine the relative position of a leaf of $\loom^+$ with respect to a leaf of $\loom^-$. Thus it suffices to consider the restrictions $\Theta^+$, $\Theta^-$ of $\Theta$ to $\Omega(\C) \cup \E^+(\C)$ and $\Omega(\C) \cup \E^-(\C)$, respectively.

There is a subtle distinction between the existence of the loom-compatible circular order~$\Theta$ associated to~$\C$ and the ability to determine $\Theta$ from the combinatorics of~$\C$ alone. The restriction of $\Theta$ to $\Omega(\C)$ is easy --- it can be recovered by following the boundary of the \emph{upper landscape} of $\C$. 
Ordering the continental exit points relative to each other and relative to the cusps of $\C$ is more involved. This is because continental cusp leaves, even though naturally associated to $\loomif(\C)$, are not directly visible in  $\C$.
In Figure \ref{fig:non:convex} we show that the loom-compatible circular order on $\Omega(\C) \cup \E(\C)$ may  depend on tetrahedra outside~$\C$. We prove that this issue disappears for \emph{convex continents}; see Definition~\ref{def:convex:continent}. 

	\begin{repproposition}{prop:conv:cont:circ:order}
	Let $\C\subset \univV$ be a convex continent. The loom-compatible circular orders \[\Theta^\pm:(\Omega(\C) \cup \E^\pm(\C))^3 \rightarrow \lbrace -1, 0, 1 \rbrace\] are uniquely determined by the veering structure on $\C$.
\end{repproposition}
Note that convexity of a continent is distinct from convexity of its loomification, which, by Proposition \ref{prop:convex:disk}, holds for every continent; see Remark \ref{rem:convexity:overlap}. 

Proposition \ref{prop:conv:cont:circ:order} implies that if $\C$ is a convex continent, $\ell$ is a continental cusp leaf of~$\C$, and $R$ is a simplex rectangle in $\loomif(\C)$, then we can find the value of $\Theta_\ell(\cdot)$ on any cusp of $R$ and hence determine if $\ell$ intersects $R$. 
 Furthermore, given two continental cusp leaves $\ell$ and $\ell'$ intersecting $R$ and belonging to the same foliation, we can similarly determine the relative  positions of $\ell$ and $\ell'$ in $R$. 
 
In particular, if a convex continent $\C \subset \univV$ satisfies condition~(1), we can determine whether conditions~(2) and (3) also hold. If they do not, we expand $\C$ upwards or downwards along the appropriate flow lines to obtain finer approximations of the points of $\liftGamma$ contained in the fundamental tetrahedron rectangles. By Lemma \ref{lem:expanding:until:on:the:same:side}, after finitely many such expansions we obtain a convex continent $\C$ satisfying all conditions~(1) -- (3). 
For every fundamental tetrahedron rectangle $R$, this gives sufficiently small rectangular approximations of the points of $R \cap \liftGamma$ to determine their horizontal and vertical orders relative to one another and to the cusps of $R$. We  then apply the finite restriction of the Agol-Gu\'eritaud construction, to construct the veering parent $\V_\gamma$. We refer the reader to Section \ref{sec:algorithm} for full details of the algorithm and its implementation.

\subsection{Experimental results and conjectures}	Our algorithm is not just theoretical; it has been implemented and incorporated into the Python package Veering \cite{VeeringGitHub}. Using that implementation, we obtained multiple experimental results concerning pseudo-Anosov flows and veering triangulations. The data from these experiments is available in the `Drilling data' section of the veering census \cite{VeeringCensus}.

	One application of our algorithm concerns the construction of \emph{layered} veering parents. Roughly speaking, a veering triangulation is \emph{layered} if it carries a fiber of a fibration over the circle; see Definition \ref{def:layered:tri}. In terms of the associated pseudo-Anosov flow, it means that the underlying drilled flow is a suspension flow. The existence of layered veering parents follows from results on \emph{Birkhoff sections} for pseudo-Anosov flows \cite{Brunella_surfaces, Fried_drill}. Moreover, Tsang proved that every veering triangulation with $n$ tetrahedra admits a layered veering parent obtained by drilling a single closed orbit whose square can be represented by a flow cycle of length at most $2\cdot 10^6n^{12}$ \cite[Theorem~6.3]{Tsang-Birkhoff}.
	We used our algorithm to experimentally  investigate how long a flow cycle is actually needed. An exhaustive search over all non-layered veering triangulations with at most $16$ tetrahedra gives the following.
	
		\begin{repexperimental}{experimental:layered:parents} \hfill 
		\begin{enumerate}[leftmargin=1.6em]
			\item Every non-layered veering triangulation with $n\leq 16$ tetrahedra admits a flow cycle of length at most $n-3$ that yields a layered veering parent.
			
			
			\item Among all non-layered veering triangulations with at most $16$ tetrahedra, approximately $82.78\%$ admit a flow cycle of length $1$ that yields a layered veering parent. Approximately $93.66\%$ admit such a flow cycle of length at most $2$.
		\end{enumerate}
	\end{repexperimental}

	See also Table \ref{tab:layered:parents} where more information is provided. The experimental bound of \mbox{$n-3$} is much smaller than the theoretical bound of $10^6n^{12}$; for $n=16$, the difference is almost 20 orders of magnitude. Of course, small triangulations may not be representative of the worst-case behavior and more complicated examples may appear as the number of tetrahedra grows. Still, it is encouraging that such short flow cycles suffice for the examples in the census. Perhaps a better understanding of the gathered experimental data could lead to more efficient ways of constructing Birkhoff sections than those discussed in~\cite{Tsang-Birkhoff}.
	Following Experimental Result \ref{experimental:layered:parents}, we propose the following conjecture.
		\begin{repconjecture}{conj:layered}
		Every non-layered veering triangulation with $n$ tetrahedra admits a layered veering parent obtained by drilling out a single flow cycle of length at most $n$.
	\end{repconjecture}
	
	Another `nice' property that a veering triangulation may have is geometricity. An ideal triangulation of a hyperbolic $3$-manifold is \emph{geometric} if, after lifting it to the universal cover and straightening its faces to totally geodesic ideal triangles (while keeping the ideal vertices fixed), none of the resulting tetrahedra flatten or overlap. While veering triangulations are known to exist only on hyperbolic $3$-manifolds \cite[Theorem 1.5]{veer_strict-angles}, many of them are not geometric \cite[Theorem 1.1]{FTW-random-veering}.  This somewhat limits our ability to precisely relate the combinatorics of veering triangulations, and hence the dynamics of pseudo-Anosov flows, to hyperbolic geometry. Although, some results in that direction are known; see \cite{Fenley_ideal, Fenley_Mosher, Huang-Taylor-hyperbolic}. 
	
	We used our algorithm to investigate experimentally whether lack of geometricity can be `fixed' by drilling. Although there is no theoretical result guaranteeing the existence of a geometric veering parent, we found one for every non-geometric veering triangulation in the veering census. The lengths of the flow cycles that we needed to drill were also relatively short; see Table \ref{tab:geometric:parents}.
	
		\begin{repexperimental}{experimental:geometric}
		Every non-geometric veering triangulation with up to 16 tetrahedra admits a flow cycle $\zeta$ such that the veering parent $\V_\zeta$ is geometric.
	\end{repexperimental}

Experimental Result~\ref{experimental:geometric} leads us to conjecture the following.

	\begin{repconjecture}{conj:geometric}
	Every transitive pseudo-Anosov flow can be encoded by a geometric veering triangulation.
\end{repconjecture}

We also used our drilling algorithm to find closed orbits of pseudo-Anosov flows (drilled along perfect sets) that are not ambiently isotopic to the unique geodesics in their respective free homotopy classes.

\begin{repexperimental}{experimental:geodesic}
	Every drilled pseudo-Anosov flow underlying a geometric veering triangulation  with up to 10 tetrahedra admits a closed orbit  that is not ambiently isotopic to the unique geodesic in its free homotopy class. 
\end{repexperimental}

The computation underlying the above statement is carried out entirely in terms of veering triangulations and their flow cycles. 
To check whether a flow cycle $\zeta$ of a veering triangulation $\V$  represents a closed orbit~$\gamma$ that is not ambiently isotopic to the freely homotopic geodesic, we consider two different drilling operations on $\V$. The first one constructs the veering parent $\V_\gamma$ via our drilling algorithm. The second drills out the geodesic in the free homotopy class of $\gamma$. A triangulation of the latter drilled manifold can be constructed using SnapPy \cite{snappea} (under certain additional conditions). If the manifolds obtained by these two drilling operations are not homeomorphic, then the closed orbit $\gamma$ is not ambiently isotopic to the freely homotopic geodesic; see Section \ref{subsec:orbit:geodesic}. Experimental Result~\ref{experimental:geodesic} follows from finding such a flow cycle for every geometric veering triangulation with at most $10$ tetrahedra. In Example~\ref{ex:figure:eight:geodesic} we describe one such flow cycle for the $2$-tetrahedron veering triangulation of the figure-eight knot complement.

Experimental Result \ref{experimental:geodesic} motivates the following conjecture.

\begin{repconjecture}{conj:geodesic}
	Every  (possibly drilled) pseudo-Anosov flow on a hyperbolic three-manifold admits closed orbits that are not ambiently isotopic to the unique geodesic in their respective free homotopy classes.
\end{repconjecture}

Our last experimental result relates to a conjecture of Ghys. Given flows $\Psi$ and $\Psi'$ on manifolds $N$ and $N'$, respectively, we say that $\Psi$ and $\Psi'$ are \emph{orbit equivalent} if there is a homeomorphism $N\to N'$ that sends orbits of $\Psi$ to orbits of~$\Psi'$. They are \emph{almost orbit equivalent} if they become orbit equivalent after removing finitely many closed orbits from each. The conjecture of Ghys states that all transitive Anosov flows with orientable invariant foliations are almost orbit equivalent. It is currently still open in full generality, although some partial results are known \cite{dehornoy-almost-equiv}.

To explore this conjecture experimentally, we define the \emph{veering ancestry graph} $\A$. Its vertices correspond to finite veering triangulations, with a directed edge from $\V$ to~$\V'$ whenever $\V'$ is a veering parent of $\V$. In Theorem \ref{thm:connected:cpts} we prove that connected components of $\A$ are in a bijective correspondence with almost orbit equivalence classes of transitive pseudo-Anosov flows. 
Since every transitive Anosov flow with orientable invariant foliations is encoded by an \emph{edge-orientable} veering triangulation with four \emph{ladders} in every boundary component, and every such triangulation encodes such a flow (see Section \ref{subsec:ghys}), we can restate the conjecture of Ghys as follows.

	\begin{repconjecture}{conj:veering:graph:component}
	Any two edge-orientable veering triangulations with four ladders in each boundary component are in the same connected component of $\A$. 
\end{repconjecture}

We used our algorithm to build a large subgraph $\A_{16,4,7}^+$ of $\A$. We started with all edge-orientable veering triangulations with at most $16$ tetrahedra and four ladders in each boundary component. We then added their veering parents obtained by drilling non-peripheral flow cycles of length at most $7$, as well as the edges corresponding to these drillings.

\begin{repexperimental}{exp:ancestry:experiments}
	\hfill
	\begin{enumerate}[label=(\Roman*), leftmargin=2.5em] 
		\item $\A^{+}_{16,4,7}$  has 8,767,114 vertices and 2247 connected components.
		\item The largest connected component of $\A^{+}_{16,4,7}$  has 8,212,337 vertices. 
		\item Every connected component of $\A^{+}_{16,4,7}$ has a vertex corresponding to a veering triangulation encoding an Anosov suspension flow.
	\end{enumerate}
\end{repexperimental}

Since all suspension Anosov flows are orbit equivalent \cite[Theorem A]{dehornoy-almost-equiv}, we deduce that 	all vertices of $\A^{+}_{16,4,7}$  lie in the same connected component of $\A$. This provides experimental evidence to support the Ghys' conjecture.

	\subsection{Outline of the paper}\label{subsec:intro:outline}
	In Section \ref{sec:pA} we discuss pseudo-Anosov flows and their orbit spaces, as well as perfect fits, perfect sets, and loom spaces. We prove that the orbit space of a transitive pseudo-Anosov flow drilled along a perfect set is a loom space.
	
	In Section \ref{sec:veering} we define veering triangulations and recall the Agol-Gu\'eritaud construction of a veering triangulation from a loom space. Although this construction is standard, we describe it here because our drilling construction is based on its modification  that is most easily explained by referring to the original. We also recall several further results connecting veering triangulations to pseudo-Anosov flows.
	
	In Section \ref{sec:parents} we discuss two abstract constructions of veering parents of $\V$: one that uses the entire loom space $(\loom, \loom^+, \loom^-)$ associated to $\V$, and another that involves only finitely many tetrahedron rectangles in $(\loom, \loom^+, \loom^-)$. The latter is what we refer to as a finite restriction of the Agol-Gu\'eritaud construction; it is the basis of our drilling algorithm.
	
	In Section \ref{sec:flow:cycles} we recall the results of Landry-Minsky-Taylor relating flow cycles of veering triangulations to periodic orbits of pseudo-Anosov flows.
	
	In Section \ref{sec:continents} we discuss convexity and convex hulls in loom spaces, as well as landscapes and continents in $\univV$. The key result is that the closure of the loomification of a continent is a convex disk with finitely many boundary points removed. From this we obtain the loom-compatible circular order on the set consisting of the cusps and the exit points of the continental cusp leaves of a continent.
	
	In Section \ref{sec:orders} we explain how to establish the horizontal order on certain stable continental cusp leaves and vertical order on certain unstable continental cusp leaves using the loom-compatible circular order. We then prove that if a continent $\C$ is convex then the associated circular order can be uniquely determined using only the combinatorics of~$\C$.
	
	In Section~\ref{sec:adapted:set} for any periodic point $x\in\loom$, we construct a subset of $(\loom^+,\loom^-,\loom^-)$ that is adapted to~$x$ and covered by finitely many edge rectangles. This is used to reduce the problem of finding all translates of $x$ in a given simplex rectangle in $(\loom,\loom^+,\loom^-)$ to a finite search through an explicit set of edge rectangles.
	
	In Section~\ref{sec:algorithm} we give a step-by-step description of the drilling algorithm, including how the theoretical results established in earlier sections are implemented. We also prove the correctness of the algorithm.
	
	In Section~\ref{sec:applications}, we use an implementation of our algorithm to study pseudo-Anosov flows experimentally. We explore four different directions, present the experimental results obtained in each, and formulate several conjectures suggested by these computations.
	
	In Section \ref{sec:questions}, we discuss several further questions concerning veering parents and related topics. These include the computational complexity of the algorithm (Section~\ref{subsec:q:time});  dependence of the combinatorics of a veering parent $\V_\zeta$ on those of $\V$ and $\zeta$ (Section~\ref{subsec:q:combinatorics});  relationship between drilling veering triangulations and \emph{vertical surgeries} (Sections~\ref{further:descreasing} and~\ref{subsec:q:surgery});  recognizing veering ancestors and, more generally, veering triangulations that are connected  in the veering ancestry graph (Section \ref{subsec:q:veering:graph}); and the operation of \emph{filling veering triangulations} (Section \ref{subsec:q:filling}).

	\subsection*{Acknowledgments}
	We thank Michael Landry and Sam Taylor for answering our questions about flow cycles of veering triangulations, Matthias Goerner for his help with SnapPy \cite{snappea}, which was crucial in establishing Experimental Result \ref{experimental:geodesic}, and Saul Schleimer for many helpful conversations. 
	
	Some of the research underlying this paper was carried out while the first author was affiliated with the University of Oxford and then the University of California, Davis. The manuscript was partially written while the authors were in residence at the Simons Laufer Mathematical Sciences Institute in Berkeley, California, during the Spring 2026 semester, supported by the National Science Foundation under Grant No. DMS-2424139. The second author was supported in part by National Science Foundation grant DMS-2203993.

\vspace*{-1em}
	\subsection*{Tool use declaration}
	The authors used SageMath \cite{sagemath}, Regina \cite{regina}, SnapPy \cite{snappea}, Python~\cite{python}, PyX \cite{pyx}, and Veering \cite{VeeringGitHub} for computational experiments, software development, and figure preparation. The first author used ChatGPT (OpenAI) for language editing during the final stages of manuscript preparation.
	
	\section{Pseudo-Anosov flows, orbit spaces, and loom spaces}\label{sec:pA}
	
	In this section we define pseudo-Anosov flows and discuss basic properties of their orbit spaces. We then define a \emph{perfect set} for a transitive pseudo-Anosov flow and show that a transitive pseudo-Anosov flow $\Psi$ and a perfect set $\Lambda$ for $\Psi$ determines a \emph{loom space}  $\loom = \loom(\Psi, \Lambda)$. This loom space is used in Section \ref{subsec:loom:to:veering} to construct a veering triangulation encoding $\Psi$.
	
	\subsection{Pseudo-Anosov flows}\label{subsec:pA}
	
	In this subsection we recall the definition of a pseudo-Anosov flow and discuss properties of its stable and unstable foliations. Both these foliations are singular. First, we describe the types of singularities that may occur.
	
	\begin{definition}\label{def:1D:prong}
		Let $H = \lbrace (x,y) \in \rr^2 \ |  \ y\geq0 \rbrace$. Equip $H$ with a one-dimensional foliation~$\F$ by horizontal lines. Set 
		\begin{align*}
		\ell^\rightarrow &= \lbrace (x,y) \in H \ | \ x \geq 0, y=0 \rbrace,\\
		\ell^\leftarrow &= \lbrace (x,y) \in H \ | \ x \leq 0, y=0 \rbrace.\end{align*} 
		We call $\ell^\rightarrow$ and $\ell^\leftarrow$ the \emph{boundary half-leaves} of $\F$. 
		
		Fix an integer $k\geq 3$. For every integer $1\leq i \leq k$ let $(H_i, \F_i)$ be a copy of the foliated half-plane $(H, \F)$, and let $\ell_i^\rightarrow$, $\ell_i^\leftarrow$ be the two boundary half-leaves of $\F_i$. Identify the negative boundary leaf $\ell_i^\leftarrow$ of $\F_i$ with the positive boundary leaf $\ell_{i+1}^\rightarrow$ of $\F_{i+1}$ using a homeomorphism $h_i: \ell_i^\leftarrow \rightarrow \ell_{i+1}^\rightarrow$ sending $(x,0) \in H_i$ to $(-x, 0) \in H_{i+1}$; the subscript $i$ is taken modulo~$k$. After all identifications, the resulting space $P$ is homeomorphic to a plane. Let $\F_P$ be the foliation of $P$ induced by the foliations $\F_i$. 
		The boundary half-leaves of $\F_i$'s project to $k$ rays in $P$ that meet at a single point $p \in P$. We say that $\F_P$ has a  \emph{$k$-prong singularity} at $p$.
		The foliation $\F_P$ is nonsingular around all other points of $P$.
	\end{definition}

	\begin{definition}
		Let $\F$ be a two-dimensional foliation of a three-manifold $N$. Let $\ell$ be a line or a simple closed curve contained in a leaf of $\F$. Let $k\geq 3$ be an integer. We say that $\ell$ is a \emph{$k$-prong singularity} of $\F$  if there exists a tubular neighborhood $U$ of $\ell$ in~$N$  such that for every meridional disk $D$ of $U$ the one-dimensional foliation $D \cap \F$ has precisely one $k$-prong singularity which appears at $D \cap \ell$. If furthermore $\ell$ is closed, we say that $\ell$ is a \emph{closed $k$-prong} of $\F$.
	\end{definition}
	
	In the following definition,  $\mathrm{Homeo}^+(\rr, 0)$ denotes the set of orientation-preserving homeomorphisms of $\rr$ that map 0 to 0. 
	\begin{definition}\label{def:pA:flow} \cite[Definition 7.1]{Fenley_geometry} 
		Let $N$ be a closed oriented three-manifold. A flow $\Psi: N\times \rr \rightarrow N$ is a \emph{(topological) pseudo-Anosov flow} if it has no point orbits and there is a pair of singular two-dimensional foliations $\W^+$, $\W^-$ in $N$ such that
		\begin{enumerate}
			\item $\W^+$, $\W^-$ have finitely many singularities, each of which is a closed $k$-prong  for some integer  $k\geq 3$, 
			\item  $\W^+$, $\W^-$ are transverse to each other away from the singularities, and coincide along their singular sets,
			\item The leaves of $\W^+$ and $\W^-$ intersect along orbits of $\Psi$,
			\item For any path metric $d$ on $N$ and for any points $x,y$ that are on the same leaf of $\W^+$ (respectively, $\W^-$) there is $h \in \mathrm{Homeo}^+(\rr, 0 )$ such that \mbox{$d\left(\Psi(x,t), \Psi(y, h(t))\right) \rightarrow 0$} as $t\rightarrow + \infty$ (respectively, $t \rightarrow - \infty$). Furthermore, if $x, y$ do not lie on the same orbit of~$\Psi$ then there exists $\delta > 0$ such that for every $h \in \mathrm{Homeo}^+(\rr, 0 )$ there exists $T<0$ (respectively, $T>0$) such that $d\left(\Psi(x,t), \Psi(y, h(t))\right) >\delta$ for every $t<T$ (respectively, $t>T$). 
		\end{enumerate}
	\end{definition}
	
	\begin{remark}
	We refer the readers to Section 1.1.2 of \cite{Barthelme-Mann-manuscipt} for a detailed discussion of the various definitions of pseudo-Anosov flows that can be found in the literature.
	\end{remark}

	We call the foliations $\W^+$, $\W^-$ associated to a pseudo-Anosov flow $\Psi$ its \emph{stable} and  \emph{unstable foliation}, respectively. A \emph{stable leaf} is a leaf of $\W^+$ and an \emph{unstable leaf} is a leaf of $\W^-$. When we write `$\W^\pm$' we mean `$\W^+$ or $\W^-$'.  
	Since $\W^\pm$ share singularities,  we may denote their common singular set by $\Sing(\Psi)$.  Elements of $\Sing(\Psi)$ are called the \emph{singular orbits} of~$\Psi$. We allow $\Sing(\Psi)$ to be empty, in which case $\Psi$ is an \emph{Anosov flow}.

	\begin{remark}
		When discussing flows, many authors use the terms `orbits' and `flow lines' interchangeably. We warn the reader that in this paper the meaning of a `flow line' will be different; see Definition~\ref{def:flow:line}.
	\end{remark}
	
	A pseudo-Anosov flow $\Psi$ is \emph{transitive} if it admits a dense orbit. Assuming that~$\Psi$ is transitive is not a strong restriction; for instance, every pseudo-Anosov flow on a hyperbolic manifold is transitive \cite[Proposition~2.7]{Mosher-dynamical1992-part1}. Transitivity implies two useful properties of the flow:  that all leaves of its stable/unstable foliation are dense \cite[Proposition 1.3]{Mosher-dynamical1992-part1}, and that periodic orbits are dense \cite[p. 493]{Brunella_surfaces}. See also Proposition~\ref{prop:orbit:space} below.
	
	From now on we assume that $\Psi$ is a transitive pseudo-Anosov flow on an oriented three-manifold $N$. A leaf $L$  of $\W^\pm$ is \emph{singular} if it contains a singular orbit of $\Psi$.  Otherwise, we say that $L$ is \emph{regular}. Every regular leaf  of $\W^\pm$ is homeomorphic to a plane, an open annulus, or an open M\"obius band \cite[Proposition~1.3]{Mosher-dynamical1992-part1}.  It follows from part (4) of Definition \ref{def:pA:flow} that every leaf  whose fundamental group is nontrivial contains a unique closed orbit of~$\Psi$. This orbit must be homotopically non-trivial \cite[proof of Proposition~4.2]{Fenley_Mosher}. A regular closed orbit $\gamma$ is \emph{twisted} if the leaf of $\W$ which contains it is homeomorphic to an open M\"obius band. Otherwise, we say that~$\gamma$ is \emph{untwisted}.

	Let $\liftedFlow$ be the lift of $\Psi$ to the universal cover $\univN$. The preimages of foliations $\W^+, \W^-$ form foliations $\widetilde{\W}^+$, $\widetilde{\W}^-$ of $\univN$ which are invariant under $\liftedFlow$. Each orbit of~$\liftedFlow$ is homeomorphic to~$\rr$.  The \emph{orbit space} $\orbit = \orbit(\Psi)$ of $\Psi$ is the quotient of $\widetilde{N}$ by the equivalence relation that identifies points lying on the same orbit of $\liftedFlow$.
	Thus each point $x \in \orbit$ corresponds to an orbit $\widetilde{\gamma}(x)$ of $\liftedFlow$. We say that $x$ is \emph{periodic} if the image of $\widetilde{\gamma}(x)$ under the covering map $\univN \rightarrow N$ is closed.  Fenley-Mosher proved that~$\orbit$  equipped with the quotient topology is homeomorphic to~$\rr^2$ \cite[Proposition~4.2]{Fenley_Mosher}. Thus~$\orbit$ is orientable. We orient $\orbit$ using the fixed orientation on $N$, the orientation of orbits of~$\liftedFlow$, and the right hand rule. 
	
	The foliations $\widetilde{\W}^+$, $\widetilde{\W}^-$ project to a pair of one-dimensional singular foliations $\orbit^+$, $\orbit^-$ in~$\orbit$, respectively. These foliations are transverse to each other away from the singularities.  We say that $(\orbit, \orbit^+, \orbit^-)$ is the \emph{bi-foliated plane associated to $\Psi$}. Singular and regular leaves of $\widetilde{\W}^\pm$ project to \emph{singular} and \emph{regular} leaves of $\orbit^\pm$, respectively. We define a \emph{half-leaf} of $\orbit^\pm$ to be a closed subset of a leaf of $\orbit^\pm$  that is homeomorphic to $\lbrack 0,+\infty)$. Below we list some well-known properties of $(\orbit, \orbit^+, \orbit^-)$ that will be used in later parts of this paper.

	\begin{proposition}\label{prop:orbit:space}
		Let $(\orbit, \orbit^+, \orbit^-)$ be the bi-foliated plane associated to a transitive pseudo-Anosov flow $\Psi$ on a closed three-manifold $N$. 
		\begin{enumerate}
			\item Periodic points are dense in $\orbit$. \label{orbit_space:periodic:points:dense}  
			\item Each leaf of $\orbit^\pm$ contains at most one periodic point. \label{orbit_space:one:periodic}
			\item The deck group action of $\pi_1(N)$ on $\widetilde{N}$ induces a continuous, orientation-preserving action of $\pi_1(N)$ on $\orbit$ which sends $\orbit^+$ to $\orbit^+$ and $\orbit^-$ to $\orbit^-$. Additionally, this action has the following properties. \label{orbit_space:group_action}
			\begin{enumerate}
				\item The stabilizer of a periodic point $x \in \orbit$ is infinite cyclic, generated by an element $g \in \pi_1(N)$  conjugate to the homotopy class of the primitive orbit  of~$\Psi$ whose elevation to $\univN$ projects to $x$. The half-leaves of $\orbit^\pm$  originating from $x$ are permuted by $g$. Moreover,  if $\ell$ is a leaf of $\orbit^+$ (respectively, $\orbit^-$) that contains $x$ then
				$g^{-n} (y) \rightarrow x$ (respectively, $g^n (y) \rightarrow x$) as $n\rightarrow +\infty$ for every $y \in \ell$. \label{orbit_space:fixed:points}
				\item The orbit of a periodic point does not have any accumulation points in $\orbit$.\label{orbit_space:no:accumulation:points} 
				\item The orbit of a half-leaf of $\orbit^\pm$ is dense in $\orbit$. \label{orbit_space:dense:leaves}
			\end{enumerate}
		\end{enumerate}
	\end{proposition}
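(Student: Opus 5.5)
The statement collects standard facts about orbit spaces of transitive pseudo-Anosov flows. I would prove each item by translating it into a statement about orbits of $\Psi$ in $N$ or of $\liftedFlow$ in $\univN$, using the definitions, transitivity, and the cited results of Fenley--Mosher and Mosher.

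\textbf{Items (1) and (2).} For (1), periodic orbits are dense in $N$ by \cite{Brunella_surfaces}. Their elevations are then dense in $\univN$, because the covering map is a local homeomorphism. The quotient map $\univN\to\orbit$ is open, being the quotient by a flow, so periodic points are dense in $\orbit$.

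For (2), let $x,y$ be periodic points on a common leaf of $\orbit^+$. The corresponding orbits of $\liftedFlow$ lie on a common leaf of $\widetilde{\W}^+$. By Definition~\ref{def:pA:flow}(4), they are forward asymptotic up to reparametrization. Now consider $g_x\in\pi_1(N)$ preserving the orbit over $x$, and suppose the projected closed orbits are distinct. Forward asymptotic closed orbits of a flow on a compact manifold must coincide, since the distance between them along the orbit is bounded below by periodicity unless they agree. So $x=y$.

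The same argument applies to $\orbit^-$ with $t\to-\infty$. There is a subtlety at singular leaves: there I argue on each prong separately.

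\textbf{Item (3), the action and (a).} The deck group commutes with $\liftedFlow$ and preserves $\widetilde{\W}^\pm$, so it descends to a continuous action on $\orbit$ preserving $\orbit^\pm$. It preserves orientation because deck transformations preserve the orientation of $\univN$ (since $N$ is oriented) and the flow direction.

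For (a):
\begin{itemize}
\item \emph{The stabilizer is cyclic.} An element fixing $x$ preserves the orbit $\widetilde\gamma(x)\cong\rr$ and acts on it freely by translation. The stabilizer is therefore a torsion-free group acting freely on $\rr$ commuting with the flow, hence a subgroup of $\rr$. It is discrete because $\pi_1(N)$ acts properly on $\univN$, so it is infinite cyclic.
\item \emph{The generator is the primitive orbit class.} The generator corresponds to the primitive closed orbit, up to conjugacy given by the basepoint choice.
\item \emph{$g$ permutes the half-leaves at $x$.} Since $g$ fixes $x$ and preserves $\orbit^\pm$, it permutes the prongs at $x$.
\item \emph{Contraction.} Take $y$ on the stable leaf through $x$. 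Translating by $g^{-n}$ corresponds to flowing forward by $n$ periods and pulling back. By Definition~\ref{def:pA:flow}(4), the orbits of $x$ and $y$ converge in forward time in $\univN$, and convergence in $\univN$ passes to $\orbit$, so $g^{-n}(y)\to x$. The sign convention, $g^{-n}$ versus $g^n$, should be checked against the chosen orientation of $g$ along the orbit.
\end{itemize}

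\textbf{Items (3)(b) and (c).} For (b), suppose the orbit $\pi_1(N)\cdot x$ accumulates at $z\in\orbit$. Take a small flow box $B$ around a lift of $z$. Infinitely many distinct translates of $\widetilde\gamma(x)$ then cross $B$, each in a segment of bounded length. Projecting to $N$, the single closed orbit $\gamma$ crosses the projected flow box in infinitely many distinct segments. This contradicts compactness of $\gamma$, since a compact orbit meets a small flow box in finitely many plaques. This is the step needing care, because the translates must be distinct as orbits rather than merely as points; I would handle it by passing to the quotient.

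For (c), every leaf of $\W^\pm$ is dense in $N$ \cite{Mosher-dynamical1992-part1}. The main obstacle is that a half-leaf in $\orbit$ corresponds to only half of a leaf of $\widetilde{\W}^\pm$ in $\univN$, and the projection of a half-leaf to $N$ need not contain the whole leaf. I would first try to pass to the sub-half of the leaf in $N$ obtained by projecting the half-leaf in $\widetilde{\W}^\pm$. I would then argue that its closure is saturated and $\W^\pm$-invariant, hence all of $N$ by minimality of $\W^\pm$, which follows from transitivity \cite[Prop.~1.3]{Mosher-dynamical1992-part1}. Lifting back to $\univN$ and projecting to $\orbit$ gives density of the $\pi_1(N)$-orbit of the half-leaf.
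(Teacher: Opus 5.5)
Your arguments for (1), for the existence of the action, for cyclicity of $\Stab(x)$, and for (3)(b) are fine. For cyclicity, the translation homomorphism into $\rr$ is a clean substitute for the paper's appeal to homotopic nontriviality of closed orbits.

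\textbf{The gap in (2).} Definition~\ref{def:pA:flow}(4) plus compactness shows only that the closed orbits $\gamma_x,\gamma_y\subset N$ coincide. That is, $\widetilde{\gamma}(y)=h\,\widetilde{\gamma}(x)$ for some $h\in\pi_1(N)$ preserving the leaf $\widetilde L$ of $\widetilde{\W}^+$ that contains both. Concluding $x=y$ requires $h\in\Stab(x)$: a single leaf of $\widetilde{\W}^+$ must not contain two distinct lifts of the same closed orbit. You introduce $g_x$ but never use it to rule this out.

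The paper supplies this through the topology of leaves \cite[Proposition~1.3]{Mosher-dynamical1992-part1}. A leaf $L$ of $\W^\pm$ with nontrivial fundamental group is an open annulus or M\"obius band, and a singular leaf is finitely many annuli glued along the singular orbit. Hence $\pi_1(L)$ is generated by its unique closed orbit, and $\widetilde L$ contains only one lift of that orbit.

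Alternatively, you could finish your $g_x$ idea. Suppose $\widetilde{\gamma}(x)$ and $h\,\widetilde{\gamma}(x)$ are forward asymptotic \emph{in $\univN$}. Then suitable powers $g_x^{k_n}$ move $h\,\widetilde{\gamma}(x)$ arbitrarily close to a fixed point of $\widetilde{\gamma}(x)$. Local finiteness of the lifts of $\gamma$ (your (3)(b)) then forces $g_x^{k_n}h\in\langle g_x\rangle$, so $h\in\langle g_x\rangle$. However, Definition~\ref{def:pA:flow}(4) gives asymptotics only in $N$. You would have to justify that they hold between the particular lifts lying in $\widetilde L$. Your contraction claim in (3)(a) silently uses the same upgrade from convergence in $N$ to convergence in $\univN$, and it needs the same input.

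\textbf{The gap in (3)(c).} The decisive step is that the closure in $N$ of the projected half-leaf is $\W^\pm$-saturated. You assert this without argument, and it is not a general fact about foliations: a half-leaf spiralling onto a compact leaf of a Reeb annulus has non-saturated closure. Density of full leaves says nothing directly about halves of leaves. The paper does not prove this step from scratch either; it invokes \cite[Lemma~2.21]{BFM}.

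For a self-contained proof, work in $\orbit$ with (1) and (3)(a), in the spirit of the $\lambda$-lemma. Pick a nonsingular periodic point $q$ whose unstable leaf crosses the interior of the half-leaf $h$ at a point $s$. The powers of the generator of $\Stab(q)$ that contract the unstable leaf of $q$ send $s$ towards $q$. Using that a stable and an unstable leaf meet at most once, the translates of $h$ eventually cross a fixed small rectangle around $q$. They do so along stable segments that converge to the stable segment through $q$. So the closure of $\pi_1(N)\cdot h$ contains a neighbourhood of $q$ in its stable leaf. By (3)(a) and invariance, it then contains the whole stable leaf of $q$. The $\pi_1(N)$-orbit of that leaf is dense, because leaves of $\W^\pm$ are dense in $N$.
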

	\begin{proof}

		The proof that periodic orbits of a transitive Anosov flow are dense can be found in \cite[Theorem 5.3.50]{HyperbolicFlows}. The same argument works in the general case \cite[p. 493]{Brunella_surfaces}. See also \cite[Proposition 1.4.9]{Barthelme-Mann-manuscipt}. This gives \eqref{orbit_space:periodic:points:dense}.

		Every regular leaf of $\W^\pm$ with nontrivial fundamental group is homeomorphic either to an open annulus or to an open M\"obius band  \cite[Proposition~1.3]{Mosher-dynamical1992-part1}. Every singular leaf of $\W^\pm$ is the union of finitely many open annuli glued along the singular orbit. Thus 
		part \eqref{orbit_space:one:periodic} follows from the asymptotic behavior of orbits contained in the same leaf of $\W^\pm$, stated in part (4) of Definition~\ref{def:pA:flow}. 
		Since every closed orbit of $\Psi$ is homotopically non-trivial \cite[proof of Proposition 4.2]{Fenley_Mosher}, we also get \eqref{orbit_space:fixed:points}.

		A periodic point in $\orbit$ corresponds to a closed orbit of $\Psi$. The preimage of such an orbit in $\widetilde{N}$ must be a closed and discrete set of orbits of $\liftedFlow$.  This gives  \eqref{orbit_space:no:accumulation:points}.

		Part \eqref{orbit_space:dense:leaves} is proved in \cite[Lemma 2.21]{BFM} for leaves, but the same argument works for half-leaves. 
	\end{proof}

	We call the action of $\pi_1(N)$ from Proposition \ref{prop:orbit:space} \eqref{orbit_space:group_action} the \emph{$\Psi$-action} of $\pi_1(N)$ on $\orbit$.

	\subsection{Rectangles in bi-foliated surfaces}
	In the previous section we defined the orbit space of a pseudo-Anosov flow, which is an example of a bi-foliated plane. More generally, a \emph{bi-foliated surface} is a triple $(\X, \X^+, \X^-)$ such that $\X$ is a surface and $\X^+, \X^-$ are  one-dimensional foliations of $\X$ which are transverse to each other. For us, $\X$ will always be either a plane or a plane punctured along a closed discrete set of points. The foliations $\X^\pm$ may or may not be singular. 
	In this subsection we consider subsets of $(\X, \X^+, \X^-)$  that are trivially bi-foliated as a product. 

	\begin{definition}\label{defn:rectangle}
		Let $(\X, \X^+, \X^-)$ be a bi-foliated surface. 
		Let  \mbox{$f_R: (0,1)^2\rightarrow \X$} be an embedding which maps vertical and horizontal segments in $(0,1)^2$ to leaves of $\X^+$ and~$\X^-$, respectively. We say that $f_R$ is a \emph{rectangle embedding} and that the image $R$ of $f_R$ is a \emph{rectangle} in $(\X, \X^+, \X^-)$.
		We denote the closure of~$R$ in $\X$ by $\overline{R}$. 
	\end{definition} 
	
	\begin{definition}
		Let  \mbox{$f_R: (0,1)^2\rightarrow \X$} be a  rectangle embedding into a bi-foliated surface $(\X, \X^+, \X^-)$. A \emph{side} of $R$ is the set of accummulation points of the family $\lbrace f_R(s,t)\rbrace_{s,t \in (0,1)}$, where either $s\to 0^+$, or $s\to 1^-$, or $t\to 0^+$, or $t\to 1^-$. 
		The union of all sides of $R$ forms the \emph{boundary} $\partial R = \overline{R} \bez R$ of $R$. 
	\end{definition}

	We may locally fix the cardinal directions in a rectangle $R \subset (\X, \X^+, \X^-)$ by declaring one side to be south (with the opposite side north) and one side to be west (with the opposite side east). We denote the west, east, south, and north sides of $R$ by $\partial_W R$, $\partial_E R$, $\partial_S R$, and $\partial_N R$, respectively. Note that the sides of $R$ are not necessarily homeomorphic to $\lbrack 0,1 \rbrack$; see Example \ref{ex:skewed} or Definition \ref{def:tet-rectangle}. 
	
	\begin{convention}\label{convention:cardinal:directions}
		For any rectangle $R$ in a bi-foliated surface $(\X, \X^+, \X^-)$ we pick the cardinal directions in $R$ as follows.
		\begin{enumerate}
			\item The west and east sides of $R$ are contained in the leaves of $\X^+$. Therefore, the south and north sides of $R$ are contained in the leaves of $\X^-$.
			\item If the foliations $\X^\pm$ are oriented, the orientation of $\X^+\cap R$ points from the south side of $R$ to the north side of $R$, and the orientation of $\X^-\cap R$ points from the west side of $R$ to the east side of $R$.
		\end{enumerate}
	\end{convention}
	
	Once the cardinal directions in $R$ are fixed by Convention~\ref{convention:cardinal:directions} (1), 
	the \emph{vertical sides} of~$R$ are its west and east sides, and the \emph{horizontal sides} are its south and north sides.

	
	\begin{definition}
		\label{def:expansion}
		Let $R$ be a rectangle in a bi-foliated surface $(\X, \X^+, \X^-)$. An \emph{expansion} of~$R$ is a rectangle $R'$ in $(\X, \X^+, \X^-)$ such that $R \subset R'$. 
		If cardinal directions in $R$ are fixed then we say that an expansion $R'$ of $R$ is 
		\begin{itemize}
			\item a \emph{western expansion} of $R$ if $\partial_E R = \partial_E R'$, $\partial_N R \subset \partial_N R'$, and $\partial_S R \subset \partial_S R'$;
			\item  an \emph{eastern expansion} of $R$ if $\partial_W R = \partial_W R'$, $\partial_N R \subset \partial_N R'$, and $\partial_S R \subset \partial_S R'$;
			\item a \emph{southern expansion} of $R$  if $\partial_N R = \partial_N R'$, $\partial_W R \subset \partial_W R'$, and $\partial_E R \subset \partial_E R'$;
			\item a \emph{northern expansion} of $R$  if $\partial_S R = \partial_S R'$, $\partial_W R \subset \partial_W R'$, and $\partial_E R \subset \partial_E R'$.
		\end{itemize}
		A \emph{horizontal expansion} of $R$ is a western and/or eastern expansion of $R$, and a \emph{vertical expansion} of $R$ is a southern and/or northern expansion of $R$. 		We say that an expansion of $R$ is \emph{maximal} if it is maximal under the inclusion. 
	\end{definition}

	\subsection{Perfect fits and perfect sets}
	
	In this subsection, we discuss certain rectangles in bi-foliated surfaces whose closures are not homeomorphic to $\lbrack 0,1 \rbrack^2$.

	\begin{example}\label{ex:skewed}
		Let $B = \lbrace (x,y) \in \rr^2 \ | \ x-1 < y < x+1 \rbrace$. Equip $B$ with foliations $B^+$, $B^-$ induced by the vertical and horizontal foliations of $\rr^2$, respectively. Then $(B, B^+, B^-)$ is a bi-foliated plane. The closure of $(0,1)^2$ in $B$ is equal to $\lbrack 0,1\rbrack^2 \bez \lbrace (0,1), (1,0)\rbrace$. The horizontal leaf segment $\ell_{h,1} = \lbrace (x,1) \rbrace_{x \in (0,1)}$ and the vertical leaf segment $\ell_{v,0} = \lbrace (0,y)\rbrace_{y \in (0,1)}$ do not intersect even though arbitrarily close to $\ell_{h,1}$ we can find horizontal leaves intersecting $\ell_{v,0}$, and vice versa. 
	\end{example}
	
	\begin{remark}\label{remark:skewed}
		There are Anosov flows whose orbit spaces are equivalent to the bi-foliated plane described in Example \ref{ex:skewed}. Such flows are called \emph{positively skewed}  \cite[Section 1.3]{Ioannis-phd}.	\end{remark}
	

	\begin{definition}\label{def:perfect:fit}
		A rectangle $R$ in a bi-foliated surface $(\X, \X^+, \X^-)$ is a \emph{perfect fit rectangle} if its closure~$\overline{R}$ in $\X$ is homeomorphic to $\lbrack 0,1 \rbrack - \lbrace (1,1) \rbrace$. 
	\end{definition}

	The missing boundary point of a perfect fit rectangle $R$ is called the \emph{ideal corner} of $R$.
	More generally, an \emph{ideal corner} of a rectangle $R$ in $(\X,\X^+,\X^-)$ is a point missing from the boundary of $R$ that does not disconnect any side of $R$. Such a rectangle may have additional ideal points in its boundary, either at its corners or in the interior of its sides; see Definition~\ref{def:tet-rectangle}.
	A \emph{material corner} of $R$ is an intersection point of two non-parallel sides of $R$.
	
	
	\begin{definition}
		A pseudo-Anosov flow $\Psi$ does \emph{not have perfect fits} if the orbit space of~$\Psi$ does not contain any perfect fit rectangles. Otherwise, we say that $\Psi$ has perfect fits.
	\end{definition}
	Examples of pseudo-Anosov flows with perfect fits include positively skewed Anosov flows mentioned in Remark \ref{remark:skewed}

	In Subsection \ref{subsec:loom:to:veering} we will outline a construction of a veering triangulation out of (the orbit space of) a transitive pseudo-Anosov flow. This construction is due to  Agol-Gu\`eritaud. Originally, it was described only for pseudo-Anosov flows without perfect fits. 
	However, it can be applied to all pseudo-Anosov flows after appropriately generalizing the `no perfect fits' condition.
	
	\begin{definition}\label{def:perfect:fits:relative} \cite[Definition 5.13]{Tsang-Agol}
		Let $(\orbit, \orbit^+, \orbit^-)$ be the bi-foliated plane associated to a pseudo-Anosov flow $\Psi$ on a three-manifold $N$. Given a collection~$\Lambda$ of closed orbits of~$\Psi$, let $\widetilde{\Lambda}$  be the preimage of $\Lambda$ under the universal covering map $\univN \rightarrow N$. We identify~$\widetilde{\Lambda}$ with a subset of $\orbit$.   We say that $\Psi$ does \emph{not have perfect fits relative to $\Lambda$} if every perfect fit rectangle in $\orbit$ has nonempty intersection with $\widetilde{\Lambda}$. 
	\end{definition}

	In the next subsection, we will use the bi-foliated plane $(\orbit, \orbit^+, \orbit^-)$ associated to a transitive pseudo-Anosov flow $\Psi$ and a suitable collection $\Lambda$ of closed orbits of $\Psi$ to construct another bi-foliated plane $(\loom, \loom^+, \loom^-)$ associated to the pair $(\Psi, \Lambda)$. We want $(\loom, \loom^+, \loom^-)$ to satisfy certain conditions: in particular,  $\loom^\pm$ should be non-singular, and every rectangle in $(\loom, \loom^+, \loom^-)$ should be contained in a rectangle of a particular type; see Definition \ref{def:loom}. To ensure this, we need $\Lambda$ to satisfy a few properties which we enlist in the following definition.
	
	\begin{definition}\label{def:perfect:set}
		Let $\Psi$ be a transitive pseudo-Anosov flow. We say that a collection~$\Lambda$ of closed orbits of $\Psi$ is a \emph{perfect set for $\Psi$}  if $\Lambda$ is finite, nonempty, contains $\Sing(\Psi)$, and $\Psi$ does not have perfect fits relative to $\Lambda$.
	\end{definition}
	
	If $\Psi$ does not have perfect fits in the usual sense, the set $\Sing(\Psi)$ is a perfect set for~$\Psi$. Otherwise, it is always possible to augment $\Sing(\Psi)$ by additional closed orbits of $\Psi$ to obtain a perfect set for $\Psi$. 
	
	\begin{proposition}\cite[Proposition 2.7]{Tsang-Birkhoff} \label{prop:tsang:perfect:set}
		If $\Psi$ is a transitive pseudo-Anosov flow then there exists a closed orbit $\gamma \notin \Sing(\Psi)$ such that $\Sing(\Psi) \cup \lbrace \gamma \rbrace$ is a perfect set for~$\Psi$. \qed
	\end{proposition}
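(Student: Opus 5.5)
The plan is a compactness argument followed by a closing-lemma type construction. First, I would show that perfect fit rectangles cannot be arbitrarily thin everywhere. Fix a finite cover of $N$ by flow boxes $B_1,\dots,B_m$ adapted to $\W^\pm$, away from the singular orbits except for standard prong charts. For each $i$, let $U_i\subset\orbit$ be the rectangle given by projecting one chosen lift $\widetilde B_i$ to $\orbit$, and fix a slightly smaller open subrectangle $U_i'\Subset U_i$. The \textbf{Claim} is: there are such subrectangles $V_1,\dots,V_k$ (finitely many, chosen among the $U_i'$ and shrunk if needed) such that every perfect fit rectangle $P\subset\orbit$ contains a $\pi_1(N)$-translate of some $V_j$.

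To prove the Claim, take a perfect fit rectangle $P$ with material sides $s^+\subset\orbit^+$ and $s^-\subset\orbit^-$ and ideal corner $c$. The two half-leaves bounding $P$ that run towards $c$ do not meet, but every nearby leaf crossing one crosses the other. Hence $P$ contains rectangles of the form $[a,b]\times(\text{sub-interval})$ reaching arbitrarily far along $s^+$. I would pick a point $p\in P$ at positive distance from $\partial P$ and project $\widetilde\gamma(p)$ to $N$; it lies in some $B_i$. The translate $gU_i$ containing $p$ is then either contained in $P$, or is cut by one of the two material sides of $P$. In the second case I would slide $p$ along $P$ towards the ideal corner and use the product structure of $P$, a rectangle trivially bi-foliated in the whole region spanned by $s^+$ and $s^-$, to find a point whose flow-box translate fits inside $P$. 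This step is the Lebesgue-number part of the argument, applied in the compact $N$ to the stable and unstable "widths" of $P$ measured along orbits.

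I expect the Claim to be the main obstacle. The difficulty is that $P$ might be thin in the unstable direction near $s^+$ and thin in the stable direction near $s^-$. What should save us is that, moving towards the ideal corner, both widths stay bounded below in the orbit-space sense, since there are leaves crossing both boundary half-leaves arbitrarily close to $c$. My first attempt would be to flow the relevant segments forward and backward, using the contraction and expansion of part (4) of Definition~\ref{def:pA:flow}, to normalise one of the two widths to a fixed size. Only finitely many such normalised pictures arise up to $\pi_1(N)$, by compactness of $N$. If that fails, I would fall back on a Markov partition for $\Psi$ and replace the $V_j$ by the rectangles of the partition.

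Granting the Claim, choose open sets $W_j\subset N$ that are projections of flow boxes over $V_j$, shrunk so that they avoid $\Sing(\Psi)$. Since $\Psi$ is transitive, there is a dense orbit, and hence an orbit segment visiting every $W_j$. By the Anosov closing lemma (shadowing, valid for pseudo-Anosov flows away from the singular orbits) there is a closed orbit $\gamma$ shadowing this segment, so $\gamma$ meets every $W_j$. Taking the shadowing close enough, $\gamma$ is not one of the finitely many singular orbits. Then every perfect fit rectangle contains a translate of some $V_j$, and that translate contains a point of $\widetilde{\{\gamma\}}$. So $\Psi$ has no perfect fits relative to $\Sing(\Psi)\cup\{\gamma\}$, and this set is finite and nonempty, as Definition~\ref{def:perfect:set} requires.
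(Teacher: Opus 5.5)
Your proof has a genuine gap: everything rests on the Claim, which is never proved. Moreover, the Claim is not a technical lemma on the way to the proposition; it carries essentially all of its content.

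Your second half shows that the Claim, plus a closing lemma, yields a finite perfect set. Conversely, suppose $\mathcal C\supseteq\Sing(\Psi)$ is any finite set of closed orbits with no perfect fits relative to it, and let $P$ be a perfect fit rectangle. Then:
\begin{itemize}
\item $P$ contains some $c_1\in\widetilde{\mathcal C}$;
\item the perfect fit subrectangle of $P$ with material corner $c_1$ contains some $c_2\in\widetilde{\mathcal C}$;
\item among the finitely many points of $\widetilde{\mathcal C}$ in the closed rectangle spanned by $c_1,c_2$, some two span a rectangle containing no further such points. This is the image in $\orbit$ of an edge rectangle of $\loom(\Psi,\mathcal C)$, and there are finitely many of these up to the group action.
\end{itemize}
So proving the Claim from scratch is as hard as producing some finite perfect set.

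Your sketch does not do this. The dichotomy ``$gU_i\subset P$ or $gU_i$ is cut by a material side'' is false. The translate $gU_i$ can also be crossed by the perfect fit half-leaves $F\subset\orbit^+$ or $L\subset\orbit^-$ bounding $P$. Sliding towards the ideal corner moves you away from the material sides, but not away from $F$ and $L$.

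More fundamentally, there is no metric on $\orbit$ in which ``both widths stay bounded below'' means anything. What is (coarsely) invariant under $\pi_1(N)$ and under flowing is the product of the stable and unstable widths of a transversal representing a subrectangle, since flowing expands one width and contracts the other. Normalising one width therefore gains nothing, and compactness of $N$ or a Lebesgue number cannot help. For the suspension of a linear Anosov map, $\pi_1(N)$ acts on $\orbit\cong\rr^2$ by area-preserving affine maps, so a rectangle of small area contains no translate of a fixed box.

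What you actually need is that a perfect fit rectangle contains subrectangles whose modulus is bounded below. In the geodesic flow example this is the degeneration of the cross-ratio near the ideal corner. This is a global property of the flow, and nothing in your sketch supplies it. The Markov partition fallback needs exactly the same input to show that a lifted Markov rectangle lies inside $P$.

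A smaller point: $\Psi$ is only a topological pseudo-Anosov flow (Definition~\ref{def:pA:flow}). So the closing/shadowing lemma has to be justified, e.g.\ via Markov partitions, and the shadowed segment may pass near singular orbits.

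The paper gives no argument of its own here; it only imports the statement from Tsang. One way to repair your approach is to feed in an existing finite set of orbits with no perfect fits relative to it. For example, take the boundary orbits of a Birkhoff section, which exists by \cite{Fried_drill, Brunella_surfaces}, together with $\Sing(\Psi)$. The flow drilled along them is the suspension of the pseudo-Anosov first-return map of a finite type surface. In the universal cover of that surface, non-intersecting stable and unstable leaves can only be asymptotic at punctures, so the only perfect fits are at cusps.

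The argument above then gives your Claim, with the $V_j$ being the images of finitely many representatives of edge rectangles of $\loom(\Psi,\mathcal C)$. Your closing argument, once justified, then produces a single closed orbit through all the $V_j$. That orbit is automatically non-singular, since it passes through the interior of a rectangle.
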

	
	The choice of an additional orbit to be added to the set $\Sing(\Psi)$ to obtain a perfect set is not canonical.

	\subsection{A loom space out of the orbit space and a perfect set}\label{subsec:loom}

	A \emph{loom space} is another type of a bi-foliated plane that was introduced by Schleimer and the second author in \cite{SchleimSegLoom}. 
	The key notion in its definition is that of \emph{tetrahedron rectangles}, defined below. 
	
	\begin{definition} \label{def:tet-rectangle}
		Let $R$ be a rectangle in a bi-foliated surface $(\X, \X^+, \X^-)$ with a rectangle embedding $f_R: (0,1)^2 \rightarrow \X$. We say that $R$ is a \emph{tetrahedron rectangle} if there exists a continuous extension of $f_R$ to a homeomorphism \[\overline{f_R}: \lbrack 0,1\rbrack^2 \bez \lbrace (a,0), (b,1), (0,c), (1,d) \rbrace \rightarrow \overline{R}\] for some $a,b,c,d \in (0,1)$ such that $a\neq b$ and $c\neq d$.
	\end{definition}
	
	Observe that a tetrahedron rectangle in $(\X, \X^+, \X^-)$  cannot be expanded to a larger rectangle along the leaves of $\X^+$ or $\X^-$. Thus every tetrahedron rectangle is  maximal under the inclusion. However, the converse does not generally hold; see Example \ref{ex:skewed:2}.

	Recall from Definition \ref{def:perfect:fit} that if $R$ is a perfect fit rectangle in a bi-foliated space $(\X, \X^+, \X^-)$ then $\overline{R} \bez R$ consists of four leaf segments $s_1, s_2, s_3, s_4$ such that $s_i$ intersects $s_{i+1}$ exactly once for $1\leq i \leq 3$ and $s_1 \cap s_4 = \emptyset$. We call the closures of $s_1$ and $s_4$   the \emph{perfect fit sides} of $R$. If $s$ is a perfect fit side of $R$ then for every $0<\epsilon<1$ the image of $(0, \epsilon)$ under any homeomorphism $h:(0,1\rbrack\rightarrow s$ is called an \emph{initial segment} of $s$. 
	\begin{definition}\cite[Definition 2.11]{SchleimSegLoom} \label{def:loom}
		A \emph{loom space} is a plane $\X$ equipped with a pair of transverse non-singular foliations $\X^+$, $\X^-$ satisfying two conditions.
		\begin{enumerate}
			\item For every perfect fit rectangle $R$ in $(\X, \X^+, \X^-)$ and every perfect fit side $s$ of~$R$ some initial segment of $s$ is contained in some rectangle.		
			\item Every rectangle in $(\X, \X^+, \X^-)$ is contained in a tetrahedron rectangle.
		\end{enumerate}
	\end{definition}


	First, we discuss two examples of bi-foliated planes which are not loom spaces.
	
	\begin{example}\label{ex:skewed:2}
		The triple $(B, B^+, B^-)$ discussed in Example \ref{ex:skewed} is a plane foliated by a pair of transverse non-singular foliations. However, it fails both conditions of a loom space. If $c$ is the ideal corner of a perfect fit rectangle $R$ in $(B, B^+, B^-)$ then every open neighborhood of $c$ contains an ideal corner of a different perfect fit rectangle $R'$ such that neither $R\subset R'$ nor $R' \subset R$. Thus no initial segment of a perfect fit side of $R$ is contained in a rectangle. Furthermore, there are no tetrahedron rectangles in $(B, B^+, B^-)$. 
	\end{example}
	
	\begin{example}\label{ex:orbit:suspension}
		Let $\F_v, \F_h$ be the vertical and the horizontal foliation in $\rr^2$, respectively. Then there are no perfect fits in $(\rr^2, \F_v, \F_h)$. Consequently, $(\rr^2, \F_v, \F_h)$ satisfies condition (1) of a loom space. However, there are no tetrahedron rectangles in $(\rr^2, \F_v, \F_h)$. Hence $(\rr^2, \F_v, \F_h)$ is not a loom space.
	\end{example}
	
	Let $\Psi$ be a transitive pseudo-Anosov flow on an oriented three-manifold $N$. Let~$\Lambda$ be a perfect set for $\Psi$,  and let $\widetilde{\Lambda}$ be the preimage of~$\Lambda$ in the universal cover of $N$. Viewing $\widetilde{\Lambda}$ as a subset of the orbit space $\orbit = \orbit(\Psi)$ of~$\Psi$ allows us to define  $\drilledOrbit = \drilledOrbit(\Psi, \Lambda) = \orbit \bez \widetilde{\Lambda}$.  Each point of $\widetilde{\Lambda}$ gives precisely one \emph{puncture} of $\drilledOrbit$. By Proposition~\ref{prop:orbit:space} \eqref{orbit_space:no:accumulation:points}, $\widetilde{\Lambda}$ does not have any accumulation points in~$\orbit$. Thus each puncture of $\drilledOrbit$ has a neighborhood homeomorphic to a once punctured disk. 

	Let $\loom = \loom(\Psi, \Lambda)$ be the universal cover of~$\drilledOrbit$. It is a non-compact  simply connected \mbox{two-manifold} without boundary, and thus it is homeomorphic to a plane \cite[Corollary~1.8]{Epstein-2manifolds}.  
	A \emph{cusp neighborhood} in $\loom$ is an open subset of $\loom$ whose closure is homeomorphic to the universal cover of a closed two-dimensional disk with the center removed.
	The interior of each cusp neighborhood can be parametrized by \mbox{$\lbrace (r,\theta) \ | \ 0<r<1 , \ \theta \in \rr\rbrace$}. The ideal point approached by $(r, \theta)$ as $r \to 0^+$ is called a \emph{cusp} of $\loom$. 	The distinction between a puncture and a cusp will become important in Section~\ref{sec:parents}, where we consider a bi-foliated surface that has both punctures and cusps. 

	Let $\loom^+$, $\loom^-$  be the preimages of $\drilledOrbit^+, \drilledOrbit^-$, respectively, under $\loom \rightarrow \mathcal{PO}$. Since $\Lambda$ contains all singular orbits of $\Psi$, the foliations $\loom^\pm$ are non-singular. 
	Furthermore, every leaf $\ell$ of $\loom^\pm$ is an elevation of a connected component of $\ell' \bez \widetilde{\Lambda}$ for some leaf $\ell'$ of $\orbit^\pm$ and is thus homeomorphic to $\rr$. We distinguish two types of leaves, depending on whether $\ell'$ is disjoint from $\widetilde{\Lambda}$ or not.
	
	\begin{definition}\label{def:cusp:leaf}
		Let $\ell$ be a leaf of $\loom^\pm$. Let $\ell'$ be the leaf of $\orbit^\pm$ such that $\ell$ is an elevation of a connected component of $\ell' \bez \widetilde{\Lambda}$.  We say that $\ell$ is a \emph{regular leaf} of $\loom$ if $\ell'$ is disjoint from $\widetilde{\Lambda}$. Otherwise, we say that $\ell$ is a \emph{cusp leaf} of $\loom^\pm$.  
	\end{definition}

	By Proposition \ref{prop:orbit:space} \eqref{orbit_space:one:periodic}, each leaf of $\orbit^\pm$ intersects $\widetilde{\Lambda}$ at most once. Thus there is exactly one cusp $c(\ell)$ associated to each cusp leaf $\ell$ of $\loom^\pm$. An \emph{initial segment} of $\ell$ is the image of $(-\infty, s)$, for any $s \in \rr$, under any homeomorphism $h:\rr \rightarrow \ell$ such that $h(t)$ approaches $c(\ell)$ as $t \rightarrow -\infty$.
	We also say that $\ell$ \emph{originates from} $c(\ell)$ or that $\ell$ is a $c(\ell)$-leaf. This terminology corresponds to viewing cusp leaves as oriented away from the cusp (which need not agree with the orientation inherited from the chosen orientation of the foliation).

	\begin{definition}\label{def:orthant}
		Let $c$ be a cusp of $\loom$. We say that a subset $O_c \subset \loom$ is an \emph{orthant} at $c$ if 
		\begin{itemize}
			\item $O_c$ is a connected component of $\loom \bez (\ell^+ \cup \ell^-)$ for some stable $c$-leaf $\ell^+$ and some unstable $c$-leaf $\ell^-$.
			\item $O_c$ does not contain any $c$-leaves of $\loom^\pm$. 
		\end{itemize}
		We call $\ell^+$ and  $\ell^-$  the \emph{stable} and the \emph{unstable boundary} of $O_c$, respectively. We also say that they \emph{cobound} $O_c$.
	\end{definition}
	For a material point $x \in \loom$, we define an orthant $O_x$ at $x$ analogously, except that \mbox{$c$-leaves} from the above definition are replaced by half-leaves with endpoint $x$. Since there are exactly four orthants at $x$, we also call $O_x$ a \emph{quadrant} at $x$.

	Around the cusps of $\loom$ there are always perfect fits rectangles whose perfect fit sides are  initial segments of some cusp leaves of $\loom^\pm$. We call such rectangles \emph{cusp rectangles}.
	
	\begin{definition}
		We say that a perfect fit rectangle $R$ in $(\loom, \loom^+, \loom^-)$ is a \emph{cusp rectangle} if one perfect fit side of $R$ is an initial segment of some cusp leaf of $\loom^+$ and the other perfect fit side of $R$ is an initial segment of some cusp leaf of $\loom^-$. 
	\end{definition}
	
	Now we will prove that all perfect fit rectangles in $(\loom, \loom^+, \loom^-)$ are cusp rectangles.

	\begin{lemma}\label{lem:only:fake:pf}
		Every perfect fit rectangle in $(\loom, \loom^+, \loom^-)$ is a cusp rectangle.
	\end{lemma}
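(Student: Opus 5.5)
Let $R$ be a perfect fit rectangle in $(\loom, \loom^+, \loom^-)$, with perfect fit sides $s^+\subset\ell^+$ (a leaf of $\loom^+$) and $s^-\subset\ell^-$ (a leaf of $\loom^-$), and ideal corner $c$. We must show that $\ell^+$ and $\ell^-$ are cusp leaves and that $s^\pm$ are initial segments of them, both originating from the same cusp $c$. The plan is to push $R$ down to the orbit space and use the hypothesis that $\Psi$ has no perfect fits relative to $\Lambda$.

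First, $R$ is simply connected and $\loom\to\drilledOrbit$ is a covering, so the covering map restricted to $\overline{R}$ is injective. Its image is a perfect fit rectangle $R'$ in $\drilledOrbit$, hence also a rectangle in $\orbit$ that misses $\widetilde{\Lambda}$; its closure in $\drilledOrbit$ is a copy of $\overline{R}$. Now pass to the closure $\overline{R'}^{\orbit}$ in $\orbit$, and consider the missing corner of $R'$.

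\textbf{Case A.} The two perfect fit sides of $R'$, traced toward the missing corner, converge in $\orbit$ to a point $p$. Then $p\notin\drilledOrbit$, so $p\in\widetilde{\Lambda}$. Since $p$ lies on both leaves of $\orbit^\pm$ containing the images of $s^\pm$, the images of $s^\pm$ lie in half-leaves of $\orbit^\pm$ emanating from $p$. Hence $\ell^\pm$ are cusp leaves, and $s^\pm$ are their initial segments at the cusp of $\loom$ lying over $p$. Here we use that $R'$ sits in a single sector at $p$: near a puncture the covering is the universal cover of a punctured disk, so approaching $p$ inside $R'$ corresponds to approaching a single cusp of $\loom$.

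\textbf{Case B.} The sides do not converge to a point of $\widetilde{\Lambda}$. Then $R'$, viewed in $\orbit$, is a genuine perfect fit rectangle of $\orbit$ that is disjoint from $\widetilde{\Lambda}$. This contradicts Definition~\ref{def:perfect:set}, since $\Psi$ has no perfect fits relative to $\Lambda$.

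The main obstacle is justifying this dichotomy rigorously. A perfect fit rectangle of $\drilledOrbit$ need not be one of $\orbit$: its closure in $\orbit$ might pick up extra points, such as a puncture lying in the interior of a side or at a material corner, rather than only at the ideal corner. I would handle this by examining $\overline{R'}^{\orbit}\setminus\overline{R'}^{\drilledOrbit}$, which consists of points of $\widetilde{\Lambda}$ only.

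1. This set is discrete by Proposition~\ref{prop:orbit:space}\eqref{orbit_space:no:accumulation:points}.
2. Any such point lies on the closure of one of the four boundary sides.
3. Points of $\widetilde{\Lambda}$ cannot lie on the material sides, since those sides are closed in $\drilledOrbit$ and their closures in $\orbit$ can only add their endpoints.
4. Hence the only possible added point is the ideal corner, because the perfect fit sides are half-open toward it.

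If that corner is added, we are in Case A. If it is not, the closure in $\orbit$ is still homeomorphic to $[0,1]^2$ minus a corner, so $R'$ is a perfect fit rectangle of $\orbit$ and we are in Case B. One must also check that in Case A the point $p$ is the limit along both sides; this holds because both sides accumulate to the unique added corner point.

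Singular orbits cause no trouble, since they lie in $\Lambda$ and $\drilledOrbit$ is nonsingular. So $R$ is a cusp rectangle.
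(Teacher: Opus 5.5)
This is the same argument as the paper's: push $R$ down to $\orbit$, and use that $\Psi$ has no perfect fits relative to $\Lambda$ to rule out the image being a perfect fit rectangle of $\orbit$. It follows that the ideal corner is a point of $\widetilde{\Lambda}$, so the perfect fit sides lift to initial segments of cusp leaves; your closure analysis spells out what the paper asserts in one line, and like the paper it asserts rather than proves that both perfect fit sides converge to the added corner point.

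One fix: injectivity of the covering on $\overline{R}$ does not follow from simple connectivity alone. It follows from the bifoliated structure: any two points of $\overline{R}$ are joined inside $\overline{R}$ by a path along one stable and then one unstable leaf, and its image in $\orbit$ cannot close up, because leaves of $\orbit^\pm$ are lines and a stable leaf meets an unstable leaf at most once; this is the same argument as in the proof of Lemma~\ref{lem:ploom}.
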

	
	\begin{proof}
		Let $\pi_\Lambda$ be the composition of the covering projection $\loom \rightarrow \drilledOrbit$ and the natural inclusion $\drilledOrbit \hookrightarrow \orbit$. Let $R$ be a perfect fit rectangle in $\loom$. Then $R$  does not contain any cusps of~$\loom$. Thus $\pi_\Lambda(R)$ is disjoint from $\widetilde{\Lambda}$, and hence, by Definition \ref{def:perfect:set}, is not a perfect fit rectangle in $\orbit$.
		But since $R$ is a perfect fit rectangle, exactly one corner of~$\pi_\Lambda(R)$ must belong to $\widetilde{\Lambda}$.
		Therefore the perfect fit sides of $R$ are contained in cusp leaves of~$\loom^\pm$. This means that $R$ is a cusp rectangle.
	\end{proof}
	
	The biggest difference between cusp rectangles and other perfect fit rectangles is that the perfect fits sides of the former can be `thickened up' to a rectangle; see Lemma \ref{lem:condition:1} below. Compare this with perfect fit rectangles from Example \ref{ex:skewed}, for which this is not possible. 

	\begin{lemma}\label{lem:condition:1} 
		The interior of every perfect fit side of every perfect fit rectangle in $(\loom, \loom^+, \loom^-)$ is contained in some rectangle. 
	\end{lemma}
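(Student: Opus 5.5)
By Lemma~\ref{lem:only:fake:pf}, every perfect fit rectangle $R$ in $(\loom, \loom^+, \loom^-)$ is a cusp rectangle. So its two perfect fit sides are initial segments $s^+ \subset \ell^+$ and $s^- \subset \ell^-$ of a stable and an unstable cusp leaf originating from the same cusp $c$, the ideal corner of $R$. The plan is to push everything down to the orbit space $\orbit$ via $\pi_\Lambda$. There the cusp $c$ becomes a point $p \in \widetilde{\Lambda}$, the images of $\ell^\pm$ are adjacent prongs (half-leaves) of $\orbit^\pm$ at $p$, and $\pi_\Lambda(R)$ is a rectangle in a quadrant (sector) at $p$ bounded by these prongs. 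I will build the required rectangle downstairs inside the complementary sector on the other side of the prong, and then lift it back to $\loom$. By symmetry it suffices to treat $s^+$.

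\textbf{Step 1.} Fix the stable prong $h^+$ at $p$ containing $\pi_\Lambda(s^+)$. Let $S$ be the sector at $p$ adjacent to $h^+$ on the side opposite to $\pi_\Lambda(R)$; $S$ is bounded by $h^+$ and another unstable prong $h^-_1$. Since the foliations near a $k$-prong or regular point are locally a product on each sector, a small product neighborhood of $p$ gives a rectangle $U$ whose closure meets $h^+$ in an initial segment $[p,q)$. Moreover $U$ straddles $h^+$: it contains points on both sides of $h^+$, i.e.\ it is a union of a piece in $S$, the open segment $(p,q)$ minus $p$, and a piece in the sector of $\pi_\Lambda(R)$. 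Concretely, take the region between the unstable prong $h^-_1$ and the unstable prong bounding $\pi_\Lambda(R)$, near $p$ and away from $p$. This is the standard local model of Definition~\ref{def:1D:prong} glued along $h^+$.

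\textbf{Step 2.} Choose the rectangle small enough that it is disjoint from $\widetilde{\Lambda}$. This is possible since $\widetilde{\Lambda}$ is closed and discrete by Proposition~\ref{prop:orbit:space}~\eqref{orbit_space:no:accumulation:points}, and $p$ is the only point of $\widetilde{\Lambda}$ on its leaves by part~\eqref{orbit_space:one:periodic}. Because the rectangle is simply connected and avoids $\widetilde{\Lambda}$, it lifts homeomorphically to $\loom$. I take the lift containing an initial segment of $s^+$ (the lift agreeing with $R$ near the cusp $c$). This gives a rectangle in $\loom$ containing an initial segment of $s^+$ in its interior.

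\textbf{Step 3 (main obstacle).} The statement asks for the whole interior of the perfect fit side, not just an initial segment, so the rectangle must be elongated along $s^+$ up to its far endpoint, where $s^+$ meets the adjacent unstable side of $R$. Here I will use that the far end of $s^+$ is a genuine material point $z$ of $\ell^+$, so the segment $\pi_\Lambda(\overline{s^+})\setminus\{p\}$ is a compact-away-from-$p$ arc in the nonsingular part. First cover this arc by finitely many small product charts disjoint from $\widetilde{\Lambda}$, using compactness of the arc from $q$ to $z$. Then take the minimum transverse width among these charts and the Step~1 rectangle. Sweeping unstable leaves of that width along the arc should give a thin rectangle containing the open arc, using the holonomy-free product structure along a compact leaf segment. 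Finally, intersect with the thin side on the $R$ side as well, and shrink the width so the result is simultaneously thin enough near $p$ (matching Step~1) and avoids $\widetilde{\Lambda}$. Lifting as in Step~2 yields a rectangle in $\loom$ containing the interior of $s^+$. The delicate point is the compatibility near $p$: the width must be controlled uniformly as one approaches the cusp. I expect the cleanest way to get this is to take the Step~1 rectangle first and extend it along the compact remainder, rather than attempting a single chart.
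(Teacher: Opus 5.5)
Your proposal is correct in outline, but it takes a different route from the paper.

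\textbf{The paper's route.} The paper never leaves $\loom$. It takes the two unstable cusp leaves at $c$ that cobound orthants with the stable cusp leaf containing $s^+$, one on each side. It then uses that cusps of $\loom$ cannot accumulate on $s^+$, which comes from discreteness of $\widetilde{\Lambda}$. From this it picks points on these two unstable leaves, close to $c$, whose stable leaves run all the way to the unstable leaf through the material endpoint $z$ of $s^+$. These five leaves bound a rectangle with $c$ in the interior of one side, containing $\mathrm{int}(s^+)$.

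\textbf{Your route.} You produce the same shape of rectangle by projecting to $\orbit$. There the noncompact end of $s^+$ at the cusp becomes the honest point $p$, so $\pi_\Lambda(s^+)\cup\{p\}$ is a compact arc. Thickening is then purely local: the prong model at $p$, plus product charts along the rest. Discreteness of $\widetilde{\Lambda}$ is used only to shrink the thickening off $\widetilde{\Lambda}$ so that it lifts.

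\textbf{What each route buys.} Your version makes explicit why the obstruction of Example~\ref{ex:skewed} cannot occur here. That obstruction is ideal points accumulating at the open end of a perfect fit side, and the paper compresses its absence into a single ``it follows that''. Your version also shows that Steps~1--2 alone already give condition~(1) of Definition~\ref{def:loom}, which is what the lemma is used for in Proposition~\ref{prop:loom:space}. The paper's argument is shorter and names the boundary leaves of the rectangle directly.

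\textbf{Remarks on Step~3.} The ``delicate point'' you flag is not actually delicate. In the half-plane model, the rectangle $U=(-\epsilon,\epsilon)\times(0,\epsilon)$ already has constant transverse width all the way down to $p$. So you never need uniform control approaching the cusp. You only need to glue to $U$ a thin product neighborhood of the compact arc from an interior point $q'\in(p,q)$ to $z$, shrinking widths so that the union is embedded. For embeddedness, use that a leaf of $\orbit^+$ meets the rectangle $U$ in at most one plaque, or simply shrink further. Your extra step of intersecting ``with the thin side on the $R$ side'' is unnecessary, since $U$ already straddles $h^+$. With these clarifications the argument is complete.
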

	\begin{proof}	
		Let $R$ be a perfect fit rectangle in $(\loom, \loom^+, \loom^-)$ and let $s$ be a perfect fit side of~$R$. By Lemma \ref{lem:only:fake:pf}, the ideal corner $c$ of $R$ is a cusp  of $\loom$. 
		Let $\ell$ be the cusp leaf that contains~$s$. Without loss of generality, assume that $\ell$ is a leaf of $\loom^+$. Let $\ell^-_1$, $\ell^-_2$ be the cusp leaves of $\loom^-$ that cobound an orthant at $c$ with $\ell$. 
		By Proposition \ref{prop:orbit:space} \eqref{orbit_space:no:accumulation:points},  $\widetilde{\Lambda}$ does not have any accumulation points in $\orbit$. Thus there is no infinite sequence of distinct cusps of $\loom$ that accumulate on $s$. It follows that for $i=1,2$ there must be $x_i \in \ell_i^-$ such that the leaf $\ell_i^+$ of $\loom^+$ passing through $x_i$ intersects the leaf $\ell^-$ of $\loom^-$ passing through the endpoint of $s$. Then $\ell_1^-, \ell_2^-, \ell_1^+, \ell_2^-$ and $\ell^-$ cobound a rectangle $R'$ which contains the interior of $s$.
	\end{proof}

	Set $M = M(\Lambda) = N \bez \Lambda$. Observe that  $(\loom, \loom^+, \loom^-)$ is the orbit space of the drilled flow $\Psi \bez \Lambda$ on $M$.
	Thus, as in Proposition~\ref{prop:orbit:space}~\eqref{orbit_space:group_action}, we get an orientation-preserving action of $\pi_1(M)$ on $\loom$.  We call it the \emph{$(\Psi, \Lambda)$-action} of $\pi_1(M)$ on $(\loom, \loom^+, \loom^-)$.


	Since $\loom$ is simply connected and foliations $\loom^\pm$ are non-singular, they must be orientable. From now on we assume that they are oriented. Equip each rectangle~$R$ in $(\loom, \loom^+, \loom^-)$ with cardinal directions as in Convention \ref{convention:cardinal:directions}. We say that  $R$ has a cusp in its side $s$ if $s$ is disconnected or $s$ is an initial segment of some cusp leaf of $\loom^\pm$. If $R$ has a cusp in any of its sides we say that $R$ has a cusp in its boundary. 
	In Lemma \ref{lem:expansions}, we describe the structure of the maximal western expansion of $R$. 

	\begin{lemma}\label{lem:expansions}
		Every rectangle $R$  in $(\loom, \loom^+, \loom^-)$ admits a
		maximal western expansion~$R'$ that has at most one cusp in each side, and exactly one cusp in the interior of the west  side. Furthermore, no two cusps in the boundary of $R'$ are connected by a leaf of $\loom^\pm$. Analogous statements hold for eastern, southern and northern expansions of~$R$.
	\end{lemma}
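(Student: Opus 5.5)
We construct the maximal western expansion directly by sweeping westward from $R$. Fix a rectangle embedding $f_R$ with cardinal directions as in Convention~\ref{convention:cardinal:directions}. Consider the set $\mathcal{E}$ of all western expansions of $R$; it is nonempty, since it contains $R$. It is totally ordered by inclusion: any two western expansions share the east side $\partial_E R$, and each is the union of the unstable leaf segments through $\partial_E R$ extended westward to some stable leaf. So $R'=\bigcup \mathcal{E}$ is again a rectangle. Concretely, for each point $p\in\partial_E R$ (or rather each horizontal leaf through $R$), we push west along the unstable leaf until the resulting family of horizontal segments stops being trivially foliated. The union $R'$ is then the maximal western expansion, and it is open and foliated as a product.

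Next we analyse the west side $\partial_W R'$. By maximality, $\partial_W R'$ is contained in $\loom^+$ but cannot be pushed further west. We claim this forces a cusp in the interior of $\partial_W R'$. Suppose $\partial_W R'$ were a closed segment of a single stable leaf $\ell$ with material endpoints and no cusp. Since $\loom^\pm$ are non-singular and transverse, a compact segment of $\ell$ has a product neighbourhood, so we could expand further west. The same argument applies if $\partial_W R'$ is an initial segment of a cusp leaf whose cusp sits at a corner: using Lemma~\ref{lem:condition:1} together with Lemma~\ref{lem:only:fake:pf}, a corner cusp (perfect fit) still lets us thicken westward. Hence $\partial_W R'$ must be disconnected, and the obstruction is a cusp $c$ in its interior, with the two stable $c$-leaves forming the two pieces.

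To see that there is exactly one such cusp, use Proposition~\ref{prop:orbit:space}~\eqref{orbit_space:no:accumulation:points} (cusps do not accumulate) and Proposition~\ref{prop:orbit:space}~\eqref{orbit_space:one:periodic} (each leaf of $\orbit^\pm$ contains at most one point of $\widetilde\Lambda$). Two cusps in $\partial_W R'$ would lie on a common stable leaf of $\orbit$ after projecting by $\pi_\Lambda$, which is impossible. The same projection argument shows that each side has at most one cusp. It also shows that no leaf of $\loom^\pm$ connects two cusps of $\partial R'$, since such a leaf would project to a leaf of $\orbit^\pm$ containing two points of $\widetilde\Lambda$. Some care is needed because $\pi_\Lambda$ restricted to $\overline{R'}$ must be injective on the relevant boundary leaves, so that two distinct cusps map to two distinct points of $\widetilde\Lambda$. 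This holds because $R'$ is simply connected, contains no cusps, and its closure embeds under the covering into $\drilledOrbit$ away from the punctures.

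The main obstacle is the existence step: showing that the westward sweep halts at a genuine cusp in the interior of the west side, rather than the sides running off to infinity or halting at a "fake" obstruction such as the skewed configuration of Example~\ref{ex:skewed}. We would argue this using transitivity. By Proposition~\ref{prop:orbit:space}~\eqref{orbit_space:dense:leaves} and density of $\widetilde\Lambda$-translates (since $\Lambda$ is nonempty and orbits of half-leaves are dense), any sufficiently wide western expansion must contain a point of $\widetilde\Lambda$, which is not allowed. Thus $R'$ is bounded to the west. The possibility that the halt is caused by a perfect fit is excluded by Lemma~\ref{lem:only:fake:pf} and Lemma~\ref{lem:condition:1}: every perfect fit is a cusp rectangle whose sides can be thickened, so it does not obstruct expansion. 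The statements for eastern, southern and northern expansions follow by symmetry.
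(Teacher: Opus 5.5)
Your proposal follows essentially the paper's route: a westward sweep that cannot continue indefinitely because half-leaf orbits are dense, an obstruction that must be a cusp in the interior of the west side, and Proposition~\ref{prop:orbit:space}~\eqref{orbit_space:one:periodic} for at most one cusp per side and no cusp-to-cusp leaves. For the cusp obstruction you give a more explicit case analysis via flow boxes and Lemmas~\ref{lem:only:fake:pf} and~\ref{lem:condition:1}, where the paper only cites non-accumulation of cusps. The boundedness step needs rewording: $\widetilde\Lambda$ is closed and discrete, not dense, and rectangles in $\loom$ never contain points of $\widetilde\Lambda$. Argue instead, as the paper does, by applying Proposition~\ref{prop:orbit:space}~\eqref{orbit_space:dense:leaves} to the western unstable half-leaf through a periodic point of the swept region (whose leaf is regular); this produces translates of cusps arbitrarily close to that half-leaf, which an indefinitely continued sweep would have to meet.
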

	\begin{proof}

		We prove the statement concerning the maximal western expansion of $R'$. The remaining statements can be proved analogously.
		
		Let $s$ be the west side of $R$. If $s$ is disconnected, then $R' = R$ is a maximal western expansion of $R$.
		Otherwise, let $P$ be the union of all segments of leaves of~$\loom^+$ that can be obtained by sliding $s$ to the west along the leaves of $\loom^-$ while keeping it transverse to $\loom^-$. 
		Since periodic points are dense in $\orbit$  by Proposition~\ref{prop:orbit:space}~\eqref{orbit_space:periodic:points:dense}, and every periodic point in $\orbit \bez \widetilde{\Lambda}$ lifts to a periodic point in $\loom$, we may choose  $x \in \mathrm{int}(P)$ which is periodic. 
		By Proposition~\ref{prop:orbit:space} \eqref{orbit_space:one:periodic}, the leaf $\ell^-(x)$ of $\loom^-$ that passes through $x$ must be regular. 
		Let $\ell_W^-(x)$ be the western half-leaf of $\ell^-(x)$ starting at $x$. By Proposition \ref{prop:orbit:space} \eqref{orbit_space:dense:leaves},  for any cusp~$c$ of~$\loom$ and for any neighborhood~$U$ of $c$ there is $g \in \pi_1(M)$ such that $g(\ell_W^-(x))\cap U\neq \emptyset$. Therefore arbitrarily close to the half-leaf $\ell_W^-(x)$ there is a translate of $c$.
		This implies that $R$ cannot be expanded to the west indefinitely and thus~$P$ is bounded. The maximal western expansion $R'$ of $R$ is then the interior of the closure of $R\cup P$.
		
		By Proposition \ref{prop:orbit:space} \eqref{orbit_space:no:accumulation:points}, the cusps of $\loom$ do not accumulate on any point or cusp of~$\loom$. Therefore the west side of $R'$ must be disconnected. Proposition \ref{prop:orbit:space} \eqref{orbit_space:one:periodic} implies that no leaf of $\loom^\pm$ connects two cusps of $\loom$. 
		Thus each side of  $R'$ has at most one cusp and the cusps in the sides of $R'$ are not connected by a leaf of $\loom^\pm$.
	\end{proof}

	Finally, we use Lemmas \ref{lem:condition:1} and \ref{lem:expansions} to prove that $(\loom, \loom^+, \loom^-)$ is a loom space.

	\begin{proposition}\label{prop:loom:space}
		Let $\Lambda$ be a perfect set  for a transitive pseudo-Anosov flow~$\Psi$. Let $(\loom, \loom^+, \loom^-)$ be the universal cover of the orbit space $(\orbit, \orbit^+, \orbit^-)$ of $\Psi$ punctured at~$\widetilde{\Lambda}$. 
		Then   $(\loom, \loom^+, \loom^-)$ is a loom space.
	\end{proposition}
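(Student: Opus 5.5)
We need to verify the ingredients of Definition~\ref{def:loom} one at a time. First, $\loom$ is a plane and $\loom^\pm$ are transverse non-singular foliations. This was already observed before the statement. Since $\Lambda \supseteq \Sing(\Psi)$, puncturing at $\widetilde{\Lambda}$ removes every singularity. Transversality is inherited from $\orbit^\pm$ away from singular points, and $\loom$ is a plane by \cite[Corollary~1.8]{Epstein-2manifolds}.

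Condition (1) is essentially Lemma~\ref{lem:condition:1}. Let $R$ be a perfect fit rectangle and $s$ a perfect fit side. The lemma gives a rectangle containing the interior of $s$. An initial segment of $s$ is contained in that interior, so it lies in a rectangle. (By Lemma~\ref{lem:only:fake:pf}, $s$ is an initial segment of a cusp leaf; the relevant end is the one near the ideal corner.)

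Condition (2) is the main work, and we plan to use Lemma~\ref{lem:expansions} repeatedly. Let $R$ be an arbitrary rectangle. Take its maximal western expansion $R_1$; it has exactly one cusp in the interior of its west side. Then take the maximal eastern expansion $R_2$ of $R_1$; it acquires a cusp in the interior of its east side. We must check that the west cusp survives this step. An eastern expansion keeps the west side fixed, so the cusp in the interior of $\partial_W R_1$ stays in the boundary. Since $R_2$ cannot be expanded further horizontally, its vertical sides each contain a cusp.

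Next we take the maximal southern expansion $R_3$ of $R_2$, and then the maximal northern expansion $R_4$ of $R_3$. The delicate point is that vertical expansion stretches the vertical sides, and we must check that the two horizontal cusps remain on the sides and in their interiors. If a side-cusp reached a corner, the cusp leaf and a horizontal side would connect two boundary points. This is excluded because each side has at most one cusp and no two boundary cusps are joined by a leaf (Lemma~\ref{lem:expansions}). The west cusp $c_W$ splits $\partial_W R_2$ into two leaf segments meeting at $c_W$. A southern expansion replaces $\partial_W R_2$ by a segment containing it, and the disconnection at $c_W$ persists. So $R_4$ has a cusp in the interior of each of its four sides.

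We then need to verify that $R_4$ matches Definition~\ref{def:tet-rectangle}: its closure must be $[0,1]^2$ minus four points, one in the interior of each side, with $a\neq b$ and $c\neq d$. The inequalities $a\neq b$ and $c\neq d$ follow from Proposition~\ref{prop:orbit:space}~\eqref{orbit_space:one:periodic}, since opposite cusps lying on a common leaf would put two periodic points on one leaf of $\orbit^\pm$. We also need the closure to have no further ideal points, in particular no perfect-fit ideal corners. Each side has at most one cusp, and cusps do not accumulate (Proposition~\ref{prop:orbit:space}~\eqref{orbit_space:no:accumulation:points}). A corner missing from $\overline{R_4}$ would be the ideal corner of a perfect fit subrectangle. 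By Lemma~\ref{lem:only:fake:pf} that corner is a cusp whose leaves are initial segments of two sides, which contradicts the cusps lying in the interiors of the sides.

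The main obstacle is this bookkeeping: showing that successive maximal expansions preserve the previously found side-cusps in the interiors of sides, and that the corners are material. We intend to argue it via the $\orbit$-picture, using the projection $\pi_\Lambda$ from Lemma~\ref{lem:only:fake:pf} and the discreteness of $\widetilde{\Lambda}$.
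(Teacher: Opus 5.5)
Your proposal is correct and follows the paper's proof. The plane and non-singularity facts are immediate, condition~(1) is Lemma~\ref{lem:condition:1}, and condition~(2) comes from composing the four maximal expansions of Lemma~\ref{lem:expansions}; the paper takes $N(E(S(W(R))))$, i.e.\ west, south, east, north, and your order west, east, south, north works equally well. The bookkeeping you flag as the main obstacle is already covered by Lemma~\ref{lem:expansions} (at most one cusp per side, none joined by a leaf) together with Lemma~\ref{lem:only:fake:pf} for the corners, so you do not need a further argument via the $\orbit$-picture.
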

	\begin{proof}
		The space $\loom$ is the universal cover of an open disk $\orbit$ punctured at a closed and discrete set of points. Thus $\loom$ is a non-compact simply connected two-manifold without boundary. Hence it is homeomorphic to a plane \cite[Corollary~1.8]{Epstein-2manifolds}.  Since $\Sing(\Psi) \subset \Lambda$, the foliations $\loom^\pm$ are non-singular. 
		
		The fact that $(\loom, \loom^+, \loom^-)$ satisfies condition (1) of a loom space (Definition~\ref{def:loom}) follows from 
		Lemma \ref{lem:condition:1}.
		
		For a rectangle $R$ in  $(\loom, \loom^+, \loom^-)$, let $W(R)$, $E(R)$, $S(R)$, $N(R)$ be the maximal western, eastern, southern, and northern expansion of $R$, respectively. By Lemma~\ref{lem:expansions}, the rectangle $N(E(S(W(R))))$ is a tetrahedron rectangle that contains $R$. Thus 	$(\loom, \loom^+, \loom^-)$ satisfies condition (2) of Definition \ref{def:loom}.
	\end{proof}
	
	We call the loom space  determined by a transitive pseudo-Anosov flow~$\Psi$ and a perfect set $\Lambda$ for $\Psi$ the \emph{$(\Psi,\Lambda)$-loom space}.

	In \cite[Theorem 4.8]{Fenley_good_geometry} Fenley proved that if $\Psi$ does not have perfect fits then the stabilizers of distinct points of $\orbit$ under the $\Psi$-action of $\pi_1(N)$ intersect trivially. Below we use an analogous argument to establish the corresponding statement for the loom space~$\loom$. More precisely, we prove it for the compact space $\completedLoom$, obtained from $\loom$ by filling in each cusp with a single point; the double overline is used since a single overline denotes the closure of a set within $\loom$, as in Definition \ref{defn:rectangle}.  The non-singular foliations $\loom^\pm$ of $\loom$ extend to foliations $\completedLoomSU$ of $\completedLoom$, where each added point becomes a singularity with infinitely many prongs.  This gives leaves that are not homeomorphic to $\rr$.
	
	The action of $\pi_1(M)$ on $\completedLoom$ is the natural extension of the $(\Psi,\Lambda)$-action on $\loom$, since every element of $\pi_1(M)$ permutes the cusp neighborhoods of $\loom$.
	This action and the $\Psi$-action of $\pi_1(N)$ on $\orbit$ are equivariant with respect to the  branched cover $\completedLoom \rightarrow \orbit$ and the epimorphism $\pi_1(M) \rightarrow \pi_1(N)$ induced by Dehn filling $M$ to $N$. Thus all properties listed in Proposition \ref{prop:orbit:space} \eqref{orbit_space:group_action} hold also  for $\pi_1(N)\curvearrowright \completedLoom$.

	\begin{lemma}\label{lem:loom:stabilizers}
		The stabilizers of distinct points of $\completedLoom$  under the $(\Psi, \Lambda)$-action of $\pi_1(M)$ intersect trivially.
	\end{lemma}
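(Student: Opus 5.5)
We argue by contradiction, following Fenley's proof of \cite[Theorem 4.8]{Fenley_good_geometry}. Suppose $x \neq y$ are points of $\completedLoom$ and $g \neq 1$ lies in $\Stab(x) \cap \Stab(y)$. The plan is to produce a perfect fit rectangle in $\loom$ that is not a cusp rectangle, or two cusps joined by a leaf. Either outcome contradicts Lemma~\ref{lem:only:fake:pf} and Lemma~\ref{lem:expansions}.

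First we reduce to a convenient power of $g$. Every fixed point of a nontrivial element is periodic, because by Proposition~\ref{prop:orbit:space}\eqref{orbit_space:fixed:points} (transferred to $\completedLoom$ as explained above) stabilizers are cyclic. The prongs at $x$ are permuted by $g$; at a cusp this means the $c$-leaves, on which $\Stab(c)$ acts as a translation of the cyclic order, or trivially. Replacing $g$ by a suitable power, we may assume $g$ fixes every half-leaf (or $c$-leaf) at $x$ and at $y$. We may also assume $g$ contracts the stable leaves of $x$ and $y$ toward them and expands the unstable leaves. This uses the last clause of part \eqref{orbit_space:fixed:points}, together with the fact that $g$ and $g^{-1}$ have the same fixed points. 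The delicate point is checking that the same power of $g$ contracts stable leaves at both $x$ and $y$. For this we use that $g$ acts on $\orbit$ compatibly with its image in $\pi_1(N)$, and that a point $z$ with $g(z)=z$ corresponds to an orbit freely homotopic to $g$ traversed in the flow direction. Hence the direction of the flow along both orbits is determined by $g$, and the dynamics have the same type at $x$ and $y$.

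Next we build the rectangle. Since $x \neq y$ and $\completedLoom$ is a tree-like plane with no leaf connecting two cusps or periodic points (Proposition~\ref{prop:orbit:space}\eqref{orbit_space:one:periodic}), we consider the leaves through $x$ and through $y$. The leaves $\ell^+(x)$ and $\ell^-(y)$ are $g$-invariant, so if they intersect, the intersection point is fixed by $g$. That point would be a second periodic point on $\ell^+(x)$, which is impossible. Hence the stable leaves of $x$ do not meet the unstable leaves of $y$, and symmetrically. Following Fenley, we take a path from $x$ to $y$ and look at the sets of stable leaves crossing the unstable prongs of $x$ near $x$. Using $g$-invariance and the contraction/expansion dynamics, we iterate $g^{\pm n}$ on a small rectangle with corner at $x$ in the quadrant facing $y$. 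This produces a $g$-invariant region whose boundary consists of $g$-invariant leaves. The invariant leaves that are not $\ell^\pm(x)$ must be fixed leaves that do not contain $x$. Each of them either contains a fixed point or makes a perfect fit with an adjacent invariant leaf. Repeating this produces a chain of $g$-invariant leaves from $x$ to $y$ in which consecutive leaves either intersect at a fixed point or form a perfect fit.

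Finally we derive the contradiction. By Lemma~\ref{lem:only:fake:pf}, every perfect fit in $\loom$ has its ideal corner at a cusp $c$, with both perfect-fit sides being $c$-leaves. So each perfect fit in the chain means the two invariant leaves are cusp leaves through the same cusp $c$, and $c$ is fixed by $g$. An invariant leaf cannot contain two fixed points (cusps or periodic points) by Proposition~\ref{prop:orbit:space}\eqref{orbit_space:one:periodic}. The chain therefore consists of leaves pairwise sharing fixed points, which forces the first leaf from $x$ and the last leaf to $y$ to collapse: all the fixed points coincide. This gives $x=y$, a contradiction. The main obstacle I expect is the middle step: making Fenley's construction of the chain rigorous in $\completedLoom$. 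There the cusps are infinite-prong singularities, and the region swept by $g$-iterates must be shown to have boundary consisting of finitely many invariant leaves. I would handle this with Lemma~\ref{lem:expansions}, which shows that cusps do not accumulate and that maximal expansions stop at cusps. It lets us replace the limiting boundary leaves by cusp leaves of maximal expansions of the invariant rectangle.
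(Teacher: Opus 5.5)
The decisive step of your argument is the finite chain of $g$-invariant leaves from $x$ to $y$, whose consecutive members meet at fixed points or form perfect fits. You do not establish it. It is the content of Fenley's chain-of-lozenges theorem, and ``repeating this produces a chain'' explains neither why each new frontier leaf is $g$-invariant and in perfect fit with the previous one, nor why the process reaches $y$ after finitely many steps. You flag this yourself as the main obstacle, so as written the proof is missing its core.

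The preparatory reductions are also flawed. A fixed point of $g$ only tells you that $g$ is conjugate to a nonzero power, of either sign, of the corresponding primitive orbit; note that $g^{-1}$ has the same fixed points. So nothing you say forces the same dynamics at $x$ and $y$. Indeed, for flows with perfect fits, the two corners of a lozenge fixed by a single $g$ have opposite dynamics. And at a cusp, a nontrivial translation of the $c$-leaves cannot be removed by passing to a power.

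No chain is needed, because the first step already yields the contradiction; this is the paper's proof, and it uses the dynamics only at $x$.
\begin{itemize}
\item Since no leaf of $\completedLoom$ contains two fixed points, $y$ lies in an orthant $O$ at $x$. Then $g(O)$ is an orthant at $x$ containing $g(y)=y$, so $g(O)=O$. Thus $g$ itself preserves the unstable boundary half-leaf $h$ of $O$, with no power needed even when $x$ is a cusp.
\item By Proposition~\ref{prop:orbit:space}\eqref{orbit_space:fixed:points}, $x$ is the only fixed point of $g$ on $h$. Hence no stable leaf crossing the interior of $h$ is $g$-invariant, and the invariant stable leaf through $y$ misses the union $U$ of those leaves.
\item Therefore $U$ has a frontier leaf $\delta$ that separates the stable leaf of $x$ from that of $y$, or lies in the latter.
\item Leaves of $U$ accumulating on $\delta$ must cross $h$ at points escaping to the end of $h$ away from $x$. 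Accumulation at an interior point of $h$ would put the limit in $U$, and accumulation at $x$ would put it on the stable leaf of $x$.
\item So $\delta$ and $h$ form a perfect fit at the far end of $h$. That end is not a cusp, so this perfect fit is not a cusp rectangle, contradicting Lemma~\ref{lem:only:fake:pf}.
\end{itemize}
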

	\begin{proof}

		Suppose that $x,y \in \completedLoom$ are distinct points of $\completedLoom$ that are stabilized by some nontrivial element $g \in \pi_1(M)$. Let $\ell_x^+, \ell_y^+$ be the leaves of $\completedLoomSU$ that pass through $x,y$, respectively. 
		Since, by Proposition \ref{prop:orbit:space} \eqref{orbit_space:one:periodic}, no leaf of $\completedLoom$ contains two periodic points, there must be an orthant $O_{x,y}$ at $x$ that contains $y$. 
		Let $h_x^-$ be the unstable boundary of $O_{x,y}$ . Since~$g$ sends every orthant at $x$ to an orthant at~$x$, and $y$ is fixed by $g$, we must have that $g(O_{x,y}) = O_{x,y}$. Consequently, $g(h_x^-) = h_x^-$. 
		
		Let $U_x^+$  be the union of all leaves of $\completedLoomS$ that intersect the interior of $h_x^-$.  Proposition~\ref{prop:orbit:space}~\eqref{orbit_space:fixed:points}  implies that no leaf of $\completedLoomS$ contained in $U_x^+$ is invariant under $g$. Since $\ell_y^+$ is fixed by $g$, we deduce that $\ell_y^+$ is disjoint from $U_x^+$. It follows that
		there is a boundary component $\delta$ of $U_x^+$ such that either $\delta$ separates $\ell_x^+$ from $\ell_y^+$ or $\delta \subseteq \ell_y^+$. Since $\delta$ must be contained in a leaf of $\completedLoomS$, in either case we get a leaf $\ell^+$ of $\completedLoomS$ that does not intersect~$h_x^-$, yet it is accumulated on by leaves of $\completedLoomS$ that do intersect $h_x^-$.
		Thus there is a perfect fit rectangle in $\completedLoom$ with perfect fit sides contained in $h_x^- \cup \ell^+$. This contradicts Lemma~\ref{lem:only:fake:pf}.
	\end{proof}

	\section{Veering triangulations}\label{sec:veering}
	In this section we define veering triangulations and discuss their basic properties. Agol and Gu\'eritaud were the first to observe that veering triangulations can be constructed from loom spaces (although this terminology did not exist at the time). The drilling algorithm described in Section~\ref{sec:algorithm} relies on a certain modification of the Agol--Gu\'eritaud construction. This modification is discussed in Section~\ref{subsec:finite:parent}. To explain why it works, in Subsection~\ref{subsec:loom:to:veering} we review the original construction in sufficient detail. This material appeared previously in \cite[Section~5]{SchleimSegLoom} and \cite[Section~4]{LMT_flow}.

	\subsection{Ideal triangulations}\label{subsec:ideal}
	
	An \emph{ideal tetrahedron} is a tetrahedron with all vertices removed, called \emph{ideal vertices}. It has six edges, and four two-dimensional faces. For brevity, we refer to the two-dimensional faces simply as `faces'.  Each face of an ideal tetrahedron is an ideal triangle --- that is, a triangle with all vertices removed. If an edge~$e$ of $t$ is embedded in the boundary of a face $f$ of $t$ then we say that $e$ is an edge of~$f$.

	Let $T$ be an arbitrary collection of ideal tetrahedra. Let $f$ and $f'$ be distinct faces of some $t, t' \in T$, respectively, where $t$ and $t'$ may be equal. Any homeomorphism $\phi:f\rightarrow f'$ is called a \emph{face pairing}.  
	We say that a collection $\Phi$ of face pairings is a \emph{complete set of face pairings on $T$} if for every $t \in T$ and  for every face $f$ of $t$ there is exactly one $t' \in T$, exactly one face $f'$ of $t'$, and exactly one $\phi \in \Phi$ such that $\phi$ is a homeomorphism between $f$ and $f'$.  The pair $\T = (T, \Phi)$ is called an \emph{ideal triangulation}. We say that $\T$ is \emph{oriented} if all tetrahedra from $T$ are positively oriented and all face pairings from $\Phi$ are orientation-reversing. Given a three-dimensional topological space $X$ we say that $\T$ is an \emph{ideal triangulation of $X$} if the quotient $T/\Phi$ is homeomorphic to $X$.

	It is customary to identify $\T$ with $T/\Phi$. Thus a  \emph{tetrahedron of $\T$} is the image of some $t \in T$ under the quotient map \mbox{$\Pi_{T,\Phi}:T \rightarrow T/\Phi$}.  
	An \emph{edge} (respectively, a \emph{face}) \emph{of} $\T$ is the image of some edge (respectively, some face) of some $t \in T$ under $\Pi_{T,\Phi}$. Conversely, given a simplex $s$ of~$\T$ a \emph{model} for $s$ is any simplex in the preimage of $s$ under   $\Pi_{T,\Phi}$. 
	Observe that every tetrahedron of $\T$ has precisely one model, and every face of $\T$ has precisely two models. If $T$ is infinite then an edge of~$\T$ can have infinitely many models. However, we will only consider triangulations for which edges have  only finitely many models; this condition is necessary for $\T$ to be a triangulation of a three-manifold.

	In Section \ref{sec:continents} we also consider \emph{subtriangulations} of ideal triangulations. These are determined by a subset of tetrahedra of a given triangulation together with a subcollection of the face pairings between them. 
	
	\begin{definition}\label{def:subtriangulation}
		Let $\T = (T, \Phi)$ be an ideal triangulation. A \emph{subtriangulation} of $\T$ is a pair $(T', \Phi')$ where $T' \subset T$ and $\Phi' \subset \Phi$ is such that a face pairing $\phi \in \Phi$ belongs to $\Phi'$ if and only if $\phi$ identifies faces of some tetrahedra $t, t' \in T'$.
	\end{definition}

	\subsection{Taut triangulations}\label{subsec:taut}
	A \emph{taut structure}  on an ideal tetrahedron $t$ is an assignment  of a  coorientation $\alpha_t(f)$ to each face $f$ of $t$ so that $\alpha_t(\cdot)$ points out of $t$ for exactly two faces of~$t$, and $\alpha_t(\cdot)$ points into $t$ for exactly two faces of $t$. 
	We encode $\alpha_t$ by drawing~$t$ as a quadrilateral with two diagonals — one on top of the other; see Figure~\ref{fig:taut_tet}. The convention is that coorientations on all faces point towards the reader. A \emph{taut tetrahedron} is  an ideal tetrahedron with a taut structure.

	\begin{figure}[h]
		\includegraphics[scale=1]{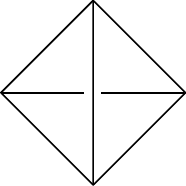}
		\caption{Taut tetrahedron.} 
		\label{fig:taut_tet}
	\end{figure}
	
	We say that a face $f$ of a taut tetrahedron $t$ is a \emph{top face} of $t$ if its coorientation points out of $t$. Otherwise, we say that $f$ is a \emph{bottom face} of $t$. The edge of $t$ which is the common edge of the two top faces (respecetively, two bottom faces) of $t$ is called the \emph{top diagonal} (respectively, the \emph{bottom diagonal}) of $t$. The remaining edges of $t$ are called its \emph{equatorial edges}. 
	
	\begin{definition}\label{def:taut} \cite[Introduction]{Lack_taut}
		Let $\T = (T,\Phi)$ be an oriented ideal triangulation. Suppose that each $t \in T$ is equipped with a taut structure $\alpha_t$. Then $\alpha = \lbrace \alpha_t \rbrace$ is a \emph{taut structure on $\T$} if
		\begin{enumerate}
			\item every $\phi \in \Phi$ identifies a top face of some $t \in T$ with a bottom face of some $t' \in T$, 
			\item for every edge $e$ of $\T$ there is precisely one $t \in T$ such that the top diagonal of~$t$ is a model for $e$, and exactly one $t' \in T$ such that the bottom diagonal of~$t'$  is a model for~$e$.
		\end{enumerate}
	\end{definition}
	A \emph{taut triangulation} is a pair $(\T, \alpha)$, where $\T =  (T, \Phi)$ is an ideal triangulation, and~$\alpha$ is a taut structure on $\T$. We often suppress $\alpha$ from the notation and refer simply to a taut triangulation $\T$, with the understanding that a taut structure on $\T$ has been fixed.
	
	\begin{remark}
		Taut triangulations are sometimes called \emph{transverse taut}; see for instance \cite[Section 2.10]{SchleimSegLinks}. 
	\end{remark}
	
	Condition (1) of Definition \ref{def:taut} ensures that the taut structures on individual tetrahedra match up across face pairings to give a well-defined upwards direction throughout the whole triangulation. 
	Recall that each face $f$ of an ideal triangulation $\T$ has precisely two models, each contained in a model tetrahedron of $\T$. If $\T$ is equipped with a taut structure, we say that a tetrahedron of $\T$ is \emph{immediately below} $f$ if one of the top faces of its model tetrahedron is a model for $f$. Similarly, a tetrahedron is \emph{immediately above}~$f$ if one of the bottom faces of its model tetrahedron is a model for $f$.
	
	Condition (2) of  Definition \ref{def:taut}  ensures that, for each edge $e$ of a taut triangulation $(\T,\alpha)$, exactly one model for $e$ is the top diagonal of a model tetrahedron and exactly one model for $e$ is the bottom diagonal of a model tetrahedron. We say that the corresponding tetrahedra of $\T$ are \emph{immediately below} and \emph{immediately above} $e$, respectively.

	\subsection{Veering triangulations}\label{subsec:veer:tri}
	
	Veering triangulations are taut triangulations whose edges are additionally equipped with a particular type of 2-coloring. We first describe this coloring on a single taut tetrahedron.
	
	\begin{definition} \cite[Definition 5.1]{SchleimSegLinks} \label{def:veering:tet}
		Let $t$ be a taut tetrahedron. We say that $t$ is \emph{veering} if all edges of $t$ are colored either red or blue so that the following two conditions hold.
		\begin{itemize}
			\item If $e_0, e_1 , e_2$ are edges of a top face of $t$, ordered counter-clockwise as viewed from above and so that $e_0$ is the top diagonal of $t$, then $e_1$ is red and $e_2$ is blue.
			\item If $e_0 , e_1 , e_2$ are edges of a bottom face of $t$, ordered counter-clockwise as viewed from above and so that $e_0$ is the bottom diagonal of $t$, then $e_1$ is blue and $e_2$ is red.
		\end{itemize}
	\end{definition}
	In particular, there are four different types of veering tetrahedra; see Figure \ref{fig:tet:rects}. 
	
	A \emph{veering triangulation} is a triple  $\V = (\T, \alpha, \beta)$ such that $(\T, \alpha)$ is a taut triangulation, and $\beta$ is a coloring on edges of $\T$ which makes every tetrahedron of~$\T$ veering. Equivalently, we may say that a veering triangulation is a triangulation composed of taut veering tetrahedra whose faces are paired up via homeomorphisms which preserve the colors of their edges. We also say that $\beta$ is a \emph{veering structure} on~$(\T, \alpha)$. Note that if a taut triangulation admits a veering structure, then this structure is unique \cite[Lemma 2.14]{Parlak-mutations}. Thus $\beta$ could be omitted from the notation. However, since not every taut triangulation admits a veering structure, it is reasonable to keep $\beta$ explicit.
	
	In Construction \ref{constr:loom:to:veering} we will show that certain triangulations are veering. For this purpose, it is helpful to collect all conditions that an ideal triangulation must satisfy to be veering in a single lemma.
	
	\begin{lemma}\label{lem:veering:conditions}
		Let $(T, \Phi)$ be an oriented ideal triangulation. Suppose that the following conditions are satisfied.
		\begin{enumerate}
			\item \label{condition:taut}
			Every $t \in T$ is taut (as defined in Subsection \ref{subsec:taut}). 
			\item \label{condition:veering}
			Every $t \in T$ is veering (in the sense of Definition \ref{def:veering:tet}). 
			\item \label{condition:respecting:coorientation} Every $\phi \in \Phi$ identifies a top face of some $t \in T$ with a bottom face of some $t' \in T$. 
			\item \label{condition:once:top:once:bottom}
			For every edge $e$ of the triangulation $\T = (T, \Phi)$ there is precisely one $t \in T$ such that the top diagonal of $t$ is a model for $e$ and exactly one $t' \in T$ such that the bottom diagonal of $t'$ is a model for $e$. 
			\item \label{condition:respecting:colors} Every $\phi \in \Phi$ preserves the colors of edges.  
		\end{enumerate}
		Then the taut and veering structures on the ideal tetrahedra from $T$ induce a taut and veering structure on $\T = (T, \Phi)$. \qed
	\end{lemma}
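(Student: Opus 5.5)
The statement is a bookkeeping lemma: conditions (1)--(5) are exactly the pieces of the definitions of a taut triangulation (Definition~\ref{def:taut}) and of a veering triangulation (Subsection~\ref{subsec:veer:tri}). The plan is to check the definitions one layer at a time, first the taut structure and then the coloring.

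\textbf{Taut structure.} By condition~\eqref{condition:taut}, each $t \in T$ carries a taut structure $\alpha_t$. Set $\alpha = \lbrace \alpha_t \rbrace_{t\in T}$. The two requirements of Definition~\ref{def:taut} are then verbatim conditions~\eqref{condition:respecting:coorientation} and~\eqref{condition:once:top:once:bottom}. The only point to note is that condition~\eqref{condition:respecting:coorientation} is what makes the coorientations agree across each glued face. Indeed, if $\phi$ identifies a top face $f$ of $t$ with a bottom face $f'$ of $t'$, then $\alpha_t(f)$ points out of $t$ and $\alpha_{t'}(f')$ points into $t'$. Since $\T$ is oriented, $\phi$ is orientation-reversing, so these two coorientations give the same coorientation of the image face in $T/\Phi$. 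Hence $(\T,\alpha)$ is a taut triangulation.

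\textbf{Coloring.} Define $\beta$ on edges of $\T$ by coloring an edge $e$ with the color of any of its models. To see this is well defined, recall that two models of $e$ are identified precisely when they are linked by a chain of face pairings in $\Phi$, each one matching an edge of a face of one tetrahedron with an edge of a face of another. By condition~\eqref{condition:respecting:colors}, each step in such a chain preserves colors, so all models of $e$ have the same color. With $\beta$ so defined, each tetrahedron of $\T$ has a unique model in $T$, and that model is veering by condition~\eqref{condition:veering}. Since $\beta$ restricts on each model to its given coloring, every tetrahedron of $\T$ is veering with respect to $\beta$. Thus $(\T,\alpha,\beta)$ is a veering triangulation.

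\textbf{Possible subtlety.} The one place requiring care is that ``counter-clockwise as viewed from above'' in Definition~\ref{def:veering:tet} has a consistent meaning across gluings. This needs both the orientation of $\T$ and the agreement of coorientations established in the taut step. However, the veering condition is checked tetrahedron by tetrahedron on the models, where it holds by hypothesis, so no global check is required.
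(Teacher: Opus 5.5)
Your proposal is correct and takes the same approach as the paper: the paper gives no proof and treats the lemma as an immediate unwinding of Definition~\ref{def:taut} and the definition of a veering triangulation, which is exactly the check you carry out. One small quibble is that the coorientations agree across a glued face because of the top-to-bottom matching in condition~\eqref{condition:respecting:coorientation} alone (``out of $t$'' and ``into $t'$'' both point to the $t'$ side of the face), not because $\phi$ is orientation-reversing.
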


	\subsection{A veering triangulation from a loom space}\label{subsec:loom:to:veering}

	Let $(\loom, \loom^+, \loom^-)$ be the loom space determined by a transitive pseudo-Anosov flow $\Psi$ on a closed three-manifold $N$ and a perfect set $\Lambda$ for $\Psi$. Set $M = N\bez \Lambda$. In this subsection we follow \cite[Section~4]{LMT} to explain the construction of a veering triangulation of $M$.

	
	Let $R$ be a tetrahedron rectangle in $(\loom, \loom^+, \loom^-)$. By Definition \ref{def:tet-rectangle}, there are four cusps in the boundary of $R$, one in the interior of each side of $R$. Connecting each cusp in the boundary of $R$ with the remaining cusps in the boundary of $R$ by a line segment contained in $R$ yields a quadrilateral $q(R)$ with two diagonals. To view $q(R)$ as a taut tetrahedron $t(R)$ it suffices to decide which diagonal lies strictly above the other.  
	Our convention is that the diagonal connecting the horizontal sides of $R$ is the top diagonal of $t(R)$ and the diagonal connecting the vertical sides of $R$ is the bottom diagonal of $t(R)$; see Figure~\ref{fig:tet:rects}.


	\begin{figure}[h]
		\begin{center}
			\includegraphics[width=1\textwidth]{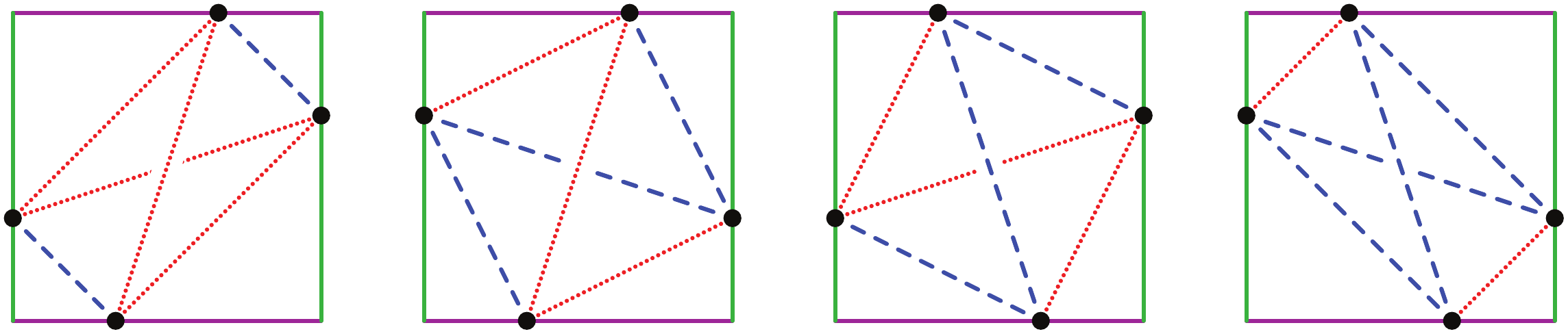} 
		\end{center}
		\caption{Four types of tetrahedron rectangles in a loom space and their associated veering tetrahedra. The black dots indicate the cusps.}
		\label{fig:tet:rects}
	\end{figure}
	
	By Proposition \ref{prop:orbit:space} \eqref{orbit_space:one:periodic}, the cusps that are contained in the horizontal (respectively, vertical) sides of~$R$ are not connected by a leaf of $\loom^+$ (respectively, $\loom^-$). Therefore each edge $e$ of $t(R)$ corresponds to a non-degenerate subrectangle $P_e \subset R$ with exactly two ideal points in the boundary, located at the diagonally opposite corners. Such rectangles are called \emph{edge rectangles}.  We say that the ideal corners of an edge rectangle \emph{span} the rectangle.
	
	
	\begin{definition}\label{def:edge:rect}
		Let $R$ be a rectangle in a bi-foliated surface $(\X, \X^+, \X^-)$. We say that~$R$ is an \emph{edge rectangle} if $R$ has exactly two ideal points in the boundary and they are located at the diagonally opposite corners of $R$. Furthermore, if the foliations $\X^\pm$ are oriented and the cardinal directions in $R$ are fixed according to Convention \ref{convention:cardinal:directions}, we say that $R$ is a \emph{red edge rectangle} if its ideal points are at the south-west and north-east corners. Otherwise, we say that $R$ is a \emph{blue edge rectangle}. 
	\end{definition}
	
	Every tetrahedron rectangle contains exactly six edge rectangles; see Figure \ref{fig:tet:rects}  or a formal proof \cite[Lemma 2.30]{SchleimSegLoom}. There is a natural bijection between the edge rectangles contained in $R$ and the edges of $t(R)$. We color an edge of $t(R)$ red if the corresponding edge rectangle is red, and blue otherwise.
	With that coloring, $t(R)$ is a veering tetrahedron; see Definition \ref{def:veering:tet}. 
	
	To further discuss subrectangles of tetrahedron rectangles, we introduce the following terminology.
	
	\begin{definition}\label{def:taller:etc}
		Let $R, R'$ be rectangles in a bi-foliated surface $(\X, \X^+, \X^-)$. 
		
		\begin{enumerate}
			\item If every leaf of $\X^+$ intersecting $R$ intersects $R'$ we say that $R$ is \emph{thinner} than $R'$  or that $R'$ is \emph{wider} than $R$. 
			If furthermore there are leaves of $\X^+$ intersecting $R'$ that do not intersect $R$, we say that $R$ is \emph{strictly thinner} than $R'$ or that $R'$ is \emph{strictly wider} than $R$. 
			\item If every leaf of $\X^-$ intersecting $R$ intersects $R'$ we say that $R$ is \emph{shorter} than $R'$  or that $R'$ is \emph{taller} than $R$. 
			If furthermore there are leaves of $\X^-$ intersecting $R'$ that do not intersect $R$, we say that $R$ is \emph{strictly shorter} than $R'$ or that $R'$ is \emph{strictly taller} than $R$. 
		\end{enumerate}
	\end{definition}

	Motivated by Definition~\ref{def:taller:etc}, we call the edge rectangle spanned by the two cusps contained in the horizontal (respectively, vertical) sides of $R$ the \emph{tallest edge (sub)rectangle} (respectively, the \emph{widest edge (sub)rectangle}) of $R$. The tallest edge subrectangle of $R$ corresponds to the top diagonal of $t(R)$ and the widest edge subrectangle of $R$ corresponds to the bottom diagonal of~$t(R)$.

	\begin{lemma}\cite[Lemma 5.14]{SchleimSegLoom}\label{lem:edge:rectangle:once:bottom:top}
		Let $P$ be an edge rectangle in a loom space $(\loom, \loom^+, \loom^-)$. Then there is a unique tetrahedron rectangle $R$ in $(\loom, \loom^+, \loom^-)$ such that $P$ is the widest edge subrectangle of $R$, and a unique tetrahedron rectangle $R'$ in $(\loom, \loom^+, \loom^-)$ such that $P$ is the tallest edge subrectangle of $R'$.\qed
	\end{lemma}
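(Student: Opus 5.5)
Let $P$ be spanned by cusps $c_1,c_2$ at diagonally opposite corners; by symmetry assume $P$ is red, with $c_1$ at the south-west and $c_2$ at the north-east corner. I would build $R$ by expanding $P$ vertically, using the same maximal-expansion procedure as in Lemma~\ref{lem:expansions}, and then show uniqueness from the cusp structure.

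\textbf{Existence.} Since $c_1$ and $c_2$ are corners of $P$, the west side of $P$ lies in the stable $c_1$-leaf and the east side in the stable $c_2$-leaf. Slide the south side of $P$ southward along the stable leaves crossing $P$, keeping it transverse, as far as possible. For the north side I use the second axiom of a loom space: $P$ lies in some tetrahedron rectangle. I then restrict to the portion of it lying between the stable leaves through $c_1$ and $c_2$. This controls the expansion, and I expect the sliding segment to stop exactly when it meets a cusp $c_3$ lying strictly between the west and east sides. It cannot stop because it reaches one of the bounding cusp leaves: those leaves continue past $c_1$ and $c_2$ in the direction away from $P$. The same argument northward yields a cusp $c_4$. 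The resulting rectangle $R$ has $c_1,c_2$ in the interiors of its vertical sides and $c_3,c_4$ in the interiors of its horizontal sides. No two of these cusps share a leaf, since a leaf of $\loom^\pm$ contains at most one cusp. So $R$ is a tetrahedron rectangle, and its widest edge subrectangle is spanned by the two cusps on its vertical sides, namely $c_1,c_2$. That subrectangle is $P$. Swapping the roles of the foliations gives $R'$.

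\textbf{Uniqueness.} Suppose $R$ is any tetrahedron rectangle with $P$ as widest edge subrectangle. Then $c_1,c_2$ lie on the vertical sides of $R$, so $R$ and $P$ have the same width: both are bounded by the stable leaves through $c_1$ and $c_2$. The south side of $R$ contains a cusp $c_3$ in its interior. Therefore $R$ lies inside the maximal southward expansion of $P$ within that strip. Conversely, $R$ must contain the whole strip up to the first cusp encountered, since its south side is the first place the strip is blocked. So the southern half of $R$ is determined; the northern half is determined in the same way. Hence $R$ equals the rectangle constructed above.

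\textbf{Main obstacle.} The delicate step is showing that vertical expansion within the fixed strip really stops at an interior cusp. One must rule out two things. First, the expansion might continue indefinitely. This is handled using the loom axiom that rectangles lie in tetrahedron rectangles, together with axiom (1), which lets perfect-fit sides at $c_1,c_2$ be thickened. Second, the expansion might be blocked by a perfect fit rather than a cusp. By Lemma~\ref{lem:only:fake:pf}, every perfect fit is a cusp rectangle; in a general loom space this is guaranteed instead by \cite[Lemma 2.30]{SchleimSegLoom}-type structure. I would cite these results to conclude that the blocking point is a cusp in the interior of the side.
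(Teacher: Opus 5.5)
Your strategy is the one the paper indicates right after the statement. The maximal vertical expansion of $P$ is the tetrahedron rectangle with $P$ as widest edge subrectangle. Uniqueness holds because any such $R$ is a vertical expansion of $P$, while tetrahedron rectangles are maximal under inclusion, so your uniqueness paragraph is fine in substance.

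\textbf{The gap is in existence.} The one nontrivial point is never established. You need the vertical expansion to be bounded, to stop with exactly one cusp in the interior of each horizontal side, and to end with $c_1,c_2$ in the interiors of the vertical sides rather than at corners. You leave this as an expectation (``I expect the sliding segment to stop\ldots''), and the tools you name do not deliver it.
\begin{itemize}
\item Applying axiom~(2) to $P$ gives some tetrahedron rectangle $T\supset P$. Its part between the stable leaves bounding $P$ is just one vertical expansion of $P$. This gives no upper bound on the maximal expansion, and this part has $c_1$ or $c_2$ at a corner whenever $P$ is not the widest edge subrectangle of $T$.
\item The bounding leaves do not ``continue past $c_1$ and $c_2$''. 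Cusp leaves originate at cusps, and south of $c_1$ the west side of an expansion lies on a different stable $c_1$-leaf. That $P$ can be pushed past its ideal corners at all is exactly what axiom~(1) provides.
\item Neither Lemma~\ref{lem:only:fake:pf} nor \cite[Lemma 2.30]{SchleimSegLoom} addresses where the expansion stops.
\end{itemize}

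\textbf{How the paper closes it.} It uses Lemma~\ref{lem:expansions}: take the maximal southern expansion of $P$, then the maximal northern expansion of the result. Each step acquires exactly one cusp in the interior of the expanded side. The facts ``at most one cusp per side'' and ``no two boundary cusps joined by a leaf'' then force:
\begin{itemize}
\item $c_1,c_2$ to lie in the interiors of the vertical sides;
\item all four corners to be material;
\item the cusps on opposite sides to lie on distinct leaves.
\end{itemize}

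That lemma is proved for $\loom(\Psi,\Lambda)$. In a general loom space you can argue as follows.
\begin{itemize}
\item Let $U$ be the union of all rectangles containing $P$ and as wide as $P$.
\item Apply axiom~(2) to $U$ itself, not to $P$, to get $U\subset T$.
\item The part of $T$ crossing the stable leaves of $P$ is a vertical expansion of $P$, hence equals $U$.
\item If $c_1$ or $c_2$ lay on a horizontal side of $T$, it would be a corner of $U$. Axiom~(1) at that cusp would then give a strictly taller vertical expansion, contradicting maximality.
\end{itemize}
Thus $U=T$.
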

	
	One can prove Lemma \ref{lem:edge:rectangle:once:bottom:top} using Lemma \ref{lem:expansions} about maximal expansions of rectangles in $(\loom, \loom^+, \loom^-)$. The maximal vertical expansion of an edge rectangle $P$ is the tetrahedron rectangle whose widest edge rectangle is $P$. The  maximal horizontal expansion of $P$ is the tetrahedron rectangle whose tallest edge rectangle is $P$.

	\begin{definition}\label{def:face:rect}
		Let $R$ be a rectangle in a bi-foliated space $(\X, \X^+, \X^-)$. We say that~$R$ is a \emph{face rectangle} if $R$ has exactly three material corners and every side  of $R$ with two material endpoints contains a cusp in its interior. 
	\end{definition}
	
	Every tetrahedron rectangle contains exactly four face rectangles, and every face rectangle contains exactly three edge rectangles \cite[Lemma~2.30]{SchleimSegLoom}.  Thus there is a natural bijection  between face rectangles contained in $R$ and two-dimensional faces of~$t(R)$. 
	Moreover, precisely two face rectangles contained in~$R$ are strictly thinner than $R$; we call them the \emph{tall face (sub)rectangles} of $R$. They correspond to the top faces of $t(R)$. Similarly, there are precisely two face rectangles contained in~$R$ that are strictly shorter than $R$; we call them the \emph{wide face (sub)rectangles} of $R$. They correspond to the bottom faces of $t(R)$.
	
	\begin{lemma}\label{lem:face:rectangles:in:two:tet}
		Let $Q$ be a face rectangle in a loom space $(\loom, \loom^+, \loom^-)$. Then there are exactly two tetrahedron rectangles in $(\loom, \loom^+, \loom^-)$ which contain~$Q$.  The face rectangle $Q$ is a tall face subrectangle of one of them and a wide face subrectangle of the other.
	\end{lemma}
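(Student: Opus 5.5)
The plan is to mimic the argument sketched after Lemma~\ref{lem:edge:rectangle:once:bottom:top}, using maximal expansions from Lemma~\ref{lem:expansions}. Let $Q$ be a face rectangle. By Definition~\ref{def:face:rect}, $Q$ has three material corners and one ideal corner, and the two sides with two material endpoints each contain a cusp in their interior. One of these two sides is vertical and the other is horizontal, since they are the two sides not adjacent to the ideal corner. Fix cardinal directions so that the ideal corner is, say, the north-east corner. Then the west side contains a cusp $c_W$ and the south side contains a cusp $c_S$. The north and east sides are initial segments of cusp leaves from the cusp $c$ at the ideal corner; this uses Lemma~\ref{lem:only:fake:pf} applied to the perfect-fit region near that corner. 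The other configurations (ideal corner at a different corner) are symmetric.

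\textbf{Existence.} Since $Q$ already has cusps in its west and south sides, it cannot be expanded to the west or to the south.
\begin{itemize}
\item Let $R_1$ be the maximal eastern expansion of $Q$, given by Lemma~\ref{lem:expansions}. This keeps the north side as a $c$-leaf segment, so $c$ stays in the north side, and it adds a cusp $c_E$ in the interior of the new east side. The resulting rectangle has cusps in the interiors of all four sides, so it is a tetrahedron rectangle of Definition~\ref{def:tet-rectangle}.
\item The needed nondegeneracy ($a\neq b$, $c\neq d$ in that definition) holds because no leaf connects two cusps, by the last statement of Lemma~\ref{lem:expansions}.
\item $R_1$ is as tall as $Q$ and strictly wider, so $Q$ is a wide face subrectangle of $R_1$; by the correspondence stated above, $Q$ corresponds to a bottom face of $t(R_1)$.
\item Symmetrically, the maximal northern expansion $R_2$ of $Q$ is a tetrahedron rectangle in which $Q$ is as wide as $R_2$, so $Q$ is a tall face subrectangle of $R_2$.
\end{itemize}
The rectangles $R_1$ and $R_2$ are distinct, since $R_1$ is strictly wider than $R_2$.

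\textbf{Uniqueness.} Let $R$ be any tetrahedron rectangle containing $Q$. Since $c_W$ and $c_S$ lie in the interiors of sides of $Q$, the sides of $R$ cannot cross them. So the west side of $R$ lies on the same leaf as the west side of $Q$, and the south side of $R$ lies on the same leaf as the south side of $Q$. The cusp $c$ at the north-east ideal corner of $Q$ must lie in $\overline{R}$ minus the interior, and it cannot be in the interior of $R$. Its two cusp leaves bounding $Q$ block expansion in both directions simultaneously, so $c$ lies either on the north side of $R$ or on the east side.
\begin{itemize}
\item If $c$ is on the north side, then $R$ has the same north side leaf as $Q$ and is therefore an eastern expansion of $Q$. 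Since a tetrahedron rectangle is maximal under inclusion, $R$ is a maximal eastern expansion, so $R=R_1$.
\item If $c$ is on the east side, the same reasoning gives $R=R_2$.
\end{itemize}

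\textbf{Main obstacle.} The main obstacle is justifying rigorously that $c$ must lie on exactly one of the north or east sides of $R$ and not at a corner. This requires excluding the possibility that $R$ has $c$ as an ideal corner. That is impossible because tetrahedron rectangles have only material corners by Definition~\ref{def:tet-rectangle}. It also requires excluding the possibility that $R$ avoids $c$ altogether. That is handled by noting that rectangles near $c$ intersecting both cusp leaves would contain the cusp, which is impossible.

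A secondary point is that the maximal eastern expansion of a rectangle is unique, as the interior of the closure of $Q$ together with the swept region, as in the proof of Lemma~\ref{lem:expansions}. This gives uniqueness of $R_1$ and $R_2$.
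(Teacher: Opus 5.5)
Your route is the paper's. The cusps $c_W$ and $c_S$ block expansion to the west and south. The maximal horizontal and vertical expansions from Lemma~\ref{lem:expansions} are tetrahedron rectangles. Since $Q$ cannot be expanded north and east at once, any tetrahedron rectangle containing $Q$ is one of these two. Your case analysis on whether $c$ lands on the north or east side of $R$, together with uniqueness of maximal one-sided expansions, spells out the paper's phrase ``in exactly two transverse directions, but not both at the same time.''

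One step is wrong as written: you have \emph{tall} and \emph{wide} reversed. In the paper, the tall face subrectangles of a tetrahedron rectangle $R$ are those that are strictly thinner than $R$. Such a subrectangle is as tall as $R$: it meets the cusps on both horizontal sides and so contains the top diagonal. The wide face subrectangles are those strictly shorter than $R$.

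Your $R_1$, the maximal eastern expansion, is as tall as $Q$ and strictly wider. In $R_1$ the cusps $c$ and $c_S$ lie on the north and south sides. So $Q$ is a \emph{tall} face subrectangle of $R_1$ and corresponds to a \emph{top} face of $t(R_1)$, not a bottom face.

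Dually, $Q$ is a \emph{wide} face subrectangle of the maximal northern expansion $R_2$, corresponding to a bottom face. There $c_W$ and $c$ lie on the west and east sides, so $Q$ contains the bottom diagonal.

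The lemma only asserts ``tall in one, wide in the other,'' so your conclusion survives once you swap the labels. Construction~\ref{constr:loom:to:veering} relies on the correct assignment to glue a top face of one tetrahedron to a bottom face of the other.
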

	\begin{proof}
		Let $c$ be the ideal corner of $Q$. The only sides of $Q$ that do not contain any cusps in their interior are those that are initial segments of cusp leaves that originate from~$c$. 
		Thus $Q$ can be expanded to a larger rectangle in exactly two transverse directions, but not both at the same time.  By Lemma \ref{lem:expansions}, the maximal horizontal expansion of $Q$ is a tetrahedron rectangle having $Q$ as a tall face subrectangle. Similarly, the maximal vertical expansion of $Q$ is a tetrahedron rectangle having $Q$ as a wide face subrectangle.
	\end{proof}

	We say that a rectangle $R$ in bi-foliated surface $(\X, \X^+, \X^-)$ is a \emph{simplex rectangle} if~$R$ is a tetrahedron, face, or an edge rectangle. The \emph{dimension} of a simplex rectangle is the dimension of the simplex that corresponds to it.

	In Construction \ref{constr:loom:to:veering} we outline the connection between loom spaces and veering triangulations, drawing primarily from \cite[Section 4]{LMT} and \cite[Section 5]{SchleimSegLoom}. 
	Note that \cite[Section 4]{LMT_flow} considers only the case of a transitive pseudo-Anosov flow $\Psi$ that does not have perfect fits in the usual sense and $\Lambda=\Sing(\Psi)$. However, the arguments extend readily to the more general setup.

	\begin{construction}[Infinite veering triangulation from a  loom space] \label{constr:loom:to:veering}
		Let $(\loom, \loom^+, \loom^-)$ be a loom space.
		Tetrahedron rectangles in $(\loom, \loom^+, \loom^-)$ determine a collection $\widetilde{T}$ of positively oriented ideal tetrahedra, each of which is taut and veering, as indicated in Figure \ref{fig:tet:rects}. Thus $\widetilde{T}$ satisfies conditions \eqref{condition:taut} and \eqref{condition:veering} of Lemma~\ref{lem:veering:conditions}. 
		
		Let $Q$ be a face rectangle in $(\loom, \loom^+, \loom^-)$. By Lemma~\ref{lem:face:rectangles:in:two:tet}, there is a unique tetrahedron rectangle $R$ in  $(\loom, \loom^+, \loom^-)$ such that $Q$ is a tall face subrectangle of $R$.  The face $f$ of $t(R)$ corresponding to $Q$ is a top face of $t(R)$. 
		Similarly, let  $R'$ be the unique tetrahedron rectangle in  $(\loom, \loom^+, \loom^-)$ such that $Q$ is a wide face subrectangle of $R'$.  Then the face $f'$ of $t(R')$ corresponding to $Q$ must be a bottom face of $t(R)$.
		
		Let $c_1$ be the ideal corner of $Q$, and let $c_2, c_3$ be the remaining cusps of $Q$. For $i=1,2,3$ let $v_i, v_i'$ be the ideal vertices of $f, f'$, respectively, corresponding to the cusps $c_i$. Up to isotopy, there is a unique orientation-reversing homeomorphism $\phi: f \rightarrow f'$ which satisfies condition~\eqref{condition:respecting:coorientation} of Lemma \ref{lem:veering:conditions} and sends to  $v_i$ to $v_i'$ for all $i=1,2,3$.
		Since $f, f'$ come from the same face rectangle $Q$ in $(\loom, \loom^+, \loom^-)$ and $\phi$  respects the cusps of~$Q$, it is clear that $\phi$ must preserve the colors on edges. Thus $\phi$ satisfies condition \eqref{condition:respecting:colors} of Lemma \ref{lem:veering:conditions}. 
		
		Let $\widetilde{\Phi}$ be the collection of face pairings determined by the face rectangles in $(\loom, \loom^+, \loom^-)$ as explained above. Lemma \ref{lem:face:rectangles:in:two:tet} implies that $\widetilde{\Phi}$ is a complete  collection of face pairings on~$\widetilde{T}$.
		It follows that $\widetilde{\T} = (\widetilde{T}, \widetilde{\Phi})$ is an ideal triangulation. 
		
		By Lemma \ref{lem:edge:rectangle:once:bottom:top},  every edge rectangle is the tallest edge rectangle of exactly one tetrahedron rectangle, and the widest edge rectangle of exactly one tetrahedron rectangle. This, combined with the fact that each $\phi \in \widetilde{\Phi}$ satisfies condition \eqref{condition:respecting:coorientation} of Lemma \ref{lem:veering:conditions}, implies that the taut structures on individual tetrahedra induce a taut structure $\widetilde{\alpha}$ on $\widetilde{\T}$.  
		
		Since each $\phi \in \widetilde{\Phi}$ satisfies condition \eqref{condition:respecting:colors} of Lemma \ref{lem:veering:conditions}, the veering structures on individual tetrahedra of $\widetilde{T}$ induce a veering structure $\widetilde{\beta}$ on $(\widetilde{\T}, \widetilde{\alpha})$. Thus $\univV  = (\widetilde{\T}, \widetilde{\alpha}, \widetilde{\beta})$ is a veering triangulation. \hfill$\lozenge$
	\end{construction}

	We call the veering triangulation $\univV  = (\widetilde{\T}, \widetilde{\alpha}, \widetilde{\beta})$ from Construction \ref{constr:loom:to:veering}  the \emph{infinite Agol-Gu\'eritaud triangulation determined by $(\loom, \loom^+, \loom^-)$}. If $\loom = \loom (\Psi, \Lambda)$ for some transitive pseudo-Anosov flow $\Psi$ and a perfect set $\Lambda$ for $\Psi$, we also say that $\univV$ is determined by~$(\Psi, \Lambda)$. 

	
	In Construction \ref{const:finite:veering} we follow \cite[Section 4]{LMT} to describe a finite veering triangulation determined by  the $(\Psi,\Lambda)$-action of $\pi_1(M)$ on the loom space $\loom(\Psi, \Lambda)$.

	\begin{construction}(Finite veering triangulation from the $(\Psi, \Lambda)$-action) \label{const:finite:veering}
		Suppose that~$\Psi$ is a transitive pseudo-Anosov flow on a closed three-manifold $N$ and $\Lambda$ is a perfect set for $\Psi$. 
		Let $\widetilde{\V}(\Psi, \Lambda)$ be the infinite Agol-Gu\'eritaud veering triangulation determined by $(\Psi, \Lambda)$, as described in Construction \ref{constr:loom:to:veering}.
		Set $M = N\bez \Lambda$.	The $(\Psi,\Lambda)$-action of $\pi_1(M)$ on $\loom(\Psi, \Lambda)$ sends simplex rectangles to simplex rectangles of the same dimension. 
		Thus it induces a simplicial action of $\pi_1(M)$ on  $\univV(\Psi, \Lambda)$. Let $\V(\Psi, \Lambda)$ be the quotient of $\univV(\Psi, \Lambda)$ by this action. Then $\V(\Psi, \Lambda)$ is a finite ideal triangulation \cite[Lemma 4.5]{LMT}. Since the combinatorics of simplex rectangles is preserved under the $(\Psi, \Lambda)$-action, $\V(\Psi, \Lambda)$ is taut. It is also veering, because the action preserves the colors of edge rectangles.

		By Lemma \ref{lem:loom:stabilizers}, the stabilizers of distinct cusps of $\loom(\Psi, \Lambda)$ have trivial intersection. Moreover, $\pi_1(M)$ is torsion-free, and hence every simplex of $\univV(\Psi, \Lambda)$ has trivial stabilizer. Since $\univV(\Psi, \Lambda)$ triangulates $\rr^3$ \cite[Theorem 6.50]{SchleimSegLoom}, it follows that the underlying space of $\V(\Psi, \Lambda)$ is a three-manifold with fundamental group isomorphic to $\pi_1(M)$. Finally, the peripheral subgroups of $\pi_1(M)$ act peripherally on $\univV(\Psi, \Lambda)$. This identifies the cusps of $\V(\Psi, \Lambda)$ with those of $M$ and shows that $\V(\Psi, \Lambda)$ triangulates $M$ \cite[Section 4]{LMT}. \hfill$\lozenge$
	\end{construction}

	We call $\V(\Psi, \Lambda)$ from Construction \ref{const:finite:veering} the \emph{finite veering triangulation determined by the $(\Psi,\Lambda)$-action on the $(\Psi, \Lambda)$-loom space}. 
	The key property of $\V(\Psi, \Lambda)$ is that its combinatorics alone is sufficient to reconstruct the flow~$\Psi$. More precisely, Agol and Tsang proved that  $\V(\Psi, \Lambda)$ and the Dehn filling slopes on $M$ that recover $N$ from $M$ determine a transitive pseudo-Anosov flow $\Psi'$ on $N$ \cite[Theorem 5.1]{Tsang-Agol}. Furthermore, Tsang used a result of Barthelm\'e-Frankel-Mann \cite[Theorem 1.1]{BFM} to show that~$\Psi'$ is orbit equivalent to~$\Psi$ via a homeomorphism that is isotopic to identity \cite[Theorem~2.4.5]{tsang-thesis}. For this reason, we say that $\V(\Psi, \Lambda)$ \emph{(combinatorially) encodes}~$\Psi$.  See also \cite{SchleimSeg-flows}. 
	
	For later reference, we collect the above results relating pseudo-Anosov flows and veering triangulations into the following four theorems.

	\begin{theorem}[Agol-Gu\'eritaud]\label{theorem:AG}
		Let $\Psi$ be a transitive pseudo-Anosov flow on a closed oriented three-manifold $N$. Let $\Lambda$ be a perfect set for $\Psi$. Then $M = N\bez \Lambda$ admits a veering triangulation that encodes $\Psi$.\qed
	\end{theorem}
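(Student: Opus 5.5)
The plan is to assemble Theorem~\ref{theorem:AG} from the results already established, with essentially no new argument. First, given a transitive pseudo-Anosov flow $\Psi$ on $N$ and a perfect set $\Lambda$, we form the punctured orbit space $\drilledOrbit(\Psi,\Lambda)$ and its universal cover $(\loom,\loom^+,\loom^-)$. By Proposition~\ref{prop:loom:space} this is a loom space. It comes equipped with the $(\Psi,\Lambda)$-action of $\pi_1(M)$, where $M = N\bez\Lambda$.

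Second, we apply Construction~\ref{constr:loom:to:veering} to this loom space to obtain the infinite Agol-Gu\'eritaud triangulation $\univV(\Psi,\Lambda)$. Its tetrahedra correspond to tetrahedron rectangles. Its face pairings come from face rectangles, via Lemma~\ref{lem:face:rectangles:in:two:tet}. The fact that each edge is once a top and once a bottom diagonal comes from Lemma~\ref{lem:edge:rectangle:once:bottom:top}. Lemma~\ref{lem:veering:conditions} then certifies that the result is taut and veering.

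Third, we pass to the quotient as in Construction~\ref{const:finite:veering}. The action sends simplex rectangles to simplex rectangles of the same dimension and preserves edge colours. Free action on simplices follows from Lemma~\ref{lem:loom:stabilizers} together with torsion-freeness of $\pi_1(M)$. Combined with the fact that $\univV$ triangulates $\rr^3$ \cite[Theorem 6.50]{SchleimSegLoom} and with finiteness \cite[Lemma 4.5]{LMT}, this shows that $\V(\Psi,\Lambda)$ is a finite veering triangulation of $M$.

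Finally, the word \emph{encodes} is justified by \cite[Theorem 5.1]{Tsang-Agol}: $\V(\Psi,\Lambda)$ together with the filling slopes determines a flow $\Psi'$. By \cite[Theorem~2.4.5]{tsang-thesis} (using \cite{BFM}), $\Psi'$ is orbit equivalent to $\Psi$.

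The main point needing care is the identification of the quotient with $M$ itself, rather than merely with some manifold with fundamental group $\pi_1(M)$. For this I would rely on the peripheral subgroups acting peripherally on the cusps of $\loom$, so that the cusps of the quotient match the torus ends of $M$, as in \cite[Section 4]{LMT}. I would also check that the arguments of \cite{LMT}, written for the case without perfect fits, go through for a general perfect set. That check amounts to noting that only the loom-space axioms and Lemma~\ref{lem:loom:stabilizers} are used.
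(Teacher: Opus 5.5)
Your proposal is correct and follows the paper's route: you get the loom space from Proposition~\ref{prop:loom:space}, build the infinite triangulation via Construction~\ref{constr:loom:to:veering}, and take the quotient of Construction~\ref{const:finite:veering} (freeness from Lemma~\ref{lem:loom:stabilizers} and torsion-freeness, identification with $M$ via peripheral subgroups as in \cite[Section 4]{LMT}), then justify \emph{encodes} by \cite[Theorem 5.1]{Tsang-Agol} and \cite[Theorem~2.4.5]{tsang-thesis}. The paper states the theorem without a separate proof precisely because it is this assembly of the preceding constructions and cited results.
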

	
	\begin{theorem}[Agol-Tsang]\label{thm:AT}
		Let $\V$ be a finite veering triangulation of an oriented three-manifold~$M$. For almost every collection $s$ of Dehn filling slopes on the boundary components of $M$ the Dehn filled manifold $M(s)$ admits a transitive pseudo-Anosov flow~$\Psi$ for which the core curves of the filling solid tori form a perfect set $\Lambda$ such that  \mbox{$\V = \V(\Psi, \Lambda)$}. 
		\hspace*{\fill}\cite[Theorem 5.1]{Tsang-Agol} 
	\end{theorem}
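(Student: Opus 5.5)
Since this theorem is quoted from \cite[Theorem 5.1]{Tsang-Agol}, the plan below only outlines how I would reconstruct its proof. The overall strategy is to build the flow directly from the combinatorics of $\V$ on the cusped manifold $M$. One then Dehn fills and checks two things: the resulting object is a genuine transitive pseudo-Anosov flow, and running the Agol-Gu\'eritaud construction (Construction \ref{const:finite:veering}) on it returns $\V$.

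\textbf{Step 1 (flow on the cusped manifold).} From the taut and veering structure I would form the dual upper and lower branched surfaces $B^\pm$ in $M$. They are transverse to each other, and each fully carries an essential lamination. The intersection $B^+\cap B^-$ is a train-track-like graph, the \emph{flow graph}, whose directed cycles should later correspond to closed orbits. Transitivity of the eventual flow will come from the flow graph having a strongly connected recurrent part, which I would deduce from the fact that every tetrahedron lies on a flow cycle.

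I would thicken and collapse the complementary regions of these laminations to get singular foliations $\W^\pm$ on $M$. Their leaves meet along a flow that is expanding and contracting in the required sense, modelled on a Markov-type decomposition given by the tetrahedra. Near each boundary torus $T$ of $M$, the boundary train tracks of $B^\pm$ consist of the $2k$ ladderpole curves of the veering triangulation on $T$, all of a common slope $l_T$. So the flow there looks like a blown-up neighbourhood of a closed orbit: there are $2k$ half-leaves arranged in alternating stable and unstable crowns.

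\textbf{Step 2 (Dehn filling).} Fill along a slope $s$ on each torus $T$ and collapse the boundary torus to the core circle. The core then becomes a closed orbit whose number of prongs is determined by the geometric intersection number $i(s,l_T)$ and by the number of ladders; I expect it to be something like $k\cdot i(s,l_T)$ prongs for each of $\W^\pm$. The filled flow is pseudo-Anosov exactly when no core becomes a $1$-prong, and that excludes only the slopes with small $i(s,l_T)$, i.e.\ finitely many slopes per cusp. This is the meaning of ``almost every collection $s$.''

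Singular orbits of the filled flow lie among the cores, possibly together with orbits that were already singular in $M$, which are themselves cores. Hence the set $\Lambda$ of cores is finite, nonempty and contains $\Sing(\Psi)$. The remaining conditions of Definition \ref{def:pA:flow} follow from the local models.

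\textbf{Step 3 (perfect set and recovery of $\V$) — the main obstacle.} I expect the hardest part to be showing that $\Psi$ has no perfect fits relative to $\Lambda$, and that $\V(\Psi,\Lambda)=\V$. To do this I would identify the universal cover of the orbit space of $\Psi$ punctured at $\widetilde{\Lambda}$ with the link space of $\univV$ (in the sense of \cite{SchleimSegLinks}). The identification should be $\pi_1(M)$-equivariant and match the foliations, cusp leaves and cusps.

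Granting this identification, both remaining claims follow:
\begin{itemize}
\item The link space is a loom space, so by Lemma \ref{lem:only:fake:pf}, in its converse direction, every perfect fit that misses $\widetilde{\Lambda}$ would have to be a cusp rectangle. Hence $\Lambda$ is a perfect set.
\item Tetrahedron rectangles of the link space correspond bijectively and equivariantly to tetrahedra of $\univV$, so Construction \ref{const:finite:veering} reproduces $\V$.
\end{itemize}
To build the equivariant identification I would first try sending each lifted tetrahedron to its rectangle of flow lines through it. I would then need to show these rectangles fit together as in Construction \ref{constr:loom:to:veering}, and the crux is checking that no extra tetrahedron rectangles appear in the orbit space.
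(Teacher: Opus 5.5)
The paper gives no proof of this statement; it imports it from \cite[Theorem 5.1]{Tsang-Agol}. Your outline therefore has to stand on its own, and it has a genuine gap at the step you present as easy, namely transitivity.

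You infer that the recurrent part of the flow graph is strongly connected from the fact that every tetrahedron lies on a flow cycle. That inference is invalid. In a finite directed graph every vertex can lie on a cycle while there are several strongly connected components: two cycles joined by a single edge pointing from the first to the second already do this. A flow coded by such a graph splits into several closed invariant pieces and has no dense orbit, which is exactly the mechanism behind non-transitive Anosov flows. Transitivity therefore needs a genuinely global statement: the part of the flow graph coding the non-peripheral orbits forms a single strongly connected component. That is a substantive combinatorial theorem, not a formality, and your outline gives no argument for it. Also, if you obtain ``every tetrahedron lies on a flow cycle'' from density of periodic orbits, you are already using transitivity.

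The same kind of gap appears when you say the remaining conditions of Definition~\ref{def:pA:flow} follow from the local models. Condition (4), asymptotic convergence along leaves together with the separation clause, is a global contraction/expansion property. It must come from actual growth along the recurrent part of the flow graph (a Perron--Frobenius or expansivity argument), not from the picture near the cores.

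Step 3 is likewise not yet an argument. The equivariant identification of the universal cover of $\orbit(\Psi)\setminus\widetilde{\Lambda}$ with the link space is essentially equivalent to the two conclusions you want, and your sketch leaves both of its halves open.
\begin{itemize}
\item The projection of each lifted tetrahedron must be a genuine tetrahedron rectangle. Each of its sides must contain (the image of) a point of $\widetilde{\Lambda}$ in its interior, and this is exactly where the filling enters.
\item Nothing else may appear: every pair of cusps spanning an edge rectangle must be joined by an edge of $\univV$.
\end{itemize}

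Lemma~\ref{lem:only:fake:pf} cannot be run ``in its converse direction'', since its proof uses that $\Lambda$ is already perfect. What you need instead is that the identification carries the ideal corners of perfect fits in the link space, namely its cusps, to lifts of $\widetilde{\Lambda}$.

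Finally, ``almost every'' does not mean ``all but finitely many slopes per cusp''. With $2k$ ladders on $T$ the core becomes a $k\,i(s,l_T)$-prong orbit. So at a cusp with two ladders every slope with $i(s,l_T)=1$ must be excluded, which is an infinite family. The correct hypothesis is that each filling slope meets the ladderpole curves at least four times.
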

Although the following result is not explicitly stated in \cite{Tsang-Agol}, it follows directly from the construction.
\begin{theorem}[Agol-Tsang] \label{thm:unique:drilled:flow}
	Let $\V$ be a finite veering triangulation of an oriented three-manifold~$M$. For any admissible collections $s, s'$ of Dehn filling slopes on the boundary components of $M$, the constructed pseudo-Anosov flows $\Psi_s$, $\Psi_{s'}$ on $M(s), M(s')$, respectively, agree on the complements of the core curves of the filling solid tori. \qed
\end{theorem}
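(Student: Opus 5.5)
The statement to prove is Theorem~\ref{thm:unique:drilled:flow}. I would prove it by going back to the construction of Agol and Tsang in \cite{Tsang-Agol} and checking that the slope data enter only near the cusps. In that construction, $\V$ first yields a canonical object on $M$ that does not depend on slopes: the stable and unstable branched surfaces dual to $\V$ (equivalently the dual graph $\Gamma$, or flow graph). From these one builds a semiflow, and then a flow $\Psi^\circ$ on the complement of a neighborhood of the cusps, i.e.\ on a compact core $M_0\cong M$. The boundary tori of $M_0$ carry the ladder structure of $\V$, and the flow near each boundary torus is a standard suspension-type model determined by the ladders. The slopes $s$ are used only in the last step: one collapses each boundary torus (or glues in a solid torus) so that the core becomes a closed orbit whose prong data are determined by how $s$ meets the ladderpole curves. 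Admissibility of $s$, meaning that each slope meets the ladderpoles in at least two points, ensures the core is a $k$-prong with $k\ge 2$ and not degenerate.

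The key steps, in order, are as follows.
\begin{enumerate}
\item Show that $\Psi^\circ$ on $M_0$ (or on $M$) is defined purely from $\V$ and is independent of $s$.
\item Show that the filling $M(s)$ is obtained from $M_0$ by a quotient or gluing that is a homeomorphism on $\mathrm{int}(M_0)$, and that the flow $\Psi_s$ restricts to $\Psi^\circ$ there, possibly after reparametrization.
\item Identify $M(s)\setminus\{\text{cores}\}$ with $\mathrm{int}(M_0)\cong M$, compatibly for $s$ and $s'$. The cores are exactly the images of the boundary tori, and $M(s)$ minus the cores deformation retracts canonically onto this interior.
\end{enumerate}
Combining these, $\Psi_s$ and $\Psi_{s'}$ are both identified with $\Psi^\circ$ on the same copy of $M$, which is exactly the claim.

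The main obstacle is step (2). In Agol--Tsang the filling is not literally a gluing of a model solid torus. Rather, a collar neighborhood of each boundary torus is modified (blown down along a foliation by curves of slope $s$), and one must make sure this modification is confined to the collar. If the construction as written alters the flow in a collar of width depending on $s$, I would handle it as follows. First, use that orbit equivalence is the relevant notion, so "agree" should mean agree up to orbit equivalence isotopic to the identity on $M$. Second, choose the collar inside the boundary-parallel region where the flow is already the standard ladder model for every slope, so that the complement of the cores is flow-equivalent to the unmodified region via a flow-preserving radial push. As a cross-check, both restrictions $\Psi_s|_{M}$ and $\Psi_{s'}|_M$ have orbit space $\loom$ with the same $\pi_1(M)$-action, namely the one for which $\V$ is the Agol--Gu\'eritaud triangulation. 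Since the finite-level data of $\V$ determine this action, the two drilled flows are orbit equivalent by the drilled analogue of \cite[Theorem 1.1]{BFM}.
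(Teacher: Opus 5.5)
Your proposal is correct and takes the same route as the paper, which justifies the statement only by remarking that it follows directly from the Agol--Tsang construction: the slopes enter only in the final filling step, so the flow on the complement of the cores is the same $s$-independent flow on $M$. Two minor corrections: admissibility in \cite[Theorem~5.1]{Tsang-Agol} requires each slope to meet the collection of ladderpole curves at least four times, since $k$ intersections make the core a $\frac{k}{2}$-prong orbit and two would give a degenerate $1$-prong; and your closing cross-check is unnecessary and should not serve as a fallback, because it relies on an analogue of \cite[Theorem~1.1]{BFM} for drilled flows on non-compact manifolds, which is not available.
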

	
	\begin{theorem}[Tsang]\label{thm:tri:flow:id}
		Let $\Psi, N$ and $\Lambda$ be as in Theorem \ref{theorem:AG}. Set $\V = \V(\Psi, \Lambda)$. Let $\Psi'$ be the pseudo-Anosov flow built via the Agol-Tsang construction from $\V$ and a collection of Dehn filling slopes that recover $N$ from $N \bez \Lambda$. 
		Then $\Psi'$ is orbit equivalent to $\Psi$ via a homeomorphism that is isotopic to identity. \hfill \cite[Theorem~2.4.5]{tsang-thesis}
	\end{theorem}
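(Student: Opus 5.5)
The plan is to reduce the statement to the Barthelm\'e--Frankel--Mann criterion \cite[Theorem 1.1]{BFM}. That criterion says the following. Suppose two transitive pseudo-Anosov flows on the same closed $3$-manifold $N$ have orbit spaces $\orbit(\Psi)$ and $\orbit(\Psi')$ that are conjugate via a homeomorphism $H$. Suppose $H$ maps $\orbit^\pm(\Psi)$ to $\orbit^\pm(\Psi')$ and is equivariant with respect to the two actions of $\pi_1(N)$, with the identity on $\pi_1(N)$. Then the flows are orbit equivalent via a homeomorphism homotopic to the identity. So the whole task is to produce such a $\pi_1(N)$-equivariant bi-foliated conjugacy $H\colon \orbit(\Psi)\to\orbit(\Psi')$. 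A separate argument is then needed to upgrade \emph{homotopic} to \emph{isotopic}.

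\textbf{Step 1: identify both loom spaces with the veering triangulation.} Let $\Lambda'$ be the set of core curves of the filling solid tori for $\Psi'$. By Theorem~\ref{thm:AT}, $\Lambda'$ is a perfect set for $\Psi'$ and $\V(\Psi',\Lambda') = \V = \V(\Psi,\Lambda)$. Passing to universal covers, both $\loom(\Psi,\Lambda)$ and $\loom(\Psi',\Lambda')$ produce, via Construction~\ref{constr:loom:to:veering}, the same infinite triangulation $\univV$. This identification is $\pi_1(M)$-equivariant, where $M = N\bez\Lambda$ and $N\bez\Lambda'$ are identified through $\V$.

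\textbf{Step 2: build the equivariant conjugacy of loom spaces.} From Step 1 I would construct a $\pi_1(M)$-equivariant homeomorphism $\loom(\Psi,\Lambda)\to\loom(\Psi',\Lambda')$ preserving both foliations. The map on cusps is determined by the ideal vertices of $\univV$. Each point $p$ of a loom space is determined by the nested family of tetrahedron rectangles containing it, equivalently by its cusp leaves and the rectangles around it. The tool for this is the reconstruction of a loom space from its veering triangulation \cite{SchleimSegLoom}, for instance via the link space of $\univV$ \cite{SchleimSegLinks}. So $p$ determines a combinatorial datum in $\univV$, and hence a point of the other loom space. Continuity and preservation of leaves follow because rectangles go to rectangles.

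\textbf{Step 3: descend to the orbit spaces.} I would extend the map to the completions $\completedLoom$ by sending cusps to cusps. It then descends along the branched covers $\completedLoom\to\orbit$, using the equivariance under $\pi_1(M)\to\pi_1(N)$. The kernel of this epimorphism is normally generated by the filling slopes, and these slopes agree on both sides by hypothesis. Hence the map is constant on fibres, and the induced map $\orbit(\Psi)\to\orbit(\Psi')$ is $\pi_1(N)$-equivariant.

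\textbf{Step 4: apply BFM and upgrade to isotopy.} Applying \cite{BFM} gives an orbit equivalence homotopic to the identity. To upgrade to isotopy, I would appeal to the fact that homotopy implies isotopy for homeomorphisms of the irreducible manifolds carrying pseudo-Anosov flows. This uses Waldhausen in the Haken case and geometrization otherwise; the manifold is irreducible because its universal cover is $\rr^3$.

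\textbf{Expected main obstacle.} I expect Step~2 to be the hardest: showing rigorously that the loom space is determined, up to equivariant foliated conjugacy, by $\univV$. This requires a careful argument that points, and not just cusps, are recovered from the combinatorics. I would try first to route it through the link-space model and the loom-isomorphism theorems of \cite{SchleimSegLoom}.
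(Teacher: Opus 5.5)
\textbf{Verdict: genuine gap in Step~2.} The paper does not prove this theorem; it cites Tsang's thesis and notes only that Tsang's argument goes through \cite[Theorem~1.1]{BFM}.

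\textbf{Steps 1, 3 and 4.} These are fine in outline. The descent in Step~3 works because the deck group of $\loom\to\drilledOrbit$ is exactly $\ker(\pi_1(M)\to\pi_1(N))$, which is the same subgroup on both sides once the filling slopes agree. Homotopy implies isotopy for these irreducible manifolds. In Step~1 you need Agol--Tsang in a stronger form than Theorem~\ref{thm:AT} as quoted: $\V$ must be the triangulation of $(\Psi',\Lambda')$ \emph{in its given position in $M$}, not merely an abstractly isomorphic one. An isomorphism induced by a homeomorphism of $M$ not homotopic to the identity would destroy the conclusion.

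\textbf{Step 2 is asserted, not proved.} It carries all of the content. As written, the mechanism does not work:
\begin{enumerate}
\item[(a)] Tetrahedron rectangles are maximal, so distinct tetrahedron rectangles containing $p$ are never nested. What is nested are the finite intersections along a flow line.
\item[(b)] To get a map $\loom(\Psi,\Lambda)\to\loom(\Psi',\Lambda')$ from $\univV$, you would have to:
  \begin{enumerate}
  \item[(i)] attach to $p$ an intrinsic combinatorial datum, e.g.\ a flow line $\rho$ with $\bigcap\rho=p$ (Theorem~\ref{thm:flow:lines});
  \item[(ii)] show that the same datum cuts out a single point in the second loom space, not a cusp and not the empty set;
  \item[(iii)] give a combinatorial criterion for two data to represent the same point, so the map is well defined and injective;
  \item[(iv)] prove continuity and that leaves go to leaves.
  \end{enumerate}
\item[(c)] ``Rectangles go to rectangles'' presupposes the map you are trying to define.
\end{enumerate}

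\textbf{The proposed citation does not close the gap.} The introduction of this paper says that \cite{SchleimSegLinks} proves only that the link space of $\V(\Psi,\Lambda)$ is a loom space. It adds that no proof is currently available that this link space is loom-isomorphic to $\loom(\Psi,\Lambda)$. A theorem stating that a loom space is determined by its veering triangulation, up to equivariant isomorphism, would immediately give that missing isomorphism. So you cannot take it off the shelf.

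\textbf{How to avoid Step 2.} Use \cite[Theorem~1.1]{BFM} in its periodic-orbit form. Transitive pseudo-Anosov flows on the same closed manifold whose closed orbits represent the same free homotopy classes are orbit equivalent by a homeomorphism isotopic to the identity. That theorem builds the orbit-space conjugacy for you, so no reconstruction of $\loom$ from $\univV$ is needed. What remains is to compare periodic data, and this is combinatorial:
\begin{enumerate}
\item[(1)] The Landry--Minsky--Taylor correspondence between closed orbits of the drilled flow and flow cycles (Theorem~\ref{thm:flow:lines}, Proposition~\ref{prop:square}) is phrased entirely in terms of $\V$. It applies equally to $(\Psi,\Lambda)$ and $(\Psi',\Lambda')$.
\item[(2)] Hence the closed orbits of $\Psi\bez\Lambda$ and $\Psi'\bez\Lambda'$ give the same conjugacy classes in $\pi_1(M)$. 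Twisted orbits need care, since only their square is represented by a flow cycle.
\item[(3)] Push these classes into $\pi_1(N)$ and add the classes of the cores. Then $\Psi$ and $\Psi'$ have the same closed orbits up to free homotopy, and \cite{BFM} gives the orbit equivalence, isotopy included.
\end{enumerate}
If you want to keep your orbit-space route instead, Step~2 has to be proved from scratch. A plausible tool is the convex-continent machinery of Section~\ref{sec:orders}, which shows that relative positions of cusp leaves are determined by the veering structure.
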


	
	\subsection{Loomification} \label{subsec:loomification}
	
	Let $\V$ be a finite veering triangulation of a three-manifold $M$. By Theorem \ref{thm:AT}, there is a transitive pseudo-Anosov flow $\Psi$ on some Dehn filling $N$ of~$M$ and a perfect set $\Lambda$ for $\Psi$ such that \mbox{$\V(\Psi, \Lambda) = \V$}. Set $\loom = \loom(\Psi, \Lambda)$. Fix an orientation on each foliation $\loom^\pm$ and the cardinal directions for each rectangle in $\loom$ according to Convention~\ref{convention:cardinal:directions}. Furthermore, fix a bijection between the ideal vertices of $\univV$ and the cusps of $\loom$ such that whenever two ideal vertices $v, v'$ of $\univV$ are distinct endpoinst of the same edge $e$ of $\univV$, the corresponding cusps span an edge rectangle which gives rise to $e$ under Construction \ref{constr:loom:to:veering}. To avoid overcomplicating the notation, we do not introduce explicit notation for this bijection. However, from now on, it is always understood to be fixed.

	Let $s$ be a simplex of~$\univV$. Set $\loomif(s)$ to be the unique rectangle in $\loom$ that gives rise to~$s$ under Construction~\ref{constr:loom:to:veering}. We call $\loomif(s)$ the \emph{loomification} of $s$. Although we cannot assign concrete dimensions to $\loomif(s)$, as we have not equipped $\loom$ with any metric, the edge colors of $s$ encode the horizontal (from west to east) and vertical (from south to north) orders on the set of cusps of $\loomif(s)$. Thus, $\loomif(s)$ may be viewed as a rectangle that is well defined only up to these orders. In particular, there is a subtle difference between $\loomif(s)$ viewed as a subset of $\loom$ and our model for $\loomif(s)$. This distinction is, however, largely  supressed in the discussion.
	
	For a collection $\mathcal{S}$ of simplices of $\univV$ set
	\[\loomif(\mathcal{S}) = \bigcup\limits_{s \in \mathcal{S}} \loomif(s).\]
	When $\mathcal{S}$ is viewed as a subcomplex of $\univV$ and some of its simplices have common subsimplices, their loomifications overlap. Thus we might be able to determine the relative positions of certain pairs of cusps of $\loomif(\mathcal{S})$ that do not meet the same rectangle. 
	
	In the case of a collection of simplices, the distinction between $\loomif(\mathcal{S})$ and our model for $\loomif(\mathcal{S})$ is even more profound, because the colors of edges of simplices in $\mathcal{S}$ might not be sufficient to determine the relative position of all pairs of cusps of $\loomif(\mathcal{S})$; see Figure~\ref{fig:non:convex} for an example. We will show that for any finite collection $W$ of cusps of $\loomif(\mathcal{S})$, one can always find a finite collection $\mathcal{S}'$ of simplices of $\univV$ that contains $\mathcal{S}$ and such that the relative positions of cusps from $W$ can  be determined using $\loomif(\mathcal{S}')$; see Proposition~\ref{prop:continents:exists}, Lemma \ref{lem:exist:convex:continent} and Proposition~\ref{prop:conv:cont:circ:order}. 
	
	Relative positions of cusps play a central role in the drilling algorithm described in Section~\ref{sec:algorithm} for the following reason. As described in Section \ref{subsec:finite:parent}, to construct the veering parent $\V_\gamma$ we need to find all points of $(\loom, \loom^+, \loom^-)$ corresponding to the lifts of $\gamma$ that lie in a finite collection of tetrahedron rectangles. 
	Given a rectangle $R \subset (\loom, \loom^+, \loom^-)$ and a point $p\in R$, we approximate the position of $p$ by a small rectangle $N(p)$ whose boundary is contained in a collection of cusp leaves. 
	Given another point $q \in R$, we then determine the relative horizontal and vertical positions of $p$ and $q$ by comparing the positions of the cusps from which the cusp leaves bounding $N(p)$ and $N(q)$ originate. Thus the ability to compare the positions of points ultimately rests on our ability to compare the relative positions of cusps; see Section~\ref{sec:algorithm} for details.


	\section{Veering parents}\label{sec:parents}
	
	Let $\Psi$ be a transitive pseudo-Anosov flow on an oriented three-manifold $N$, and let~$\Lambda$ be a perfect set for~$\Psi$. By Theorem \ref{theorem:AG}, there is a veering triangulation~$\V$ on $M = N \bez \Lambda$ that encodes $\Psi$. If $\gamma$ is a closed orbit of $\Psi$ that is not an element of~$\Lambda$ then $\Lambda_\gamma = \Lambda \cup \lbrace \gamma \rbrace$ is also a perfect set for~$\Psi$. Thus, again by Theorem~\ref{theorem:AG}, there is a veering triangulation $\V_\gamma$ of $M_\gamma = N \bez \Lambda_\gamma$ that encodes~$\Psi$. We call $\V_\gamma$ a \emph{veering parent} of $\V$.  
	
	The standard method for building $\V_\gamma$  utilizes the $(\Psi, \Lambda_\gamma)$-action of $\pi_1(M_\gamma)$ on the $(\Psi, \Lambda_\gamma)$-loom space; see Constructions \ref{constr:loom:to:veering} and \ref{const:finite:veering}. We aim to construct~$\V_\gamma$ directly from $\V$ and some combinatorial object encoding $\gamma$ that we will describe in Section~\ref{sec:flow:cycles}. In this section, we focus on one of the ingredients ---  constructing $\V_\gamma$ using the $(\Psi, \Lambda)$-action of $\pi_1(M)$ on the $(\Psi, \Lambda)$-loom space.

	\subsection{Veering parent from the punctured loom space}\label{subsec:punctured:loom:space}

	Set $\loom = \loom(\Psi, \Lambda)$. Recall that $(\loom, \loom^+, \loom^-)$ can be seen as the orbit space of the drilled flow $\Psi \bez \Lambda$ on $M$. 
	Since~$\gamma$ is a closed orbit of $\Psi$ that is not in $\Lambda$, the preimage $\liftGamma$ of $\gamma$ under the universal covering map $\widetilde{M} \rightarrow M$ is a nonempty set of orbits of $\widetilde{\Psi} \bez \widetilde{\Lambda}$.  Identify $\liftGamma$ with the corresponding subset of $\loom$ and set  $\pLoom = \pLoom(\Psi, \Lambda, \gamma) =  \loom \bez \liftGamma$. Denote the restrictions of $\loom^\pm$  to $\pLoom$ by $\pLoom^\pm$. 
	The bi-foliated surface $(\pLoom, \pLoom^+, \pLoom^-)$ is not a loom space, because $\pLoom$ is not a plane. In Lemma \ref{lem:ploom} we show that nonetheless $(\pLoom, \pLoom^+, \pLoom^-)$ retains many key properties of a loom space. Figure~\ref{diagram:bifoliated:spaces} illustrates relationships between various bi-foliated surfaces that we consider in this section.

	\begin{figure}[h]
		\begin{tikzcd}
		\loom(\Psi, \Lambda_\gamma)\arrow[d]& & \\
		\pLoom(\Psi, \Lambda, \gamma) \arrow[d] \arrow[r, hookrightarrow]& \loom(\Psi, \Lambda)\arrow[d]& \\
		\drilledOrbit(\Psi, \Lambda_\gamma)\arrow[r, hookrightarrow] & \drilledOrbit(\Psi, \Lambda) \arrow[r, hookrightarrow]& \orbit(\Psi)
		\end{tikzcd}
		\caption{Relationship between bi-foliated surfaces obtained from the orbit space of a transitive pseudo-Anosov flow $\Psi$, a perfect set $\Lambda$ for $\Psi$, and a periodic orbit $\gamma$ of $\Psi$ that is not in $\Lambda$. All horizontal maps are inclusions. All vertical maps are covering maps. 
		}
		\label{diagram:bifoliated:spaces}
	\end{figure}
	
	The surface $\pLoom$ admits two distinct types of \emph{ideal points}: punctures and cusps. The distinction between the two is such that a puncture admits a neighborhood homeomorphic to a once punctured disk, while a cusp admits a neighborhood homeomorphic to the universal cover of a once punctured disk.

	\begin{lemma}\label{lem:ploom}
		The bi-foliated surface $(\pLoom, \pLoom^+, \pLoom^-)$ satisfies the following properties.
		\begin{enumerate}
			\item The foliations $\pLoom^\pm$ are orientable. \label{ploom:oriented:foliations}
			\item Every rectangle in $\pLoom^\pm$ is contained in a tetrahedron rectangle. \label{ploom:everything:in:tet}
			\item 	Every edge rectangle in $(\pLoom, \pLoom^+, \pLoom^-)$ is the widest edge rectangle of exactly one tetrahedron rectangle in $(\pLoom, \pLoom^+, \pLoom^-)$ and the tallest edge rectangle of exactly one tetrahedron rectangle in $(\pLoom, \pLoom^+, \pLoom^-)$. \label{ploom:edge:rectangles}
			\item Every face rectangle in $(\pLoom,\pLoom^+,\pLoom^-)$ is contained in precisely two distinct tetrahedron rectangles, and is a tall face subrectangle of one of them and a wide face subrectangle of the other. \label{ploom:face:rectangles}
			\item There is an orientation-preserving action of $\pi_1(M)$ on $(\pLoom, \pLoom^+, \pLoom^-)$ that sends simplex rectangles to simplex rectangles of the same dimension and is free on simplex rectangles. \label{ploom:action}
		\end{enumerate}
	\end{lemma}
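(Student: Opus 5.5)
Each item will be deduced from the corresponding property of $(\loom,\loom^+,\loom^-)$ in Section~\ref{sec:pA}, plus one extra input: $\liftGamma$ is a closed, discrete, $\pi_1(M)$-invariant subset of $\loom$, and each leaf of $\loom^\pm$ contains at most one point of $\liftGamma$. Both facts follow from Proposition~\ref{prop:orbit:space} \eqref{orbit_space:one:periodic} and \eqref{orbit_space:no:accumulation:points}, since $\liftGamma$ projects to the lifts of the closed orbit $\gamma$ in $\orbit$. In $\pLoom$ we use two kinds of ideal points, cusps and punctures. Simplex rectangles in $\pLoom$ are defined exactly as in $\loom$, with ideal points of either kind allowed in their boundaries. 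Their combinatorial type is then recorded in the same way, since a puncture on a side disconnects that side just as a cusp does.

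Item \eqref{ploom:oriented:foliations} is immediate: $\pLoom^\pm$ are restrictions of the oriented foliations $\loom^\pm$. For item \eqref{ploom:everything:in:tet} I would first reprove Lemma~\ref{lem:expansions} in $\pLoom$. Take a rectangle $R\subset\pLoom$ and slide its west side westward through $\pLoom$. The swept region is contained in the maximal western expansion of $R$ in $\loom$, which is bounded by Lemma~\ref{lem:expansions}. The sweep stops either at a cusp or at the first point of $\liftGamma$ encountered. Discreteness of $\liftGamma$, together with the fact that cusps do not accumulate, ensures that such a first obstruction exists. It also ensures that the new west side is disconnected at exactly one ideal point. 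The leaf-uniqueness property ensures that no two ideal points on the boundary are joined by a leaf. With this in hand, $N(E(S(W(R))))$ is a tetrahedron rectangle of $\pLoom$ containing $R$, exactly as in the proof of Proposition~\ref{prop:loom:space}.

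Items \eqref{ploom:edge:rectangles} and \eqref{ploom:face:rectangles} then follow as Lemmas~\ref{lem:edge:rectangle:once:bottom:top} and \ref{lem:face:rectangles:in:two:tet} do. Take an edge rectangle $P$ of $\pLoom$. Its maximal vertical expansion, which is unique by the $\pLoom$-version of Lemma~\ref{lem:expansions}, is a tetrahedron rectangle with $P$ as its widest edge subrectangle. Its maximal horizontal expansion is a tetrahedron rectangle with $P$ as its tallest edge subrectangle. Uniqueness holds because any tetrahedron rectangle with $P$ as its widest edge rectangle is a vertical expansion of $P$ that cannot be expanded further. A maximal vertical expansion is determined by separately pushing north and south until an ideal point is hit. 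For a face rectangle $Q$ with ideal corner $c$, the only expandable sides are the two initial segments at $c$. The two maximal expansions in those directions give the two required tetrahedron rectangles. The local combinatorics is identical to the loom case once punctures are treated like cusps, so I would argue locally and not re-derive the full statements of \cite{SchleimSegLoom}.

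For item \eqref{ploom:action}, the $(\Psi,\Lambda)$-action of $\pi_1(M)$ on $\loom$ is orientation-preserving and preserves $\loom^\pm$. It also preserves $\liftGamma$, because $\liftGamma$ is the full preimage of $\gamma$. Hence it restricts to an action on $\pLoom$. This action sends rectangles to rectangles and ideal points to ideal points of the same type, so it preserves simplex rectangles and their dimensions. For freeness, suppose $g\neq 1$ stabilizes a simplex rectangle $R$. Then $g$ permutes the at least two ideal points in $\partial R$, and these have distinct positions (west, north, and so on). Since the action preserves leaf orientations and hence cardinal directions, $g$ fixes each of them. These ideal points are distinct points of $\completedLoom$, with punctures viewed as points of $\loom$. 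Lemma~\ref{lem:loom:stabilizers} then gives $g=1$, a contradiction. The one delicate point I expect is the $\pLoom$-analogue of Lemma~\ref{lem:expansions}: one must check that a sweep cannot end with a puncture at a corner or with a side accumulating on punctures. Discreteness of $\liftGamma$ handles this.
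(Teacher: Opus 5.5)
Your treatment of items (1)--(4) is sound, but the freeness argument in item (5) has a genuine gap. You assert that the $(\Psi,\Lambda)$-action preserves the orientations of $\loom^\pm$, and hence cardinal directions, so that an element stabilizing a simplex rectangle $R$ must fix each ideal point of $\partial R$. This is false in general. Elements of $\pi_1(M)$ may \emph{act with a twist}, sending the west side of a rectangle to the east side of its image. Being orientation-preserving, such an element also sends the south side to the north side. This happens, for instance, for elements representing twisted orbits, and it is why the paper needs the twisted case in Lemma~\ref{lem:equivalent:rectangles} and the factor $\gamma_x^2$ in Lemma~\ref{lem:staircase}. A twisting $g$ with $g(R)=R$ would swap the ideal points of $\partial R$ in pairs rather than fix them, so Lemma~\ref{lem:loom:stabilizers} does not apply to $g$ directly.

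The missing ingredient is that $\pi_1(M)$ is torsion-free. Since $g$ permutes the finitely many (at least two) ideal points of $\partial R$, some power $g^n$ with $n\geq 1$ fixes two distinct points of $\completedLoom$. Lemma~\ref{lem:loom:stabilizers} then gives $g^n=1$, and torsion-freeness gives $g=1$. This is exactly how the paper closes item (5).

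For items (2)--(4) you take a different route from the paper. The paper passes to the universal cover $\loomGamma=\loom(\Psi,\Lambda_\gamma)$ of $\pLoom$, which is a loom space by Proposition~\ref{prop:loom:space}. It shows that the covering map $\loomGamma\to\pLoom$ is injective on rectangles, using a monotone path in one stable and one unstable leaf, and that it preserves the number and positions of ideal points. It then transfers Lemma~\ref{lem:expansions} down. You instead stay inside $\loom$ and truncate the $\loom$-maximal expansion of $R^\sharp$ at the first point of $\liftGamma$. This avoids the covering argument, at the cost of checking the boundary structure by hand. Your check works, because a leaf through a point of $\liftGamma$ is never a cusp leaf and carries no other point of $\liftGamma$.

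One small correction there. For the first obstruction to exist, what you need is that $\liftGamma$ does not accumulate on cusps, which matters when a side of $R^\sharp$ or of its $\loom$-expansion ends at a cusp. It is not enough that cusps do not accumulate. Both facts follow from Proposition~\ref{prop:orbit:space}~\eqref{orbit_space:no:accumulation:points}: the lifts of the finitely many orbits in $\Lambda_\gamma$ form a closed discrete subset of $\orbit$.
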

	\begin{proof}
		Property \eqref{ploom:oriented:foliations} is satisfied because the foliations $\pLoom^\pm$ can be equipped with orientations inherited from $\loom^\pm$.
		
		Let $\loom_\gamma = \loom(\Psi, \Lambda_\gamma)$. Then $\loom_\gamma$ is the universal cover of $\pLoom$. We claim that the covering projection $\pi_\gamma: \loom_\gamma\rightarrow \pLoom$ sends rectangles to rectangles. Since it sends $\loom_\gamma^+$ to $\pLoom^+$ and $\loom_\gamma^-$ to $\pLoom^-$, it suffices to show that $\pi_\gamma$ is injective on rectangles. 
		
		Equip the foliations $\loom_\gamma^\pm$ with the orientation induced from $\pLoom^\pm$. 
		Let $x,y \in \loom_\gamma$ be distinct points contained in a rectangle $R \subset \loom_\gamma$. 
		Then $x,y$ can be connected by an injective path $\alpha:\lbrack 0,1 \rbrack \rightarrow R$ that is contained in the union of the stable leaf through~$x$ and the unstable leaf through~$y$; see Figure \ref{fig:2:points:rectangle}. The path must advance a non‑zero distance along at least one of those leaves because $x\neq y$. Since the image of $\alpha$ lies within two transverse leaves and $\alpha$ is injective, the direction of the path never reverses: if it ever moves west (respectively north, east, south), it never moves east (respectively, south, west, north). The same must be true for $\pi_\gamma \circ \alpha$, because 
		$\pi_\gamma$ sends $\loom_\gamma^\pm$ to $\pLoom^\pm$ respecting the orientations of leaves. Therefore $\pi_\gamma(x) \neq \pi_\gamma(y)$. It follows that for any rectangle embedding $f_R : (0,1)^2 \rightarrow \loom_\gamma$, the composition $\pi_\gamma \circ f_R$ is a rectangle embedding. 
		
		\begin{figure}[h]
			\includegraphics[scale=0.35]{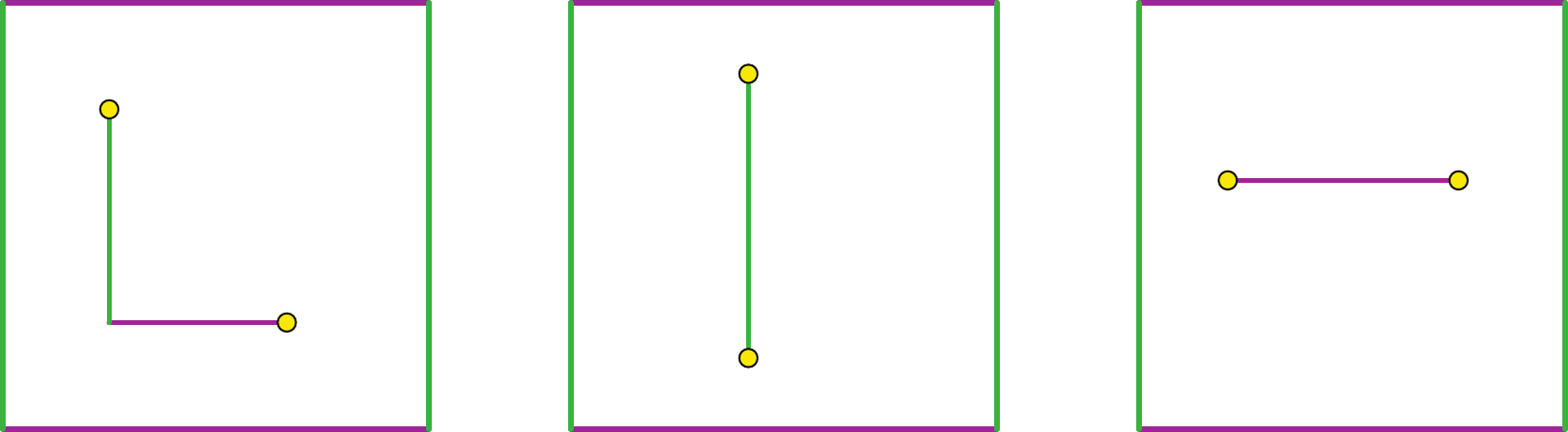}
			\caption{Two distinct points contained in a rectangle can be connected by an injective path contained in the union of one stable leaf and one unstable leaf.}
			\label{fig:2:points:rectangle}
		\end{figure}
		
		The covering projection $\pi_\gamma$ sends cusp neighborhoods in $\loom_\gamma$ to either cusp neighborhoods in $\pLoom$ or punctured disks in $\pLoom$.
		Thus~$\pi_\gamma$ preserves the number and location of ideal points in the boundary of each rectangle.
		With the orientations on $\loom_\gamma^\pm$ induced from $\pLoom^\pm$ and the cardinal directions in each rectangle determined by Convention \ref{convention:cardinal:directions},  the west (east, south, north, respectively) side of a rectangle maps to the west (east, south, north, respectively) side of the rectangle in the image.
		It follows that every rectangle in $(\pLoom, \pLoom^+, \pLoom^-)$ admits a maximal western expansion $R'$ that has at most one ideal point in each side, exactly one ideal point in the west side, and such that no two cusps in the boundary of $R'$ are connected by a leaf segment of $\pLoom^\pm$, because --- by~Lemma \ref{lem:expansions} --- such property holds in $(\loom_\gamma, \loom_\gamma^+, \loom_\gamma^-)$. Analogous statements hold for eastern, southern, and northern expansions. This together with Proposition \ref{prop:orbit:space} \eqref{orbit_space:no:accumulation:points} imply properties \eqref{ploom:everything:in:tet}, \eqref{ploom:edge:rectangles}, and \eqref{ploom:face:rectangles}; see Lemmas \ref{lem:edge:rectangle:once:bottom:top}, \ref{lem:face:rectangles:in:two:tet} and Proposition \ref{prop:loom:space} where these properties are proved for loom spaces.
		
		Recall that $\pLoom$ is obtained from the loom space $\loom = \loom(\Psi, \Lambda)$ by deleting an orbit of a periodic point under the $(\Psi, \Lambda)$-action of $\pi_1(M)$. Thus the $(\Psi, \Lambda)$-action of $\pi_1(M)$ on $(\loom, \loom^+, \loom^-)$ restricts to an action of $\pi_1(M)$ on $(\pLoom, \pLoom^+, \pLoom^-)$ which sends simplex rectangles to simplex rectangles of the same dimension. By Lemma \ref{lem:loom:stabilizers}, the stabilizers of distinct ideal points of $\pLoom$ have trivial intersection. Since moreover $\pi_1(M)$ is torsion-free, \eqref{ploom:action} follows.
	\end{proof}

	Parts \eqref{ploom:oriented:foliations} -- \eqref{ploom:face:rectangles}  of Lemma \ref{lem:ploom}  imply that  Construction \ref{constr:loom:to:veering}, which produces an infinite veering triangulation of $\rr^3$ from a loom space, can be generalized to produce an infinite veering triangulation from $(\pLoom, \pLoom^+, \pLoom^-)$. We denote this triangulation by $\widehat{\V}_\gamma$. 
	By Lemma~\ref{lem:ploom}~\eqref{ploom:action}, the group $\pi_1(M)$ acts simplicially on $\widehat{\V}_\gamma$. We now show that the resulting quotient triangulation is the veering parent $\V_\gamma$.

	\begin{proposition}\label{prop:parent:from:intermediate:cover}
		The quotient of $\widehat{\V}_\gamma$ by the action induced by the $(\Psi, \Lambda)$-action of $\pi_1(M)$ on $(\pLoom, \pLoom^+, \pLoom^-)$ is the veering parent $\V_\gamma$.
	\end{proposition}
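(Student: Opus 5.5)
The plan is to compare $\widehat{\V}_\gamma/\pi_1(M)$ with $\V_\gamma = \univV_\gamma/\pi_1(M_\gamma)$, where $\univV_\gamma$ is the infinite Agol-Gu\'eritaud triangulation determined by $\loom_\gamma = \loom(\Psi,\Lambda_\gamma)$ (Construction~\ref{constr:loom:to:veering}). The comparison goes through the covering projection $\pi_\gamma\colon \loom_\gamma \to \pLoom$. We will show that $\widehat{\V}_\gamma$ is the quotient of $\univV_\gamma$ by the deck group $K$ of $\pi_\gamma$, and that $\pi_1(M)$ is the quotient $\pi_1(M_\gamma)/K$ acting compatibly. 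The quotient of $\widehat{\V}_\gamma$ by $\pi_1(M)$ is then the quotient of $\univV_\gamma$ by $\pi_1(M_\gamma)$.

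\textbf{Step 1: $\pi_\gamma$ induces a simplicial covering $\univV_\gamma \to \widehat{\V}_\gamma$.} The proof of Lemma~\ref{lem:ploom} shows that $\pi_\gamma$ is injective on rectangles. It sends rectangles to rectangles, preserves cardinal directions, and preserves the number and positions of ideal points on the sides. Hence it maps tetrahedron, face, and edge rectangles of $\loom_\gamma$ to simplex rectangles of $\pLoom$ of the same dimension and color.

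Conversely, every simplex rectangle $R$ of $\pLoom$ is simply connected, so it lifts to $\loom_\gamma$. The lift is a rectangle with the same ideal points, since cusp and puncture neighbourhoods lift to cusp neighbourhoods; thus it is a simplex rectangle of the same type. The lifts of $R$ are permuted simply transitively by $K$.

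Face inclusions are also preserved. By Lemma~\ref{lem:face:rectangles:in:two:tet} and Lemma~\ref{lem:ploom}\eqref{ploom:face:rectangles}, the face pairings on both sides are characterised by ``tall face subrectangle of'' and ``wide face subrectangle of''. These relations are preserved under $\pi_\gamma$ and under lifting, so the map on simplices is a simplicial map. It is equivariant and identifies $\widehat{\V}_\gamma$ with $\univV_\gamma/K$. Here $K$ acts freely on simplices because, by Lemma~\ref{lem:loom:stabilizers} and torsion-freeness, $\pi_1(M_\gamma)$ acts freely on them.

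\textbf{Step 2: the groups match.} We have $\loom_\gamma$ covering $\pLoom$, which covers $\drilledOrbit(\Psi,\Lambda_\gamma)$. Viewing $\loom_\gamma$ as the orbit space of the lifted drilled flow on $\widetilde{M_\gamma}$, the deck group of the composite covering $\loom_\gamma \to \drilledOrbit(\Psi,\Lambda_\gamma)$ is identified with $\pi_1(M_\gamma)$ via the $(\Psi,\Lambda_\gamma)$-action.

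Similarly, $\pLoom \to \drilledOrbit(\Psi,\Lambda_\gamma)$ is the restriction of $\loom \to \drilledOrbit(\Psi,\Lambda)$. Its deck group is $\pi_1(M)$ acting via the $(\Psi,\Lambda)$-action. The inclusion $M_\gamma \subset M$ induces a surjection $\pi_1(M_\gamma) \to \pi_1(M)$, which is the filling of the cusp at $\gamma$; its kernel $K$ is normally generated by meridians of $\gamma$.

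We must check that $K$ is exactly the deck group of $\loom_\gamma \to \pLoom$, and that the two actions are compatible. The cleanest way is to lift everything to flow spaces: $\widetilde{M}\setminus\widetilde{\gamma}$ has universal cover $\widetilde{M_\gamma}$, and $K = \pi_1(\widetilde{M}\setminus \widetilde{\gamma})$. Since $\widetilde{M} \setminus \widetilde{\gamma}$ deformation retracts along flow lines to $\pLoom$, this is the deck group of $\loom_\gamma \to \pLoom$. The equivariance of $\pi_\gamma$ with respect to $\pi_1(M_\gamma) \to \pi_1(M)$ follows from naturality of the flow-orbit projections.

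\textbf{Step 3: conclude.} By Steps 1 and 2,
\[
\widehat{\V}_\gamma/\pi_1(M) \;\cong\; (\univV_\gamma/K)/(\pi_1(M_\gamma)/K) \;\cong\; \univV_\gamma/\pi_1(M_\gamma) \;=\; \V_\gamma .
\]
The taut and veering structures agree because $\pi_\gamma$ preserves orientations and colors.

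I expect Step 2 to be the main obstacle. The difficulty is identifying the deck group of $\loom_\gamma \to \pLoom$ with $\ker(\pi_1(M_\gamma) \to \pi_1(M))$, in a way compatible with both the $(\Psi,\Lambda_\gamma)$- and $(\Psi,\Lambda)$-actions, while keeping track of the identification of orbit spaces with flow spaces in the drilled manifolds.
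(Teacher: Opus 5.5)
Your proposal follows the paper's own argument: the paper likewise observes that $\pLoom=\loom_\gamma/\ker\delta_\gamma$ for the filling epimorphism $\delta_\gamma\colon\pi_1(M_\gamma)\to\pi_1(M)$, deduces $\widehat{\V}_\gamma=\widetilde{\V}_\gamma/\ker\delta_\gamma$, and identifies $\widetilde{\V}_\gamma/\pi_1(M_\gamma)$ with the iterated quotient by $\pi_1(M_\gamma)/\ker\delta_\gamma\cong\pi_1(M)$; your Steps 1--2 simply spell out what the paper states as an observation. One correction in Step 2: the deck groups of $\loom_\gamma\to\drilledOrbit(\Psi,\Lambda_\gamma)$ and $\pLoom\to\drilledOrbit(\Psi,\Lambda_\gamma)$ are the free groups $\ker(\pi_1(M_\gamma)\to\pi_1(N))$ and $\ker(\pi_1(M)\to\pi_1(N))$, not $\pi_1(M_\gamma)$ and $\pi_1(M)$ (those act on $\loom_\gamma$ and $\pLoom$ by lifting the $\pi_1(N)$-action on $\orbit$, not as deck transformations), but your flow-space argument identifying $K$ with $\pi_1(\pLoom)$ does not use these claims, so the proof stands.
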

	\begin{proof}
		As before, set $\loom_\gamma = \loom(\Psi, \Lambda_\gamma)$.  Let $\widetilde{\V}_\gamma$ be the veering triangulation determined by the action of $\pi_1(M_\gamma)$ on $\loom_\gamma$ as described in  Construction \ref{constr:loom:to:veering}. The veering parent $\V_\gamma$ is defined as the quotient of $\widetilde{\V}_\gamma$  by the action of $\pi_1(M_\gamma)$.
		
		Let $\delta_\gamma: \pi_1 (M_\gamma) \rightarrow \pi_1(M)$ be the epimorphism induced by Dehn filling $M_\gamma$ to~$M$. Observe that $\pLoom = \loom_\gamma/ \ker \delta_\gamma$ and thus
		$\widehat{\V}_\gamma = \widetilde{\V}_\gamma/\ker \delta_\gamma$. Since $\pi_1(M) \cong \pi_1(M_\gamma)/ \ker \delta_\gamma$, we get an identification between $\V_\gamma = \widetilde{\V}_\gamma/\pi_1(M_\gamma)$ and the iterated quotient $ \left(\widetilde{\V}_\gamma/\ker \delta_\gamma \right)\Big/\Big(\pi_1(M_\gamma)/\ker \delta_\gamma\Big)  = \widehat{\V}_\gamma/\pi_1(M)$. \end{proof}

	\subsection{Veering parent from punctured tetrahedron rectangles}\label{subsec:finite:parent}

	In Proposition~\ref{prop:parent:from:intermediate:cover} we showed that the veering parent $\V_\gamma$ can be constructed using the $(\Psi, \Lambda)$-action of $\pi_1(M)$ on the punctured loom space $(\pLoom, \pLoom^+, \pLoom^-)$ rather than the $(\Psi, \Lambda_\gamma)$-action of $\pi_1(M_\gamma)$ on the loom space $(\loom_\gamma, \loom_\gamma^+, \loomGamma^-)$. 
	In this section, we use the fact that a fundamental domain for the action of $\pi_1(M)$ on $\univV$ contains only finitely many tetrahedra to reduce this construction to one involving only finitely many tetrahedron rectangles in $(\loom,\loom^+,\loom^-)$.

	Let $R$ be a rectangle in $(\pLoom, \pLoom^+, \pLoom^-)$. We denote the image of $R$ under the inclusion $(\pLoom, \pLoom^+, \pLoom^-) \hookrightarrow (\loom, \loom^+, \loom^-)$ by $R^\sharp$.

	\begin{lemma}\label{lem:everything:in:tet:rect}
		Let $R$ be a rectangle in $(\pLoom, \pLoom^+, \pLoom^-)$. There is a tetrahedron rectangle~$Q$ in $(\loom, \loom^+, \loom^-)$ such that $\Rsharp \subset Q$. 
	\end{lemma}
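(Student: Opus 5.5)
The plan is to show first that $\Rsharp$ is itself a rectangle in $(\loom, \loom^+, \loom^-)$, and then to apply condition (2) of Definition~\ref{def:loom}. By Proposition~\ref{prop:loom:space}, $(\loom, \loom^+, \loom^-)$ is a loom space. So every rectangle in $\loom$ lies in some tetrahedron rectangle. Hence it suffices to check that $\Rsharp$ is a rectangle in $\loom$.

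Let $f_R\colon (0,1)^2 \to \pLoom$ be a rectangle embedding with image $R$, and let $\iota\colon \pLoom \hookrightarrow \loom$ be the inclusion. Since $\pLoom$ is an open subset of $\loom$ (we remove the closed discrete set $\liftGamma$), $\iota$ is an open embedding. Therefore $\iota\circ f_R$ is an embedding $(0,1)^2 \to \loom$. The foliations $\pLoom^\pm$ are the restrictions of $\loom^\pm$, so every leaf of $\pLoom^\pm$ is contained in a leaf of $\loom^\pm$. Hence $\iota\circ f_R$ sends vertical segments into leaves of $\loom^+$ and horizontal segments into leaves of $\loom^-$. By Definition~\ref{defn:rectangle}, $\iota\circ f_R$ is a rectangle embedding, and its image $\Rsharp$ is a rectangle in $(\loom, \loom^+, \loom^-)$.

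Applying Definition~\ref{def:loom}(2) then gives a tetrahedron rectangle $Q$ in $(\loom, \loom^+, \loom^-)$ with $\Rsharp \subset Q$. Concretely, one may take $Q = N(E(S(W(\Rsharp))))$ as in the proof of Proposition~\ref{prop:loom:space}, using Lemma~\ref{lem:expansions}.

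There is essentially no obstacle here. The only point needing care is that "rectangle" is defined as an open embedded product, not via its closure. So the closure of $R$ in $\pLoom$ may contain punctures that become material points of $\loom$, but this does not matter: we only need $\Rsharp$ itself to be a rectangle. In particular, $Q$ may contain points of $\liftGamma$, and the statement does not ask otherwise.
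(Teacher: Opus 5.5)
Your proposal is correct and follows the paper's proof exactly: the paper observes that $\Rsharp$ is clearly a rectangle in $(\loom, \loom^+, \loom^-)$ and then invokes condition (2) of Definition~\ref{def:loom}. Your argument via the open inclusion $\pLoom \hookrightarrow \loom$ just fills in the word ``clearly''. One small wording fix: punctures are not points of $\pLoom$, so what may contain points of $\liftGamma$ is the closure of $\Rsharp$ in $\loom$, not the closure of $R$ in $\pLoom$.
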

	\begin{proof}
		The image $\Rsharp$ of $R$ under the inclusion $(\pLoom, \pLoom^+, \pLoom^-) \hookrightarrow (\loom, \loom^+, \loom^-)$ is clearly a rectangle. Since  $(\loom, \loom^+, \loom^-)$ is a loom space, $\Rsharp$ must be contained in a tetrahedron rectangle by condition (2) of Definition \ref{def:loom}.
	\end{proof}
	
	Since images of simplex rectangles are especially  important, we introduce a special term for them.
	
	\begin{definition}
		A $\gamma$-\emph{simplex} ($\gamma$-\emph{tetrahedron}, $\gamma$-\emph{face}, $\gamma$-\emph{edge}, respectively) \emph{rectangle} in $(\loom, \loom^+, \loom^-)$ is the image of a simplex (tetrahedron, face, edge, respectively) rectangle in $(\pLoom, \pLoom^+, \pLoom^-)$ under the inclusion $(\pLoom, \pLoom^+, \pLoom^-) \hookrightarrow (\loom, \loom^+, \loom^-)$. 
	\end{definition}
	
	Note that a $\gamma$-simplex rectangle might not be a simplex rectangle; see Figure \ref{fig:subrectangles}.

	Recall from Construction \ref{const:finite:veering} that the tetrahedra of $\V$ are in bijection with the orbits of tetrahedron rectangles in $(\loom,\loom^+,\loom^-)$ under the $(\Psi,\Lambda)$-action of $\pi_1(M)$. Choose one representative from each such orbit and call these  tetrahedron rectangles \emph{fundamental}. This terminology is motivated by the fact that fundamental tetrahedron rectangles bijectively correspond to the tetrahedra of a triangulated fundamental domain for the action of $\pi_1(M)$ on $\widetilde{\V}$. In particular, there are only finitely many of them.
	
	Once a collection of fundamental tetrahedron rectangles is fixed, we say that a $\gamma$-simplex rectangle in $(\loom, \loom^+, \loom^-)$ is \emph{fundamental} if it is contained in a fundamental tetrahedron rectangle. 
	In Proposition \ref{prop:finite:veering:parent} we show that to construct the veering parent $\V_\gamma$ it suffices to know which fundamental $\gamma$-simplex rectangles are translates of one another under the $(\Psi, \Lambda)$-action of $\pi_1(M)$ on $(\loom, \loom^+, \loom^-)$. To simplify the statement of the proposition, we introduce the following terminology.
	
	\begin{definition} \label{def:pi-equivalence}
		Let $Q, Q'$ be rectangles in either $(\pLoom, \pLoom^+, \pLoom^-)$  or $(\loom, \loom^+, \loom^-)$. We say that $Q,Q'$ are \emph{$\pi_1(M)$-equivalent} if there is $g \in \pi_1(M)$ such that  $g(Q) = Q'$ under the $(\Psi,\Lambda)$-action of $\pi_1(M)$. Otherwise we say that $Q, Q'$ are \emph{$\pi_1(M)$-inequivalent}. 
	\end{definition}
	
	For instance, the set fundamental tetrahedron rectangles  can be described as a maximal collection of $\pi_1(M)$-inequivalent tetrahedron rectangles in $(\loom, \loom^+, \loom^-)$. 
	
	Given a $\gamma$-tetrahedron rectangle $R_\gamma$ and a $\gamma$-face rectangle $Q_\gamma\subset R_\gamma$ we call the pair $(R_\gamma, Q_\gamma)$ a \emph{$\gamma$-(tetrahedron, face) pair}. If $R_\gamma$ is fundamental we say that $(R_\gamma, Q_\gamma)$ is fundamental.  Furthermore, we say that two $\gamma$-(tetrahedron, face) pairs are \emph{solely face equivalent} if $Q_\gamma$ and~$Q_\gamma'$ are $\pi_1(M)$-equivalent  and moreover $Q_\gamma$ is a tall $\gamma$-face subrectangle of $R_\gamma$ if and only if $Q_\gamma'$ is a wide $\gamma$-face subrectangle of $R_\gamma'$. The terminology refers to the fact that, under the last condition, the $\pi_1(M)$-equivalence between $Q_\gamma$ and $Q_\gamma'$ does not extend to a $\pi_1(M)$-equivalence between $R_\gamma$ and~$R_\gamma'$ (although these rectangles can nonetheless be $\pi_1(M)$-equivalent).
	
	By Construction \ref{constr:loom:to:veering} and Proposition~\ref{prop:parent:from:intermediate:cover}, for every face pairing $\phi$ of $\V_\gamma$ one can find a pair of solely face equivalent $\gamma$-(tetrahedron, face) pairs that induce~$\phi$. Conversely, every pair of solely face equivalent $\gamma$-(tetrahedron, face) pairs induce some face pairing of $\V_\gamma$.

	\begin{proposition}\label{prop:finite:veering:parent}
	Tetrahedra of the veering parent $\V_\gamma$ are in a bijective correspondence with the $\pi_1(M)$-equivalence classes of fundamental $\gamma$-tetrahedron rectangles in $(\loom, \loom^+, \loom^-)$. Every face pairing of $\V_\gamma$ can be represented by a pair of solely face equivalent fundamental $\gamma$-(tetrahedron, face) pairs, and every such pair represents a face pairing of~$\V_\gamma$.
	\end{proposition}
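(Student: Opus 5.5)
The plan is to reduce everything to Proposition~\ref{prop:parent:from:intermediate:cover}, which identifies $\V_\gamma$ with $\widehat{\V}_\gamma/\pi_1(M)$. By Construction~\ref{constr:loom:to:veering} applied to $(\pLoom,\pLoom^+,\pLoom^-)$, the tetrahedra of $\widehat{\V}_\gamma$ correspond to tetrahedron rectangles of $\pLoom$. Hence the tetrahedra of $\V_\gamma$ correspond to $\pi_1(M)$-orbits of these rectangles. The inclusion $R\mapsto \Rsharp$ is injective and $\pi_1(M)$-equivariant, since the $\pi_1(M)$-action on $\pLoom$ is the restriction of the action on $\loom$. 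So these orbits are in bijection with $\pi_1(M)$-equivalence classes of $\gamma$-tetrahedron rectangles in $\loom$. The task is therefore to show that every such class meets the set of fundamental $\gamma$-tetrahedron rectangles, and that restricting to fundamental ones does not merge distinct classes.

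\textbf{Surjectivity onto classes.} Let $R_\gamma$ be any $\gamma$-tetrahedron rectangle. By Lemma~\ref{lem:everything:in:tet:rect} it lies in some tetrahedron rectangle $Q$ of $\loom$. By the definition of fundamental tetrahedron rectangles, some $g\in\pi_1(M)$ sends $Q$ to a fundamental one. Since $\liftGamma$ is $\pi_1(M)$-invariant, $g$ maps $\gamma$-simplex rectangles to $\gamma$-simplex rectangles of the same type. Therefore $g(R_\gamma)$ is a fundamental $\gamma$-tetrahedron rectangle.

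\textbf{Injectivity.} Two fundamental $\gamma$-tetrahedron rectangles give the same tetrahedron of $\V_\gamma$ exactly when they are $\pi_1(M)$-equivalent, which is precisely the equivalence relation in the statement. So the correspondence is a bijection. One subtlety needs checking: a $\gamma$-rectangle may lie in several fundamental tetrahedron rectangles, or in two different ones via translates. This is harmless, because we quotient by $\pi_1(M)$-equivalence of the $\gamma$-rectangles themselves, not of the ambient $Q$.

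\textbf{Face pairings.} By Lemma~\ref{lem:ploom}\eqref{ploom:face:rectangles}, each $\gamma$-face rectangle $Q_\gamma$ is a tall face subrectangle of exactly one $\gamma$-tetrahedron rectangle and a wide face subrectangle of exactly one other. As in Construction~\ref{constr:loom:to:veering}, these two containments glue a top face to a bottom face of $\widehat{\V}_\gamma$. Passing to the quotient by $\pi_1(M)$, a face pairing of $\V_\gamma$ is determined by an orbit of $\gamma$-face rectangles. Pick the tall-side tetrahedron $R_\gamma\supset Q_\gamma$ and the wide-side tetrahedron $R'\supset Q_\gamma$. Translate each independently, by elements $g$ and $g'$ from the first step, to fundamental rectangles. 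This produces $(gR_\gamma, gQ_\gamma)$ and $(g'R', g'Q_\gamma)$. These are fundamental, their faces are $\pi_1(M)$-equivalent via $g'g^{-1}$, and the tall/wide condition holds. So they are solely face equivalent. Conversely, if two fundamental pairs are solely face equivalent via $h(Q_\gamma)=Q_\gamma'$, then $h^{-1}R'_\gamma$ and $R_\gamma$ are the two $\gamma$-tetrahedron rectangles containing $Q_\gamma$ on opposite sides. Hence they represent the face pairing of $\widehat{\V}_\gamma$ along $Q_\gamma$, which descends to $\V_\gamma$.

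\textbf{Main obstacle.} I expect the main difficulty to be confirming that the data is genuinely well defined, so that the pairing read off from fundamental representatives is the same one as in $\widehat{\V}_\gamma/\pi_1(M)$. This requires two facts.
\begin{enumerate}
\item Translating a pair by $g$ preserves the tall/wide distinction and the cusp correspondence, because the action preserves the orientations of $\loom^\pm$.
\item Simplex rectangles have trivial stabilizers (Lemma~\ref{lem:ploom}\eqref{ploom:action}). Without this, one class of $\gamma$-face rectangles could yield two distinct gluings up to a self-map.
\end{enumerate}
With freeness, the map identifying $Q_\gamma$ with $Q'_\gamma$ is unique, so the induced face pairing, including its vertex correspondence, is determined.
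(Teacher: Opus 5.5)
Your proposal is correct and follows essentially the same route as the paper: it identifies the tetrahedra and faces of $\V_\gamma$ with $\pi_1(M)$-orbits via Proposition~\ref{prop:parent:from:intermediate:cover}, then uses Lemma~\ref{lem:everything:in:tet:rect} to translate each $\gamma$-tetrahedron rectangle, and each side of a face pairing independently, into a fundamental tetrahedron rectangle. One small correction to your justification: the action need not preserve the orientations of $\loom^\pm$ (elements may act with a twist, as in Lemma~\ref{lem:equivalent:rectangles}), but the tall/wide distinction is preserved anyway, since it only requires that the action send $\loom^+$ to $\loom^+$ and $\loom^-$ to $\loom^-$.
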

	\begin{proof}
	Let $t$ be a tetrahedron of $\V_\gamma$. 
	By Proposition \ref{prop:parent:from:intermediate:cover}, there is a $\pi_1(M)$-equivalence class of $\gamma$-tetrahedron rectangles in $(\loom, \loom^+, \loom^-)$ that corresponds to $t$. Let $R_\gamma$ be any $\gamma$-tetrahedron rectangle in that class. Lemma \ref{lem:everything:in:tet:rect} implies that $R_\gamma$ is contained in a tetrahedron rectangle $R$. By the definition of fundamental tetrahedron rectangles, there is $g \in \pi_1(M)$ such that $g(R)$ is fundamental. Then $g(R_\gamma)$ is a fundamental $\gamma$-tetrahedron rectangle that represents the $\pi_1(M)$-equivalence class corresponding to $t$. 
	
	Conversely, a $\pi_1(M)$-equivalence class of fundamental $\gamma$-tetrahedron rectangles is contained in a $\pi_1(M)$-equivalence class of all $\gamma$-tetrahedron rectangles in $(\loom, \loom^+, \loom^-)$. Thus it determines one tetrahedron of $\V_\gamma$.
	
	As explained immediately before the statement of the proposition, every face pairing of $\V_\gamma$ can be represented by a pair $\lbrace (R_\gamma, Q_\gamma), (R_\gamma', Q_\gamma')\rbrace$ of solely face equivalent \mbox{$\gamma$-(tetrahedron, face) }pairs. There are $g, g' \in \pi_1(M)$ such that $(g(R_\gamma), g(Q_\gamma))$ and $(g'(R_\gamma'), g'(Q_\gamma'))$ are fundamental. Since the action of $\pi_1(M)$ on $(\loom,\loom^+,\loom^-)$ preserves the relevant combinatorial structure, in particular $\liftGamma$ and the distinction between tall and wide $\gamma$-face subrectangles,  $(g(R_\gamma), g(Q_\gamma))$ and  $(g'(R_\gamma'), g'(Q_\gamma'))$ are solely face equivalent. Thus the face pairing induced by $\lbrace (R_\gamma, Q_\gamma), (R_\gamma', Q_\gamma')\rbrace$ can be represented by a pair  $\lbrace (g(R_\gamma), g(Q_\gamma)), (g'(R_\gamma'), g'(Q_\gamma'))\rbrace$  of solely face equivalent fundamental $\gamma$-(tetrahedron, face) pairs.
	
	The fact that every pair of solely face equivalent $\gamma$-(tetrahedron, face) pairs, fundamental or not, induces a face pairing of $\V_\gamma$ follows from Construction \ref{constr:loom:to:veering} and Proposition~\ref{prop:parent:from:intermediate:cover}.
	\end{proof}
	
	We refer the reader to Appendix \ref{appendix:examples} for several examples of veering parents. Each example is presented in a way consistent with the description of the veering parent given in Proposition \ref{prop:finite:veering:parent}. Specifically, we list the \mbox{$\pi_1(M)$-equivalence} classes of fundamental \mbox{$\gamma$-tetrahedron} rectangles, representing the tetrahedra of the veering parent, and pairs of solely face equivalent $\gamma$-(tetrahedron, face) pairs, representing its face pairings.
	
	\subsection{Determining $\pi_1(M)$-equivalence classes of $\gamma$-simplex rectangles}\label{subsec:equiv:classes}
	
By Proposition \ref{prop:finite:veering:parent}, to construct the veering parent $\V_\gamma$ it suffices to:
	\begin{enumerate}
		\item Fix a collection $\FTRect$ of fundamental tetrahedron rectangles in $(\loom,\loom^+,\loom^-)$.
		\item For each $R \in \FTRect$, find all points of $\liftGamma$ contained in $R$ and thus all fundamental $\gamma$-tetrahedron rectangles.
		\item Determine $\pi_1(M)$-equivalence classes of fundamental $\gamma$-tetrahedron rectangles and their $\gamma$-face subrectangles.
	\end{enumerate}
	
	In the drilling algorithm, each of these steps must be carried out using only the combinatorics of $\V$. The most challenging part is determining $R\cap\liftGamma$ for each $R\in\FTRect$. We discuss and develop the  tools needed for this in Sections \ref{sec:flow:cycles}, \ref{sec:continents}, \ref{sec:orders}, and \ref{sec:adapted:set}. However, already at this stage  we can explain how to determine the $\pi_1(M)$-equivalence classes of fundamental $\gamma$-simplex rectangles, assuming that $R\cap\liftGamma$ is known for every $R\in\FTRect$. 
	
	First, it follows directly from Construction \ref{const:finite:veering} that the action of $\pi_1(M)$ on simplex rectangles in $(\loom, \loom^+, \loom^-)$ can be recovered from face pairings of $\V$. More precisely, we can make the following observation. 
	
	\begin{observation}\label{obs:identification:equivalences}
		Let $R, R' \in\FTRect$. Let $t, t'$ be the tetrahedra of $\V$ admitting elevations to $\univV$ that correspond to $R$ and $R'$, respectively, via Construction~\ref{constr:loom:to:veering}.
		Then simplex subrectangles $Q \subset R$, $Q' \subset R'$ are $\pi_1(M)$-equivalent if and only if the corresponding subsimplices of $t, t'$ are identified in $\V$. 
	\end{observation}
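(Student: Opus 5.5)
The plan is to run everything through the bijection between simplices of $\univV$ and simplex rectangles in $(\loom,\loom^+,\loom^-)$ from Construction~\ref{constr:loom:to:veering}, together with the equivariance set up in Construction~\ref{const:finite:veering}. Let $\tilde t,\tilde t'$ be the tetrahedra of $\univV$ with $\loomif(\tilde t)=R$ and $\loomif(\tilde t')=R'$, so that $\tilde t,\tilde t'$ are elevations of $t,t'$. The subrectangle $Q\subset R$ is a simplex rectangle, and by Construction~\ref{constr:loom:to:veering} it equals $\loomif(s)$ for a unique subsimplex $s$ of $\tilde t$. This uses the bijections between edge (face) subrectangles of $R$ and edges (faces) of $t(R)$ from \cite[Lemma~2.30]{SchleimSegLoom}. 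Likewise $Q'=\loomif(s')$ for a unique subsimplex $s'$ of $\tilde t'$. The model of $s$ in $t$ is the image of $s$ under the covering $\univV\to\V$, and the same holds for $s'$ in $t'$.

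The key step is the equivariance $g(\loomif(\sigma))=\loomif(g\sigma)$ for every simplex $\sigma$ of $\univV$ and every $g\in\pi_1(M)$. This is how the simplicial action on $\univV$ is defined in Construction~\ref{const:finite:veering}: $g$ sends simplex rectangles to simplex rectangles of the same dimension, and preserves inclusions between them, since it acts by a homeomorphism of $\loom$. We also use that $\V=\univV/\pi_1(M)$.

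For the forward direction, suppose $g(Q)=Q'$. Then $\loomif(gs)=\loomif(s')$, and injectivity of $\loomif$ on simplices gives $gs=s'$. Hence $s$ and $s'$ have the same image in $\V$, so the subsimplices of $t$ and $t'$ are identified. For the converse, suppose the two subsimplices are identified in $\V$. Then $s$ and $s'$ are two lifts of the same simplex of $\V$, so some $g\in\pi_1(M)$ satisfies $gs=s'$. Applying $\loomif$ gives $g(Q)=Q'$.

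The only delicate point is what ``identified in $\V$'' means. A face of $\V$ may have two models, and an edge may have several. We interpret the statement as saying that the models in $t$ and $t'$ map to the same simplex of the quotient. Lifting such a simplex to $\univV$ then gives the element $g$, which is unique by freeness of the action (Lemma~\ref{lem:ploom}\eqref{ploom:action}, or Construction~\ref{const:finite:veering} directly). Beyond this, the argument is bookkeeping and I expect no real obstacle.
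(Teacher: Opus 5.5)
Your argument is correct and is essentially the paper's: the paper states this observation without a separate proof, as a direct consequence of Construction~\ref{const:finite:veering}, namely the $\pi_1(M)$-equivariance of the bijection $\loomif$ between simplices of $\univV$ and simplex rectangles together with $\V=\univV/\pi_1(M)$, and this is exactly what you spell out. The uniqueness of $g$ (freeness of the action) that you mention is not needed for either direction.
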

	
	Determining if two $\gamma$-simplex rectangles contained in some elements of $\FTRect$ are $\pi_1(M)$-equivalent is a bit more involved. We give a criterion for this in Lemma \ref{lem:equivalent:rectangles}, after introducing necessary notation and terminology.

	Let $R$ be a simplex or a $\gamma$-simplex rectangle in  $(\loom, \loom^+, \loom^-)$. Fix the cardinal directions in $R$ according to Convention \ref{convention:cardinal:directions}. 
	Recall that $\liftGamma$ is the set of points in $\loom$ that are the projections of the preimage of $\gamma$ under the covering map $\widetilde{M}\rightarrow M$.
	Let $\Ideal(R)$ be the set of cusps and punctures in the boundary of $R$ and the punctures of $R \bez \liftGamma$. The set $\Ideal(R)$ is nonempty, because $R$ must have at least two ideal points in the boundary. By Proposition~\ref{prop:orbit:space}~\eqref{orbit_space:no:accumulation:points}, $\liftGamma$ is discrete and does not accumulate on the cusps of  $(\loom, \loom^+, \loom^-)$. Thus $\Ideal(R)$ must be finite. 
	Suppose that it has $k$ elements. 
	Since, by Proposition \ref{prop:orbit:space}~\eqref{orbit_space:one:periodic}, no two elements of $\Ideal(R)$ lie on the same leaf of $\loom^+$, there is a unique way to label the elements of $\Ideal(R)$ by elements of $\lbrace 1,2,\ldots, k \rbrace$ such that the label of $x \in \Ideal(R)$ is smaller than the label of $y \in \Ideal(R)$ if and only if $x$ is strictly to the west of~$y$. We call this labelling the \emph{west-east labelling on $\Ideal(R)$}. The \emph{east–west labelling} on $\Ideal(R)$ can be defined analogously, by labelling the elements of $\Ideal(R)$ by elements of $\lbrace 1,2,\ldots, k \rbrace$ from the eastmost to the westmost.


	Let $Q_\gamma$ be a $\gamma$-simplex rectangle, and let $R$ be any simplex rectangle in $(\loom, \loom^+, \loom^-)$ that contains $Q_\gamma$. Then $\Ideal(Q_\gamma) \subset \Ideal(R)$.   Set $\eta^\mathrm{WE}(R, Q_\gamma)$ to be the ordered tuple of labels of the elements of $\Ideal(Q_\gamma)$, where the label of each element is  inherited from the west-east labelling on $\Ideal(R)$, arranged in increasing order. We call $\eta^{\mathrm{WE}}(R,Q_\gamma)$ the \emph{west-east signature of $Q_\gamma$ in~$R$}; see Figure \ref{fig:subrectangles} for an example. 
	 We analogously define the \emph{east-west signature} $\eta^{\mathrm{EW}}(R,Q_\gamma)$ \emph{of $Q_\gamma$ in $R$}, for which the labels are determined by the east-west labelling on $\Ideal(R)$. 
	
	\begin{figure}[h]
		\centering
		\labellist
		\hair 2pt \small
		\pinlabel $1$ [r] at 2 267
		\pinlabel $2$ [b] at 142 372
		\pinlabel $3$ [tr] at 174 144
		\pinlabel $4$ [t] at 249 1
		\pinlabel $5$ [br] at 283 236
		\pinlabel $6$ [l] at 372 114
		
		\endlabellist
		\includegraphics[width = 0.25\textwidth]{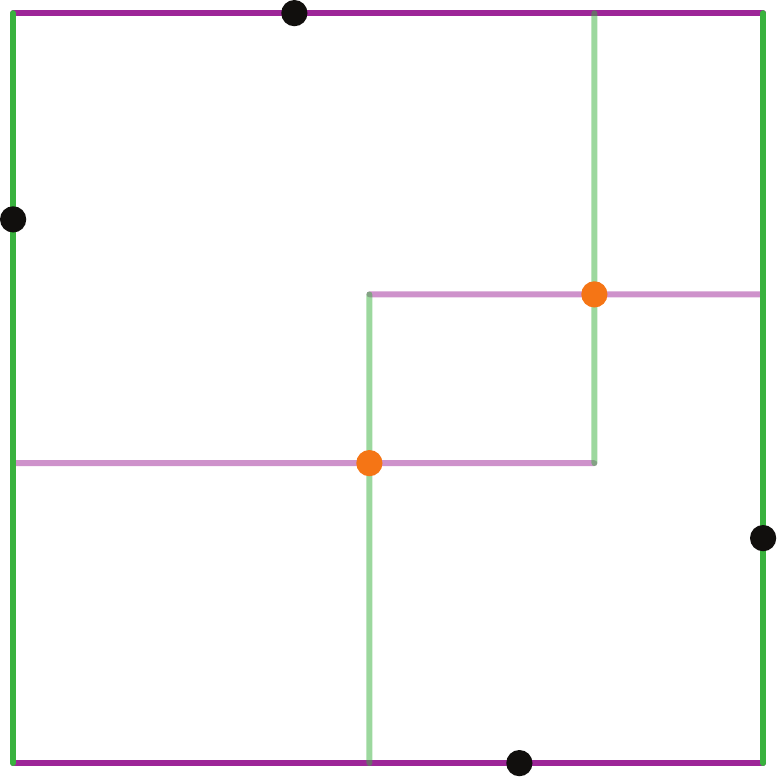}
		\caption{A tetrahedron rectangle $R$ in  $(\loom, \loom^+, \loom^-)$ with the west-east labelling on $\Ideal(R)$. The cusps of $R$ are indicated by black dots, and the elements of $\liftGamma$ by orange dots. The tetrahedron $R$ contains two $\gamma$-tetrahedron rectangles with west-east signatures $(1,2,3,5)$, $(3,4,5,6)$, eight $\gamma$-face rectangles with west-east signatures $(1,2,3)$, $(1,2,5)$, $(1,3,5)$, $(2,3,5)$, $(3,4,5)$, $(3,4,6)$, $(3,5,6)$, $(4,5,6)$, and eleven $\gamma$-edge rectangles with west-east signatures $(1,2)$, $(1,3)$, $(1,5)$, $(2,3)$, $(2,5)$, $(3,5)$, $(3,4)$, $(3,6)$, $(4,5)$, $(4,6)$, $(5,6)$.}
		\label{fig:subrectangles}
	\end{figure}

	\begin{definition}
		Fix the cardinal directions in all rectangles in $(\loom, \loom^+, \loom^-)$ according to Convention \ref{convention:cardinal:directions}. 
		We say that $g \in \pi_1(M)$ \emph{acts with a twist} on $(\loom, \loom^+, \loom^-)$ if there is a rectangle $R$ in $(\loom, \loom^+, \loom^-)$ such that $g$ sends the west side of $R$ to the east side of $g(R)$. 
	\end{definition}
	
	For instance,  if the conjugacy class of $g\in\pi_1(M)$ corresponds to the free homotopy class of a twisted orbit of $\Psi$, then $g$ acts with a twist on $(\loom,\loom^+,\loom^-)$.


	\begin{lemma}\label{lem:equivalent:rectangles}
		Let $Q_\gamma, Q_\gamma'$ be $\gamma$-simplex rectangles in $(\loom, \loom^+, \loom^-)$. Then $Q_\gamma, Q_\gamma'$ are $\pi_1(M)$-equivalent if and only if they are contained in simplex rectangles $R, R'$ respectively, such that  with $R' = g(R)$ for some $g \in \pi_1(M)$ and 
		\begin{equation}\label{eqn:signatures}
		\eta^{\mathrm{WE}}(R', Q_\gamma') = \begin{cases}
		\eta^{\mathrm{WE}} (R,Q_\gamma)& \text{if $g$ does not act with a twist on } (\loom, \loom^+, \loom^-)\\
		\eta^{\mathrm{EW}}(R,Q_\gamma) & \text{otherwise.}
		\end{cases}
		\end{equation}
	\end{lemma}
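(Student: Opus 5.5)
We prove the two implications separately. Throughout we use three facts. The $(\Psi,\Lambda)$-action is by homeomorphisms that preserve each foliation $\loom^\pm$ and the set $\liftGamma$. It therefore sends rectangles to rectangles and ideal points to ideal points, and maps $\Ideal(R)$ bijectively onto $\Ideal(g(R))$. Finally, by Proposition~\ref{prop:orbit:space}~\eqref{orbit_space:one:periodic}, the elements of $\Ideal(R)$ lie on pairwise distinct leaves of $\loom^+$, so the west-east labelling is well defined.

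\emph{Key observation.} Let $g\in\pi_1(M)$ and let $R$ be a rectangle. Since $g$ maps $\loom^+$ to $\loom^+$ and is a homeomorphism, it maps the leaves of $\loom^+\cap R$ to the leaves of $\loom^+\cap g(R)$ monotonically. Hence it either preserves or reverses the west-east order. We claim that which of the two occurs does not depend on $R$. Choose a transverse orientation of $\loom^+$ compatible with the chosen orientation of $\loom^-$. The map $g$ either preserves or reverses this orientation globally, because $\loom$ is connected and the orientation is a continuous choice. By Convention~\ref{convention:cardinal:directions}, global reversal is exactly acting with a twist; this uses that cardinal directions are defined via the orientation of $\loom^-$. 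Consequently, if $g$ does not act with a twist, the labelling of $x$ in the west-east labelling of $\Ideal(R)$ equals the labelling of $g(x)$ in that of $\Ideal(g(R))$. If $g$ acts with a twist, the west-east label of $g(x)$ in $g(R)$ equals the east-west label of $x$ in $R$.

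($\Rightarrow$) Suppose $Q_\gamma'=g(Q_\gamma)$. By Lemma~\ref{lem:everything:in:tet:rect}, $Q_\gamma$ lies in a tetrahedron rectangle $R$, which is a simplex rectangle. Set $R'=g(R)$; it is again a simplex rectangle, by Construction~\ref{const:finite:veering}, and it contains $Q_\gamma'$. Moreover $\Ideal(Q_\gamma')=g(\Ideal(Q_\gamma))$. The key observation then gives \eqref{eqn:signatures} directly, in both the twisted and the untwisted case.

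($\Leftarrow$) Suppose $R'=g(R)$ and the signatures satisfy \eqref{eqn:signatures}. By the key observation, $g$ maps $\Ideal(R)$ to $\Ideal(R')$ in a way that transports the relevant labelling. Equality of the signatures therefore gives $g(\Ideal(Q_\gamma))=\Ideal(Q_\gamma')$. It remains to show that a $\gamma$-simplex rectangle contained in $R'$ is determined by its set of ideal points. Then $g(Q_\gamma)$ and $Q_\gamma'$ are both $\gamma$-simplex rectangles in $R'$ with the same ideal set, so they coincide, as required.

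\emph{Main obstacle: the uniqueness claim.} The delicate point is to show that a $\gamma$-simplex rectangle is the unique rectangle of its type spanned by its ideal points inside the ambient rectangle. The approach is as follows. A $\gamma$-simplex rectangle is maximal among rectangles in $\pLoom$ with the prescribed ideal points on its boundary: its sides are pinned at those ideal points, in the corner or side-interior positions determined by its type. Inside $R'$, whose product structure is trivial, a rectangle is determined by the horizontal and vertical intervals it spans. For edge, face, and tetrahedron rectangles, these intervals are exactly the spans between the extreme ideal points in each direction. For instance, the west side of the rectangle is the leaf through its westmost ideal point. So the rectangle is forced to be the product of these spans.

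Finally, to turn the interval spans into the signature data, we need that the vertical order is also recoverable. It is: the subset of ideal points determines the rectangle directly, since the ideal points themselves are carried by $g$. No separate vertical signature is therefore required.
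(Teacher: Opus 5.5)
Your proof is correct and follows the same route as the paper. It uses the same two ingredients: $g$ transports the west-east labelling of $\Ideal(R)$ to that of $\Ideal(g(R))$, reversed exactly when $g$ acts with a twist, and a $\gamma$-simplex subrectangle of a fixed simplex rectangle is determined by its signature. You also justify two points the paper only asserts: that twisting is a global property of $g$, which follows from the orientation of $\loom^-$ and the connectedness of $\loom$, and the uniqueness claim, via the product structure of $R'$ together with the fact that every side of an edge, face or tetrahedron rectangle carries an ideal point.
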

	\begin{proof}
	 Let \mbox{$g \in \pi_1(M)$}. Recall that $g$ sends cusps of $\loom$ to cusps of $\loom$, elements of $\liftGamma$ to elements of $\liftGamma$, and preserves each of the foliations $\loom^\pm$. Therefore for every simplex rectangle $R$ and every $\gamma$-simplex subrectangle $Q_\gamma \subset R$ we have 
	 \begin{equation}\label{eqn:signatures:general}
			\eta^{\mathrm{WE}}(g(R), g(Q_\gamma)) = \begin{cases}
			\eta^{\mathrm{WE}} (R,Q_\gamma)& \text{if $g$ does not act with a twist on } (\loom, \loom^+, \loom^-)\\
			\eta^{\mathrm{EW}}(R,Q_\gamma) & \text{otherwise.}
			\end{cases}
	\end{equation}
	Moreover, two $\gamma$-simplex subrectangles of the same simplex rectangle are equal whenever they have the same west-east (and hence east-west) signature.
	It follows that if $Q_\gamma, Q_\gamma'$ are $\gamma$-simplex rectangles contained in simplex rectangles $R, R'$, respectively, such that $R =g(R')$ and moreover \eqref{eqn:signatures} holds, then $Q_\gamma' = g(Q_\gamma)$. Hence $Q_\gamma, Q_\gamma'$ are \mbox{$\pi_1(M)$-equivalent}.
	
	For the converse, suppose that $Q_\gamma'=g(Q_\gamma)$ for some $g\in\pi_1(M)$. By Lemma \ref{lem:everything:in:tet:rect}, there is a tetrahedron rectangle $R$ containing $Q_\gamma$. Then $R' = g(R)$ is a tetrahedron rectangle containing $Q_\gamma'$. Thus equality \eqref{eqn:signatures:general}  gives \eqref{eqn:signatures}.
	\end{proof}
	
	Observation \ref{obs:identification:equivalences} and Lemma \ref{lem:equivalent:rectangles} provide a practical way to determine the \linebreak \mbox{$\pi_1(M)$-equivalence} classes of the fundamental $\gamma$-simplex rectangles. Let $Q_\gamma$ and $Q_\gamma'$ be two such rectangles. By Lemma \ref{lem:equivalent:rectangles}, $Q_\gamma, Q_\gamma'$ may be $\pi_1(M)$-equivalent only if they are contained in $\pi_1(M)$-equivalent simplex rectangles $R$, $R'$, respectively. Observation~\ref{obs:identification:equivalences} allows us to determine whether such simplex rectangles exist. If they do not, then $Q_\gamma$ and $Q_\gamma'$ are not $\pi_1(M)$-equivalent. If they do, it remains to compare the west-east signatures of $Q_\gamma$ in $R$ and of $Q_\gamma'$ in $R'$. Specifically, $Q_\gamma$ and $Q_\gamma'$ are $\pi_1(M)$-equivalent if and only if their signatures in $R$ and $R'$, respectively, are related as prescribed by Lemma~\ref{lem:equivalent:rectangles}. Although Lemma \ref{lem:equivalent:rectangles} refers also to the east-west signature, observe that if
	$\eta^{\mathrm{WE}}(R,Q)=(i_1,i_2,\ldots,i_l)$
	for some integers $2\leq l\leq k$ and $1\leq i_1<i_2<\cdots<i_l\leq k$, then
	\[
	\eta^{\mathrm{EW}}(R,Q)
	=
	(k+1-i_l,k+1-i_{l-1},\ldots,k+1-i_1).
	\]
	Thus, whether \eqref{eqn:signatures} holds can be determined using the west-east signatures  alone.

	\section{Combinatorial representation of orbits} \label{sec:flow:cycles}
	
	In this section, as in the previous sections, $\Psi$ is a transitive pseudo-Anosov flow on a closed three-manifold~$N$, $\Lambda$ is a perfect set for $\Psi$, and $\V$ is the veering triangulation of $M = N\bez \Lambda$ determined by the pair $(\Psi,\Lambda)$. A closed orbit $\gamma$ of $\Psi$ that is not in $\Lambda$ determines a closed and discrete set of points $\liftGamma$ in the loom space $(\loom, \loom^+, \loom^-)$ determined by $(\Psi, \Lambda)$. 
	By Proposition \ref{prop:finite:veering:parent}, to construct the veering parent $\V_\gamma$ of $\V$ one needs to be able to locate the points of $\liftGamma$ inside (finitely many) fundamental tetrahedron rectangles in $(\loom, \loom^+, \loom^-)$.
	In this section, we follow Landry-Minsky-Taylor \cite[Section 6]{LMT_flow} to explain how to find  these points using only the combinatorics of $\V$.

	\subsection{Flow cycles and flow lines}\label{subsec:flow:cycles:and:lines}

	As in Section~\ref{subsec:taut}, we say that a tetrahedron~$t$ of a taut triangulation is immediately above an edge $e$ of that triangulation if $e$ is the bottom diagonal of $t$. 	We use this terminology to recall the notion of \emph{flow cycles} of $\V$, introduced in \cite[Section 4.3]{LMT}.

	\begin{definition}\label{defn:flow:cycle} \cite[Section 2]{LMT_flow}
		A sequence $\zeta = (t_1, t_2, \ldots, t_k)$ of tetrahedra in $\V$ is a \emph{flow cycle} if for each $i \in \{1,2, \ldots, k\}$, the tetrahedron $t_{i+1}$  is either immediately above the top diagonal of $t_i$ or immediately above one of the equatorial edges of $t_i$ that have  different color than the top diagonal of $t_i$ (the indices are taken modulo~$k$).
		The integer $k$ is called the \emph{length} of $\zeta$. We say that $\zeta$ is \emph{primitive} if there is no integer $0<l < k$ such that $t_i = t_{i+l}$ for every $i \in \lbrace 1,2, \ldots, k\rbrace$.
	\end{definition}

	\begin{definition}\label{def:flow:line}
		A bi-infinite sequence $(t_i)_{i\in \zz}$ of tetrahedra of $\univV$ is a \emph{flow line} if for every $i \in \zz$  the tetrahedron $t_{i+1}$  is either immediately above the top diagonal of $t_i$ or immediately above one of the equatorial edges of $t_i$ that has a different color than the top diagonal of $t_i$. 
	\end{definition}

	Given a flow cycle $\zeta$, an \emph{elevation} of $\zeta$ is  a flow line which projects to $\zeta$ under the covering map $\univV \rightarrow \V$.  Since $\zeta$ may pass through the same tetrahedron multiple times, this terminology differs slightly from the usual notion of an elevation of an object understood as a connected component of its preimage.
	
	Given a flow cycle $\zeta = (t_1, t_2, \ldots, t_k)$, let $\zeta_i$ denote a simple, unknotted path in $t_i$ connecting the midpoint of the bottom diagonal of $t_i$ to the midpoint of the bottom diagonal of $t_{i+1}$ (indices taken modulo~$k$). The concatenation $\zeta_1 \zeta_2 \cdots \zeta_k$ is a loop in $M$ and hence determines a conjugacy class $\lbrack \zeta \rbrack$ in $\pi_1(M)$. For each elevation $\eleZeta$ of $\zeta$ there is an element $g_{\eleZeta} \in \lbrack \zeta \rbrack $ that acts by translation on $\eleZeta$. We say that  $g_{\eleZeta}$ \emph{stabilizes}~$\eleZeta$.

	\begin{proposition}\label{prop:nontrivial}
		For every elevation $\eleZeta$ of a flow cycle $\zeta$, the element $g_{\eleZeta} \in \pi_1(M)$ is nontrivial.
	\end{proposition}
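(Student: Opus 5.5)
Suppose, for contradiction, that $g_{\eleZeta}$ is trivial for some elevation $\eleZeta = (\tilde t_i)_{i\in\zz}$ of $\zeta = (t_1,\ldots,t_k)$. Since $g_{\eleZeta}$ translates $\eleZeta$ by $k$ steps, triviality means $\tilde t_{i+k} = \tilde t_i$ for all $i$. In particular, the flow line would return to the same tetrahedron of $\univV$ after $k$ steps. The plan is to rule this out by exhibiting a strict monotonicity along flow lines, either combinatorially in $\univV$ or geometrically in the loom space $(\loom,\loom^+,\loom^-)$.

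The cleanest route goes through loomifications. Let $R_i = \loomif(\tilde t_i)$. First, I will check directly from Definition \ref{def:flow:line} and the picture of tetrahedron rectangles (Figure \ref{fig:tet:rects}) that each step strictly shrinks the width and strictly increases the height. The tetrahedron $\tilde t_{i+1}$ sits immediately above an edge $e$ of $\tilde t_i$, namely the top diagonal or an equatorial edge of the opposite color. By Lemma \ref{lem:edge:rectangle:once:bottom:top}, $R_{i+1}$ is the maximal vertical expansion of the edge rectangle $P_e = \loomif(e)$, which is the widest edge rectangle of $R_{i+1}$. Hence $R_{i+1}$ is exactly as wide as $P_e$. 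The allowed edges (top diagonal, or equatorial edges whose color differs from the top diagonal) are precisely those edges of $\tilde t_i$ whose edge rectangles are as tall as $R_i$; I will verify this on the four tetrahedron types in Figure \ref{fig:tet:rects}. For these edges, $P_e$ is strictly thinner than $R_i$, because $e$ is not the bottom diagonal of $\tilde t_i$. Therefore $R_{i+1}$ is strictly thinner than $R_i$ in the sense of Definition \ref{def:taller:etc}.

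Iterating this, $R_{i+k}$ is strictly thinner than $R_i$. Hence $R_{i+k} \neq R_i$, so $\tilde t_{i+k} \neq \tilde t_i$, because loomification is a bijection between the simplices of $\univV$ and the simplex rectangles. This contradicts the assumption. Transitivity of "strictly thinner" is immediate from the definition: the set of stable leaves meeting the rectangle strictly decreases at each step.

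The main obstacle is the first step, namely matching the combinatorial rule in Definitions \ref{defn:flow:cycle} and \ref{def:flow:line} with the rectangle condition "$P_e$ is as tall as $R_i$". I would handle it by a case check on the veering colorings of Definition \ref{def:veering:tet}. The top diagonal corresponds to the tallest edge rectangle, which spans the cusps on the north and south sides. Of the four equatorial edges, the two with the opposite color join a north or south cusp to an east or west cusp diagonally across the rectangle, so their rectangles span the full height. The two same-colored equatorial edges are shorter. This is the only place where the red/blue convention of Definition \ref{def:edge:rect} is needed.

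An alternative, if the rectangle bookkeeping proves awkward, is to observe that $\eleZeta$ is a path in the dual graph that always moves upward through faces. Since $\univV$ is a taut triangulation of $\rr^3$, upward paths cannot close up: the dual directed graph of a transverse taut triangulation of a simply connected manifold has no directed cycles. I would fall back on this argument if needed.
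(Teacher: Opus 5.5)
Your argument is correct, but it takes a different route from the paper.

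\textbf{The paper's route.} The paper works entirely inside the finite triangulation. It isotopes the loop $\zeta_1\zeta_2\cdots\zeta_k$ so that each $\zeta_i$ links the equatorial edges of $t_i$. The loop is then vertical with respect to the $2$-skeleton of $\V$, and the paper invokes Schleimer--Segerman's theorem that vertical loops are homotopically nontrivial.

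\textbf{Your route.} You pass to the loom space and show that $\loomif(\tilde t_{i+1})$ is strictly thinner than $\loomif(\tilde t_i)$. By Lemma~\ref{lem:edge:rectangle:once:bottom:top}, $R_{i+1}$ is exactly as wide as $P_e$. In turn, $P_e$ is strictly thinner than $R_i$, because the widest edge subrectangle is the only edge subrectangle of $R_i$ as wide as $R_i$, and $e$ is not the bottom diagonal of $\tilde t_i$. This is the monotonicity behind Observation~\ref{obs:tets:get:thinner} and Figure~\ref{fig:flow:line}, which the paper records only after this proposition; you derive it directly, so there is no circularity.

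\textbf{Comparison.} Your route is elementary given the paper's set-up, and it proves more: no flow line in $\univV$ ever revisits a tetrahedron. However, it depends on identifying $\univV$ with the Agol--Gu\'eritaud triangulation of $\loom(\Psi,\Lambda)$, and hence on realizing $\V$ by a flow via Theorem~\ref{thm:AT}. The paper's route uses only the taut structure of $\V$, at the price of citing an external theorem.

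\textbf{A false side claim.} You should delete the paragraph you call the main obstacle. Its claim is false, and the argument does not need it.
\begin{itemize}
\item Only the top diagonal gives an edge rectangle as tall as $R_i$.
\item Each equatorial edge has one endpoint at the west or east cusp of $R_i$. Its rectangle therefore has a horizontal side on the unstable leaf through that cusp, which lies strictly inside $R_i$.
\item The correct characterization (see the discussion after Definition~\ref{def:flow:line:loom}) is that the equatorial edges of opposite colour to the top diagonal are exactly those whose rectangles are disjoint from the tallest edge rectangle.
\end{itemize}
Your thinning step uses only that $e$ is not the bottom diagonal of $\tilde t_i$, so nothing else changes.

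\textbf{The fallback.} Consecutive tetrahedra of a flow line share the edge $e$ but no face. So $\eleZeta$ is not itself an upward path in the dual graph. An upward dual path from $\tilde t_i$ to $\tilde t_{i+1}$ must pass through the tetrahedra on one side of $e$.
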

	\begin{proof}
		The concatenation $\zeta_1\zeta_2\cdots\zeta_k$ can be isotoped so that for each tetrahedron $t_i$, where  $i=1,2,\ldots, k$, the path $\zeta_i$ links with the union of the equatorial edges of $t_i$. Thus $\zeta_1\zeta_2\cdots\zeta_k$ is vertical with respect to the two-skeleton of $\V$; see \cite[Definition 2.1]{SchleimSeg-essential}. The statement follows from the fact that every vertical loop is homotopically nontrivial \cite[Theorem~3.2]{SchleimSeg-essential}.
	\end{proof}
	
	More generally, we say that $g \in \pi_1(M)$ \emph{stabilizes} a flow line $\rho = (t_i)_{i \in \mathbb{Z}}$ if there exists $k \in \mathbb{N}$ such that $g(t_i) = t_{i+k}$ for all $i \in \mathbb{Z}$. We denote the maximal subgroup of $\pi_1(M)$ that stabilizes $\rho$ by $\Stab(\rho)$. If $\Stab(\rho)$ is nontrivial we say that $\rho$ is \emph{periodic}. Clearly, this can happen only if $\rho$ is an elevation of a flow cycle.

	Construction~\ref{constr:loom:to:veering} establishes a bijective correspondence between simplices of $\univV$ and simplex rectangles in $(\loom, \loom^+, \loom^-)$. See also Section \ref{subsec:loomification}, where we define the loomification. This allows us to view flow lines as sequences of tetrahedron rectangles in $(\loom, \loom^+, \loom^-)$; see Definition \ref{def:flow:line:loom} below. 
	
	\begin{definition}\label{def:flow:line:loom}
		Let  $\rho = (t_i)_{i\in \zz}$ be a flow line in $\univV$. Set $R_i = \loomif(t_i)$ for every $i \in \zz$.  We call the sequence $(R_i)_{i\in \zz}$ a \emph{flow line} in $(\loom, \loom^+, \loom^-)$. 
	\end{definition}
	It follows from Definition \ref{def:flow:line} and Figure \ref{fig:tet:rects} that if $(R_i)_{i\in \zz}$ is a flow line in $(\loom, \loom^+, \loom^-)$ then for every $i \in \zz$ the tetrahedron rectangle $R_{i+1}$ is either the maximal vertical expansion of the tallest edge rectangle of $R_i$ or the maximal vertical expansion of an edge rectangle of~$R_i$ that is disjoint from the tallest edge rectangle of $R_i$; see Figure \ref{fig:flow:line}. This immediately implies the following observation.
	
	\begin{observation}\label{obs:tets:get:thinner}
		If $(R_i)_{i \in \zz}$ is a flow line in $(\loom, \loom^+, \loom^-)$, then
		\begin{itemize}
			\item for every $i \in \zz$, the rectangle $R_{i+1}$ intersects both horizontal sides of $R_i$;
			\item for all $i<j<k$, we have $R_i \cap R_k \subset R_i \cap R_j$.
		\end{itemize}
	\end{observation}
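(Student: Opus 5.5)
Both bullets will follow from the local description of consecutive rectangles in a flow line given just before the observation. By Definition~\ref{def:flow:line}, $R_{i+1}=\loomif(t_{i+1})$ is the maximal vertical expansion of an edge subrectangle $P\subset R_i$, where $P$ is either the tallest edge rectangle of $R_i$ (top diagonal of $t_i$) or an equatorial edge rectangle of $R_i$ of colour opposite to the top diagonal. The plan is to prove the first bullet from this description, then deduce the second by induction.

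\textbf{First bullet.} It suffices to show that $P$ meets every leaf of $\loom^-$ crossing $R_i$, that is, $P$ is as tall as $R_i$. Since $R_{i+1}\supset P$, the leaves of $\loom^+$ through $P$ then run within $R_{i+1}$ across the whole height of $R_i$, so $R_{i+1}$ reaches (indeed crosses) both horizontal sides of $R_i$. For the tallest edge rectangle this holds by definition, since its cusps lie in the interiors of the south and north sides of $R_i$.
\begin{itemize}
\item For the equatorial case I would use Figure~\ref{fig:tet:rects}. The four equatorial edges each join a cusp on a horizontal side to a cusp on a vertical side.
\item The edge rectangles joining the south cusp (resp.\ north cusp) to a vertical-side cusp have one edge colour each.
\item The colour condition in Definition~\ref{defn:flow:cycle} (opposite to the top diagonal) selects exactly the two equatorial edges whose rectangles are disjoint from the tallest edge rectangle. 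These are the ones pinned to a corner region away from the top diagonal.
\end{itemize}
Checking the four veering types, such a rectangle is the rectangle spanned by, for example, the south cusp and the west cusp, placed in the south-west corner region.

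Here I expect the main obstacle: this rectangle is not as tall as $R_i$, so the argument has to be refined. Its maximal vertical expansion $R_{i+1}$ is obtained by sliding its horizontal sides until they hit cusps, via Lemma~\ref{lem:expansions}. Going south, it escapes past the south side of $R_i$, since the south cusp sits at its corner and is not on its interior side. Going north, it can be stopped before the north side of $R_i$ only by a cusp lying in $R_i$ or on its boundary within that vertical strip. The only candidates are the west and north cusps, and neither lies in the open strip: the west cusp is a corner of $P$, and the north cusp lies horizontally outside $P$ by the colour condition. Hence $R_{i+1}$ extends beyond both horizontal sides of $R_i$, and in particular intersects them. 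The same verification applies to the other three veering types, by symmetry.

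\textbf{Second bullet.} The first bullet shows that $R_{i+1}$ is a vertical strip-like rectangle crossing $R_i$ fully. Combined with $R_{i+1}\supset P$ and $P\subset R_i$, this gives that $R_{i+1}$ is thinner than $R_i$, with $R_i\cap R_{i+1}$ a full-height vertical substrip of $R_i$. Induction then shows that $R_k$ is thinner than $R_j$ for $j<k$ and is a vertical expansion of its intersection with $R_j$. Consequently, the part of $R_k$ meeting $R_i$ lies within the leaves of $\loom^+$ crossing $R_j$. Since $R_j$ meets every leaf of $\loom^-$ through $R_i$, the convexity of rectangles along leaves gives $R_i\cap R_k\subset R_j$, and hence $R_i\cap R_k\subset R_i\cap R_j$.
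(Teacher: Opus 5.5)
Your proposal is correct and follows the paper's route. The paper also identifies $R_{i+1}$ as the maximal vertical expansion of either the tallest edge subrectangle of $R_i$ or an equatorial edge subrectangle disjoint from it (Figure~\ref{fig:flow:line}), and then declares both bullets immediate; your expansion analysis and your thinner/taller induction spell out exactly that.

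For the second bullet, you only need two ingredients. The first is transitivity of ``thinner'' and ``taller''. The second is the standard fact that a stable and an unstable leaf of $\loom$ meet at most once, so a point both of whose leaves meet $R_j$ lies in $R_j$. That fact is what your appeal to ``convexity of rectangles along leaves'' should be taken to mean. The extra claim that $R_k$ is a vertical expansion of $R_k\cap R_j$ is not needed.
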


	\begin{figure}[h]
		\includegraphics[scale=0.25]{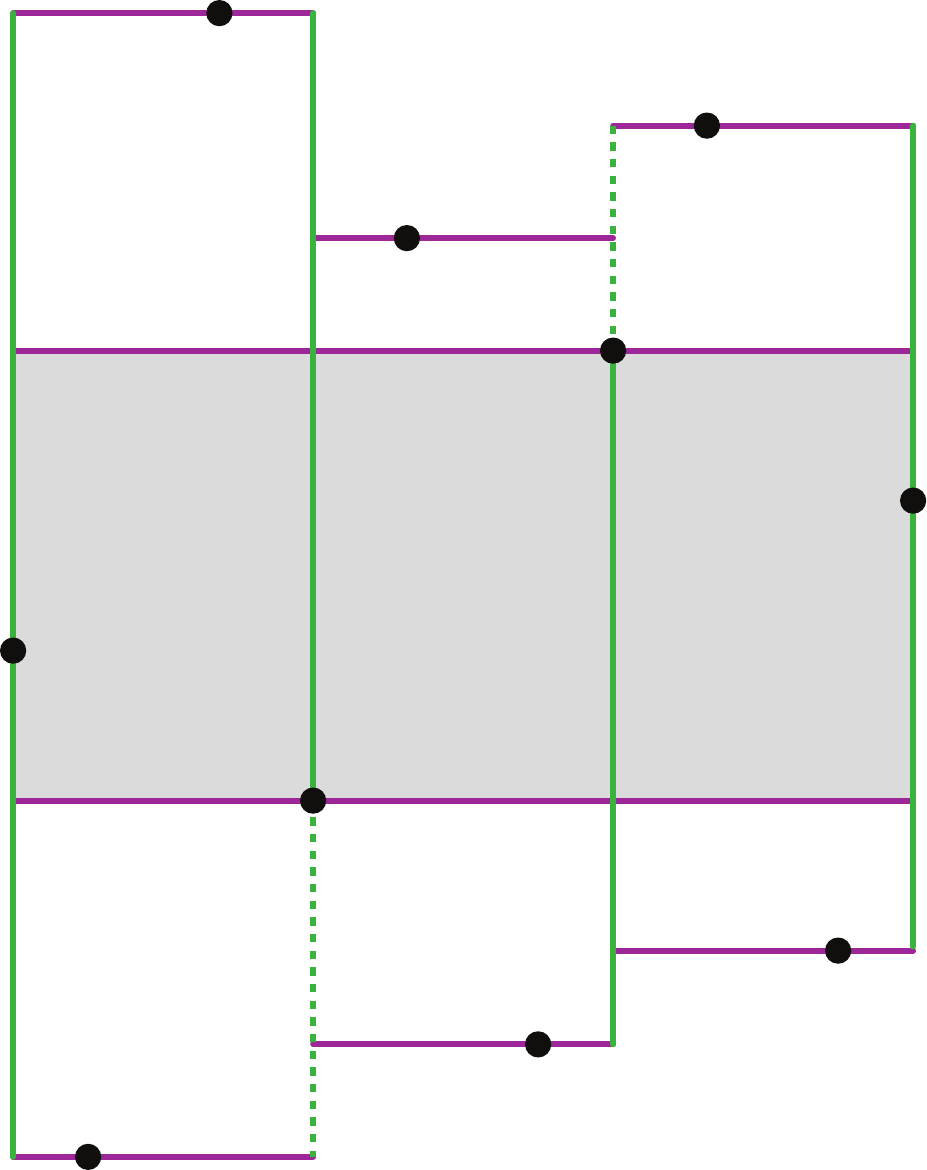}
		\caption{Relationship between $R_{i+1}$ and $R_i$ when $(R_i)_{i \in \zz}$ is a flow line. The tetrahedron rectangle $R_i$ is shaded gray. There are three options for $R_{i+1}$: it can be either the maximal vertical expansion  of the tallest edge subrectangle of $R_i$ (in the middle) or the maximal vertical expansion of an edge subrectangle of $R_i$ that is disjoint from the tallest edge rectangle of $R_i$ (on the sides). The dotted segments indicate where the expanded rectangles do not share the corresponding subsets of their sides.}
		\label{fig:flow:line}
	\end{figure}
	
	Throughout the paper, we do not distinguish notationally between a flow line viewed as a sequence of tetrahedra and a flow line viewed as a sequence of tetrahedron rectangles, and we freely pass between these interpretations. 
	In particular, we may write that a sequence $\eleZeta = (R_i)_{i\in \zz} $ of tetrahedron rectangles is an elevation of a flow cycle $\zeta = (t_1, \ldots, t_k)$, without explicitly mentioning the loomification map $\loomif(\cdot)$.

	\subsection{Flow lines and orbits of the flow}\label{subsec:flow:lines:and:orbits}
	Recall that the points of $\loom$ correspond to the orbits of the lift of the flow $\Psi\bez \Lambda$ to the universal cover of $M$, where  $M = N \bez\Lambda$.
	The terminology that appears in Definitions \ref{defn:flow:cycle} and~\ref{def:flow:line} is motivated by the following result of Landry-Minsky-Taylor.
	
	\begin{theorem}\cite[Fact 4.4 and Proposition 6.12]{LMT_flow}\label{thm:flow:lines}
		Let $(R_i)_{i \in \zz}$ be a flow line in $(\loom, \loom^+, \loom^-)$. Then either all rectangles $R_i$ share a common cusp of $\loom$ or there exists $x \in \loom$ such that
		\begin{equation}\label{eqn:orbit:in:loom}
		\bigcup\limits_{g \in \pi_1(M)} \bigcap\limits_{i\in \zz} g(R_i) = \bigcup\limits_{g \in \pi_1(M)} g(x).\end{equation}
		Conversely, for every $x\in \loom$ there is a flow line $ (R_i)_{i \in \zz}$ in $(\loom, \loom^+,\loom^-)$ such that \eqref{eqn:orbit:in:loom} holds. \qed
	\end{theorem}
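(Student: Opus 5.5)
The plan is to study the forward and backward halves of the flow line separately, using the nesting in Observation~\ref{obs:tets:get:thinner}. By that observation and Figure~\ref{fig:flow:line}, each $R_{i+1}$ is a maximal vertical expansion of an edge subrectangle of $R_i$. Going forward, the rectangles therefore become (weakly) thinner and taller; going backward, they become shorter and wider. Set
\[
V^+=\bigcap_{i\geq 0}\overline{\{\text{leaves of }\loom^+\text{ meeting } R_i\}},\qquad
V^-=\bigcap_{i\leq 0}\overline{\{\text{leaves of }\loom^-\text{ meeting } R_i\}}.
\]
The intersection $\bigcap_i R_i$ should then be the "product" of the stable band $V^+$ with the unstable band $V^-$, inside the rectangle $R_0$. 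The key claim is a dichotomy: either $V^+$ (respectively $V^-$) collapses to a single stable (respectively unstable) leaf, or all $R_i$ share a common cusp. If both bands collapse, the two leaves meet in $R_0$ in a single point $x$. The displayed equality \eqref{eqn:orbit:in:loom} then follows from equivariance: $g$ maps the flow line $(R_i)$ to the flow line $(g(R_i))$, whose intersection is $g(x)$.

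To prove the collapse for $V^+$, I would argue by contradiction. Suppose $V^+$ has nonempty interior. The forward rectangles all cross a fixed open vertical band $B\subset R_0$, and since each step is a maximal vertical expansion, their heights grow. There are two cases.
\begin{itemize}
\item If the heights grow without bound in both directions, then $\bigcup_{i\ge0}R_i\supset$ an infinitely tall cusp-free strip. Here I would use Proposition~\ref{prop:orbit:space}\eqref{orbit_space:dense:leaves}, which says the orbit of a half-leaf is dense, to place a translate of a cusp leaf, and hence a cusp, inside this strip. Since tetrahedron rectangles contain no cusps, this is a contradiction.
\item Otherwise the expansions stabilise in one vertical direction. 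Then a fixed horizontal side is shared by all $R_i$ for large $i$, and the cusp in that side is common to all of them. A symmetric statement then has to be checked for the backward direction. It may also be convenient here to use that $\V$ has finitely many tetrahedra, so that the combinatorial type of $R_i$ recurs.
\end{itemize}
The same argument, with the roles of the foliations exchanged, handles $V^-$.

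I expect the collapse step to be the main obstacle. The difficulty is ruling out a band of positive width whose limiting boundary leaves are themselves cusp leaves. This is exactly where the no-perfect-fits-relative-to-$\Lambda$ hypothesis enters, via Lemma~\ref{lem:only:fake:pf}: a limiting boundary leaf that is accumulated on but not crossed would produce a perfect fit. That perfect fit must then be a cusp rectangle, whose cusp is shared by all $R_i$, which lands us in the first alternative of the statement.

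For the converse, fix $x\in\loom$ and use condition (2) of Definition~\ref{def:loom} to choose a tetrahedron rectangle $R_0\ni x$. I would then build the sequence inductively. Going forward, the edge subrectangles of $R_i$ allowed by Figure~\ref{fig:flow:line} (the tallest one and the two disjoint from it) cover the region of $R_i$ not blocked by cusp leaves. So $x$ lies in, or on the closure of, one of them, and I take $R_{i+1}$ to be its maximal vertical expansion. Going backward, I would define $R_{i-1}$ as the unique tetrahedron rectangle having the relevant edge rectangle as its tallest edge, using Lemma~\ref{lem:edge:rectangle:once:bottom:top}. Some care is needed when $x$ lies on a cusp leaf; there I would choose consistently to one side, so that $x$ stays in the intersection. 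By the first part, $\bigcap_i R_i$ is then a single point containing $x$, or the $R_i$ share a cusp. The shared-cusp case is excluded by the choices, and \eqref{eqn:orbit:in:loom} follows as before.
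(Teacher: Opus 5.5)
The paper does not prove this statement; it is quoted from \cite{LMT_flow}. So your argument has to stand on its own, and for arbitrary flow lines it cannot, because in that generality the statement is false.

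\textbf{The forward dichotomy.} Your second case does not occur as described. By Observation~\ref{obs:tets:get:thinner}, $R_{i+1}$ meets both horizontal sides of $R_i$, so heights grow strictly at every step and no horizontal side is ever shared. Bounded height would only make the pairwise distinct cusps on the horizontal sides accumulate, which Proposition~\ref{prop:orbit:space}~\eqref{orbit_space:no:accumulation:points} forbids. (In the unbounded case, a cusp leaf crossing your strip does not put its cusp inside it; you need the no-unbounded-product-region argument of Lemma~\ref{lem:expansions}.)

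What actually happens is the following. A cusp shared by $R_i$ and $R_{i+1}$ is a corner of the edge rectangle that $R_{i+1}$ expands, hence a west or east cusp of $R_{i+1}$. Two distinct cusps cannot both persist, since they would span the widest edge of all later $R_i$, making $R_i$ constant. A persisting cusp $c$ therefore fixes one end of all later widths at a pair of stable $c$-leaves that bound, rather than cross, the $R_i$. Your case analysis misassigns this situation.

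More seriously, $c$ need not be a cusp of every $R_i$. Here is a flow line where this fails:
\begin{itemize}
\item let $c$ be the north cusp of some $R_0$;
\item go straight up, so $c$ becomes a vertical-side cusp of $R_1$;
\item from then on, always pass through the unique equatorial edge at $c$ whose colour differs from that of the top diagonal;
\item extend backwards straight down.
\end{itemize}
The cusps of $R_{-1}$ are the corners of the widest edge of $R_0$ together with two cusps outside the width of $R_0$, so $c$ is not among them. Hence no cusp is common to all $R_i$. Meanwhile the widths $W_i$, $i\ge 1$, share an endpoint, so $\bigcap_i R_i$ is empty or contains a nondegenerate leaf segment. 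Thus \eqref{eqn:orbit:in:loom} fails for every $x$.

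The most your strategy can give is that $\bigcap_i R_i$ is a point unless the $R_i$ share a cusp for all large $i$. For periodic lines, the only case the paper uses, this suffices. If $g(R_i)=R_{i+k}$ and $c$ persists, then $g(c)$ persists as well. Since two distinct cusps cannot both persist, $g(c)=c$, and so $c$ is a cusp of every $R_i=g^{-m}(R_{i+mk})$.

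\textbf{The converse.} Your forward step is fine: the three successors of $R_i$ meet it in the three vertical strips cut out by the stable leaves through its north and south cusps. The backward step is not. The straight-down predecessor meets $R_i$ only in the strip between the unstable leaves through its west and east cusps. More generally, by Observation~\ref{obs:tets:get:thinner}, each of the finitely many predecessors of $R_i$ meets it in a horizontal strip whose two sides lie strictly inside $R_i$. So if $x$ is close to a horizontal side of your arbitrarily chosen $R_0$, no backward extension containing $x$ exists.

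For periodic $x$ you can avoid this as follows.
\begin{itemize}
\item Since $x$ lies on no cusp leaf, the forward ray through $x$ from any $R_0\ni x$ is unique.
\item Since $\pi_1(M)\cdot x$ meets each tetrahedron rectangle in finitely many points, this ray meets only finitely many $\Stab(x)$-classes of rectangles.
\item Hence $R_j=h(R_i)$ for some $i<j$ and $h\in\Stab(x)$, and the segment $R_i,\ldots,R_j$ extends $h$-periodically to a periodic flow line all of whose rectangles contain $x$.
\end{itemize}

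No choice works when $x$ lies on a stable cusp leaf $\sigma$ from a cusp $c$. Suppose $\bigcap_i R_i=\{x\}$. Then the forward widths shrink to $\sigma$, while the horizontal sides facing $c$ move strictly towards $c$ along $\sigma$. Their cusps are pairwise distinct and none equals $c$, so they would accumulate at a point of $\sigma$ or at $c$. This again contradicts Proposition~\ref{prop:orbit:space}~\eqref{orbit_space:no:accumulation:points}, so ``choosing consistently to one side'' cannot rescue that case.
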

	In other words, flow lines in the sense of Definition \ref{def:flow:line} encode orbits of $\widetilde{\Psi}$. 
	
	To simplify notation, we denote the intersection $\bigcap\limits_{i \in \zz} R_i$ of all rectangles that make up a flow line $\rho = (R_i)_{i \in \zz}$ by~$\bigcap \rho$. If $\bigcap \rho = x$ for some $x \in \loom$, we say that $\rho$ \emph{represents} $x$. Otherwise,  all rectangles~$R_i$ share a common cusp~$c$ of $\loom$. Then we say that $\rho$ represents~$c$. In the compactified loom space $\completedLoom$ the intersection $\bigcap \rho$  is the infinite-prong singularity corresponding to $c$. 
	

	\begin{lemma}\label{lem:stabilizers}
		Let $\rho = (R_i)_{i \in \zz}$ be flow line in $(\loom, \loom^+, \loom^-)$ that represents $x \in \completedLoom$. Then, under the $(\Psi, \Lambda)$-action of $\pi_1(M)$ on $(\loom, \loom^+, \loom^-)$, we have $\Stab(\rho) \subseteq \Stab(x)$.
	\end{lemma}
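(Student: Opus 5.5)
Let $g \in \Stab(\rho)$, so there is $k \in \mathbb{N}$ with $g(R_i) = R_{i+k}$ for all $i \in \zz$. The plan is to show directly that $g$ fixes the set $\bigcap \rho$, interpreted in $\completedLoom$, which by definition is the point $x$.

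First I will record that the $(\Psi,\Lambda)$-action is by homeomorphisms of $\loom$ that permute cusps, and extends to $\completedLoom$. Hence $g$ commutes with intersections:
\[ g\Bigl(\bigcap_{i\in\zz} R_i\Bigr) = \bigcap_{i\in\zz} g(R_i) = \bigcap_{i\in\zz} R_{i+k} = \bigcap_{i\in\zz} R_i ,\]
where the last equality is just reindexing.

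Then I will split into the two cases of Theorem~\ref{thm:flow:lines}.

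\textbf{Case 1: $\bigcap \rho = \{x\}$ for a point $x \in \loom$.} The display above gives $g(x) = x$ immediately.

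\textbf{Case 2: all $R_i$ share a common cusp $c$, and $\rho$ represents $c$.} Here I need that $g(c) = c$. The cusps of $R_i$ are sent by $g$ to the cusps of $g(R_i) = R_{i+k}$, so $g(c)$ is a cusp common to all $R_{i+k}$, that is, to all $R_j$. It then suffices to show that the common cusp is unique.

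\textbf{The main obstacle is this uniqueness.} If two distinct cusps $c, c'$ were cusps of every $R_i$, the $R_i$ would all contain the edge rectangle spanned by $c$ and $c'$ (or at least a fixed nonempty region near both). This would contradict the fact, implicit in Theorem~\ref{thm:flow:lines} and Observation~\ref{obs:tets:get:thinner}, that the nested intersections shrink to a single point of $\completedLoom$. Indeed, the paper says that in $\completedLoom$ the set $\bigcap\rho$ is the singular point corresponding to $c$, so it is a single point.

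Alternatively, the argument can be phrased uniformly in $\completedLoom$: $\bigcap \rho$, computed in $\completedLoom$, is the single point $x$, and the display above shows that $g$ preserves this singleton. Therefore $g(x) = x$, that is, $g \in \Stab(x)$, so $\Stab(\rho) \subseteq \Stab(x)$.
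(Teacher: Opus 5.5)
Your proposal is correct, and its final uniform phrasing is exactly the paper's proof: since $g$ is injective, $g\bigl(\bigcap_i R_i\bigr)=\bigcap_i g(R_i)=\bigcap_i R_{i+k}=\bigcap_i R_i=x$, with $\bigcap\rho$ read in $\completedLoom$ in the cusp case. The case split and the separate uniqueness-of-the-common-cusp step are not needed for that version. If you do want the uniqueness step self-contained, do not appeal to Theorem~\ref{thm:flow:lines}, which does not actually say anything about the size of $\bigcap\rho$ in the cusp case. Use instead that the $R_i$ are pairwise distinct (each is strictly thinner than its predecessor), while only finitely many tetrahedra of $\univV$ are incident to the unique edge joining two fixed cusps.
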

	\begin{proof}
		Let $g \in \Stab(\rho)$. Since $g$ is injective, we have
		\[g(x) = g\left(\bigcap\limits_{i \in \zz} R_i\right) = \bigcap\limits_{i \in \zz} g(R_i) = \bigcap\limits_{i \in \zz} R_i = x,\]
		Thus $g \in \Stab(x)$. 
	\end{proof}
	It follows from Lemma \ref{lem:stabilizers} that every periodic flow line represents either a cusp or a periodic point in~$\loom$.  
	Landry-Minsky-Taylor proved that $\Stab(\rho)$ is a subgroup of index at most two in $\Stab(x)$.

	\begin{proposition}\cite[Theorem 6.1 (4)]{LMT_flow}\label{prop:square}
		Suppose that $x \in \loom$ is periodic and represented by a flow line $\rho = (R_i)_{i\in \zz}$. Let $g_x$ be the generator of the stabilizer of $x$. Then every element of $\langle g_x^2 \rangle$ stabilizes $\rho$. \qed
	\end{proposition}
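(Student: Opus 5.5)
We would prove this by showing that $g=g_x^2$ carries $\rho$ to a flow line that represents $x$ in the same combinatorial way, and then that two such flow lines must coincide up to a shift of indices.

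\emph{Setup.} By Proposition~\ref{prop:orbit:space}~\eqref{orbit_space:one:periodic}, $x$ lies on no cusp leaf, since a cusp leaf already carries its unique periodic point at the cusp. Every side of a simplex rectangle is contained in a cusp leaf. Hence $x$ lies in the interior of every simplex rectangle whose closure contains it, and never on a side.

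\emph{Step 1: quadrants.} By Proposition~\ref{prop:orbit:space}~\eqref{orbit_space:fixed:points}, $g_x$ permutes the four half-leaves of $\loom^\pm$ at $x$. It preserves each foliation and the orientation of $\loom$, so it acts on the four quadrants at $x$ either trivially or by rotation by $\pi$; the latter is the twisted case. In either case $g=g_x^2$ fixes each quadrant at $x$ and each half-leaf at $x$. It also preserves the upward direction of $\univV$.

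\emph{Step 2: local determinism of a flow line through $x$.} Let $R_i\ni x$ be a tetrahedron rectangle. By Observation~\ref{obs:tets:get:thinner} and Figure~\ref{fig:flow:line}, $R_{i+1}$ is the maximal vertical expansion of one of three edge subrectangles of $R_i$: the tallest one, or one of the two disjoint from it. Since $x\in R_{i+1}\cap R_i$, and $R_{i+1}\cap R_i$ is the chosen edge subrectangle, that subrectangle must contain $x$. We would check that these three subrectangles are pairwise disjoint, as in Figure~\ref{fig:tet:rects}. Then $R_{i+1}$ is determined by $R_i$ together with the position of $x$ relative to the cusps of $R_i$. This position is encoded by which cusp lies in which quadrant at $x$.

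The dual statement determines $R_{i-1}$ from $R_i$ as the maximal horizontal expansion of the appropriate edge subrectangle. Where there are several candidates, the choice is forced by the quadrant data together with the requirement that $x$ lies in $R_{i-1}$. Consequently, if two flow lines representing $x$ share one rectangle and select the same quadrant data at every step, they agree up to an index shift.

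\emph{Step 3: comparing $\rho$ and $g(\rho)$.} By Lemma~\ref{lem:stabilizers}, applied in reverse, $g(\rho)=(g(R_i))_i$ is a flow line with $\bigcap g(\rho)=g(x)=x$. Since $g$ fixes the quadrants at $x$, the combinatorial choice made at $g(R_i)$ equals the one made at $R_i$. It remains to find indices $i,j$ with $g(R_i)=R_j$.

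\emph{Main obstacle.} We expect this last matching to be the hardest step. We would attack it with the dynamics of $g$: $g$ contracts along the stable leaf through $x$ and expands along the unstable leaf, with the directions as in Proposition~\ref{prop:orbit:space}~\eqref{orbit_space:fixed:points}. Along $\rho$, the rectangles become strictly thinner and taller as $i$ increases, with width shrinking to $x$.
\begin{enumerate}
\item Fix the cusps $c_W$ and $c_E$ on the vertical sides of $R_0$. Track the nearest cusps to $x$ in each of the two quadrants bounding the horizontal extent of $R_j$.
\item Show that the horizontal cusp bracket of $g(R_0)$ equals that of some $R_j$. This bracket consists of the cusps $g(c_W)$ and $g(c_E)$, which are closer to $x$ on the correct sides.
\item To do so, use the fact that $\rho$ passes through every widest-edge configuration containing $x$ with vertical sides at cusps nested between those of $R_0$. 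By Lemma~\ref{lem:edge:rectangle:once:bottom:top}, such a tetrahedron rectangle is determined by its widest edge rectangle.
\end{enumerate}
If this exhaustion fails directly, we would instead compare forward tails. Both tails are determined by Step~2 from any shared edge rectangle containing $x$. By discreteness of cusps (Proposition~\ref{prop:orbit:space}~\eqref{orbit_space:no:accumulation:points}), there are only finitely many rectangles of $\rho$ between consecutive cusp brackets. This should force the tails of $\rho$ and $g(\rho)$ to meet. Steps~2 and~3 then give $g(R_i)=R_{i+k}$ for all $i$, with $k>0$ because $g$ preserves the upward direction, so $g_x^2\in\Stab(\rho)$ and hence $\langle g_x^2\rangle\subseteq\Stab(\rho)$.
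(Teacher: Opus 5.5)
\paragraph{The meeting step is missing.} The gap is the step you yourself call the main obstacle, and it carries the whole content of the statement. (The paper does not prove this statement; it imports it from \cite[Theorem 6.1(4)]{LMT_flow}.)

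What is easy to make rigorous is weaker. Every tetrahedron rectangle containing $x$ contains one of its six edge subrectangles that contains $x$. Edge rectangles containing $x$ fall into finitely many $\langle g_x\rangle$-orbits, by discreteness of $\liftGamma$ (see the start of Section~\ref{sec:adapted:set}). Each edge rectangle lies in finitely many tetrahedron rectangles. The forward step is $\Stab(x)$-equivariant and is defined on every tetrahedron rectangle containing $x$. The projection of $\rho$ is then a bi-infinite orbit of a self-map of a finite set, so it lies on a periodic cycle, and equivariance gives $g_x^{N}(R_i)=R_{i+p}$ for all $i$ and some $N\neq 0$.

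Hence $\Stab(\rho)=\langle g_x^{n_0}\rangle$ for some $n_0\ge 1$. The proposition is exactly the assertion $n_0\le 2$, i.e.\ that $\rho$ and $g_x^2(\rho)$ share a rectangle, and nothing in your sketch provides this.

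\begin{itemize}
\item Your exhaustion claim in (3) is false. Write $w_j,e_j,s_j,n_j$ for the cusps on the sides of $R_j$, and suppose $\rho$ moves sideways from $R_j$ into the strip between the west side of $R_j$ and the stable leaf of $s_j$ (with $s_j$ west of $n_j$). Then the maximal vertical expansion of the edge subrectangle spanned by $w_j$ and $n_j$ contains the whole vertical strip of $R_j$ west of the stable leaf of $n_j$, so it contains $x$. Its stable extent lies strictly between those of $R_{j+1}$ and $R_j$, so it is not on $\rho$.
\item The fallback, that discreteness ``should force'' the tails to meet, is not an argument. Discreteness yields only the pigeonhole statement above, with no bound on $n_0$.
\item Knowing that $g_x^2$ fixes the quadrants does not help by itself. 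Distinct flow lines representing $x$ do occur, since distinct flow cycles can encode the same orbit. By forward determinism and the periodicity just described, such flow lines share no rectangle at all.
\end{itemize}

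\paragraph{The backward half of Step 2 is wrong.} The flow-line rule is not symmetric under reversing time and swapping the foliations. A predecessor $R_{i-1}$ of $R_i$ is any tetrahedron rectangle that contains the widest edge subrectangle $P_i$ of $R_i$, either as its tallest edge subrectangle or as an equatorial edge subrectangle of colour opposite to its top diagonal. When $\rho$ enters $R_i$ straight up, $R_{i-1}\cap R_i=P_i\ni x$. So $x$ lies in every such candidate, and neither the quadrant data nor the condition $x\in R_{i-1}$ selects one.

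Your forward step has a smaller inaccuracy of the same kind. There $R_{i+1}\cap R_i$ is the full vertical strip of $R_i$ over the chosen candidate, not the candidate itself. Forward determinism survives only because the three candidates have pairwise disjoint stable extents.

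Backward determinism is in fact unnecessary. If $\rho$ and $g_x^2(\rho)$, both invariant under $g_x^{N}$, share one rectangle, they share a forward tail. Their translation lengths then agree, since a flow line never repeats a tetrahedron, and applying $g_x^{-kN}$ shows they coincide. This again reduces everything to the missing statement that the two flow lines meet.
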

	
	For almost all closed orbits $\gamma$ of $\Psi\bez \Lambda$, the stabilizer of any point from $\liftGamma$ is equal to the stabilizer of a flow line representing this point. 
	The only exceptions are twisted orbits~$\gamma$ for which a primitive flow cycle $\zeta$ satisfying \[\bigcup\limits_{g \in \pi_1(M)} \bigcap g(\eleZeta) = \liftGamma\] is homotopic to $\gamma^2$ instead of $\gamma$ \cite[Theorem~6.1(4)]{LMT_flow}. In this case, we get distinct translates of the same flow line that represent the same point of $\liftGamma$. In the drilling algorithm such translates have to be recognized and \emph{uniquified}; see Section \ref{subsec:fellow-travel}.

	\section{Continents}\label{sec:continents}

	In this section, we discuss convexity in the loom space, define \emph{continents}, and prove that the loomification of any continent $\C$ in $\univV$ is a convex disk in $\loom = \loomif(\univV)$; see Proposition \ref{prop:convex:disk}. This result is used in Section \ref{sec:orders} to derive a circular order on the set consisting of the cusps of $\C$ together with the \emph{exit points of continental cusp leaves} of~$\C$.
	This circular order is further used to determine the horizontal order on certain stable continental cusp leaves and the vertical order on certain unstable continental cusp leaves; see Corollary \ref{cor:from:circ:to:total}.

	\subsection{Geodesics and convex hulls in loom spaces}\label{subsec:geodesics}

	In this subsection, we discuss geodesics and convex hulls in loom spaces. These notions are used in Proposition \ref{prop:convex:disk} which states that the loomification of a continent is a convex disk.
	The original source for this material is \cite[Section~6]{SchleimSegLoom}. 
	
	Let $(\loom, \loom^+, \loom^-)$ be a loom space. By a \emph{leaf segment} in $(\loom,\loom^+,\loom^-)$ we mean an oriented segment of a leaf of $\loom^\pm$ that is closed, except possibly at an endpoint corresponding to a cusp. The orientation of a leaf segment need not agree with the orientation of the leaf.
	We denote the closure of leaf segment $\alpha$ in the compactified loom space $\completedLoom$ by $\overline{\overline{\alpha}}$. 
	
	\begin{definition}
		Let $k\geq 1$. For $i=1,2,\ldots, k$ let $\alpha_i$ be an oriented leaf segment in $(\loom,\loom^+,\loom^-)$. 
		We say that $\alpha = (\alpha_1, \ldots, \alpha_k)$ is a \emph{polygonal path} in in $(\loom,\loom^+,\loom^-)$ if for every \mbox{$i\in \lbrace 1,2,\ldots, k-1\rbrace$} the terminal endpoint of $\overline{\overline{\alpha}}_i$ is the initial endpoint of $\overline{\overline{\alpha}}_{i+1}$.  
	\end{definition}
	
	Polygonal paths can connect two points, two cusps, or a point and a cusp. 
	
	\begin{definition}
		Let $\alpha = (\alpha_1, \ldots, \alpha_k)$ be a polygonal path in $(\loom, \loom^+, \loom^-)$. \\We say that $\alpha$  is a \emph{geodesic} if 
		\begin{itemize}[topsep=0pt]
			\item $\bigcup\limits_{i=1}^k \alpha_i$ intersects every leaf of $\loom^\pm$ in at most one connected component.
			\item For $i=1,\ldots,k-1$, no terminal segment of $\alpha_i$ is the reverse of some initial segment of $\alpha_{i+1}$.
		\end{itemize}
	\end{definition}

	Denote the set of cusps of $\loom$ by $\Omega$. We say that a subset $A \subset \loom$ \emph{meets} a cusp $c \in \Omega$ if the closure of $A$ in $\completedLoom$ contains the singularity corresponding to $c$, and denote the set of cusps met by $A$ by $\Omega(A)$ .

	The compactified loom space $\completedLoom$ can be naturally identified with $\loom \cup \Omega$. Nevertheless, we consider geodesics to be subsets of $\loom$, not of $\completedLoom$, keeping in mind that they may meet cusps in the above sense. 
	\begin{definition}
		Let $A \subset \loom$. 
		We say that $A$ is \emph{convex} if for any $x,y \in A \cup \Omega(A)$  every geodesic connecting  $x$ and $y$ is contained in the closure $\overline{A}$ of $A$ in $\loom$. 
		
	\end{definition}
	The most basic examples of convex sets in $\loom$ are rectangles. More complicated convex sets can be constructed using \emph{hulls}.
	
	\begin{definition} \cite[Definition 6.19]{SchleimSegLoom}
		Let $X \subset \loom \cup \Omega$. The \emph{hull} of $X$ is the set
		\[\hull(X) = \lbrace p \in \loom  \ | \ p \text{ lies on a geodesic between some } x,y \in X \cup \Omega(X)\rbrace.\]
	\end{definition}
	
	For instance, if $x,y$ are corners (material or ideal) of a rectangle $R\subset \loom$, then $\mathrm{int}(\hull(\lbrace x, y \rbrace)) =  R$ (recall from Definition \ref{defn:rectangle} that rectangles are open). 
	
	\begin{proposition}\cite[Lemma 6.21]{SchleimSegLinks} \label{prop:SS:convex:hull:is:convex}
		For any finite subset $X \subset \loom \cup \Omega$,  $\hull(X)$ is convex. \qed
	\end{proposition}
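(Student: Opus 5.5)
The statement is that $\hull(X)$ is convex for finite $X\subset\loom\cup\Omega$. The plan is to reduce convexity of $\hull(X)$ to a local \emph{geodesic extension} property. Concretely, take $p,q\in\hull(X)\cup\Omega(\hull(X))$ and any geodesic $\beta$ joining them; we must show $\beta\subset\overline{\hull(X)}$.

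First we check $\Omega(\hull(X))=\Omega(X)$. Since $X$ is finite, $\hull(X)$ is a finite union of geodesics between finitely many points and cusps. A geodesic meets only cusps at its endpoints, because its segments are closed except at cusp endpoints. So the only cusps met are those in $X\cup\Omega(X)$. Hence $p$ lies on a geodesic $\alpha_1$ between some $x_1,y_1\in X\cup\Omega(X)$, and $q$ lies on a geodesic $\alpha_2$ between $x_2,y_2$.

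The key step is a \emph{triangle (or quadrilateral) lemma} in the loom space. If $p$ lies on a geodesic from $x_1$ to $y_1$ and $z$ is any point or cusp, then every geodesic from $p$ to $z$ lies in the union of the closed regions bounded by geodesics among $\{x_1,y_1,z\}$. We would prove this via the structure of geodesics. A geodesic meets each leaf in a connected set, so it is monotone in the transverse coordinates on each rectangle it passes through. Between two points, the geodesics fill out a region that is a union of rectangles; this is the analogue of $\mathrm{int}(\hull\{x,y\})=R$ for corners of a rectangle. We then use that leaves of $\loom^\pm$ separate $\loom$, since $\loom$ is a plane and leaves are properly embedded lines. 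Any leaf separating $\beta$ from the triangle would have to cross the geodesic sides at least twice, which contradicts the one-component condition. Applying the lemma twice, first with $(x_1,y_1;q)$ and then with $(x_2,y_2;\cdot)$, puts $\beta$ inside a union of geodesic polygons with vertices in $X\cup\Omega(X)$.

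It remains to show that a region bounded by geodesics between points of $X$ lies in the closure of $\hull(X)$. Every interior point of such a polygon lies on some geodesic between two of its vertices. This follows by sweeping: leaves through an interior point meet two sides, and one concatenates the leaf segment with sub-geodesics of the sides. The result is then straightened into a geodesic by removing backtracking and redundant crossings, which gives a geodesic between vertices that passes through the point. Taking closures handles boundary points and points on cusp leaves.

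The main obstacle is the triangle/sweeping lemma. The difficulty is that loom spaces have cusp leaves and perfect-fit-like phenomena near cusps, so a leaf through an interior point may run into a cusp rather than hit a side. We would handle this using condition (1) and condition (2) of Definition~\ref{def:loom}: thicken initial segments of cusp leaves into rectangles, and cover the relevant compact pieces by finitely many tetrahedron rectangles, inside which everything is explicit. Alternatively, we would cite the treatment in \cite[Section~6]{SchleimSegLoom}.
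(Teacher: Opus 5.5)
The paper gives no argument for this statement; it imports it from Schleimer--Segerman, which is also your fallback. As a self-contained proof, though, your sketch has genuine gaps.

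\textbf{The first step is false.} The claim $\Omega(\hull(X))=\Omega(X)$ fails, and both justifications for it are wrong.
\begin{enumerate}
\item The hull is not a finite union of geodesics, since one pair of endpoints is typically joined by infinitely many. The paper itself notes that $\mathrm{int}(\hull(\{x,y\}))=R$ when $x,y$ are opposite corners of a rectangle $R$.
\item A geodesic can pass through cusps in its interior, because consecutive leaf segments are allowed to meet at a cusp. Lemma~\ref{lem:geodesic:between} produces exactly such geodesics, and the proof of Lemma~\ref{lem:red:edge:rects} uses one through the cusp $s^+_{a,m,x}$.
\end{enumerate}
For a concrete counterexample, let $x,y$ be the (material) opposite corners of a tetrahedron rectangle $R$. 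Then $\Omega(\{x,y\})=\emptyset$, while $\hull(\{x,y\})\supset R$ meets all four cusps of $R$. So the convexity test involves endpoints that are cusps outside $X\cup\Omega(X)$. Your reduction needs each endpoint to lie on a geodesic between points of $X\cup\Omega(X)$, and it does not cover cusps that are only limit points of $\hull(X)$.

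\textbf{The triangle lemma is unproved, and it carries the whole content.} Together with your final step, it amounts to a special case of the statement itself: convexity of a three-point hull, tested at one endpoint lying on a side. Your reduction of the general case to this lemma is sound. The trouble is the mechanism you propose for the lemma.
\begin{itemize}
\item It needs every point $w\in\beta$ outside the region to be cut off from \emph{both} endpoints $p$ and $z$ by a single leaf. Then $\beta$ (not the sides, as you wrote) would meet that leaf in two components.
\item That supporting-leaf property of $\overline{\hull(\{x_1,y_1,z\})}$ is a sufficient condition for its convexity, and it is exactly what has to be proved. Nothing in the sketch establishes it.
\item Cusps are where this is delicate, since geodesics may pass through them (Lemma~\ref{lem:geodesic:between}).
\item Covering the hull by finitely many tetrahedron rectangles does not help on its own, because the issue is precisely a geodesic that leaves the cover.
\end{itemize}

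\textbf{The closing sweeping step also fails as stated.} Straightening a concatenation of side segments with a leaf segment through an interior point need not yield a geodesic that still passes through that point. Also, three geodesic sides can overlap, so they need not bound a region at all.

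As written, the proposal reduces to the same citation the paper uses.
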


In the proof of Lemma \ref{lem:adapted:set}, we will use the following result which gives a sufficient condition for the existence of a geodesic passing through some point or cusp of $\loom$ that connects two points or cusps of $\loom$.
	\begin{lemma}\cite[Lemma 6.17]{SchleimSegLoom} \label{lem:geodesic:between}
		Let $p, q, x \in \loom \cup \Omega$. If $p$ and $q$ lie in the closures of distinct, non-adjacent orthants at $x$ then there is a geodesic from $p$ to $q$ that passes through~$x$.
	\end{lemma}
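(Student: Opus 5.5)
Since the statement is cited from \cite[Lemma 6.17]{SchleimSegLoom}, I would reprove it directly by building the geodesic as a concatenation of two polygonal paths. One runs from $p$ to $x$ inside the closed orthant $\overline{O_1}$ at $x$ containing $p$; the other runs from $x$ to $q$ inside the closed orthant $\overline{O_3}$ containing $q$. Here $O_1$ and $O_3$ are distinct and non-adjacent.

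The first step is to show that within a single closed orthant $\overline{O}$ at $x$, any point or cusp $p$ can be joined to $x$ by a geodesic that stays in $\overline{O}$ and is monotone in both directions. Fix cardinal directions near $x$ so that $O$ is, say, the north-east orthant. Take the rectangle whose corners are $x$ and $p$, that is, the interior of $\hull(\lbrace x,p\rbrace)$ when it exists. Otherwise, use condition (2) of Definition~\ref{def:loom} and Lemma~\ref{lem:expansions} to produce a staircase path: move north from $x$ along the boundary or along a leaf until blocked by a cusp leaf, then east, and so on. This gives a polygonal path that only moves north and east. Termination uses the fact that cusps do not accumulate, so only finitely many cusp leaves separate $x$ from $p$ inside $O$. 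A path that only moves north and east meets each leaf of $\loom^\pm$ in at most one connected component, and it never backtracks, so it is a geodesic. The same argument gives a south-west monotone geodesic from $x$ to $q$ in the opposite orthant.

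The second step is to concatenate the two paths at $x$ and check the geodesic conditions. Because $O_1$ and $O_3$ are opposite (non-adjacent) orthants, the concatenation is north-east monotone throughout when oriented from $q$ to $p$. At the junction the last segment of the first path and the first segment of the second do not reverse each other, since they lie on half-leaves bounding opposite orthants. For leaf intersections, a leaf of $\loom^+$ meeting both pieces would have to lie in the closures of both $O_1$ and $O_3$. By monotonicity and non-adjacency this forces the leaf to pass through $x$, where the two pieces meet and the intersection is connected. If $x$ is a cusp, the orthant boundaries are distinct $c$-leaves and the same argument applies in $\completedLoom$.

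The main obstacle is the first step when $p$ lies on the boundary of $O$ or is a cusp. The rectangle spanned by $x$ and $p$ may not exist because cusps obstruct it. A staircase is then needed, and I must verify that it terminates and remains inside $\overline{O}$. I would handle this by repeatedly taking maximal expansions (Lemma~\ref{lem:expansions}) and using local finiteness of cusps, which gives Proposition~\ref{prop:orbit:space}\eqref{orbit_space:no:accumulation:points} in $\loom$.
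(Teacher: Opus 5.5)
\textbf{There is a genuine gap: Step~1 is false.} The paper gives no proof of this lemma; it quotes it from the loom-space paper, so your argument has to stand on its own.

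\textbf{Step~1.} A closed orthant at $x$ is not a product region. In general it contains points that no path moving only north and east can reach. Here is an example.
\begin{itemize}
\item Take $x\in\loom$ on no cusp leaf, let $O$ be its north-east quadrant, and let $c$ be a cusp met by the staircase at $x$ in $O$. A small cusp neighbourhood of $c$ misses $\ell_x^\pm$, so it lies in $O$ and meets every one of the infinitely many orthants at $c$.
\item The orientations of the $c$-leaves repeat with period four around $c$. So, counting from the orthant containing $x$, the third $c$-leaf is a stable leaf $\lambda$, oriented away from $c$, with $x$ on its east side. The orthant $O'$ at $c$ just across $\lambda$ has $c$ as its south-east corner.
\item A north-east path cannot cross $\lambda$ westward. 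It cannot enter $O'$ through $c$ either. It can only leave $c$ along a $c$-leaf oriented away from $c$, which puts it in the closure of an orthant with $c$ as south-west corner, and such a closed orthant cannot be exited by moving north or east.
\end{itemize}
So for $p\in O'\subset O$ there is no monotone path from $x$ to $p$ at all. Any geodesic from $x$ to $p$ must turn back, for instance by running into $c$ and leaving it heading south or west. Neither the existence of your first piece nor its geodesic property can come from monotonicity. What you need is the existence of generally non-monotone geodesics from $x$ to arbitrary points of $\overline{O}$ that stay inside $\overline{O}$; this is where the real work lies.

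Your termination argument does not supply the needed finiteness either. By Proposition~\ref{prop:orbit:space}\eqref{orbit_space:dense:leaves} cusp leaves are dense, so in general infinitely many cusp leaves separate $x$ from $p$. The finiteness actually available concerns cusps met by the hull of finitely many points, not cusp leaves.

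\textbf{Step~2 when $x$ is a cusp.} As written, Step~2 also fails in this case, which is exactly how the paper uses the lemma: in Lemma~\ref{lem:red:edge:rects}, $x$ is the cusp $s^+_{a,m,x}$. Non-adjacent orthants at a cusp need not be opposite. Two orthants four apart around $c$ both have $c$ as north-east corner. A path entering $c$ monotonically from one of them therefore leaves into the other moving south or west, and your remark that ``the same argument applies'' gives nothing.

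This part is easy to repair without monotonicity. Suppose $O$ and $O'$ are non-adjacent at $c$, and take an orthant between them. Its two boundary $c$-leaves are one stable and one unstable. Each of them has $\overline{O}$ and $\overline{O'}$ on different sides, with one closure possibly containing the leaf itself. A leaf of $\loom^\pm$ cannot cross the separating leaf from its own foliation, so no leaf meets both closures. The two pieces also arrive at $c$ along distinct $c$-leaves, so there is no backtracking at $c$.

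When $x$ is material, the only leaves meeting both closed opposite quadrants are $\ell_x^\pm$, and each piece meets them in a connected set containing $x$. So the concatenation step works for any geodesics into $x$ that lie in the closed orthants. The gap is in producing those geodesics.
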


	\subsection{Landscapes}

	Let $t$ be a tetrahedron of a taut triangulation $\T$, and let $f, f'$ be distinct faces of $t$  which meet along an edge $e$ of $t$. We realize this intersection with a dihedral angle of $\pi$ if $f$ and $f'$ are either both top faces of $t$ or they are both bottom faces of $t$. Otherwise, we realize this intersection  with a dihedral angle of 0. By Condition~(2) of  Definition~\ref{def:taut}, this smooths the two-skeleton of $\T$ into a (non-generic) branched surface~$\HH$ whose branch locus coincides with the one-skeleton of $\T$. We call $\HH$ the \emph{horizontal branched surface} associated to $\T$. 

	A small regular neighborhood $N(\HH)$ of~$\HH$ in the manifold $|\T|$ underlying $\T$ can be foliated by properly embedded intervals that are transverse to the faces of $\T$. We call $N(\HH)$ a \emph{tie neighborhood} of~$\HH$, and the transverse intervals that make up the foliation its \emph{ties}. We say that a surface $S$  embedded in $|\T|$ is \emph{carried} by~$\T$ if it can be isotoped into a tie neighborhood of $\HH$ so that it is there transverse to its ties. 
	
	Using this terminology, we now define \emph{landscapes}, first introduced by Schleimer and the second author in \cite[Section~4.4]{SchleimSegLinks}. 
	
	\begin{definition}\cite[Definition 4.5]{SchleimSegLoom}\label{def:landscape}
		Let $\widetilde{\T}$ be a taut triangulation of $\rr^3$. A \emph{landscape} in~$\widetilde{\T}$ is a union of faces of $\univT$ that forms a connected, embedded surface  carried by $\univT$.
	\end{definition}

	
	We denote the set of cusps of a landscape $L$ by $\Omega(L)$. Note that every edge of $L$ has at most two incidences with faces of $L$. An edge with exactly one such incidence is called a \emph{coastal edge} of $L$.

	\begin{lemma} \cite[Lemma 4.6]{SchleimSegLoom} \label{lem:landscape:disk} 	
		Let $L$ be a landscape in a taut triangulation $\univT$ of $\rr^3$.
		\begin{enumerate}[topsep=0pt]
			\item The interior of $L$ is an open disk.
			\item Any two distinct vertices of $L$ are distinct vertices of $\univT$. 
			\pushQED{\qed} 
			\qedhere
			\popQED\end{enumerate}
	\end{lemma}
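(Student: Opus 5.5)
The plan is to project the landscape $L$ to the loom space and to prove that this projection is an embedding onto a simply connected open set. We take $\univT$ to be veering, so that by Construction~\ref{constr:loom:to:veering} every face $f$ of $L$ has a face rectangle $\loomif(f)$. The map $\Pi$ will be built face by face: it sends $f$ to a triangle spanned inside $\loomif(f)$ by the three cusps of $\loomif(f)$, with its vertices going to those cusps. Two faces sharing an edge $e$ of $L$ have face rectangles that both contain the edge rectangle $\loomif(e)$. So these triangles can be chosen compatibly along $\loomif(e)$, giving a continuous map $\Pi$ from the interior of $L$ to $\loom$. For a general taut $\univT$ one would replace $\loom$ by the leaf space of the upward flow dual to the two-skeleton. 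I only sketch the veering case, which is the one used later.

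\emph{Step 1 (local homeomorphism).} Since $L$ is carried by $\univT$, at each edge $e$ of $L$ the two incident faces of $L$ lie one on each side of $e$ in the transverse direction. The smoothing of $\HH$ with dihedral angles $\pi$ and $0$ rules out a fold at $e$. Translated to rectangles, the two face rectangles containing $\loomif(e)$ lie on opposite sides of the diagonal of $\loomif(e)$, so their triangles do not overlap near $e$. At an interior vertex the same reasoning shows that the triangles around the corresponding cusp wind around it once. Thus $\Pi$ is a local homeomorphism on the interior of $L$, and it is injective on the links of vertices.

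\emph{Step 2 (global injectivity).} I expect this to be the main obstacle. Suppose two distinct faces $f \neq f'$ of $L$ have overlapping images. Then their face rectangles share a point $x$. By Theorem~\ref{thm:flow:lines} there is a flow line through tetrahedra whose rectangles shrink to $x$, and this flow line passes through both $f$ and $f'$. Hence there is an upward transverse path from $f$ to $f'$, and, by connectivity of $L$, a path inside $L$ from $f'$ back to $f$. I would first try to rule this out by an intersection-number argument. Pushing the upward path slightly off $L$, the resulting loop in $\rr^3$ crosses the carried surface $L$ exactly once and always from the same side, because of the coorientation. This contradicts the vanishing of intersection numbers in $\rr^3$, provided $L$ is properly embedded. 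If properness fails for an infinite landscape, I would instead reduce to a finite subsurface containing both faces and use that upward paths in $\univT$ never return (the vertical-loop argument of Proposition~\ref{prop:nontrivial}).

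\emph{Step 3 (conclusion).} By Steps 1 and 2, $\Pi$ embeds the interior of $L$ onto a connected open subset $U$ of the plane $\loom$, and $U$ is a union of open triangles inside face rectangles glued along edge rectangles. For simple connectivity, suppose a bounded complementary component $K$ of $U$ existed. Its frontier would consist of coastal edges of $L$, that is, geodesics between cusps. This contradicts the fact that the interior of $L$, being the interior of a surface with the coastal edges as its boundary, has no boundary circles enclosing a hole. Alternatively, one argues with the convexity of hulls (Proposition~\ref{prop:SS:convex:hull:is:convex}). Hence $U$, and so the interior of $L$, is an open disk, which proves (1). For (2), two distinct vertices of $L$ are sent by $\Pi$ to distinct cusps of $\loom$, using injectivity on links and Step 2. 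Since the cusps of $\loom$ are in bijection with the ideal vertices of $\univT$ (Section~\ref{subsec:loomification}), these are distinct vertices of $\univT$.
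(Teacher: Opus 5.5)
\textbf{There is a genuine gap, and it sits in Step~2.} Step~1 is fine. Carriedness puts the two faces of $L$ at an interior edge $e$ on opposite sides of the diagonal of $\loomif(e)$, so $\Pi$ is a local homeomorphism on the interior of $L$.

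Step~2 is where the whole proof happens, and the argument there does not work.
\begin{enumerate}
\item Theorem~\ref{thm:flow:lines} only provides \emph{some} flow line whose rectangles shrink to $x$. A flow line steps from $t_i$ to the tetrahedron immediately above an edge of $t_i$, and nothing forces it to meet $f$ or $f'$. An upward arc from $f$ to $f'$ could instead come from a realization of $\univV$ over $\loom$ in which vertical lines are transverse to the two-skeleton, but that is a different input.
\item More seriously, the contradiction you draw from such an arc is not available. Intersection numbers with $L$ are homotopy invariant only when $L$ is a properly embedded surface without boundary. A finite landscape has its coastal edges as boundary, and finite landscapes are exactly the case used later (Proposition~\ref{prop:loomif:of:landscape}). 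For such $L$, a loop meeting $L$ transversally in one point always exists; take $L$ to be a single face.
\item Your fallback fails as well. The loop is an upward arc followed by an arc in $L$, and pushing the second arc off $L$ does not make it vertical. Where the pushed arc passes an edge $e$ of $L$ on the upper side, it must cross the faces of $\univT$ at $e$ lying between the two faces of $L$. In general it crosses them upwards on one side of $e$ and downwards on the other, so Proposition~\ref{prop:nontrivial} says nothing.
\end{enumerate}
Local injectivity together with transversality to the vertical direction cannot suffice on their own. A helicoidal strip is transverse to vertical lines and projects by a local homeomorphism, yet meets vertical lines twice. What is missing is a genuinely global argument that an upward path cannot leave a landscape and come back to it.

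\textbf{Further problems downstream.}
\begin{enumerate}
\item Part~(2) does not follow from injectivity of $\Pi$ on the interior of $L$. The fans of faces at two distinct vertices $v \neq v'$ of $L$ could map into disjoint angular sectors at the same cusp, with no overlap of images.
\item In Step~1, triangles around an interior vertex cannot ``wind once'' around a cusp. The angular coordinate at a cusp of $\loom$ ranges over $\rr$, so this local picture actually shows that $L$ has no interior vertices.
\item In Step~3 you use that $L$ has no boundary circle enclosing a hole, which is what is to be proved. Granting Step~2, the correct argument is this: the frontier of a bounded component of $\loom \setminus U$ would consist of images of coastal edges. These are proper arcs running out to cusps, so they cannot lie in a compact set.
\item The lemma is stated for all taut triangulations of $\rr^3$, but you treat only the veering case.
\end{enumerate}
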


 Our goal in Proposition \ref{prop:convex:disk} is to show that the loomification of any landscape in a veering triangulation $\univV$ is a convex subset of the loom space associated to $\univV$. As a first step, we prove below that the closure of the loomification of a landscape is the convex hull of its cusps.

	\begin{proposition}\label{prop:loomif:of:landscape}
		Let $L$ be a finite landscape in a veering triangulation $\univV$. Then \[\overline{\loomif(L)} = \hull(\Omega(L)).\] 
	\end{proposition}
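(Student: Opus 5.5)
We prove the two inclusions separately, arguing by induction on the number of faces of $L$ for the harder one.

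\emph{The inclusion $\overline{\loomif(L)} \subseteq \hull(\Omega(L))$.} It suffices to show that $\overline{\loomif(f)} \subseteq \hull(\Omega(f))$ for every face $f$ of $L$, since $\Omega(f)\subseteq\Omega(L)$ and hulls are monotone in their input. Let $Q=\loomif(f)$ be a face rectangle with cusps $c_1,c_2,c_3$, where $c_1$ is the ideal corner. We first handle the three edge subrectangles of $Q$. For a pair of cusps spanning an edge subrectangle $P$, the hull of the pair contains $\overline P$ by the remark following the definition of hull ($\mathrm{int}\,\hull$ of two opposite corners equals the rectangle). Any remaining point $p\in\overline Q$ not covered this way lies in a region where two cusps of $Q$ sit in the closures of non-adjacent quadrants at $p$. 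Lemma~\ref{lem:geodesic:between} then gives a geodesic between those cusps through $p$, so $p\in\hull(\Omega(f))$.

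\emph{The inclusion $\hull(\Omega(L)) \subseteq \overline{\loomif(L)}$.} We will show that $\overline{\loomif(L)}$ is convex; it then contains every geodesic between its cusps. We induct on the number of faces. A single face rectangle is convex, being a rectangle together with part of its boundary. For the inductive step, write $L=L'\cup f$, where $f$ is glued to $L'$ along a coastal edge $e$ of $L'$. Such an $f$ exists because $L$ is a disk with a dual tree structure (Lemma~\ref{lem:landscape:disk}), so we may remove a leaf face. Then $\loomif(f)\cap\loomif(L')\supseteq\loomif(e)$, and $\loomif(f)$ lies on the side of the edge rectangle $\loomif(e)$ opposite to the rest of $L'$ near $e$. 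The claim to prove is that the union of a convex set $A=\overline{\loomif(L')}$ with a face rectangle meeting it along the edge rectangle $\loomif(e)$ is again convex. We check this on a geodesic $\alpha$ between $x\in A\cup\Omega(A)$ and the new cusp $c$ of $f$. Where $\alpha$ leaves $A$, it must cross the leaf segments forming the two sides of $\overline{\loomif(e)}$ opposite the cusps. Because a geodesic meets each leaf at most once, $\alpha$ cannot leave and re-enter, so it stays in $\overline{\loomif(f)}$ after its exit point.

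The main obstacle is this gluing step. One must rule out that $\loomif(f)$ overlaps $A$ in more than a neighborhood of $\loomif(e)$, and that geodesics exit $A$ elsewhere. Here we use that $L$ is embedded and carried. This implies that landscape faces adjacent across edges lie on opposite sides in the loom, as one sees from Figure~\ref{fig:tet:rects} via the top/bottom face structure. The hardest part is showing that distinct faces of $L$ meeting only at a cusp, or not at all, have loomifications intersecting in a controlled way. We would try to establish this by tracking the boundary polygon of $\loomif(L')$ through the induction, maintaining the invariant that it is a finite staircase-like polygonal path through the cusps of $L'$ in their circular order. With that invariant, attaching $f$ along the coastal edge $e$ only replaces the corner at $\loomif(e)$ by the two new sides of $\loomif(f)$.
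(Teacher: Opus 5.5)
Your first inclusion is fine. In fact the closures of the three edge subrectangles of a face rectangle already cover its closure, so the appeal to Lemma~\ref{lem:geodesic:between} is not needed.

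\textbf{The gap.} It is in the second inclusion, exactly at the gluing step you flag yourself.
\begin{itemize}
\item The sentence ``where $\alpha$ leaves $A$, it must cross the sides of $\overline{\loomif(e)}$'' carries the whole content of the step, and it is not justified. Nothing you have established prevents a geodesic from $x\in A$ to $c$ from leaving $A$ through some other part of its boundary and reaching $c$ from outside $A\cup\loomif(f)$.
\item The proposed invariant (a staircase-like boundary polygon of $\loomif(L')$) is only a plan, not an argument.
\item Your induction hypothesis is convexity of $\overline{\loomif(L')}$, but in the inductive step you only test geodesics ending at the new cusp $c$. To carry convexity forward you would also need geodesics from points of $A$ to arbitrary points of $\loomif(f)$. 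As set up, the induction does not close.
\end{itemize}

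\textbf{How the paper avoids this.} It inducts on the identity $\overline{\loomif(L')}=\hull(\Omega(L'))$ itself.
\begin{itemize}
\item Convexity of $\overline{\loomif(L')}$ then comes for free from Proposition~\ref{prop:SS:convex:hull:is:convex}.
\item Since $\hull(\Omega(L))$ is the union of hulls of pairs of cusps, it suffices to show $\hull(\{c_i,c\})\subseteq\overline{\loomif(L)}$ for each old cusp $c_i$ not in $e$.
\item The key observation is that $\overline{\loomif(e)}$ is a closed strip bounded by two leaf-segment paths from one cusp of $e$ to the other. Hence $\loom\setminus\loomif(e)$ has two components, with $c$ in one and the remaining cusps of $L'$ in the other.
\item So every geodesic $\delta$ from $c_i$ to $c$ meets $\overline{\loomif(e)}$. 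Split $\delta$ at such a point $y$.
\item The piece from $c_i$ to $y$ is a geodesic between points of $\hull(\Omega(L'))\cup\Omega$, because $y\in\overline{\loomif(e)}\subseteq\overline{\loomif(L')}$. It therefore lies in $\overline{\loomif(L')}$ by convexity.
\item The piece from $y$ to $c$ lies in the convex set $\overline{\loomif(f)}$.
\end{itemize}
This needs no control over how $\loomif(f)$ overlaps the rest of $\loomif(L')$, and no analysis of where geodesics exit $A$. Beyond the dual-tree decomposition, the landscape structure enters only through the cusp-level separation statement. That statement is much weaker than the overlap control you were trying to establish.
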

	\begin{proof} The smallest possible landscape consists of a single triangle. In this case, the loomification is a face rectangle and hence its closure is the convex hull of the three cusps of the landscape.
		
		Suppose that for some $n \in \mathbb{N}$, $n\geq 3$, the equality $\overline{\loomif(L_n)} = \hull(\Omega(L_n))$ holds for every landscape $L_n$ with $n$ cusps.
		
		Let $L_{n+1}$ be a landscape with  $n+1$ cusps. Since, by Lemma \ref{lem:landscape:disk}, $L_{n+1}$ determines a finite ideal triangulation of a disk with finitely many points removed from its boundary, the dual graph $\Gamma_{n+1}$ of $L_{n+1}$ is a finite tree. The tree $\Gamma_{n+1}$ has at least two vertices, and thus admits a vertex $v$ of degree 1. Let $f$ be the face of $L_{n+1}$ that is dual to $v$. Then the union of all faces of $L_{n+1}$ except $f$ is connected, and therefore a landscape. Since $f$ is connected to the rest of $L_{n+1}$ only along one edge, this smaller landscape has exactly one less cusp than $L_{n+1}$. We denote it by $L_n$.
		We may assume that the set of cusps of  $L_{n+1}$ and $L_n$ are $\Omega_{n+1} = \lbrace c_i \rbrace_{i=1}^{n+1}$ and $\Omega_n = \lbrace c_i \rbrace_{i=1}^n$, respectively.

		Let $F_{n+1}$ be the set of triangles of $L_{n+1}$. For a triangle $g$ of $L_{n+1}$ let $E(g)$ be the set of edges of $g$. Since $\overline{\loomif(g)}$ is the union of $\overline{\loomif(e)}$ for $e \in E(g)$, and $\overline{\loomif(e)}$ is the convex hull of the cusps of $e$, we have
		\begin{align*}
		\overline{\loomif(L_{n+1}) }&= \bigcup\limits_{g \in F_{n+1}} \overline{\loomif(g)} = \\
		&= \bigcup\limits_{g \in F_{n+1}}  \bigcup\limits_{e \in E(g)} \overline{\loomif(e)} =\\
		& = \bigcup\limits_{g \in F_{n+1}}  \bigcup\limits_{e \in E(g)} \hull(\text{cusps of e}) \subseteq \\
		& \subseteq \bigcup\limits_{1\leq i < j \leq n+1} \hull(\lbrace c_i, c_j\rbrace) = \hull(\Omega_{n+1}).
		\end{align*}
		By the induction hypothesis $\hull(\lbrace c_i, c_j \rbrace) \subseteq \overline{\loomif(L_n)} \subset \overline{\loomif(L_{n+1})}$ whenever \mbox{$1\leq i < j \leq n$}. Therefore to show that $\hull(\Omega_{n+1}) \subseteq \overline{\loomif(L_{n+1})}$ it suffices to show that $\hull(\lbrace c_i, c_{n+1}\rbrace ) \subseteq \overline{\loomif(L_{n+1})}$ for every $1\leq i \leq n$. 
		
		Let $e$ be the edge of $L_{n+1}$ that separates $L_n$ from $f$. Assume that $c_1$ and $c_n$ are the cusps of $e$. Then $\hull(c_1, c_{n+1})$ and $\hull(c_n, c_{n+1})$ are contained in $\overline{\loomif(f)}$ and hence in $\overline{\loomif(L_{n+1})}$.

		Let $\delta$ be a geodesic that connects $c_i$ to $c_{n+1}$ for some $2\leq i \leq n-1$. Every cusp leaf of $\loom^{\pm}$ separates $\loom$, thus the complement of $\loomif(e)$ in $\loom$ consists of two disjoint sets: one which meets  all cusps of $\Omega_n$, and the other that meets $\lbrace c_1, c_n, c_{n+1}\rbrace$. It follows that~$\delta$ passes through $\loomif(e)$. 
		Subdivide $\delta$ into two subpaths $\delta_1, \delta_2$ meeting at a point $x\in \loomif(e)$. Since $\delta$ is a geodesic and $\loomif(e)$ is the interior of $\loomif(L_n) \cap \loomif(f)$, the first subpath $\delta_1$ is contained in $\overline{\loomif(L_n)}$, and the second subpath $\delta_2$ is contained in $\overline{\loomif(f)}$. Thus $\delta \subseteq \overline{\loomif(L_n)} \cup \overline{\loomif(f)} = \overline{\loomif(L_{n+1})}$. This finalizes the proof.
	\end{proof}

	\begin{corollary}\label{cor:loomification:convex}
		The loomification of a finite landscape in $\univV$ is convex in $\loom = \loomif(\univV)$. 
	\end{corollary}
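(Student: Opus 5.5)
The plan is to deduce the corollary directly from Proposition~\ref{prop:loomif:of:landscape} and Proposition~\ref{prop:SS:convex:hull:is:convex}. Let $L$ be a finite landscape in $\univV$. By Lemma~\ref{lem:landscape:disk}, the vertices of $L$ are distinct vertices of $\univV$, and $L$ is a finite union of faces. Hence $\Omega(L)$ is a finite set of cusps of $\loom$. Proposition~\ref{prop:SS:convex:hull:is:convex} then shows that $\hull(\Omega(L))$ is convex. Proposition~\ref{prop:loomif:of:landscape} gives $\overline{\loomif(L)} = \hull(\Omega(L))$. It remains to transfer convexity from this closure to $\loomif(L)$ itself.

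To do this, I would unwind the definition of convexity. Take $x,y \in \loomif(L) \cup \Omega(\loomif(L))$, and let $\delta$ be a geodesic connecting them. We must show that $\delta \subset \overline{\loomif(L)}$.

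\emph{Step one.} Note that $\loomif(L) \subset \overline{\loomif(L)} = \hull(\Omega(L))$.

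\emph{Step two.} Check that the cusps met by $\loomif(L)$ are met by $\hull(\Omega(L))$. Indeed, $\loomif(L)$ and its closure in $\loom$ have the same closure in $\completedLoom$. So $\Omega(\loomif(L)) = \Omega(\overline{\loomif(L)}) = \Omega(\hull(\Omega(L)))$.

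Thus $x,y \in \hull(\Omega(L)) \cup \Omega(\hull(\Omega(L)))$. Convexity of the hull then yields $\delta \subset \overline{\hull(\Omega(L))}$.

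\emph{Step three.} Show that $\hull(\Omega(L))$ is closed in $\loom$. This holds because it equals the closure $\overline{\loomif(L)}$. Therefore $\delta \subset \overline{\loomif(L)}$, which is what convexity of $\loomif(L)$ requires.

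The only delicate point is the identification of the cusp sets in step two, that is, making sure the notion of ``meeting a cusp'' does not change when passing between a set and its closure in $\loom$. I would handle this by observing that closure in $\completedLoom$ of $A$ equals the closure in $\completedLoom$ of $\overline{A}$, since $\loom \subset \completedLoom$ carries the subspace topology. Everything else is formal.
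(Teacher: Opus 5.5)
Your proof is correct and follows the paper's route: combine Proposition~\ref{prop:loomif:of:landscape} with Proposition~\ref{prop:SS:convex:hull:is:convex}. Your additional steps, matching the cusp sets and using that the hull is closed, make explicit how convexity passes from $\overline{\loomif(L)}$ to $\loomif(L)$; the paper's two-line proof skips this, since Proposition~\ref{prop:loomif:of:landscape} only identifies the closure of the loomification with $\hull(\Omega(L))$.
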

	\begin{proof}
		By Proposition \ref{prop:loomif:of:landscape}, the loomification of a finite landscape is the convex hull of its cusps. 	Its convexity therefore follows from Proposition \ref{prop:SS:convex:hull:is:convex}.
	\end{proof}

	\subsection{Continents} \label{subsec:continents}

	Continents are a special type of subtriangulations of taut triangulations.
	The first property that they satisfy is \emph{face-connectedness}.
	
	\begin{definition}\label{def:face:connected}
		Let $\T' = (T', \Phi')$ be a subtriangulation of a triangulation $\T = (T, \Phi)$. We say that $\T'$ is \emph{face-connected} if for any pair of tetrahedra $t, t' \in T'$ there is a path contained in $\T'$ that connects $t$ to~$t'$ and is transverse to the two-skeleton of~$\T'$. 
	\end{definition}

	%
	%
	%
	
	Now, consider a landscape $L$ in a taut triangulation $\widetilde{\T}$ of $\rr^3$. Given a tetrahedron $t$ of $\widetilde{\T}$ we say that $L$ is \emph{immediately above} (respectively, \emph{immediately below}) $t$ if both top faces (respectively, both bottom faces) of $t$ belong to $L$. 
	
	\begin{definition} \label{def:layering}
		Let $\T'$ be a finite face-connected subtriangulation of a taut triangulation $\widetilde{\T}$ of $\rr^3$. Let $n$ be the number of tetrahedra of $\T'$. A \emph{layering} of $\T'$ is a collection $(L_i)_{i=0}^n$ of pairwise distinct landscapes  in $\widetilde{\T}$ such that:
		\begin{enumerate}
			\item For every tetrahedron $t$ of $\T'$ there is a unique $0 \leq i \leq n$ such that $t$ lies immediately above $L_i$ and immediately below $L_{i+1}$.
			\item For every $0 \leq i < n$ there is a tetrahedron of $\T'$ immediately above $L_i$.
		\end{enumerate}
		We say that $\T'$ is \emph{layered} if it admits a layering. 
	\end{definition}

	Finally, a \emph{continent} in  $\widetilde{\T}$  is a finite subtriangulation $\C$ of $\widetilde{\T}$ that is face-connected and layered \cite[Section 4.14]{SchleimSegLinks}. 
	
	We denote the set of cusps of a continent $\C \subset \univT$ by $\Omega(\C)$. If $(L_i)_{i=0}^n$ is a layering of~$\C$ then every landscape $L_i$ has the same set of coastal edges. We call them the \emph{coastal edges} of $\C$. The landscapes $L_0$ and $L_n$ are respectively called the \emph{upper} and \emph{lower landscapes} of $\C$.

	Crucially, continents exist. For instance, a single tetrahedron in $\univT$ is a continent. The union of two tetrahedra in $\univT$ that share a face is also a continent. More generally, there are certain conditions under which  adding a tetrahedron to a continent yields another continent; see \cite[Lemma 4.24]{SchleimSegLinks}. In the veering case, Schleimer and the second author proved that there exists a collection $(\C_n)_{n \in \mathbb{N}}$ of continents in $\univV$ such that $\C_n \subset \C_{n+1}$ and  $\univV = \bigcup\limits_{n \in \mathbb{N}} \C_n$ \cite[Proposition 5.4]{SchleimSegLinks}. The collection $(\C_n)_{n \in \mathbb{N}}$ is called a \emph{continental exhaustion} of $\univV$. Here, we will not need a continental exhaustion, but only the following consequence of its existence.

	\begin{proposition}\label{prop:continents:exists} \cite[Proposition 5.4]{SchleimSegLinks}.
		Let $X$ be a finite collection of tetrahedra of $\univV$. Then there exists a continent $\C \subset \univV$ that contains every tetrahedron from $X$. \qed
	\end{proposition}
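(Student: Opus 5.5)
The plan is to build the continent by induction, adding one tetrahedron at a time. Since $\univV$ triangulates $\rr^3$ and each edge has finite degree, the dual graph of $\univV$ is connected. So we first enlarge $X$ to a finite face-connected collection $X'$ by joining all tetrahedra of $X$ with finitely many dual paths. Fix $t_0 \in X'$; a single tetrahedron is a continent, with layering given by its lower and upper pair of faces. It then suffices to prove the following \emph{extension step}: given a continent $\C$ and a tetrahedron $t \notin \C$ sharing a face with $\C$, there is a continent $\C' \supseteq \C \cup \{t\}$. Iterating this step along the dual paths of $X'$ gives a continent containing $X$.

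For the extension step, let $(L_i)_{i=0}^n$ be a layering of $\C$, and suppose without loss of generality (by the up–down symmetry of the taut structure) that $t$ lies immediately above a face $f$ of $\C$. Note that faces of $\C$ adjacent to a tetrahedron outside $\C$ lie in $L_n$ or $L_0$, up to the coastal edges. If both bottom faces of $t$ lie in the top landscape, then $L_{n+1} = (L_n \setminus \{\text{bottom faces of } t\}) \cup \{\text{top faces of } t\}$ is again a connected carried surface, and $(L_0,\dots,L_n,L_{n+1})$ is a layering of $\C \cup \{t\}$. Here one checks embeddedness: because the upward direction is transverse to the branched surface $\HH$, a carried disk stays embedded under such an upward move. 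Face-connectedness is clear.

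The general case, where only $f$ lies in the top landscape and the other bottom face $f'$ of $t$ does not, is the main obstacle. In that situation we would first add the tetrahedra lying below $f'$ and above the top landscape, working downward, until $f'$ becomes a face of the top landscape. Concretely, we take the tetrahedron immediately below $f'$ and, recursively, apply the same procedure to it. Two things must be proved for this recursion:
\begin{itemize}
\item It terminates. For this we would use that the taut structure induces a partial order on tetrahedra with no upward cycles, since vertical loops are homotopically nontrivial by \cite[Theorem~3.2]{SchleimSeg-essential} and $\univV$ is simply connected. We would also use the finiteness of the set of tetrahedra lying between a finite landscape and a given tetrahedron above it. That finiteness should follow from Lemma~\ref{lem:landscape:disk} and the finite valence of the edges around the coast.
\item It respects the other landscapes. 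The added tetrahedra must also sit above $L_0$, which may require inserting them into intermediate layers, or equivalently pushing the coast outward. One way to handle this is to add the fans of tetrahedra around each coastal edge encountered, which converts that edge into an interior edge.
\end{itemize}
This is exactly the content of \cite[Lemma~4.24]{SchleimSegLinks}, and we would follow that argument.

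Finally, we verify that the resulting collection of landscapes satisfies conditions (1) and (2) of Definition~\ref{def:layering}. This is routine, since each new tetrahedron is added by a single upward or downward move of one landscape.
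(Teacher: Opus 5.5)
Your reduction to a one-tetrahedron extension step is fine, and so is the case where both bottom faces of $t$ already lie in the top landscape $L_n$. The gap is the remaining case, where only $f$ lies in $L_n$. This is where the entire content of the proposition sits, and what you give for it is a plan rather than an argument.

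Your termination claim concerns the tetrahedra lying between $L_n$ and $t$, but you never show that the recursion stays in that set. When the bottom diagonal of $t$ (or of a tetrahedron reached later) is a coastal edge, there is no reason it should. In that case $f'$ meets $\C$ only along that edge, and the tetrahedron immediately below $f'$ sits beyond the coast. Every tetrahedron has two bottom faces, so a downward recursion can always be continued, and acyclicity of the upward order gives nothing. Lemma~\ref{lem:landscape:disk} also says nothing about which tetrahedra lie above a landscape, so the finiteness claim itself is unsupported.

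Your fallback of adding the fan around each coastal edge creates new coastal edges and new cusps, with no bound on how often this recurs. Moreover, the natural coastal move, replacing each $L_i$ by $L_i\cup f'$, fails outright when the vertex of $f'$ opposite the coastal edge is already a cusp of $\C$. In that case Lemma~\ref{lem:landscape:disk} prevents $L_i\cup f'$ from being a landscape, and nothing in your argument rules this case out.

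The missing ingredient is the veering structure, which your proposal never uses. The paper takes the statement from \cite[Proposition~5.4]{SchleimSegLinks}, where it follows from the existence of a continental exhaustion of $\univV$. That construction works with convex continents (Lemma~\ref{lem:exist:convex:continent}) and enlarges them by channelization \cite[Section~5.19]{SchleimSegLinks}.

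Convexity removes the bad coastal configuration above. A tetrahedron whose cusps all lie in $\Omega(\C)$ has loomification inside $\hull(\Omega(\C))=\overline{\loomif(\C)}$, so it already belongs to $\C$ when $\C$ is convex. Convexity is also what the paper uses to show that upper rivers are finite (Lemmas~\ref{lem:widest:edges} and~\ref{lem:rivers}). Both convexity and channelization are defined through the veering structure.

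Finally, \cite[Lemma~4.24]{SchleimSegLinks} only gives conditions under which a single tetrahedron can be added to a continent. It is not the general extension step you need, so deferring to it does not close the gap.
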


	The process of constructing a continent that contains a given finite set of tetrahedra relies on a procedure, called \emph{channelization}, that was introduced in \cite[Section 5.19]{SchleimSegLinks}. We refer the reader to the original source for the details of the construction. 
	
	
	\begin{proposition}\label{prop:convex:disk}
		Let $\C$ be a nonempty continent in $\univV$. Then $\overline{\loomif(\C)} \subset \loom$ is a convex, finite-sided closed disk with finitely many boundary points removed.
	\end{proposition}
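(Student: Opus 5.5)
Our plan is to reduce the statement to the landscape case, which is already handled by Proposition~\ref{prop:loomif:of:landscape} and Corollary~\ref{cor:loomification:convex}. Let $(L_i)_{i=0}^n$ be a layering of $\C$. The first step is to show
\[\overline{\loomif(\C)} = \overline{\loomif(L_0)} = \hull(\Omega(\C)).\]
We argue by induction on $n$, removing one tetrahedron at a time. Condition~(2) of Definition~\ref{def:layering} lets us reindex so that $L_{i+1}$ is obtained from $L_i$ by a single tetrahedron move: replace the two bottom faces of $t$ by its two top faces. Then $\loomif(t)$ contains the loomifications of all four of its faces.

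The key observation is that $\overline{\loomif(t)}$ is the hull of its four cusps. Moreover, the two bottom faces and the two top faces of $t$ each cover $\overline{\loomif(t)}$ up to boundary: this follows from Figure~\ref{fig:tet:rects}, since two wide (or two tall) face subrectangles together with their common edge rectangle fill the tetrahedron rectangle. Hence $\overline{\loomif(L_i)}\cup\overline{\loomif(t)} = \overline{\loomif(L_{i+1})}$, and the cusp sets of $L_i$, $L_{i+1}$ and $\C$ all coincide, because a tetrahedron move does not add vertices. Applying Proposition~\ref{prop:loomif:of:landscape} to each $L_i$ gives the equality above. Convexity of $\loomif(\C)$ then follows from Proposition~\ref{prop:SS:convex:hull:is:convex}.

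Next we identify the topology. By Lemma~\ref{lem:landscape:disk}, $L_0$ is an ideal polygon triangulated by finitely many faces, with distinct vertices. We build the disk structure inductively over faces, exactly as in the proof of Proposition~\ref{prop:loomif:of:landscape}. The closure of a face rectangle is a closed disk with three boundary points (its cusps) removed. Adding a leaf face $f$ glues $\overline{\loomif(f)}$ to $\overline{\loomif(L_n)}$ along $\overline{\loomif(e)}$, where $e$ is the separating edge. The induction needs the intersection $\overline{\loomif(f)}\cap\overline{\loomif(L_n)}$ to equal $\overline{\loomif(e)}$. Granting that, the union of two punctured closed disks glued along a closed sub-disk that meets both boundaries in two arcs is again a closed disk with finitely many boundary points removed, namely the cusps of $L_{n+1}$.

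Finite-sidedness follows because the boundary is a union of finitely many sides of face rectangles, each consisting of finitely many leaf segments. The argument that $\overline{\loomif(f)}\cap\overline{\loomif(L_n)} = \overline{\loomif(e)}$ uses cusp leaves and separation. The cusp leaves bounding $\loomif(e)$ separate $\loom$. The set $\loomif(f)\setminus\overline{\loomif(e)}$ lies on the side of these leaves containing $c_{n+1}$, whereas $\loomif(L_n)$ lies on the other side, by convexity and since $c_{n+1}\notin\Omega(L_n)$.

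We expect this last disjointness to be the main obstacle. Carrying it out requires care at the cusps $c_1$ and $c_n$ of $e$, where two sides of $f$ and of the rectangles of $L_n$ can share initial segments of cusp leaves. We would first try to use the fact that the face rectangles of $L_n$ adjacent to $e$ are tall or wide face rectangles containing $\loomif(e)$. The question is then whether these extend past the cusp leaves from $c_1$ and $c_n$ toward $c_{n+1}$. We would rule this out using Lemma~\ref{lem:face:rectangles:in:two:tet} together with the fact that the landscape is embedded and carried by $\univV$: an overlap would force two faces on the same side of $e$ in $L_{n+1}$, contradicting embeddedness.
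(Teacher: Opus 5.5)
Your convexity argument is the paper's: the layering gives $\loomif(L_i)=\loomif(L_{i+1})$, hence $\overline{\loomif(\C)}=\overline{\loomif(L_0)}=\hull(\Omega(\C))$, and Proposition~\ref{prop:SS:convex:hull:is:convex} applies. For the disk structure the paper does not induct. It cites \cite[Lemma 6.33]{SchleimSegLoom}: the hull of the cusps of a finite, nonempty, face-connected subtriangulation is a finite-sided closed disk minus finitely many boundary points. Your induction over the dual tree of $L_0$ is a reasonable alternative, but it breaks at exactly the step you flag.

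The gap is the claim that $\loomif(L_n)$ lies on the side of $\loomif(e)$ away from $c_{n+1}$ ``by convexity and since $c_{n+1}\notin\Omega(L_n)$''. Write $U$ for the component of $\loom\setminus\overline{\loomif(e)}$ containing $c_{n+1}$ and $U'$ for the other. Convexity only gives $\overline{\loomif(L_n)}=\hull(\Omega(L_n))$, so you would need two things: every cusp of $L_n$ lies in the closure of $U'$, and $\loom\setminus U$ is convex. The fact that $c_{n+1}$ is not a cusp of $L_n$ says nothing about where the other cusps of $L_n$ sit relative to $\loomif(e)$; that is a global statement about how the landscape's boundary is arranged in $\loom$. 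Your fallback via Lemma~\ref{lem:face:rectangles:in:two:tet} and embeddedness only controls the faces incident to $e$. It does not rule out a distant face of $L_n$ whose rectangle reaches into $U$.

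To repair the induction, strengthen the hypothesis. Require that the frontier of $\overline{\loomif(L_n)}$ in $\loom$ be the union of the boundary paths of the coastal edge rectangles that face away from the landscape. Then the far side $U$ of $\loomif(e)$ is a complementary component of the disk, so $U\cap\loomif(L_n)=\emptyset$ automatically. Separately, you need a local fact. The two landscape faces at an interior edge $e$ come from the two different fans of faces around $e$. The faces of these two fans have their third cusps in different components of $\loom\setminus\overline{\loomif(e)}$. This is what places $\loomif(f)\setminus\overline{\loomif(e)}$ in $U$. Finally, check that the strengthened hypothesis passes to $L_{n+1}$, using that distinct complementary components of the disk are disjoint. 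Alternatively, simply cite the lemma as the paper does.
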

	\begin{proof}
		Let $(L_i)_{i=0}^n$ be a layering of $\C$. By Definition \ref{def:layering}, $\loomif(L_i) = \loomif(L_{i+1})$ for every $0\leq i < n$ and thus $\loomif(L_0) = \loomif(L_n) = \loomif(\C)$. Therefore the convexity of $\overline{\loomif(\C)}$ follows from Corollary \ref{cor:loomification:convex}. 
		We also get $\overline{\loomif(\C)} = \hull(\Omega(\C))$. 
		
		Schleimer and the second author showed in \cite[Lemma 6.33]{SchleimSegLoom} that for any finite, non-empty, face-connected subtriangulation of $\univV$, the convex hull of the cusps of its tetrahedron rectangles is a finite-sided closed disk minus finitely many boundary points. Since $\C$ is face-connected, this description applies to $\hull(\Omega(\C))$ and thus to  $\overline{\loomif(\C)}$.
	\end{proof}
	
	\section{The circular, horizontal, and vertical orders}\label{sec:orders}

	As already indicated in Section \ref{subsec:loomification}, the drilling algorithm described in Section \ref{sec:algorithm} relies on our ability to compare the relative cardinal positions of certain cusps of $\loomif(\univV)$. In this section we explain how to leverage the combinatorics of continents to extract precisely this information.
	
	\subsection{The loom-compatible circular order and partial cardinal orders}\label{subsec:linear:orders}
	Let $\C$ be a continent in $\univV$. We say that a cusp leaf $\ell$ of $\loom^\pm$ is a \emph{continental cusp leaf} (with respect to~$\C$) if $\ell$ originates at a cusp of $\C$ and intersects $\loomif(\C)$. Since, by Proposition~\ref{prop:convex:disk}, $\loomif(\C)$ is convex and connected, any leaf intersecting $\loomif(\C)$ splits it into two components. In particular, a continental cusp leaf $\ell$ meets  $\partial \loomif(\C)$ in a single point $\epsilon(\ell)$, called the \emph{(continental) exit point} of $\ell$. We denote the set of all exit points of all stable (respectively, unstable) continental cusp leaves for $\C$ by $\E^+(\C)$ (respectively, $\E^-(\C)$) and set $\E(\C) = \E^+(\C) \cup \E^-(\C)$.
	
	Let $h$ an orientation-preserving homeomorphism  between $\loomif(\C)$ and an open two-dimensional disk. Such a homeomorphism exists by Proposition~\ref{prop:convex:disk}. Fill in the cusps of $\loomif(\C)$ and extend $h$ to a homeomorphism $\overline{\overline{h}}$ between two closed disks. 
	Let  \mbox{$\kappa: \Omega(\C) \cup \E(\C) \hookrightarrow S^1$} be the restriction of $\overline{\overline{h}}$ to $\Omega(\C)\cup \E(\C)$. For $a,b,c \in \Omega(\C) \cup \E(\C)$ set
	\begin{equation*}
	\Theta(a,b,c) = \begin{cases}
	0 & \text{ iff } |\lbrace a,b,c \rbrace| < 3, \\
	1 & \text{ iff } (\kappa(a), \kappa(b), \kappa(c)) \text{ are in counterclockwise cyclic order on } S^1,\\
	-1 &\text{ iff } (\kappa(a), \kappa(b), \kappa(c)) \text{ are in clockwise cyclic order on } S^1.
	\end{cases}\end{equation*}
	Then $\Theta: (\Omega(\C) \cup \E(\C))^3 \rightarrow \lbrace -1, 0, 1\rbrace$ is a circular order on $\Omega(\C) \cup \E(\C)$. We refer to~$\Theta$ as the~\emph{loom-compatible} circular order on $\Omega(\C) \cup \E(\C) $. 
	Its restriction to $\Omega(\C)$ is compatible with the taut structure of $\univV$ in the sense of \cite[Definition~2.19]{SchleimSegLinks}.

	Given a continental cusp leaf $\ell$ of $\C$, let $\iota(\ell)$ be the origin cusp of  $\ell$. As before, let $\epsilon(\ell)$ be the continental exit point of $\ell$. Define the \emph{side function} $\Theta_\ell$ of $\ell$ by setting, for any $c \in \Omega(\C)$,
	\begin{equation}\label{function:on:cusps}
	\Theta_\ell(c) = \Theta(\iota(\ell), \epsilon(\ell), c).
	\end{equation}
	Below we show that the side functions encode how their corresponding continental cusp leaves separate the cusps of $\C$. This provides a link between the loom-compatible circular order and a partial horizontal order on stable continental cusp leaves, as well as a partial vertical order on unstable continental cusp leaves; see Corollary \ref{cor:from:circ:to:total}.
	
	\begin{lemma}\label{lem:theta:l:function} 
		Let $\C$ be a continent in $\univV$. 
		\begin{enumerate}
			\item For any continental cusp leaf $\ell$ in $\loomif(\C)$ and $c,d \in \Omega(\C)$
			\[\Theta_\ell(c)\cdot \Theta_\ell(d) = \begin{cases} 
			0 & \text{ if either } c \text{ or } d \text{ is the origin of } \ell,\\
			1 & \text{ if } c \text{ and } d \text{ are (strictly) on the same side of } \ell,\\
			-1 &\text{ if } \ell \text{ (strictly) separates } c \text{ and } d. 
			\end{cases}\]
			\item For any continental cusp leaf $\ell$ in $\loomif(\C)$ the function $\Theta_\ell(\cdot)$ is constant on a subset $D \subset \Omega(\C)$ if and only if either every $d \in D$ lies (strictly) on the same side of $\ell$ or $D = \lbrace \iota(\ell) \rbrace$.
			\item Let $\ell, \ell'$ be distinct continental cusp leaves in $\loomif(\C)$ originating at the same cusp \mbox{$c \in \Omega(\C)$} and belonging to the same foliation. Denote the unique connected component of $\loomif(\C) \bez (\ell \cup \ell')$ whose boundary intersects both $\ell$ and $\ell'$ by $J$. Then for any $d \in \Omega(\C)$
			\[\Theta_\ell(d) \cdot \Theta_{\ell'}(d) = 
			\begin{cases}
			0 & \text{ if } c = d,\\
			1 & \text{ if } c \neq d \text{ and } d \text{ does not meet } J,\\
			-1 &\text{ if } c \neq d \text{ and } d \text{ meets } J.
			\end{cases}
			\]
		\end{enumerate}
	\end{lemma}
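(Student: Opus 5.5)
The plan is to reduce all three parts to the elementary topology of a closed disk cut by a chord. By Proposition~\ref{prop:convex:disk}, $\overline{\overline{\loomif(\C)}}$, obtained by filling in the cusps of $\C$, is a closed disk, and the homeomorphism $\overline{\overline{h}}$ carries it to a round disk with $\kappa$ its restriction to the boundary points $\Omega(\C)\cup\E(\C)$. A continental cusp leaf $\ell$ meets $\loomif(\C)$ in a properly embedded arc. This arc starts at the boundary point $\iota(\ell)$, and it leaves the disk exactly once, at $\epsilon(\ell)$; this uses convexity, since a leaf intersects a convex set in a connected set. Its image under $\overline{\overline{h}}$ is therefore a chord joining $\kappa(\iota(\ell))$ to $\kappa(\epsilon(\ell))$. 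Such a chord cuts the disk into two components, and the boundary circle minus the two chord endpoints splits into two open arcs, one in the closure of each component. Note that cusps of $\C$ are never exit points, because exit points are material points of $\partial\loomif(\C)$; so no $c\in\Omega(\C)$ equals $\epsilon(\ell)$.

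For part (1), fix $c\in\Omega(\C)$. If $c=\iota(\ell)$, then $\Theta_\ell(c)=0$ by the definition of $\Theta$. Otherwise $\kappa(c)$ lies in one of the two open arcs. The value $\Theta(\iota(\ell),\epsilon(\ell),c)$ is $+1$ on one arc and $-1$ on the other, because the cyclic order of a triple with two fixed entries depends only on which complementary arc the third point lies in. A cusp $c$ lies strictly on a given side of $\ell$ exactly when $\kappa(c)$ is in the corresponding boundary arc. Multiplying the values for $c$ and $d$ gives the formula. Part (2) then follows directly. If $\Theta_\ell$ is constant and nonzero on $D$, all of $D$ lies on one side of $\ell$. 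If the constant value is $0$, then every element of $D$ equals $\iota(\ell)$, so $D=\{\iota(\ell)\}$. The converse directions are immediate.

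For part (3), two distinct leaves $\ell,\ell'$ from the same cusp $c$ in the same foliation are disjoint in $\loom$. Their images are then two chords sharing the endpoint $\kappa(c)$ and otherwise disjoint. These chords cut the disk into three regions. Exactly one region, $J$, has boundary meeting both chords, and its boundary arc on the circle runs from $\kappa(\epsilon(\ell))$ to $\kappa(\epsilon(\ell'))$ without passing through $\kappa(c)$. A cusp $d\neq c$ therefore falls into one of three arcs of the circle minus the points $\kappa(c),\kappa(\epsilon(\ell)),\kappa(\epsilon(\ell'))$. If $d$ meets $J$, then $d$ lies on the $J$-side of both chords. Here one must check that this gives opposite signs for $\Theta_\ell$ and $\Theta_{\ell'}$. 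Since $\Theta_\ell(d)=\Theta(c,\epsilon(\ell),d)$ and $\Theta_{\ell'}(d)=\Theta(c,\epsilon(\ell'),d)$, the triples are $(c,\epsilon(\ell),d)$ and $(c,\epsilon(\ell'),d)$ with $d$ strictly between the two exit points. These two triples have opposite orientations. If $d$ lies in one of the other two arcs, both triples have the same orientation. When $c=d$, the product is $0$.

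The step I expect to need the most care is the claim that $\ell\cap\loomif(\C)$ is a single arc with exactly one endpoint $\epsilon(\ell)$ on the boundary, together with the transfer from sides of $\ell$ in $\loomif(\C)$ to arcs of $S^1$. I would justify this from the convexity in Proposition~\ref{prop:convex:disk}. If the leaf left and re-entered the disk, the leaf segment between two of its points in $\overline{\loomif(\C)}$ would be a geodesic joining them. Convexity puts that geodesic in $\overline{\loomif(\C)}$, so the leaf cannot leave and come back. I would also use that $\iota(\ell)$ is an ideal boundary point of the disk, because $\ell$ starts at a cusp of $\C$. The remaining work is the standard Jordan-curve argument.
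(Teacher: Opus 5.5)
Your proposal is correct and follows essentially the same route as the paper. The paper also uses the convexity from Proposition~\ref{prop:convex:disk} to split $\loomif(\C)\setminus\ell$ into a left and a right component, notes that $\Theta_\ell(c)=\pm1$ according to which side $c$ meets (and $0$ exactly when $c=\iota(\ell)$), and reads off all three parts from this; your chord-and-arc picture and the explicit cyclic-order check in part (3) spell out details the paper leaves implicit.
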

	\begin{proof}
		By definition \eqref{function:on:cusps}, $\Theta_\ell(c) = 0$ if and only if $c = \iota(\ell)$. 
		
		Since a continental cusp leaf is a geodesic that intersects the interior of $\loomif(\C)$ and, by Proposition \ref{prop:convex:disk}, $\loomif(\C)$ is a convex disk, there are precisely two connected components of \mbox{$\loomif(\C) \bez \ell$}. 
		Orient $\ell$ away from its origin, and denote the connected component that is to the right of $\ell$, relative to that orientation, by $J_R$ and the connected component that is to the left of $\ell$ by $J_L$. 
		Observe that every cusp $c \in \Omega(\C)$ other than $\iota(\ell)$ meets the interior of either $J_R$ or $J_L$. Moreover, if $c \neq \iota(\ell)$ then 
		\[\Theta_\ell(c) = 
		\begin{cases}
		1 & \text{ if } c \text{ meets the interior of } J_L,\\
		-1 & \text{ if } c \text{ meets the interior of }  J_R.
		\end{cases}\]
		This geometric interpretation of the value of $\Theta_\ell(c)$ imply all three statements of the lemma.
	\end{proof}
	
	The word `strictly' that appears in the statement of Lemma \ref{lem:theta:l:function} (1) refers to neither $c$ nor $d$ being equal to $\iota(\ell)$. When $\Theta_\ell(c) \neq \Theta_\ell(d)$ and $c\neq d$ we say that $c$ and $d$ are \emph{weakly separated} by $\ell$. This allows for either $c$ or $d$, but not both at the same time, to be the origin of $\ell$.

Let $P$ be a rectangle in $\loomif(\C)$, equipped with the cardinal directions specified in Convention~\ref{convention:cardinal:directions}. The stable continental cusp leaves intersecting $P$ are naturally ordered from west to east, while the unstable continental cusp leaves intersecting $P$ are naturally ordered from south to north. By Part~(1) of Lemma~\ref{lem:theta:l:function}, these orders can be read off from~$\Theta$; see Corollary~\ref{cor:from:circ:to:total}.

	\begin{corollary}\label{cor:from:circ:to:total}
		Let $\ell_1, \ell_2$ be stable (respectively, unstable) continental cusp leaves that intersect an  edge rectangle $P \subset \loomif(\C)$. Denote the cusp of $P$ that meets the west side (respectively, south side) of $P$ by $w$ (respectively, $s$). Then $\ell_1$ is  west (respectively, south) of $\ell_2$ if and only if 
		\[\pushQED{\qed}
		\Theta_{\ell_1}(w)\cdot\Theta_{\ell_1}(\iota(\ell_2)) = -1 \hspace{1cm} (\text{respectively, } \Theta_{\ell_1}(s)\cdot\Theta_{\ell_1}(\iota(\ell_2)) = -1). \qedhere  		\pushQED{\qed}\]
\end{corollary}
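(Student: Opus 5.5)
The plan is to reduce the corollary to Lemma~\ref{lem:theta:l:function}~(1), which says that $\Theta_{\ell_1}(c)\cdot\Theta_{\ell_1}(d) = -1$ exactly when $\ell_1$ strictly separates $c$ and $d$. I give the argument for stable leaves; the unstable case is the same with west/east replaced by south/north and $w$ by $s$. Let $e$ be the cusp of $P$ in its east side. Both $\ell_1$ and $\ell_2$ are stable leaves crossing the edge rectangle $P$, so each runs from the south side to the north side of $P$. Hence each separates $P$ into a western part, whose closure in $\completedLoom$ meets $w$, and an eastern part, whose closure meets $e$.

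The key step is to determine, for $\ell_2 \neq \ell_1$, on which side of $\ell_1$ the cusp $\iota(\ell_2)$ lies. Stable leaves are pairwise disjoint, so $\ell_2$ stays entirely on one side of $\ell_1$. That side is the western one exactly when $\ell_2 \cap P$ is west of $\ell_1 \cap P$. Since $\iota(\ell_2)$ is an endpoint of $\ell_2$ in $\completedLoom$, it lies in the closure of the same component of $\loomif(\C)\bez\ell_1$ as $\ell_2$; here I use Proposition~\ref{prop:convex:disk}, which makes that complement consist of exactly two components.

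I must also rule out the degenerate cases. First, $\iota(\ell_2)$ is not the origin of $\ell_1$ unless the two leaves share an origin. In that case, if $\ell_1$ and $\ell_2$ are distinct stable leaves from the same cusp, they bound an orthant-type region, and I expect that both cannot cross the same edge rectangle $P$ with a cusp-free interior. The tool here is Lemma~\ref{lem:theta:l:function}~(3), or directly the fact that $P$ contains no cusp. Second, $w \neq \iota(\ell_1)$: a stable leaf from $w$ is a boundary leaf of $P$ rather than one crossing it, so this holds.

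With these cases settled, $\ell_1$ is west of $\ell_2$ iff $\ell_2$ lies on the eastern side of $\ell_1$. By the key step, this holds iff $\iota(\ell_2)$ is on the side opposite to $w$, that is, iff $\ell_1$ strictly separates $w$ from $\iota(\ell_2)$. By Lemma~\ref{lem:theta:l:function}~(1), this is equivalent to $\Theta_{\ell_1}(w)\cdot\Theta_{\ell_1}(\iota(\ell_2))=-1$.

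The main obstacle is justifying that the side of $\ell_1$ containing $w$, determined inside $P$, agrees with the side of the component of $\loomif(\C)\bez\ell_1$ that $w$ meets. This should follow from connectedness of each piece of $P\bez \ell_1$ together with $P\subset\loomif(\C)$.
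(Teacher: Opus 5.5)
Your reduction to Lemma~\ref{lem:theta:l:function}~(1) is correct and is exactly how the paper obtains this corollary, which it states as an immediate consequence of that lemma and leaves your side-of-$\ell_1$ bookkeeping implicit. The one place to tighten is the degenerate cases: they are settled not by Lemma~\ref{lem:theta:l:function}~(3) or by the absence of cusps in $P$, but by the observations $(\ast)$ and $(\ast\ast)$ in the proof of Lemma~\ref{lem:determining:circular:order}. By $(\ast)$, $P$ misses every stable leaf from its own cusps, so $\iota(\ell_1)\neq w$. An edge rectangle crossing a stable $c$-leaf meets exactly the two orthants at $c$ adjacent to that leaf, and by $(\ast\ast)$ no orthant is adjacent to two distinct stable $c$-leaves, so $P$ cannot cross two distinct stable $c$-leaves.
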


	We never need to order a set of leaves that contains both stable and unstable leaves. Therefore to use Corollary \ref{cor:from:circ:to:total} it suffices to independently determine $\Theta^+$, the restriction of $\Theta$ to $\Omega(\C) \cup \E^+(\C)$ and $\Theta^-$, the restriction of $\Theta$ to $\Omega(\C) \cup \E^-(\C)$.

	\subsection{Determining the natural circular order} 
	
	In Corollary \ref{cor:from:circ:to:total} we observed that the loom-compatible circular order $\Theta$ on $\Omega(\C) \cup \E(\C)$ can be used to determine the horizontal and vertical orders on the set of the continental cusp leaves passing through a fixed simplex rectangle in $\loomif(\C)$. 
	However, we have not explained how to determine $\Theta$ in the situation when we only have direct access to $\C$, and not $\loomif(\C)$. Recall from Section~\ref{subsec:loomification} that given a simplex $s$ of $\univV$ we can only determine $\loomif(s)$ up to the horizontal and vertical orders on the set of cusps of $\loomif(s)$. This means that $\C$ might not encode complete information about the relative positions of cusps of $\loomif(\C)$; see Figure \ref{fig:non:convex}. In this subsection we explain how to resolve this issue.
	
	\begin{figure}[h]
		\centering
		\begin{minipage}{0.3\textwidth}
			\centering
			\labellist
			\hair 2pt \small
			\pinlabel $c_2$ [r] at 2 167
			\pinlabel $c_1$ [br] at 42 315
			\pinlabel $c_3$ [tr] at 78 53
			\pinlabel $c_0$ [b] at 151 353
			\pinlabel $c_4$ [t] at 225 15
			\pinlabel $c_6$ [bl] at 263 278
			\pinlabel $c_5$ [l] at 336 200
			\pinlabel $\epsilon$ [t] at 151 18
			\endlabellist
			\includegraphics[width=\linewidth]{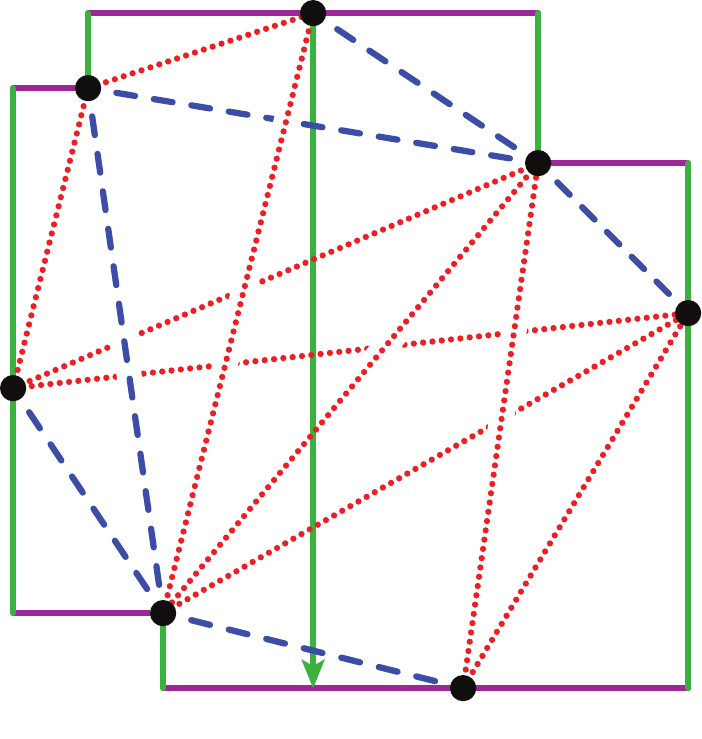}
			
		\end{minipage}
		\qquad\qquad
		\begin{minipage}{0.3\textwidth}
			\centering
			\labellist
			\hair 2pt \small
			\pinlabel $c_2$ [r] at 2 167
			\pinlabel $c_1$ [br] at 42 315
			\pinlabel $c_3$ [tr] at 78 53
			\pinlabel $c_0$ [b] at 214 353
			\pinlabel $c_4$ [t] at 152 15
			\pinlabel $c_6$ [bl] at 263 278
			\pinlabel $c_5$ [l] at 336 200
			\pinlabel $\epsilon$ [t] at 214 18
			\endlabellist
			\includegraphics[width=\linewidth]{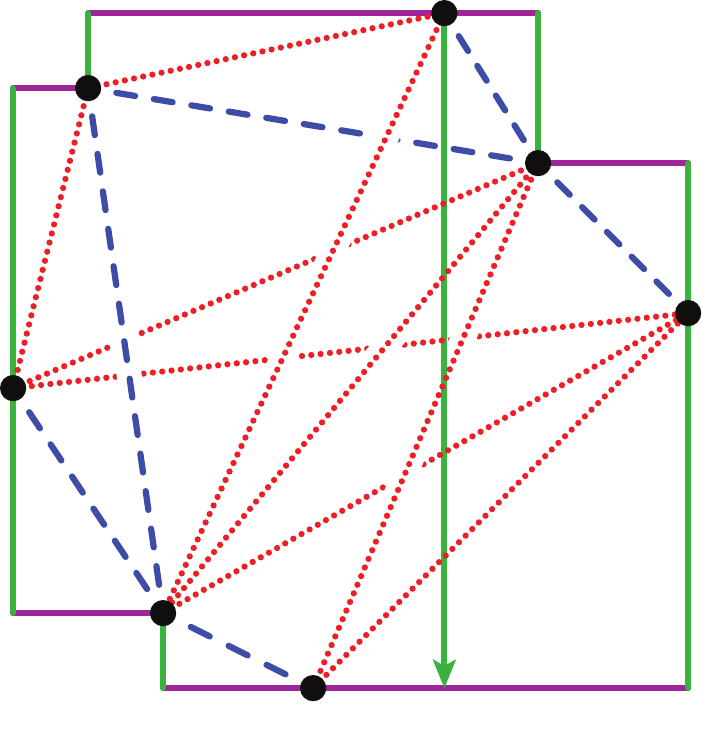}
			
		\end{minipage}
		\caption{Two possible loomifications of the same veering continent that indicate different circular orders on the set of cusps and exit points of the continental cusp leaves. On the left, the stable continental cusp leaf originating at the cusp $c_0$ exits the loomification between cusps $c_3$ and $c_4$; on the right --- between cusps $c_4$ and~$c_5$.}
		\label{fig:non:convex}
	\end{figure}
	
	The restriction of $\Theta$ to $\Omega(\C)$ is clear: by Lemma \ref{lem:landscape:disk}, the upper landscape of $\C$ is a disk with finitely many boundary points, corresponding to the cusps of~$\C$, deleted. Furthermore, by Lemma \ref{lem:landscape:disk} (2), any two of its vertices correspond to different cusps of~$\loom$. Thus the circular order on the set $\Omega(\C)$ of cusps of $\C$ can be determined by following the boundary of the upper landscape of $\C$.
	
	Ordering the exit points of the continental cusp leaves is a bit more involved. The exit point  of a continental cusp leaf $\ell$ lies between two cusps of $\C$ that are consecutive in the restriction of $\Theta$ to $\Omega(\C)$. We call these cusps the \emph{bracketing cusps} for $\ell$. 
	
	\begin{lemma}\label{lem:determining:circular:order}
		The circular order $\Theta^+ : (\Omega(\C) \cup \E^+(\C))^3 \to \lbrace-1,0,1\rbrace$ can be determined using only the following data:
		\begin{itemize}
			\item the restriction of $\Theta^+$ to triples of points from $(\Omega(\C))^3$, and
			\item the bracketing cusps of each stable continental cusp leaf.
		\end{itemize}
		Analogous claim holds for $\Theta^- : (\Omega(C) \cup \E^+(\C)) \rightarrow \lbrace -1, 0, 1\rbrace$ with the stable continental cusp leaves replaced by the unstable continental cusp leaves.
	\end{lemma}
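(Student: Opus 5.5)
The plan is to show that, once the cyclic order on $\Omega(\C)$ and the bracketing cusps are known, the only remaining ambiguity concerns several exit points that fall into the same gap between consecutive cusps. That ambiguity is then resolved by a planarity (non-crossing) argument for the stable continental cusp leaves.

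Fix the cyclic order $c_1, \ldots, c_m$ of $\Omega(\C)$ read off from the boundary of the upper landscape. Each boundary arc of $\overline{\loomif(\C)}$ runs from $c_j$ to $c_{j+1}$. Every exit point $\epsilon(\ell) \in \E^+(\C)$ lies in the open arc determined by its bracketing cusps. Hence the position of each exit point relative to the cusps is determined, and so is every value of $\Theta^+$ on a triple with at most one exit point. It remains to order, within a single arc $(c_j, c_{j+1})$, the exit points $\epsilon(\ell_1), \ldots, \epsilon(\ell_r)$ of all stable continental cusp leaves bracketed by $c_j, c_{j+1}$.

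Distinct stable leaves are disjoint, so the leaves $\ell_1, \ldots, \ell_r$ are pairwise disjoint chords of the disk $\overline{\loomif(\C)}$. Each chord runs from $\iota(\ell_k)$ to a point of the arc. By Lemma~\ref{lem:expansions}, or by Proposition~\ref{prop:orbit:space}~\eqref{orbit_space:one:periodic}, no $\iota(\ell_k)$ equals $c_j$ or $c_{j+1}$ in a degenerate way. Two disjoint chords ending in the same arc must be nested. Concretely, suppose $\epsilon(\ell_a)$ precedes $\epsilon(\ell_b)$ going counterclockwise from $c_j$. Non-crossing then forces $\iota(\ell_b)$ to lie in the closed arc from $\epsilon(\ell_b)$ counterclockwise to $\iota(\ell_a)$, that is, on the $c_{j+1}$ side of $\iota(\ell_a)$.

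If $\iota(\ell_a) \neq \iota(\ell_b)$, the order of the two exit points is therefore dictated by the cyclic order of their origin cusps, which is known. Indeed, reading counterclockwise from $c_{j+1}$, the chord whose origin comes first has the later exit point. So
\[
\Theta^+(\epsilon(\ell_a), \epsilon(\ell_b), c_{j+1}) \quad\text{is determined by}\quad \Theta^+(c_{j+1}, \iota(\ell_a), \iota(\ell_b)).
\]
I would verify this sign carefully with a picture, including the boundary cases $\iota(\ell_k) \in \{c_j, c_{j+1}\}$.

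The main obstacle is the case $\iota(\ell_a) = \iota(\ell_b) = c$, that is, two stable leaves from the same cusp. In $\loom$ the stable $c$-leaves are ordered around $c$, and two of them, together with the region between them, are disjoint from a third. I would argue that at most one stable $c$-leaf can exit through a given arc. The region $J$ between $\ell_a$ and $\ell_b$ from Lemma~\ref{lem:theta:l:function}(3) is cut off by an unstable $c$-leaf, which is disjoint from both stable leaves, lies between them, and has a positive-length initial segment inside any neighbourhood of $c$ in $\loomif(\C)$ between them. That unstable leaf must exit the convex disk in the arc between $\epsilon(\ell_a)$ and $\epsilon(\ell_b)$. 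Since $J$ is bounded by an arc of the boundary containing no cusp, the region $J$ would then be a rectangle-like region bounded by two stable leaves and a cusp-free arc. I expect to get a contradiction from the fact that $\overline{\loomif(\C)}$ is a union of edge rectangles (Proposition~\ref{prop:convex:disk}): an edge rectangle meeting $J$ near its boundary arc would need a cusp inside $J$.

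If this claim fails, the fallback is to order same-origin leaves by their cyclic order at $c$, which is determined by which faces of $\C$ at $c$ they cross. Assembling these pairwise comparisons within each arc gives a linear order on that arc, and hence all of $\Theta^+$. The statement for $\Theta^-$ follows by the symmetric argument.
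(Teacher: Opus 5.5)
Your reduction is the same as the paper's. Only the order of exit points inside a single arc between consecutive cusps $a,b$ is unknown. Since distinct stable leaves are disjoint, that order is the reverse of the order of the origins, which is exactly the paper's identity $\Theta^+(a,\epsilon_1,\epsilon_2)=\Theta^+(b,\epsilon_1,\epsilon_2)=-\Theta^+(a,c_1,c_2)$ with $c_i=\iota(\ell_i)$. But this identity determines the order only if $a$, $c_1$, $c_2$ are three distinct cusps. Proving that is the real content of the lemma, and your proposal leaves both halves open.

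\emph{Origins versus bracketing cusps.} The claim $\iota(\ell_k)\notin\{a,b\}$ does not follow from Lemma~\ref{lem:expansions} or Proposition~\ref{prop:orbit:space}~\eqref{orbit_space:one:periodic}. A stable $b$-leaf exiting between $a$ and $b$ would not join two cusps, so nothing there excludes it. Your sign formula degenerates when an origin equals $c_{j+1}$.

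\emph{Same origin.} This case you only expect to rule out, and your fallback does not rescue the statement. Ordering two stable $c$-leaves by their position around $c$, read off from faces of $\C$ at $c$, uses data beyond the two items the lemma allows. So you must show this case never occurs.

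The missing idea is to use the edge rectangle $P=\hull(\{a,b\})$ spanned by the bracketing cusps, which every stable continental cusp leaf bracketed by $a,b$ crosses, together with the orthant structure at a cusp $c$. An edge rectangle meeting a stable $c$-leaf $\ell$ meets exactly the two orthants at $c$ adjacent along $\ell$, and the closure of an orthant at $c$ contains only one stable $c$-leaf. Two consequences follow.
\begin{itemize}
\item An edge rectangle with ideal corner at $c$ lies in a single orthant at $c$, so it meets no stable $c$-leaf. This gives $c_1,c_2\notin\{a,b\}$.
\item An edge rectangle meets at most one stable $c$-leaf. This gives $c_1\neq c_2$ whenever $\ell_1\neq\ell_2$.
\end{itemize}
Your picture with the unstable $c$-leaf between $\ell_a$ and $\ell_b$ points in this direction, but the contradiction is not a cusp inside $J$; it is simply that $P$ would meet two distinct stable $c$-leaves. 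With these two facts in hand, your non-crossing sign computation completes the proof.
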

	\begin{proof}
We first make a few general observations about orthants and stable cusp leaves. Let~$c$ be a cusp of~$\loom$ and let $\ell$ be a stable $c$-leaf. Since rectangles are open (see Definition~\ref{defn:rectangle}), every edge rectangle that intersects $\ell$ intersects precisely two adjacent orthants at $c$. In particular,\\
($\ast$) an edge rectangle with ideal corner at $c$ is disjoint from every stable $c$-leaf.

Moreover,\\
($\ast\ast$) the closure of each orthant at $c$ intersects precisely one stable $c$-leaf.

Since $\Omega(\C)$ partitions $\partial\loomif(\C)$ into nonempty intervals, the circular order $\Theta^+$ is completely determined by its restriction $\Theta^+_\Omega$ to $(\Omega(\C))^3$ together with the linear order induced on the exit points contained in each interval. We now explain how to determine the latter assuming that $\Theta^+_\Omega$ and the bracketing cusps of each stable continental cusp leaf are known.

Let $\ell_1, \ell_2$ be distinct stable continental cusp leaves with the same pair of bracketing cusps $a,b$. For $i=1,2$, let $c_i$ and $\epsilon_i$ be the cusp from which $\ell_i$ originates and the exit point of $\ell_i$, respectively. Set $P = \hull(\lbrace a,b\rbrace)$. Then $P$ is an edge rectangle; see \cite[Lemma 6.36]{SchleimSegLoom}.

Since both $\ell_1$ and $\ell_2$ intersect $P$, and $a$ and $b$ are ideal corners of $P$, $(\ast)$ implies that neither $c_1$ nor $c_2$ is equal to $a$ or $b$. Moreover, since $P$ intersects two adjacent orthants at $c_1$ meeting $\ell_1$ and two adjacent orthants at $c_2$ meeting $\ell_2$, the assumption that $\ell_1 \neq \ell_2$ together with $(\ast\ast)$ gives $c_1\neq c_2$. Thus  $a,c_1,c_2$ are three distinct cusps of $\loom$.

Since furthermore distinct leaves of~$\loom^+$ do not intersect, we get the equality
\[
\Theta^+(a,\epsilon_1,\epsilon_2)
=
\Theta^+(b,\epsilon_1,\epsilon_2)
=
-\Theta^+(a,c_1,c_2).
\]
Observe that the last expression $-\Theta^+(a,c_1,c_2)$ involves only the cusps of $\C$, while the first expression $\Theta^+(a,\epsilon_1,\epsilon_2)$ determines the linear order on $\lbrace \epsilon_1,\epsilon_2\rbrace$ induced by $\Theta^+$. Therefore the claim follows.
	\end{proof}

	\subsection{Finding the bracketing cusps}\label{subsec:convexity}
	By Lemma \ref{lem:determining:circular:order}, the loom-compatible circular order~$\Theta$ is completely determined by its restriction to the triples of cusps of $\C$ together with the information on the bracketing cusps of each continental cusp leaf.
Unfortunately, the bracketing cusps for continental cusp leaves cannot always be read off from the combinatorics of $\C$ alone; see Figure \ref{fig:non:convex}. In this section we discuss an additional property that a continent may satisfy and prove that it ensures that the bracketing cusps of any continental cusp leaf can be determined unambiguously.

	\begin{definition}\label{def:convex:continent}
		We say that a continent $\C \subset \univV$ is \emph{convex} if for every tetrahedron rectangle $R \subset \loomif(\C)$ there is a tetrahedron $t$ of $\C$ with $\loomif(t) = R$.
	\end{definition}
	
	The notion of convexity in Definition~\ref{def:convex:continent} agrees with \cite[Definition~5.17]{SchleimSegLinks}, where it is defined in terms of properties of certain train tracks embedded in the upper and the lower landscape of $\C$. 
	
	\begin{remark} \label{rem:convexity:overlap}
		We caution the reader not to confuse the convexity of a continent with the convexity of its loomification. Indeed, in Proposition \ref{prop:convex:disk} we proved that the loomification of any continent is convex.  We decided to retain the original terminology from \cite[Definition~5.17]{SchleimSegLinks}, despite the fact that it creates this terminological overlap, because introducing yet another term could create even more confusion in the literature.
	\end{remark}
	
	\begin{example} The veering continent illustrated in Figure \ref{fig:non:convex} is not convex: it misses a tetrahedron~$t$ with vertices $c_0, c_3, c_4, c_6$. 
		Since we do not know the color of the top diagonal of $t$, we cannot determine the horizontal order of $\{c_0, c_4\}$ which is needed to establish the bracketing cusps for the stable continental cusp leaf originating at $c_0$.\end{example}

	In Lemma \ref{lem:bracketing:cusps} we prove that convexity of a continent is sufficient to determine, without any ambiguity, the bracketing cusps for any stable continental cusp leaf. We begin by introducing the necessary terminology and establishing several auxiliary lemmas.

%
	
	We say that an edge $e$ of a face $f$ of $\univV$ is the \emph{widest edge} of $f$ if it is the bottom diagonal of the tetrahedron $t$ that is immediately above $f$. 
	This terminology is consistent with Definition \ref{def:taller:etc}, because $\loomif(e)$ is the widest edge rectangle among all edge rectangles contained in $\loomif(f)$.
	
	\begin{lemma}\label{lem:widest:edges}
		Let $L$ be the upper landscape of a convex continent $\C$ in $\univV$. Let $f, f'$ be faces of $L$  that are adjacent along an edge $e$ of $L$. Then $e$ cannot be the widest edge of both $f$ and $f'$. 
	\end{lemma}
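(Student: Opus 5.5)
Argue by contradiction: suppose $e$ is the widest edge of both $f$ and $f'$, and produce a tetrahedron rectangle inside $\loomif(\C)$ that is the loomification of no tetrahedron of $\C$, contradicting Definition~\ref{def:convex:continent}.

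Since $e$ is the widest edge of $f$, it is the bottom diagonal of the tetrahedron $t$ immediately above $f$. The same holds for $f'$. In rectangle terms, $\loomif(e)$ is the widest edge subrectangle of both face rectangles $Q=\loomif(f)$ and $Q'=\loomif(f')$. So $Q$ and $Q'$ are obtained from $P=\loomif(e)$ by vertical expansions, one to the north and one to the south, each adding a single cusp on a horizontal side. They lie on opposite sides of $e$ in the landscape, and $L$ is an embedded disk whose faces have pairwise distinct cusps by Lemma~\ref{lem:landscape:disk}. Hence the third cusps $c$ of $f$ and $c'$ of $f'$ are distinct and lie on opposite horizontal sides of $P$. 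By Lemma~\ref{lem:edge:rectangle:once:bottom:top}, the maximal vertical expansion $R$ of $P$ is the unique tetrahedron rectangle having $P$ as its widest edge rectangle. Both $Q$ and $Q'$ are wide face subrectangles of $R$, and the cusps of $R$ are the two cusps of $e$ together with $c$ and $c'$. Since $Q\cup Q'\subset \loomif(L)$, and $\loomif(L)$ is convex (Corollary~\ref{cor:loomification:convex}), we get $R=\mathrm{int}(\hull(\{c,c'\}\cup\text{cusps of }e))\subset\loomif(\C)$. I expect this containment step to need the most care: one has to check that the hull of these four cusps is exactly $\overline R$. This should follow from Proposition~\ref{prop:loomif:of:landscape} applied to the two-face landscape $f\cup f'$, or from $R$ being a union of $Q$, $Q'$ and geodesics between $c$ and $c'$ that pass through $P$.

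By convexity of $\C$, there is a tetrahedron $s$ of $\C$ with $\loomif(s)=R$. By Construction~\ref{constr:loom:to:veering}, $f$ and $f'$ are then the two bottom faces of $s$, and $e$ is its bottom diagonal. Now use the layering $(L_i)$ of $\C$, in which $L=L_0$ is the upper landscape. Every tetrahedron of $\C$ lies immediately above some $L_i$ with $i<n$, and every face of $L_0$ is a bottom face of the tetrahedron immediately above it. So the faces $f$ and $f'$, which lie in $L_0$, have the tetrahedron $s\in\C$ immediately above them. The layering condition, however, forces no tetrahedron of $\C$ to lie above the upper landscape: each $t\in\C$ lies between $L_i$ and $L_{i+1}$, and the faces of $L_0$ are top faces only. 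This gives the contradiction.

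If the paper's orientation convention is reversed, so that the upper landscape sits on top, the same argument runs symmetrically. Here the key check is that a face of the upper landscape can be a bottom face of no tetrahedron of $\C$, since otherwise that tetrahedron would sit above $L_0$, contrary to Definition~\ref{def:layering}. If instead the upper landscape is the one lying below all tetrahedra, I would run the argument with the tallest edge and top faces, using horizontal expansions in place of vertical ones. I would resolve which case applies by checking the direction in which landscapes are indexed in Definition~\ref{def:layering}.
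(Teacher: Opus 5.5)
Your argument is the paper's. $\loomif(f)\cup\loomif(f')$ is the tetrahedron rectangle $R$ whose widest edge rectangle is $\loomif(e)$. Convexity of $\C$ puts the tetrahedron $s$ with $\loomif(s)=R$ into $\C$, and $s$ has $f$ and $f'$ as its bottom faces.

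The containment step you flag needs no hull argument. The two wide face subrectangles of a tetrahedron rectangle cover it, so $R=\loomif(f)\cup\loomif(f')\subset\loomif(\C)$ directly. Even more directly, the tetrahedra immediately above $f$ and $f'$ both have bottom diagonal $e$, so they coincide by Definition~\ref{def:taut}(2). Convexity is then used only to see that this tetrahedron lies in $\C$.

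What needs repair is the final contradiction. Your third paragraph uses the layering in two incompatible ways. It first takes every tetrahedron of $\C$ to lie immediately above some $L_i$ with $i<n$; by Definition~\ref{def:layering}(2) with $i=0$, some tetrahedron of $\C$ then sits on $L_0$. It then concludes that no tetrahedron of $\C$ lies above $L_0$.

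The lemma is true only when the upper landscape is the \emph{top} boundary of $\C$, meaning that none of its faces is a bottom face of a tetrahedron of $\C$. This is how the paper uses the term, both here and in Lemma~\ref{lem:bracketing:cusps}, whatever one makes of the indexing in Definition~\ref{def:layering}. For the bottom boundary the statement is false. A single tetrahedron is a convex continent, since tetrahedron rectangles are maximal. Its two bottom faces share the bottom diagonal, which is the widest edge of both.

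So your fallback of switching to tallest edges in that case would prove the analogous statement for the other landscape, not this one. Drop the case split and fix the top-boundary reading. The contradiction is then exactly the one you state in your last paragraph: a face of the upper landscape is a bottom face of no tetrahedron of $\C$.
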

	\begin{proof}
		If $e$ is the widest edge of both $f$ and $f'$ then $\loomif(f)\cup \loomif(f')$ is a tetrahedron rectangle contained in $\loomif(\C)$. Since $\C$ is convex, there is a tetrahedron $t$ of $\C$ such that $\loomif(t) = \loomif(f)\cup \loomif(f')$. Furthermore, $e$ is the bottom diagonal of $t$. This contradicts that $f, f'$ are faces of the upper landscape of $\C$.
	\end{proof}
	
	Let $\C$ be a convex continent in $\univV$. For each face $f$ of the upper landscape $L$ of $C$ we define a sequence $r(f) = (f_i)_i$ of faces of $L$ as follows. First, set $f_1$ to be $f$. For $i\geq1$, if the widest edge $e_i$ of $f_i$ is a coastal edge of $\C$, we are done. Otherwise, set $f_{i+1}$ to be the unique face of $L$ that is adjacent to $f_i$ along $e_i$. We call $r(f)$ the \emph{upper river} originating at $f$. 
	
	\begin{lemma}\label{lem:rivers}
		All faces of the upper river $r(f)$ are distinct.
	\end{lemma}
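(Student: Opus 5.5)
We need to show that the upper river $r(f)=(f_1,f_2,\dots)$ never revisits a face. My plan is to attach to each step a monotone quantity in the loom space. Concretely, I will show that the loomifications of the widest edges $e_i$ become strictly wider along the river, in the sense of Definition~\ref{def:taller:etc}. Since widths are strictly increasing, no face can repeat.

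First, fix $i$ with $f_{i+1}$ defined, so that $e_i$ is a non-coastal edge shared by $f_i$ and $f_{i+1}$. By Lemma~\ref{lem:widest:edges}, $e_i$ is not the widest edge of $f_{i+1}$. Hence $e_{i+1}\neq e_i$, and both are edges of the face $f_{i+1}$.

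Next I use the structure of a face rectangle. Among the three edge rectangles contained in the face rectangle $\loomif(f_{i+1})$, the widest one, $\loomif(e_{i+1})$, is as wide as $\loomif(f_{i+1})$. The other two are strictly thinner than it: each spans only part of the width, since the cusps of a face rectangle lie at distinct horizontal positions. Because $e_i\neq e_{i+1}$, it follows that $\loomif(e_{i+1})$ is strictly wider than $\loomif(e_i)$. Meanwhile $\loomif(e_i)$ is the widest edge rectangle of $\loomif(f_i)$, so it is as wide as $\loomif(f_i)$. Writing $W(X)$ for the set of stable leaves meeting $X$, we therefore get a chain of strict inclusions
\[
W(\loomif(f_1))\subsetneq W(\loomif(f_2))\subsetneq \cdots .
\]
Indeed, $W(\loomif(f_i))=W(\loomif(e_i))\subsetneq W(\loomif(e_{i+1}))=W(\loomif(f_{i+1}))$.

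Finally, if $f_j=f_k$ with $j<k$, the two extreme sets in this chain would be equal. That contradicts the strict inclusions, so all faces of $r(f)$ are distinct. As a consequence, since $L$ is finite, the river terminates at a coastal edge.

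The main obstacle is justifying the strict-width claim for face rectangles: that each non-widest edge rectangle of a face rectangle is strictly thinner than the widest one. I would argue this from Definition~\ref{def:face:rect} and the picture in Figure~\ref{fig:tet:rects}. A face rectangle has its three cusps at distinct horizontal positions, by Proposition~\ref{prop:orbit:space}\eqref{orbit_space:one:periodic}. The widest edge rectangle is spanned by the westmost and eastmost cusps, while any other edge rectangle omits one of these extremes, so a stable leaf near that extreme side meets $\loomif(e_{i+1})$ but not $\loomif(e_i)$.
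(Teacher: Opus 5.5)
Your proof is correct and matches the paper's argument. You use Lemma~\ref{lem:widest:edges} to see that $\loomif(e_i)$ is a non-widest edge subrectangle of $\loomif(f_{i+1})$, so the sets of stable leaves meeting $\loomif(f_i)$ strictly increase along the river; your extra check that the two non-widest edge subrectangles of a face rectangle are strictly thinner than the widest one fills in a step the paper asserts without comment.
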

	\begin{proof}
		For every $i$, let $e_i$ be the widest edge of $f_i$. Then the set of the stable leaves intersecting $\loomif(f_i)$ and the set of the stable leaves intersecting $\loomif(e_i)$ are equal. 
		By Lemma \ref{lem:widest:edges}, $\loomif(e_i)$ is not the widest edge of $\loomif(f_{i+1})$. Therefore every stable leaf that intersects $\loomif(f_i)$, intersects $\loomif(f_{i+1})$, but there are stable leaves intersecting $\loomif(f_{i+1})$ that do not intersect $\loomif(f_i)$. In other words, the sets of the stable leaves that intersect $\loomif(f_i)$ form a strictly ascending family. Thus all $f_i$'s are distinct.
	\end{proof}
	
	Since  the upper landscape contains only finitely many faces, Lemma \ref{lem:rivers} implies that $r(f)$ is finite. In particular, $r(f)$ is a special case of an upper river in the sense of \cite[Definition 4.12]{SchleimSegLinks}, where the notion is defined more generally, also for non-convex continents.

	\begin{lemma}\label{lem:bracketing:cusps}
		Let $\C$ be a convex continent in $\univV$. For every stable continental cusp leaf~$\ell$ there is a unique face $f$ of the upper landscape of $\C$ such that $\ell$ intersects the loomification of every face of the upper river $r(f)$. In particular, the cusps of the widest edge of the last face of $r(f)$ are the bracketing cusps for $\ell$. 
	\end{lemma}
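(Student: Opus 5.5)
Let $\ell$ be a stable continental cusp leaf with origin $c = \iota(\ell) \in \Omega(\C)$. Since $\loomif(\C) = \loomif(L)$ for the upper landscape $L$, the leaf $\ell$ meets the loomification of some face of $L$. The plan is to single out the face of $L$ where $\ell$ ``starts'' near $c$, and then show that $\ell$ stays inside the loomifications of the faces of the upper river from that face until it exits.

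\textbf{Existence.} Near its origin, $\ell$ lies in the closure of two adjacent orthants at $c$. I would take $f$ to be a face of $L$ having $c$ as a vertex such that $\ell$ intersects $\loomif(f)$, i.e.\ $c$ lies in the interior of a horizontal side of $\loomif(f)$ and $\ell$ enters it. Such a face exists: the faces of $L$ incident to $c$ form a fan, their loomifications cover a neighbourhood of $c$ in $\loomif(\C)$, and $\ell$ meets $\loomif(\C)$ arbitrarily close to $c$. Then I would induct along $r(f) = (f_i)$. The key observation, used in the proof of Lemma~\ref{lem:rivers}, is that the stable leaves meeting $\loomif(f_i)$ are exactly those meeting $\loomif(e_i)$, where $e_i$ is the widest edge of $f_i$. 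Moreover, $\loomif(e_i) \subset \loomif(f_{i+1})$. Hence if $\ell$ meets $\loomif(f_i)$, it meets $\loomif(e_i)$, and therefore meets $\loomif(f_{i+1})$. The river terminates at a coastal edge $e_k$, and $\ell$ crosses $\loomif(e_k)$.

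Next I would show that the exit point $\epsilon(\ell)$ lies on the boundary arc of $\loomif(\C)$ between the two cusps of $e_k$. The edge rectangle $\loomif(e_k)$ touches $\partial \loomif(\C)$ along the side not adjacent to $f_k$, because $e_k$ is coastal. Since $\ell$ crosses $\loomif(e_k)$ fully, from one horizontal side to the other, and $\loomif(\C)$ is a convex disk by Proposition~\ref{prop:convex:disk}, the leaf leaves $\loomif(\C)$ exactly through the boundary segment of the closure of $\loomif(e_k)$ lying between its two cusps. By Lemma~\ref{lem:landscape:disk}, these two cusps are consecutive in the circular order on $\Omega(\C)$, so they are the bracketing cusps. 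Here I need to make sure that the cusp $c$ is not one of them and that the crossing is transverse; for this I would use property $(\ast)$ from the proof of Lemma~\ref{lem:determining:circular:order}.

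\textbf{Uniqueness.} This is the main obstacle. Suppose $f'$ is another face of $L$ with $\ell$ meeting every face of $r(f')$. Rivers are monotone: the sets of stable leaves meeting their faces strictly increase. So two rivers that share a face coincide from that face on. The issue is the initial face. I would argue that the statement should be read as requiring $f$ to be the first face, i.e.\ the face at which $\ell$ originates (containing $c$ on a side). I would then show this face is unique using Lemma~\ref{lem:widest:edges} and convexity of $\C$: if two faces at $c$ both had $\ell$ entering them, then $\ell$ would pass through their shared edge rectangle in a way forcing that edge to be widest in both faces, which contradicts Lemma~\ref{lem:widest:edges}. Alternatively, if the intended claim is that $f$ is maximal with this property, I would show that any face whose river $\ell$ traverses either lies on $r(f)$ or is $f$ itself. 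This follows because $\ell$ separates $\loomif(\C)$ into two pieces (by Lemma~\ref{lem:theta:l:function}), and faces met by $\ell$ form a chain in the dual tree of $L$ ordered along $\ell$.
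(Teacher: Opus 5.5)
\textbf{There is a genuine gap in the uniqueness step.} Your existence argument and your exit-point argument follow the paper. In both, $\ell$ is propagated along the river using the fact, from the proof of Lemma~\ref{lem:rivers}, that the stable leaves meeting $\loomif(f_i)$ are those meeting $\loomif(e_i)\subset\loomif(f_{i+1})$. Your reading of ``unique'' is also the one the paper proves: $f$ is the unique face of the upper landscape whose loomification contains an initial segment of $\ell$. Read literally, every later face of $r(f)$ would also qualify.

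Your uniqueness argument fails for two reasons.
\begin{enumerate}
\item You assume that two faces of the fan at $c$, both entered by $\ell$, share an edge. Nothing you say rules out two non-adjacent such faces.
\item Even for adjacent faces the mechanism is wrong. Their common edge $e$ is incident to $c$, so $\loomif(e)$ has an ideal corner at $c$. By $(\ast)$ in the proof of Lemma~\ref{lem:determining:circular:order}, $\loomif(e)$ is then disjoint from every stable $c$-leaf, so $\ell$ does not pass through it. Moreover, ``$\ell$ enters $Q$'' means $c$ lies in the interior of a horizontal side of the face rectangle $Q$. In that case the widest edge subrectangle of $Q$ is spanned by the ideal corner and the cusp on a vertical side, and is \emph{not} incident to $c$. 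So $e$ is the widest edge of neither face, and Lemma~\ref{lem:widest:edges} yields no contradiction.
\end{enumerate}
Your alternative, that the faces met by $\ell$ form a chain in the dual tree, is just an assertion of the fact you need.

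\textbf{The missing idea} is to apply convexity of $\C$ (Definition~\ref{def:convex:continent}) to tetrahedra, rather than to use Lemma~\ref{lem:widest:edges}. The paper's proof runs as follows.
\begin{enumerate}
\item Every tetrahedron rectangle in $\loomif(\C)$ containing an initial segment of $\ell$ has $c$ on a horizontal side and a cusp on each vertical side. Hence these rectangles are linearly ordered by width, say $R_1,\dots,R_m$ with $R_1$ the thinnest.
\item By convexity, each $R_i$ equals $\loomif(t_i)$ for some tetrahedron $t_i$ of $\C$.
\item Exactly one face subrectangle $Q_i\subset R_i$ containing an initial segment of $\ell$ is strictly thinner than $R_i$. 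It is the loomification of a top face $q_i$ of $t_i$.
\item The tetrahedron immediately above $q_i$ has loomification equal to the maximal vertical expansion of $Q_i$. This is a strictly thinner tetrahedron rectangle that still contains an initial segment of $\ell$. By convexity, that tetrahedron lies in $\C$ if and only if this rectangle lies in $\loomif(\C)$.
\item This singles out $q_1$ as the unique face of the upper landscape whose loomification contains an initial segment of $\ell$.
\end{enumerate}
Your river argument then finishes the proof, starting from $q_1$.
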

	\begin{proof}
		Let $\ell$ be a stable continental cusp leaf for $\C$. 
		Let $\TRect(\C, \ell)$ be the set of all tetrahedron rectangles in $\loomif(\C)$ that contain some initial segment of $\ell$. This set must be nonempty, because  $\ell$ intersects $\loomif(\C)$ that is covered by tetrahedron rectangles, and these rectangles do not have ideal corners. Each tetrahedron rectangle from $\TRect(\C, \ell)$ meets a cusp $c \in \Omega(\C)$ from which $\ell$ originates and meets another cusp along each of its vertical sides. 
		Therefore, there exists an ordering $(R_1,\ldots,R_m)$ of the elements of $\TRect(\C,\ell)$ such that $R_i$ is strictly thinner than $R_j$ whenever $1 \le i < j \le m$. Convexity of $\C$ implies that for each $1 \le i \le m$ there exists a tetrahedron $t_i \subset \C$ with $\loomif(t_i)=R_i$.

		Each $R_i$ contains exactly two face subrectangles that contain an initial segment of $\ell$. However, only one of those, denoted by $Q_i$, is strictly thinner than $R_i$. The rectangle $Q_i$ is the loomification of a top face $q_i$ of $t_i$; see Figure \ref{fig:tet:rects}.
		
		Furthermore, $\C$ contains the tetrahedron rectangle $t_i'$ immediately above $q_i$ if and only if the maximal vertical expansion of $Q_i$, which is a tetrahedron rectangle strictly thinner than $R_i$, is in $\loomif(\C)$. It follows that $q_1$ is the unique face of the upper landscape of $\C$ whose loomification contains some initial segment of $\ell$.

		Let $r(q_1) = (f_i)_{i=1}^k$ be the upper river originating from $q_1$. As observed in the proof of Lemma \ref{lem:rivers}, for every $1 \leq i \leq k-1$ every stable leaf intersecting $\loomif(f_i)$, intersects $\loomif(f_{i+1})$. Since $\ell$ intersects $q_1 = f_1$, $\ell$ intersects the loomifications of all faces of the upper river $r(q_1)$. By the definition of $r(q_1)$, the widest edge $e_k$ of $f_k$ is a coastal edge of~$\C$. Thus $\ell$ must exit $\loomif(\C)$ between the cusps of $\loomif(e_k)$. 
	\end{proof}


	A result analogous to Lemma \ref{lem:bracketing:cusps} holds for unstable continental cusp leaves. These are encoded by \emph{lower rivers}, which are sequences of triangles contained in the lower landscape of $\C$, where each successive triangle is determined by the tallest edge of its predecessor. Lower rivers satisfy properties analogous to those of upper rivers established in Lemmas~\ref{lem:widest:edges} and~\ref{lem:rivers}, and 
	the bracketing cusps of an unstable leaf $\ell$ can be recovered by following the lower river encoding $\ell$. 
	
	\begin{proposition}\label{prop:conv:cont:circ:order}
		Let $\C\subset \univV$ be a convex continent. The loom-compatible circular orders \[\Theta^\pm:(\Omega(\C) \cup \E^\pm(\C))^3 \rightarrow \lbrace -1, 0, 1 \rbrace\] are uniquely determined by the veering structure on $\C$.
	\end{proposition}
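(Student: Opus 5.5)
The plan is to combine Lemma \ref{lem:determining:circular:order} with Lemma \ref{lem:bracketing:cusps} and its unstable analogue. By Lemma \ref{lem:determining:circular:order}, $\Theta^+$ is determined by two pieces of data: its restriction $\Theta^+_\Omega$ to $\Omega(\C)^3$, and the bracketing cusps of each stable continental cusp leaf. It therefore suffices to show that each piece is determined by the veering structure on $\C$ alone. The same argument, with lower rivers in place of upper rivers, will handle $\Theta^-$.

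The first step is to recover $\Theta^+_\Omega$. By Lemma \ref{lem:landscape:disk}, the upper landscape $L$ of $\C$ is a disk with finitely many boundary points removed, and these points are pairwise distinct cusps of $\loom$. Its loomification is $\loomif(\C)$, and the orientation of $\loom$ agrees with the orientation of $L$ induced by the taut structure (upward coorientation plus the orientation of $\univV$). So the cyclic order in which cusps appear along $\partial L$, read counterclockwise as seen from above, is $\Theta^+_\Omega$. I would check this orientation compatibility on a single face rectangle (Figure \ref{fig:tet:rects}) and extend it across face gluings. This data is purely combinatorial.

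The second step is to enumerate the stable continental cusp leaves together with their bracketing cusps, combinatorially. For each cusp $c$ and each face $f$ of $L$ with $c$ as a vertex, I decide from the edge colours of $f$ whether some stable $c$-leaf has an initial segment in $\loomif(f)$. This holds exactly when $c$ is the cusp lying on a horizontal side of the face rectangle, since the colours fix the cardinal positions of its three cusps. The proof of Lemma \ref{lem:bracketing:cusps} shows that each stable continental cusp leaf $\ell$ corresponds to a unique such face $q_1$ whose loomification contains an initial segment of $\ell$ and which lies in $L$. Conversely, every such pair $(c,f)$ gives a continental leaf: the $c$-leaf crossing $\loomif(f)$ intersects $\loomif(\C)$.

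Distinct continental leaves give distinct initial faces. The main obstacle is pinning down this correspondence, namely that distinct faces at $c$ with the stable $c$-leaf through them correspond to distinct leaves, or else identifying when two of them represent the same leaf. I would handle this by using $(\ast\ast)$ from the proof of Lemma \ref{lem:determining:circular:order} and noting that faces of $L$ at $c$ lying between two consecutive such faces all lie in one orthant. Once the faces are listed, I follow the upper river $r(f)$, which is determined by the widest edges and hence by the colours. By Lemma \ref{lem:rivers} the river terminates at a coastal widest edge, and by Lemma \ref{lem:bracketing:cusps} the endpoints of that edge are the bracketing cusps.

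Feeding these data into the formula $\Theta^+(a,\epsilon_1,\epsilon_2) = -\Theta^+(a,c_1,c_2)$ from Lemma \ref{lem:determining:circular:order} then orders the exit points within each boundary interval, which determines $\Theta^+$. For $\Theta^-$ the argument runs symmetrically, using the lower landscape, tallest edges, and lower rivers.
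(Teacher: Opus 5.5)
Your proposal is correct and follows essentially the same route as the paper: Lemma~\ref{lem:determining:circular:order} reduces everything to the cusp order, read off from the boundary of the upper landscape, and to the bracketing cusps, obtained by following the colour-determined upper rivers of Lemma~\ref{lem:bracketing:cusps}, with the lower landscape handling $\Theta^-$. The injectivity you flag as the main obstacle is already supplied by the uniqueness of $q_1$ in the proof of Lemma~\ref{lem:bracketing:cusps}, which you cite yourself, so the additional orthant argument is not needed.
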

\begin{proof} We prove the claim for $\Theta^+$; the proof for $\Theta^-$ is analogous.
	By Lemma \ref{lem:determining:circular:order}, $\Theta^+$ is uniquely determined by its restriction to triples from $\Omega(\C)$ together with the data on the bracketing cusps of every stable continental cusp leaf.  The former can be read off from the boundary of the upper landscape of $\C$. The bracketing cusps of a stable continental cusp leaf $\ell$ can be found by following the upper river associated to $\ell$ as described in Lemma~\ref{lem:bracketing:cusps}.
	
	By definition, an upper river is a finite sequence $(f_i)_{i=1}^m$  of faces of the upper landscape of $\C$ such that $f_{i+1}$ is adjacent to $f_i$ along an edge $e_i$ that is 
	the bottom diagonal of the tetrahedron immediately above $f_i$. Meanwhile, Definition~\ref{def:veering:tet} of a veering tetrahedron implies that $e_i$ can be identified solely from the colors of the edges of $f_i$.
	It follows that the veering structure on $\C$ uniquely determines upper rivers of $\C$ and therefore the bracketing cusps of stable continental cusp leaves. 
	\end{proof}

By part~(1) of Lemma \ref{lem:theta:l:function}, a continental cusp leaf $\ell$ intersects a simplex rectangle $R \subset \loomif(\C)$ if and only if there are two cusps $c,d$ of $R$ such that $\Theta_\ell(c)\cdot\Theta_\ell(d)=-1$. Proposition \ref{prop:conv:cont:circ:order} ensures that by working exclusively with convex continents we can always compute the values of $\Theta_\ell(c)$ and $\Theta_\ell(d)$, and hence determine whether $\ell$ intersects~$R$. Once two stable (respectively, unstable) continental cusp leaves are known to intersect $R$, Corollary \ref{cor:from:circ:to:total} allows us to determine their horizontal (respectively, vertical) order.

	The advantage of working with convex continents is now clear. Fortunately, restricting our attention to them does not limit us in any essential way, as the following result shows.

	\begin{lemma}\cite[Lemma 5.18]{SchleimSegLinks} \label{lem:exist:convex:continent}
		Let $\C$ be a continent in $\univV$. There exists a unique convex continent $\C' \subset \univV$ such that $\C \subset \C'$ and $\Omega(\C) = \Omega(\C')$. \qed
	\end{lemma}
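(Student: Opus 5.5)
The plan is to build $\C'$ directly as the set of all tetrahedra of $\univV$ whose loomifications lie in $\overline{\loomif(\C)}$. For uniqueness, any convex continent $\C''$ containing $\C$ with $\Omega(\C'')=\Omega(\C)$ must equal this set. By Proposition~\ref{prop:convex:disk}, $\overline{\loomif(\C'')}=\hull(\Omega(\C''))=\hull(\Omega(\C))=\overline{\loomif(\C)}$. So $\loomif(\C'')=\loomif(\C)$, since both are the interior of the same closed disk minus finitely many boundary points. Convexity of $\C''$ then forces every tetrahedron rectangle inside $\loomif(\C)$ to come from a tetrahedron of $\C''$. Conversely, every tetrahedron of $\C''$ has loomification inside $\loomif(\C'')=\loomif(\C)$. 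Hence $\C''$ is exactly the set $\C'$ defined above, and uniqueness follows.

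For existence, set $D=\loomif(\C)$ and let $\C'$ consist of all tetrahedra $t$ with $\loomif(t)\subset D$ (together with the induced face pairings). The proof then has four steps.

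\begin{enumerate}
\item \emph{Finiteness and contents.} $\C'$ is finite: every tetrahedron rectangle in $D$ has its four cusps in $\overline{\overline{D}}$, and $\Omega(D)=\Omega(\C)$ is finite by Proposition~\ref{prop:convex:disk}. Clearly $\C\subset\C'$, and $\Omega(\C')=\Omega(\C)$.
\item \emph{Convexity.} Once $\C'$ is shown to be a continent, the identity $\loomif(\C')=D$ holds and gives convexity immediately.
\item \emph{Face-connectedness.} Take a tetrahedron $t\in\C'$ and a point of $\loomif(t)$ that also lies in the loomification of some tetrahedron of $\C$. Using the taut structure, I would walk from $t$ up or down through face rectangles contained in $D$ until reaching $\C$; the maximal vertical or horizontal expansions used along the way stay in $D$ by convexity of $D$.
\item \emph{Layering.} This is the step I expect to be the main obstacle. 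I would proceed by adding the tetrahedra of $\C'\setminus\C$ one at a time, following \cite[Lemma 4.24]{SchleimSegLinks}. At each stage, pick a tetrahedron rectangle $R\subset D$ not yet realized whose two bottom faces (or two top faces) both lie in the current upper (or lower) landscape. Such an $R$ exists because of the river argument in Lemma~\ref{lem:widest:edges}: if two adjacent faces of the upper landscape share their widest edge, their union is a tetrahedron rectangle in $D$ that can be attached on top. Each addition preserves both face-connectedness and the existence of a layering.
\end{enumerate}

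The delicate point in the layering step is to show that the process terminates exactly at $\C'$. It cannot stall while some tetrahedron rectangle in $D$ remains unrealized. To prove this, I would argue that if no pair of adjacent faces of the upper landscape share a widest edge, then every upper river runs to the coast. From this I would conclude that no tetrahedron rectangle in $D$ lies above the upper landscape, and symmetrically none lies below the lower landscape. Since the landscapes loomify to $D$, this would leave no unrealized tetrahedron rectangles, so the final continent is $\C'$.
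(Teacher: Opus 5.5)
\textbf{Uniqueness is fine; existence has a genuine gap.}

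Your uniqueness argument works. By Proposition~\ref{prop:convex:disk}, any convex continent $\C''\supset\C$ with $\Omega(\C'')=\Omega(\C)$ satisfies $\loomif(\C'')=\loomif(\C)=D$. Convexity then forces $\C''$ to be the set $\C'$ of all tetrahedra $t$ with $\loomif(t)\subset D$. The paper gives no proof of its own here; it imports the lemma from \cite[Lemma 5.18]{SchleimSegLinks}, where convexity is phrased via the absence of large branches in the landscape train tracks.

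Step~3 rests on a false claim. Convexity of $\overline{D}$ does not keep maximal vertical or horizontal expansions inside $D$. If $f$ is a face of the upper landscape of a convex continent, the tetrahedron immediately above $f$ is not in the continent. So by Definition~\ref{def:convex:continent} its rectangle, which is the maximal vertical expansion of $\loomif(f)$, is not contained in $D$. Walking ``through face rectangles contained in $D$'' therefore just restates the face-connectedness you want to prove.

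The genuine gap is the termination claim in Step~4, which carries the whole content of the lemma. You must show the following: if no tetrahedron with loomification in $D$ can be attached to the upper or lower landscape of the current continent $\C_j$, then every tetrahedron rectangle in $D$ is already realized in $\C_j$.
\begin{itemize}
\item Lemma~\ref{lem:widest:edges} only gives the easy direction: a shared widest edge yields an attachable tetrahedron.
\item ``Every upper river runs to the coast'' is automatic once no widest edges are shared. At best it controls the single tetrahedron directly above each face, not tetrahedra further up.
\item More seriously, ``an unrealized rectangle lies above the upper landscape or below the lower one'' has no content in the loom-space framework, because loomifications do not record vertical position. Overlapping tetrahedron rectangles need not be comparable. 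For any edge $e$, the tetrahedra on the two sides of $e$ both contain $\loomif(e)$, but one extends west of it and the other east. Neither is thinner than the other, so, since rectangles get strictly thinner along upward dual paths, neither lies above the other.
\end{itemize}

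What is missing is a descent mechanism. Starting from an unrealized tetrahedron rectangle in $D$, you need to produce one whose two bottom (or two top) faces lie in the current upper (or lower) landscape.

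One usable ingredient is the following. Along upward dual paths, rectangles become strictly thinner and taller. Hence every point of an intermediate rectangle lies on a two-segment geodesic (stable then unstable) between points of the two end rectangles. By convexity of $\overline{D}$, every tetrahedron on an upward path between two members of $\C'$ is therefore again in $\C'$.

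You would still have to show two further things. First, each unrealized tetrahedron is joined to $\C_j$ by such a path. Second, the first tetrahedron off $\C_j$ along that path can actually be attached to a landscape.
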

Combining this with Lemma \ref{prop:continents:exists} gives that any finite set  of tetrahedra of $\univV$ is contained in a convex continent.

	\begin{remark}
		Convexity of a continent is also needed for \emph{channelization}, a procedure introduced by Schleimer and the second author in \cite[Section 5.19]{SchleimSegLinks}. Channelization is key for  constructing a continent that contains a given finite set of tetrahedra of $\univV$.  Since our algorithm requires certain tetrahedra to be in the continent (see Figure \ref{fig:flowchart}), this gives another reason to work exclusively with convex continents.
	\end{remark}
	
	\section{Subset adapted to a periodic orbit}\label{sec:adapted:set}

	Recall that to construct the veering parent $\V_\gamma$ using Proposition \ref{prop:finite:veering:parent} one first has to find all points of~$\liftGamma$  inside each fundamental tetrahedron rectangle in $(\loom, \loom^+, \loom^-)$. As explained in Section~\ref{subsec:flow:lines:and:orbits}, each element of $\liftGamma$ can be realized as the intersection of infinitely many tetrahedron rectangles that make up a flow line in~$\loom$.  However, in the drilling algorithm we work only with finite subtriangulations of $\univV$ (specifically, convex continents), instead of the entire $\univV$. The drawback of this approach is that for a given convex continent~$\C$ it may happen that some fundamental tetrahedron rectangle $R$ contains some $x \in  \liftGamma$, but a flow line in $\univV$ that represents~$x$ is disjoint from $\C$. In this case, we would fail to detect $x$ in $R$ using $\C$ and thus the output of the algorithm would not be correct. 
	
	To avoid this we need to ensure that the continents that we work with are `sufficiently large'. The main difficulty here is that we do not know a priori how many points of $\liftGamma$ lie in each fundamental tetrahedron rectangle or where they are located. Therefore starting from some convex continent $\C$ in $\univV$ it is not immediately clear in which directions $\C$ should be expanded so that no relevant flow line is missed.
	
	Observe that given $x \in \liftGamma$, up to the action of the stabilizer of $x$, there are only finitely many edge rectangles that contain $x$. Indeed if that was not the case, there would be an edge rectangle that contains infinitely many elements of $\liftGamma$ which is impossible, as $\liftGamma$ is a closed and discrete subset of $\loom$. 
	The challenge is to find such a finite collection of representatives explicitly. Once it is found, it can be used to find all elements of $\liftGamma$ inside any edge rectangle in $\loom$, and thus inside any tetrahedron rectangle in $\loom$; see Lemma \ref{lem:adapted}.
	
	In this section, we use basic properties of \emph{staircases} (Definition~\ref{def:staircase}) to explicitly construct a subset of $\loom$ that is covered by finitely many edge rectangles and, for some $x \in \liftGamma$, contains, up to the action of the stabilizer of $x$, every edge rectangle containing~$x$.
	In the drilling algorithm, this set serves as a `roadmap', directing us to the regions of $\univV$ that need to be included in the continent $\C$ in order to detect all points of $\liftGamma$ inside a fixed collection of fundamental tetrahedron rectangles; see Sections \ref{subsec:adapted:continent} and \ref{subsec:translate}.

	\subsection{Subset of the loom space adapted to a periodic orbit}
	Recall from Proposition~\ref{prop:orbit:space}~\eqref{orbit_space:fixed:points} that the stabilizer $\Stab(x)$ of a periodic point $x \in \liftGamma$ is cyclic. Furthermore, Lemma \ref{lem:loom:stabilizers} implies that given any rectangle $R$ containing~$x$, the corners of $R$ are not fixed by any nontrivial element of $\Stab(x)$. Therefore, by Proposition~\ref{prop:orbit:space} \eqref{orbit_space:group_action}, we can pick a generator $\gamma_x$ of $\Stab(x)$ such that $\gamma_x(R)$ is strictly taller than $R$ and $R$ strictly wider than $\gamma_x(R)$. We call  $\gamma_x$ the \emph{positive generator} of $\Stab(x)$.

	\begin{definition}\label{def:adapted:set}
		Fix $x \in \liftGamma$. We say that a subset $A \subset \loom$ is \emph{adapted to $x$} if every edge rectangle in $\loom$ that contains $x$ has a $\langle \gamma_x \rangle$-translate in $A$. We say that $A$ is \emph{adapted to $\gamma$} if~$A$ is adapted to $x$ for some $x \in \liftGamma$.
	\end{definition}
	
	For example, the union of all edge rectangles in $\loom$ that contain $x$ is a subset of $\loom$ that is adapted to $x$.  However, since every edge rectangle contains only finitely many elements of $\liftGamma$, there are also subsets of $\loom$ that are adapted to $x$ and covered by finitely many edge rectangles.
	Our goal in that section is to explicitly construct such a subset.
	
	Subsets adapted to $\gamma$ capture all information about $\liftGamma$ in the following sense. 
	
	\begin{lemma}\label{lem:adapted}
		Suppose that $A \subset \loom$ is adapted to some $x \in \liftGamma$. Then for every edge rectangle $P \subset \loom$, and every $y \in P \cap \liftGamma$ there is $g \in \pi_1(M)$ such that $x \in g(P)$ and $g(P) \subset A$. 
	\end{lemma}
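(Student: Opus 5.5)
The plan is to reduce directly to the definition of an adapted set. We use that $\liftGamma$ is a single $\pi_1(M)$-orbit, and that the $(\Psi,\Lambda)$-action sends edge rectangles to edge rectangles.

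First, fix an edge rectangle $P \subset \loom$ and a point $y \in P \cap \liftGamma$. Recall that $\liftGamma$ is the preimage of the single closed orbit $\gamma$ under the universal covering map, viewed as a subset of $\loom$. Hence $\pi_1(M)$ acts transitively on $\liftGamma$, and there is $h \in \pi_1(M)$ with $h(y) = x$.

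Second, by Construction~\ref{const:finite:veering}, the action sends simplex rectangles to simplex rectangles of the same dimension. So $h(P)$ is an edge rectangle, and it contains $h(y) = x$.

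Third, since $A$ is adapted to $x$ (Definition~\ref{def:adapted:set}), there is $k \in \mathbb{Z}$ with $\gamma_x^k(h(P)) \subset A$. Set $g = \gamma_x^k h$. Because $\gamma_x \in \Stab(x)$, we get $x = \gamma_x^k(x) \in \gamma_x^k(h(P)) = g(P)$. Together with $g(P) \subset A$, this gives the claim.

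The only step requiring care is the transitivity of $\pi_1(M)$ on $\liftGamma$. It holds because $\gamma$ is connected, so its full preimage in $\widetilde{M}$ consists of the $\pi_1(M)$-translates of a single elevation. This gives a single orbit in $\loom$ and no further obstacle.
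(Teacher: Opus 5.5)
Your proposal is correct and matches the paper's proof: move $y$ to $x$ by an element of $\pi_1(M)$, apply the definition of an adapted set to get a $\langle \gamma_x \rangle$-translate inside $A$, and compose the two elements. You also make explicit two facts the paper uses without comment: that $\pi_1(M)$ acts transitively on $\liftGamma$, and that translates of edge rectangles are edge rectangles.
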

	\begin{proof}
		Let $y \in P \cap \liftGamma$. Then $y = h(x)$ for some $h \in \pi_1(M)$. Thus $x \in h^{-1}(P)$. Since $A$ is adapted to $x$, there exists $m \in \zz$ such that $\gamma_x^m h^{-1}(P) \subset A$. Given that $\gamma_x$ fixes $x$, we also have $x \in \gamma_x^m h^{-1}(P)$. Thus $g =  \gamma_x^m h^{-1}$ translates $P$ into $A$ and $y$ to $x$.
	\end{proof}

By Lemma \ref{lem:adapted}, once a finitely covered set $A$ adapted to $x$ has been found, the problem of determining $P \cap \liftGamma$ for any edge rectangle $P$ reduces to a finite search. Namely, we can iterate over the finitely many translates of $P$ contained in $A$, identify those that contain~$x$, and translate $x$ from each of them back into $P$.

	\subsection{Staircases}\label{subsec:staircases}
	
In this section we recall certain properties of $\loom$ that we use to construct a subset of $\loom$ that is adapted to $\gamma$ and covered by finitely many edge rectangles.
	
	Let $x \in \loom \cup \Omega$. Recall from Section \ref{subsec:loom} that there are either four or infinitely many orthants at $x$, depending on whether $x \in \loom$ or $x \in \Omega$. When $x \in \loom$, an orthant at $x$ is also called a quadrant. Each orthant determines a unique \emph{staircase} at $x$, defined as below and illustrated in Figure \ref{fig:staircase}.
	
	\begin{definition}\label{def:staircase}
		Let $O$ be an orthant at $x \in \loom\cup \Omega$. A \emph{staircase} at $x$ is the closure of the union of all rectangles $P \subset O$ with one corner at $x$.
	\end{definition}

	\begin{figure}[ht]
	\centering
	\labellist
	\hair 2pt \small
	\pinlabel $x$ [tr] at 65 50
	\pinlabel $c$ [bl] at 250 300
	\pinlabel $\ell^-_{c, x}$ [b] at 30 300
	\pinlabel $\ell^+_{c, x}$ [l] at 250 25
	\endlabellist
	\includegraphics[width = 0.4\textwidth]{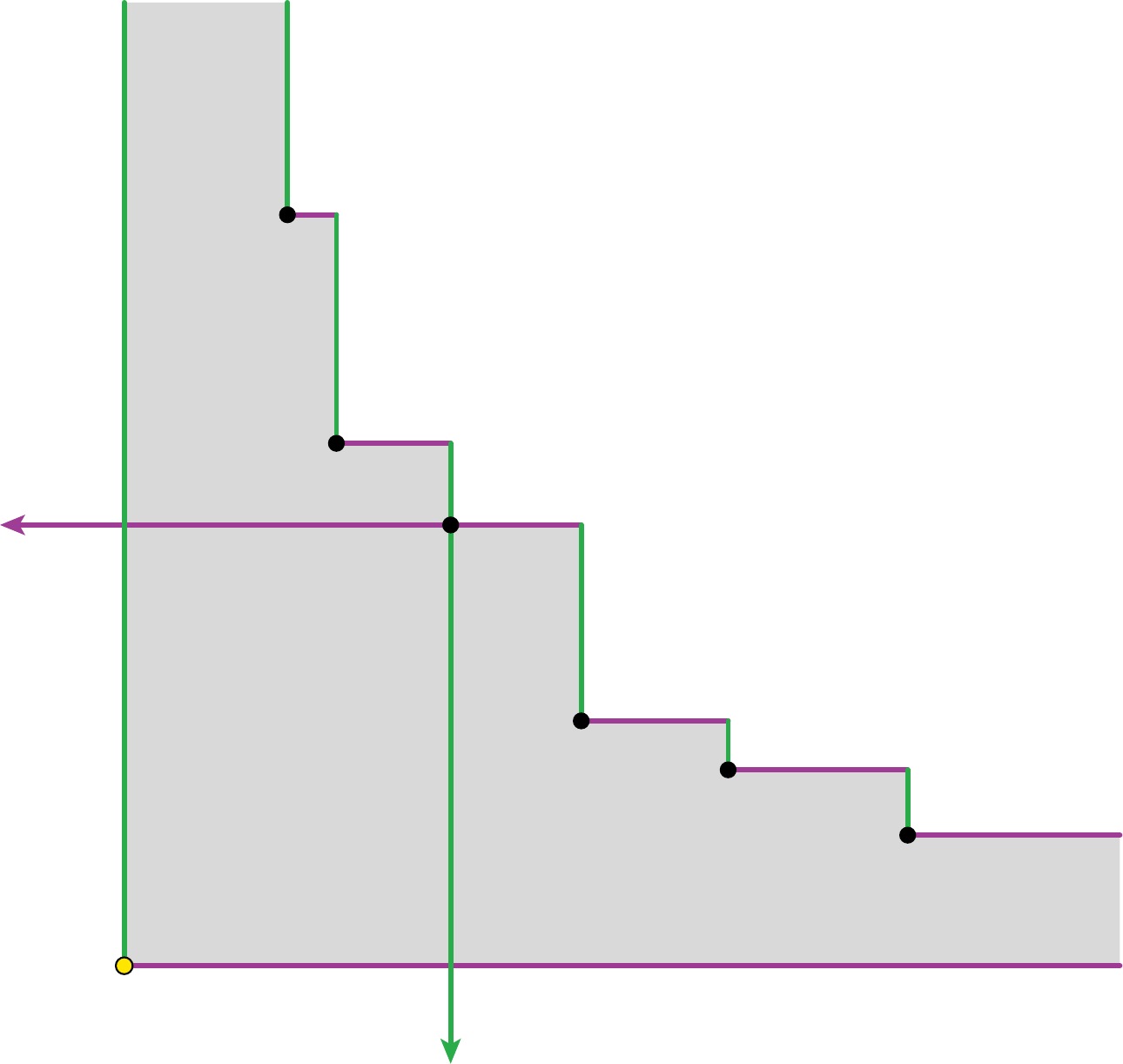}
	\caption{A part of a staircase $S$ at $x$. The cusps belonging to $\Omega_x(S)$ are represented by solid black dots. For each $c \in \Omega_x(S)$, there is precisely one stable $c$-leaf $\ell_{c,x}^+$ and precisely one unstable $c$-leaf $\ell_{c,x}^-$ intersecting the interior of $S$.}
	\label{fig:staircase}
\end{figure}
	
	
	Let $S$ be a staircase at $x \in \loom \cup \Omega$. Set $\Omega_x(S) = \Omega(S) \bez \lbrace x \rbrace$. When $x \in \loom$, $\Omega_x(S) = \Omega(S)$. Each $c \in \Omega_x(S)$ determines a unique rectangle $R_c \subset S$ with corners at $x$ and~$c$. Observe that $R_c \cap R_d \neq \emptyset$ for any $c,d \in \Omega_x(S)$. In particular, we can compare the height or width of $R_c, R_d$ in the sense of Definition~\ref{def:taller:etc}.  We define a \emph{dynamic linear order} $<_S$ on $\Omega_x(S)$ by declaring, for $c,d \in \Omega_x(S)$, that $c <_S d$ if  the rectangle $R_d$ is strictly taller than $R_c$.

	\begin{lemma}\label{lem:staircase}
		Let $S$ be a staircase at a periodic point $x \in \loom$. Denote the positive generator of the stabilizer of $x$ by $\gamma_x$, and the quadrant at $x$ that contains the interior of $S$ by $Q_S$.
		\begin{enumerate}
				\item Let $\ell^+$, $\ell^-$ be the half-leaves at $x$ that cobound $Q_S$. Arbitrarily close to $x$ there is 
				\begin{itemize}
					\item a point of $\ell^+$ that lies on a cusp leaf of $\loom^- $ meeting $\Omega(S)$, and
					\item a point of $\ell^-$ that lies on a cusp leaf of $\loom^+$ meeting $\Omega(S)$. 
				\end{itemize}
			In particular, $S$ meets infinitely many cusps of $\loom$.
			\item The stabilizer of $S$ is equal to $\langle \gamma_x^2 \rangle$, if $\gamma_x$ acts with a twist on $\loom$, and to $\langle \gamma_x \rangle$ otherwise.
			\item  The group $\langle \gamma_x^2 \rangle$ acts on $\Omega(S)$ by translations with respect to the dynamic linear order on $\Omega(S)$.
		\end{enumerate}
	\end{lemma}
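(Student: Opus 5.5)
For part (1) I would work inside the quadrant $Q_S$ and use the density of cusps near periodic points. Fix a small rectangle $U$ with a corner at $x$ lying in $Q_S$, spanned by initial segments of $\ell^+$ and $\ell^-$. Cusps of $\loom$ are lifts of the orbits in $\Lambda$. By Proposition~\ref{prop:orbit:space}~\eqref{orbit_space:dense:leaves}, the $\pi_1(M)$-translates of a half-leaf are dense, and each cusp leaf is such a half-leaf, so some translate of a cusp leaf enters $U$. The real task, though, is to produce a cusp leaf meeting a point of $\ell^+$ near $x$ and, in addition, having its cusp in $\Omega(S)$.

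My plan is to enlarge $U$ by sliding along $\ell^-$. Take a point $p\in\ell^+$ close to $x$ and the unstable leaf through $p$. Expand the rectangle with corners $x$ and $p$ maximally in the direction away from $\ell^+$ within $Q_S$. By Lemma~\ref{lem:expansions} (together with Proposition~\ref{prop:orbit:space}~\eqref{orbit_space:no:accumulation:points}) this expansion stops at a cusp $c$ on its far side. This gives a rectangle in $Q_S$ with corner at $x$ whose closure meets $c$, so $c\in\Omega(S)$. The unstable $c$-leaf through that side meets $\ell^+$, or meets it after a further northern expansion, at a point between $x$ and $p$. I expect this corner-and-leaf bookkeeping to be the main obstacle, because the cusp can sit on a side rather than at a corner and the expansion may need to be iterated. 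Letting $p\to x$ gives infinitely many such cusps, since a single cusp leaf meets $\ell^+$ only once. The $\ell^-$ statement follows by the symmetric argument.

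For part (2), $g\in\Stab(S)$ fixes $x$, because $x$ is the unique corner point of $S$ (equivalently, by Lemma~\ref{lem:loom:stabilizers}), so $g\in\langle\gamma_x\rangle$. $\gamma_x$ permutes the four quadrants at $x$ while preserving the foliations. If $\gamma_x$ does not act with a twist, it preserves each half-leaf at $x$ and hence $Q_S$, so it maps $S$ to the staircase of the same quadrant, which is $S$ itself. If it acts with a twist, it swaps the half-leaves of one foliation, so $Q_S$ is not fixed but $\gamma_x^2(Q_S)=Q_S$.

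For part (3), $\gamma_x^2$ preserves $S$ and the orientations of both foliations. By Proposition~\ref{prop:orbit:space}~\eqref{orbit_space:fixed:points}, it expands the unstable direction and contracts the stable one relative to $x$. It therefore sends $R_c$ to $R_{\gamma_x^2 c}$ with $R_{\gamma_x^2c}$ strictly taller than $R_c$, and it preserves the order $<_S$, since heights are compared via leaves and these are mapped monotonically. An order-preserving fixed-point-free bijection of a discrete linear order is then a translation. Discreteness and the absence of endpoints come from part (1) together with Proposition~\ref{prop:orbit:space}~\eqref{orbit_space:no:accumulation:points}.
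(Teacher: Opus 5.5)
Your proposal is correct and follows essentially the paper's route. Part (1) rests, as in the paper, on the boundedness of maximal expansions from Lemma~\ref{lem:expansions}: the maximal expansion away from $\ell^+$ of a thin rectangle in $Q_S$ with one side $[x,p]\subset\ell^+$ has a cusp of $\Omega(S)$ in the interior of its far side, whose unstable leaf crosses $\ell^+$ strictly between $x$ and $p$, so no further expansion is needed. Parts (2) and (3) use the action of $\gamma_x$ on the quadrants at $x$ and the equivariance $\gamma_x^2(R_c)=R_{\gamma_x^2(c)}$ exactly as the paper does.

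There are only minor slips, none of which affects your conclusions: $x$ and $p$ lie on the same leaf, so they do not span a rectangle; a twist reverses the orientations of both foliations, sending $Q_S$ to the opposite quadrant; and the strict increase in height is just the defining property of the positive generator, which stretches rectangles along stable leaves, not unstable ones.
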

	\begin{proof}
			If part (1) was false, there would be a rectangle $R \subset \loom$ that is not contained in any maximal rectangle. However, we have already proved in Lemma \ref{lem:expansions} that $\loom$ does not admit unbounded product regions. See also \cite[Lemma 4.10]{SchleimSegLoom}. 
		
		For (2) observe that any element stabilizing $S$ must fix $x$ and thus $\Stab(S) \subset \langle \gamma_x \rangle$. Conversely, for any edge rectangle $P\subset S$ and every $m \in \zz$ the rectangle $\gamma_x^m(P)$ lies in some staircase at $x$. If $\gamma_x$ does not act with a twist then $\gamma_x^m(P) \subset S$. Otherwise, $\gamma_x^{2m}(P) \subset S$. 
		
		Let $c \in \Omega(S)$. Since $R_{\gamma_x^2(c)} = \gamma_x^2(R_c)$, the group $\langle \gamma_x^2 \rangle$ preserves the linear order $<_S$ on $\Omega(S)$. Furthermore, by the assumption that $\gamma_x$ is the positive generator of $\Stab(x)$, $\gamma_x^2(R_c) = R_{\gamma_x^2(c)}$ is strictly taller than $R_c$. Therefore no nontrivial element of $\langle \gamma_x^2 \rangle$ has fixed points on $\Omega(S)$. This gives (3).
	\end{proof}

	\subsection{Finitely covered subset adapted to $\gamma$}\label{subsec:adapted:set}
	
	Let $x \in \liftGamma$. Denote the union of staircases at $x$ by $\Sigma_x$, and the set of cusps meeting $\Sigma_x$ by $\Omega_x$. For every $c \in \Omega_x$, exactly one stable $c$-leaf $\ell_{c,x}^+$ intersects the interior of $\Sigma_x$ and exactly one unstable $c$-leaf $\ell_{c,x}^-$ intersects the interior of $\Sigma_x$; see Figure \ref{fig:staircase}. By Lemma \ref{lem:staircase} (1), for any pair of distinct cusps $c,d \in \Omega_x$  there always exists an edge rectangle $P \subset \Sigma_x$ that intersects both $\ell_{c,x}^+$ and $\ell_{d,x}^+$. Thus the leaves $\ell_{c,x}^+, \ell_{d,x}^+$ can be horizontally ordered as in Section~\ref{subsec:linear:orders}. This horizontal order (say, from west to east) on $\lbrace \ell_{c,x}^+ \ | \ c \in \Omega_x \rbrace$ induces a horizontal order on $\Omega_x$. Analogously, we also get a vertical order on $\Omega_x$ induced by  the vertical order on $\lbrace \ell_{c,x}^- \ | \ c \in \Omega_x \rbrace$. Thus it makes sense to say that one cusp of $\Omega_x$ lies to the east or west of the other, and likewise to the north or south.
Each $c \in \Omega_x$ also has a well-defined horizontal and vertical position relative to $x$, which depends only on the staircase at $x$ that $c$ meets. Hence we may speak about the horizontal and vertical order on $\Omega_x \cup \lbrace x \rbrace$.

\begin{definition}\label{def:separator:cusp}
	Let $c$ be a cusp that meets some staircase $S$ at $x$. We say that a cusp $s$ is an \emph{external horizontal $(c,x)$-separator} (respectively, \emph{external vertical $(c,x)$-separator})~if
	\begin{itemize}
		\item $s$ meets a staircase at $x$ that is adjacent to $S$,
		\item $s$ is between $c$ and $x$ in the horizontal (respectively, vertical) order on $\Omega_x \cup \lbrace x \rbrace$.
	\end{itemize} See Figure \ref{fig:separators}.
\end{definition}

	\begin{figure}[h]
	\centering
	\labellist
	\hair 2pt \small
	\pinlabel $x$ [tr] at 548 526
	\pinlabel $c$ [bl] at 763 700
	\endlabellist
	\includegraphics[width = 0.5\textwidth]{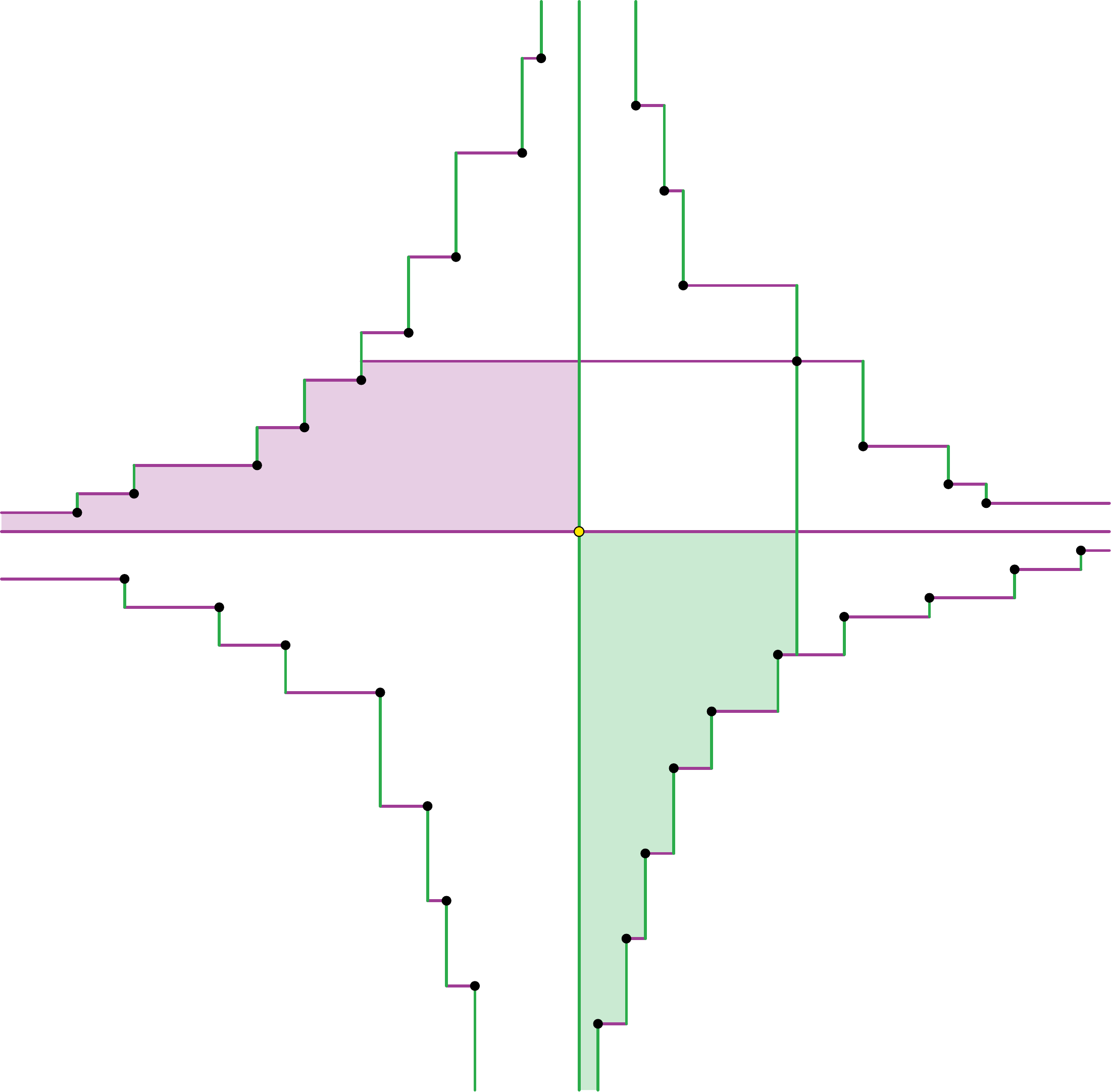}
	\caption{Every cusp meeting the region shaded green is an external horizontal $(c,x)$-separator. Every cusp meeting the region shaded purple is an external vertical $(c,x)$-separator.}
	\label{fig:separators}
\end{figure}

The adjective `external' in Definition \ref{def:separator:cusp} refers to the fact that $s$ is in a different staircase at $x$ than $c$. A cusp $d$ that meets the same staircase $S$ at $x$ as $c$, and lies further along in the dynamic linear order on $S$, could be viewed as an `internal' horizontal $(c,x)$-separator, because $\ell_{d,x}^+$ separates $c$ from $x$. However, to avoid any confusion, we will not use this terminology.

\begin{remark}\label{rem:infinitely:many:seps}
Lemma \ref{lem:staircase} (1) implies that for any $c \in \Omega_x$ there are infinitely many external horizontal $(c,x)$-separators and infinitely many external vertical $(c,x)$-separators; see Figure \ref{fig:separators}.
\end{remark}

	Denote the north-east, nort-west, south-west, and south-east staircase at $x$ by $\SNE, \SNW, \SSW$, and $\SSE$, respectively.
	Observe that every red edge rectangle containing $x$ has ideal corners in $\SSW$ and $\SNE$, while every blue edge rectangle containing $x$ has ideal corners in $\SNW$ and $\SSE$. Furthermore, $x$ is contained in infinitely many red edge rectangles, and infinitely many blue edge rectangles. 
	
	Let $a$ be a cusp that meets $\SSW$. Fix $m \in \mathbb{N}$ such that $\gamma_x^m(a)$ meets $\SSW$. By Lemma~\ref{lem:staircase}~(2), any even $m \in \mathbb{N}$ works.  Pick an external horizontal $(\gamma_x^m(a),x)$-separator  $s^+_{a,m,x}$, and an external vertical $(a, x)$-separator $s^-_{a,0,x}$; see Figure \ref{fig:hull:red:edges}. By Lemma~\ref{lem:staircase} (3), $s^+_{a,m,x}$ is also an external horizontal $(a,x)$-separator, and $s^-_{a,0,x}$ is also an external vertical  $(\gamma_x^2(a),x)$-separator.
		Set
	\[A_0= \hull\left(a, \gamma_x^m(a), s^+_{a,m,x},s^-_{a,0,x}\right).\]
		Then $A_0$ is convex by \cite[Lemma 6.21]{SchleimSegLoom} and covered by finitely many edge rectangles by \cite[Corollary 6.26]{SchleimSegLoom}.
	
	\begin{figure}[ht]
		\centering
		\labellist
		\hair 2pt \small
		\pinlabel $x$ [tr] at 550 530
		\pinlabel $a$ [tr] at 310 430
		\pinlabel $\gamma_x^m(a)$ [tr] at 473 187
		\pinlabel $s^+_{a,m,x}$ [b] at 360 950
		\pinlabel $s^-_{a,0,x}$ [t] at 960 390
		\endlabellist
		\includegraphics[width = 0.7\textwidth]{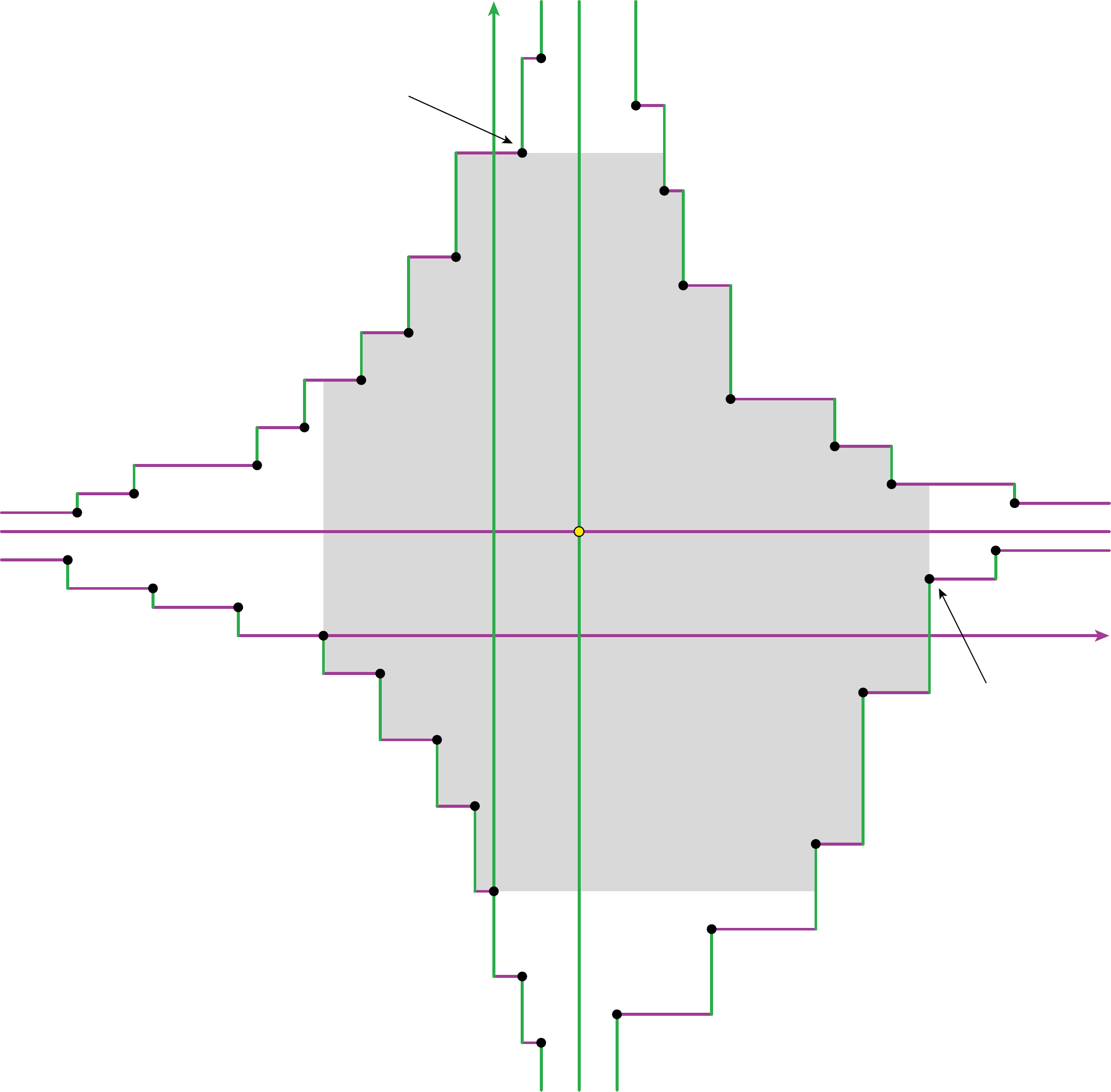}
		\caption{The set $A_0$, shaded gray, is the convex hull of $\lbrace a, \gamma_x^m(a), s^+_{a,m,x}, s^-_{a,0,x}\rbrace$. There are infinitely many possible choices of $s^+_{a,m,x}$ and $s^-_{a,0,x}$ for a given \mbox{$a \in \Omega(\SSW)$} and $m\in \mathbb{N}$ such that $\gamma_x^m(a) \in \Omega(\SSW)$; the ones indicated are those that yield the smallest possible convex hull.}
		\label{fig:hull:red:edges}
	\end{figure}

	\begin{lemma}\label{lem:red:edge:rects}
		Every red edge rectangle that contains $x$ has a $\langle \gamma_x \rangle$-translate in $A_0$.
	\end{lemma}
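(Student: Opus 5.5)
Let $P$ be a red edge rectangle containing $x$, and let $p\in\Omega(\SSW)$ and $q\in\Omega(\SNE)$ be its south-west and north-east ideal corners. The plan is to first translate $P$ by a power of $\gamma_x$ so that its south-west corner lies in a fixed fundamental interval of the dynamic order on $\Omega(\SSW)$. Then we check that the translated rectangle lies in $A_0$, using convexity of $A_0$ together with the separators $s^\pm$.

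\emph{Step 1 (normalising the south-west corner).} By Lemma~\ref{lem:staircase}~(2),(3), $\langle\gamma_x^2\rangle$ preserves $\SSW$ and acts on $\Omega(\SSW)$ by translations for $<_{\SSW}$. Since $m$ is chosen with $\gamma_x^m(a)\in\Omega(\SSW)$, the element $\gamma_x^m$ also acts as a translation; we may take $m$ even. Hence some power $\gamma_x^{mk}$ moves $p$ into the closed interval between $a$ and $\gamma_x^m(a)$ in $<_{\SSW}$. Replace $P$ by $\gamma_x^{mk}(P)$. This is still a red edge rectangle containing $x$: $\gamma_x^{mk}$ with $mk$ even does not twist, so it preserves colours. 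Now $p$ lies between $a$ and $\gamma_x^m(a)$. The monotonicity of heights and widths of the rectangles $R_c$ along the staircase then puts $p$ weakly east of $a$ and weakly north of $\gamma_x^m(a)$, in the sense of the horizontal and vertical orders on $\Omega_x\cup\{x\}$.

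\emph{Step 2 (locating the north-east corner).} Since $P$ contains $x$ and has ideal corners $p,q$, the cusp $q$ meets $\SNE$. I claim that no stable $s^+$-leaf and no unstable $s^-$-leaf can lie between $q$ and the quadrant structure. Because $P$ is a rectangle, it contains no cusps. Every stable leaf crossing $P$ meets the whole horizontal extent from $p$ to $q$, and likewise every unstable leaf crossing $P$ meets its whole vertical extent. Now $s^+:=s^+_{a,m,x}$ meets $\SNW$ and lies horizontally between $\gamma_x^m(a)$ and $x$, hence horizontally between $p$ and $x$ (using Step 1 and Lemma~\ref{lem:staircase}~(3)). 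If $q$ were so far north that $P$ reached past $s^+$, then the stable leaf $\ell^+_{s^+,x}$ would cross $P$. Combined with $s^+$ lying north of $x$ in $\SNW$, this would force the cusp $s^+$ to lie in $\overline P$ or in its interior. That is impossible for an edge rectangle, which has only the two cusps $p,q$. The symmetric argument with $s^-:=s^-_{a,0,x}$ in $\SSE$ bounds $q$ to the east.

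\emph{Step 3 (containment in the hull).} With $p$ squeezed between $a$ and $\gamma_x^m(a)$, and $P$ confined by the leaves through $s^\pm$, I would show that $p$, $q$ and every point of $P$ lie on a geodesic between two of the four generating points. The tool is Lemma~\ref{lem:geodesic:between}: for $y\in P$, the points $a$ or $\gamma_x^m(a)$ lie in the closure of the south-west orthant at $y$, and $s^+$ or $s^-$ lie in non-adjacent orthants at $y$ (north-east, or north-west/south-east combined appropriately). A cleaner route is to show that $p\in\overline{\hull(a,\gamma_x^m(a))}$ and $q$ meets $A_0$. Then convexity of $A_0$ (\cite[Lemma 6.21]{SchleimSegLoom}) gives $P=\mathrm{int}\,\hull(p,q)\subset A_0$.

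The main obstacle is Step 2/3: showing rigorously that $q$ meets $A_0$, i.e.\ that $q$ is not beyond the hull's north-east boundary, which is formed by geodesics from $s^+$ to $s^-$. I would first try to exhibit a geodesic from $s^+$ to $s^-$ that passes through $q$ or separates nothing from it. To do this, apply Lemma~\ref{lem:geodesic:between} at $q$: $s^+$ lies west and $s^-$ south of the leaves through $q$, in non-adjacent orthants at $q$, because $P$ crosses neither separator leaf.
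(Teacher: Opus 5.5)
Your proposal is correct and shares the paper's skeleton. You translate by a power of $\gamma_x$ so that the south-west cusp $p$ lies between $a$ and $\gamma_x^m(a)$ in $<_{\SSW}$, and then use convexity of $A_0$ to reduce to showing that $A_0$ meets the north-east cusp $q$. The difference is in that last step.

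The paper argues by contradiction. It asserts, without further justification, that $A_0$ can miss $q$ only if $s^+_{a,m,x}$ is an external vertical $(q,x)$-separator or $s^-_{a,0,x}$ is an external horizontal one. In that case it applies Lemma~\ref{lem:geodesic:between} \emph{at the separator cusp}, which gives a geodesic from $p$ to $q$ through $s^+_{a,m,x}$. This is incompatible with $\mathrm{int}\,\hull(p,q)$ being an edge rectangle.

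You instead apply Lemma~\ref{lem:geodesic:between} \emph{at $q$} to the pair $s^+_{a,m,x}, s^-_{a,0,x}$. This places $q$ directly on a geodesic between two generators of $A_0$, and it is exactly the justification the paper's ``observe'' step leaves out. So your route is more complete, at the cost of having to locate $s^\pm$ relative to the orthants at $q$; the paper's route is shorter.

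Several of your justifications need tightening.

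\emph{Step~2.} After Step~1, the leaf $\ell^+_{s^+,x}$ crosses $P$ unconditionally. It meets $\ell^-_x$ strictly between $x$ and the west side of $P$, which contains the west side of $R_p$, so no case analysis on how far north $q$ lies is needed. A leaf crossing $P$ leaves $\overline{P}$ through material points of the horizontal sides. Hence $\ell^+_{s^+,x}$ crosses the unstable $q$-leaf containing the north side of $P$ before it reaches $s^+$, so $s^+$ lies on the far side of that leaf from $P$. Symmetrically, $\ell^-_{s^-,x}$ crosses $P$, and $s^-$ lies beyond the stable $q$-leaf containing the east side of $P$. Your sentence saying that stable leaves crossing $P$ meet its whole horizontal extent has the directions swapped.

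\emph{Step~3.} The correct picture is that $s^+$ lies north of the unstable boundary leaf, and $s^-$ east of the stable boundary leaf, of the orthant at $q$ containing $P$. They therefore lie in orthants on opposite sides of that orthant, which are non-adjacent. Your phrase ``$s^+$ lies west and $s^-$ south of the leaves through $q$'' puts them on the same side as $P$ and would not yield non-adjacency.

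\emph{The cusp $p$.} That $p$ meets $\hull(a,\gamma_x^m(a))$ still needs an argument. It follows from the same lemma applied at $p$: $a$ lies beyond the stable $p$-leaf bounding $P$ and $\gamma_x^m(a)$ beyond the unstable one. Alternatively, as in the paper, use the staircase boundary path from $a$ to $\gamma_x^m(a)$.
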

	
	\begin{proof}
		Let $P$ be any red edge rectangle that contains $x$. Then $P$ meets a cusp of $\SSW$ and a cusp of $\SNE$. By Lemma \ref{lem:staircase} (1) and (3), there exists $n \in \zz$ such that the unique cusp $w$ of $\gamma_x^n(P)\cap \SSW$ meets the geodesic contained in $\partial \SSW$ connecting $a$ and $\gamma_x^m(a)$. The other cusp $z$ of $\gamma_x^n(P)$ certainly meets $\SNE$. 
		Since $\gamma_x^n(P)$ is  the interior of $\hull(w,z)$ and $A_0$ is convex, to show that $\gamma_x^m(P) \subset A_0$ it suffices to show that $A_0$ meets $z$. 
		
	Observe that $A_0$ can miss $z$ only if $s^+_{a,m,x}$ is an external vertical $(z,x)$-separator or if $s^-_{a,0,x}$ is an external horizontal $(z,x)$-separator. In the first case, $z$ and $w$ are contained in (the interiors of) distinct and non-adjacent orthants at $s^+_{a,m,x}$. By  Lemma \ref{lem:geodesic:between}, there is a geodesic between $z$ and $w$ that passes through $s^+_{a,m,x}$. It follows that the interior of $\hull(z,w)$ is not an edge rectangle. This contradicts that $z$ and $w$ span $\gamma_x^n(P)$. The case when $s^-_{a,0,x}$ is an external horizontal $(z,x)^+$-separator is analogous. 
	\end{proof}

To simplify notation, set $b = s^+_{a,m,x}$. Fix an external horizontal $(b,x)$-separator $s^+_{b,0,x}$, and an external vertical $(\gamma_x^{-m}(b),x)$-separator $s^-_{b,-m,x}$. 
	Furthermore set 
	\begin{equation*}
	A = \hull\left(a, \gamma^m_x(a), b = s^+_{a,m,x}, s^-_{a, 0,x}, \gamma_x^{-m}(b), s^+_{b,0,x}, s^-_{b,-m,x}\right).\end{equation*}
	\begin{lemma}\label{lem:adapted:set}
		$A$ is adapted to $x$ and covered by finitely many edge rectangles.
	\end{lemma}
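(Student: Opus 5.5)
Since $A$ is the hull of seven points of $\loom \cup \Omega$, I would get the second claim as for $A_0$: \cite[Lemma 6.21]{SchleimSegLoom} makes $A$ convex and \cite[Corollary 6.26]{SchleimSegLoom} covers it by finitely many edge rectangles. The real work is adaptedness. Every edge rectangle containing $x$ is either red or blue. Red ones are handled by Lemma \ref{lem:red:edge:rects}, because $A_0 \subset A$: every generator of $A_0$ is among the generators of $A$, and the hull is monotone in its generating set. So it remains to show that every blue edge rectangle $P \ni x$ has a $\langle \gamma_x\rangle$-translate in $A$.

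I would copy the proof of Lemma \ref{lem:red:edge:rects}, with the roles of the staircases changed. A blue edge rectangle containing $x$ has ideal corners in $\SNW$ and $\SSE$. The cusp $b = s^+_{a,m,x}$ meets a staircase adjacent to $\SSW$ and lies horizontally between $\gamma_x^m(a)$ and $x$. From the picture (Figure \ref{fig:hull:red:edges}) this staircase should be $\SNW$; I would check this first from Definition \ref{def:separator:cusp} together with the fact that $a$ and $\gamma_x^m(a)$ lie in $\SSW$. By Lemma \ref{lem:staircase} (2), $\gamma_x^{-m}(b)$ also meets $\SNW$, since $m$ is even. By Lemma \ref{lem:staircase} (1) and (3), there is $n$ such that the $\SNW$-cusp $w$ of $\gamma_x^n(P)$ meets the portion of $\partial \SNW$ between $b$ and $\gamma_x^{-m}(b)$. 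Here the order depends on the direction of translation, and I would choose the sign of $n$ so that it works. The geodesic along the staircase boundary from $b$ to $\gamma_x^{-m}(b)$ lies in $A$, so $A$ meets $w$. Let $z$ be the other cusp, which meets $\SSE$. By convexity, $\gamma_x^n(P) = \mathrm{int}\,\hull(w,z) \subset A$ as soon as $A$ meets $z$.

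To show that $A$ meets $z$, I would argue as before. If $z$ lies outside $A$, then it is cut off by one of the separators bounding $A$ on the $\SSE$ side. These are $s^-_{a,0,x}$, which lies vertically between $a$ and $x$ in a staircase adjacent to $\SSW$ (presumably $\SSE$), and $s^+_{b,0,x}$ together with $s^-_{b,-m,x}$, which are the separators for $b$ in staircases adjacent to $\SNW$. Concretely, $z$ would have to be separated from $w$ by being in the interior of distinct non-adjacent orthants at one of these separator cusps $s$. Then Lemma \ref{lem:geodesic:between} gives a geodesic from $w$ to $z$ through $s$. In that case $\mathrm{int}\,\hull(w,z)$ is not an edge rectangle, which contradicts the fact that $w$ and $z$ span $\gamma_x^n(P)$.

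The main obstacle is the case analysis behind ``$A$ misses $z$ only if some separator lies in non-adjacent orthants between $w$ and $z$''. For $A_0$ this was asserted by inspection. For $A$ the boundary has more pieces, and one must check that the horizontal separator $s^+_{b,0,x}$ of $b$ and the vertical separator $s^-_{b,-m,x}$ of $\gamma_x^{-m}(b)$ bracket the window $[b, \gamma_x^{-m}(b)]$ in the needed directions. This works because, by Lemma \ref{lem:staircase} (3), a separator for an endpoint of the window is also a separator for the other endpoint and for everything in between. I would work it out by tracking the horizontal and vertical order on $\Omega_x \cup \{x\}$, as in Section \ref{subsec:adapted:set}.
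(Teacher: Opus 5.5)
Your proposal is correct and follows the paper's proof: red edge rectangles are handled by $A_0\subset A$ together with Lemma~\ref{lem:red:edge:rects}, and blue ones by the mirrored version of that argument. The only difference is that the paper runs the mirrored argument on the subhull $A_1=\hull(b,\gamma_x^{-m}(b),s^+_{b,0,x},s^-_{b,-m,x})\subset A$, which is an exact relabelled copy of $A_0$. As a result, the extra boundary pieces you worry about never enter the case analysis; in particular $s^-_{a,0,x}$ plays no role.
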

	\begin{proof}
		The fact that $A$ is covered by finitely many edge rectangles follows from the fact that the convex hull of finitely many cusps of $\loom$ meets only finitely many cusps of $\loom$  \cite[Corollary 6.26]{SchleimSegLoom}.
		
		Because $A_0 \subset A$, Lemma \ref{lem:red:edge:rects} implies that every red edge rectangle containing $x$ has a $\langle \gamma_x \rangle$-translate in $A$.
		Set
			\[A_1 = \hull(b, \gamma_x^{-m}(b), s^+_{b,0,x}, s^-_{b,-m,x}).\]
			The proof that every blue edge rectangle that contains $x$ has a $\langle \gamma_x \rangle$-translate in $A_1$ is completely analogous to the proof of Lemma \ref{lem:red:edge:rects}. Since furthermore $A_1 \subset A$, the same holds for $A$.
			As every edge rectangle is either red or blue, $A$ is adapted to~$x$.
	\end{proof}
	
		In the construction above we chose a cusp $a$ meeting the south-west staircase at $x$ as our starting point. An analogous construction can be carried out starting from any cusp meeting any of the staircases at $x$, with the roles of red and blue edges possibly reversed. To describe these variants precisely, we say that a subset $A\subset\loom$ is \emph{red-adapted to $x$} if every red edge rectangle containing $x$ has a $\langle\gamma_x\rangle$-translate in $A$. We define a set that is \emph{blue-adapted to $x$} analogously.
		
	\begin{proposition}\label{prop:adapted:general}
		Let $x \in \liftGamma$. For any $a \in \Omega_x$ and $m \in 2\mathbb{N}$, and any choice of the separators $b = s^+_{a,m,x}, s^-_{a,0,x}$, $s^+_{b,0,x}, s^-_{b,-m,x}$, the convex hull of the set
			\[
		W = \lbrace a, \gamma_x^m(a), b = s^+_{a,m,x}, s^-_{a,0,x}, \gamma_x^{-m}(b), s^+_{b,0,x}, s^-_{b,-m,x}\rbrace
		\]
		is adapted to $x$ and covered by finitely many edge rectangles.
	\end{proposition}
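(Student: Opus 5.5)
The plan is to reduce Proposition~\ref{prop:adapted:general} to the special case already treated in Lemmas~\ref{lem:red:edge:rects} and~\ref{lem:adapted:set}, where $a$ meets $\SSW$. The finiteness claim needs no reduction. $\hull(W)$ is the hull of finitely many cusps, so it meets only finitely many cusps by \cite[Corollary 6.26]{SchleimSegLoom}. It is therefore covered by finitely many edge rectangles, and this holds independently of which staircase $a$ meets. So the work is in showing that $\hull(W)$ is adapted to $x$.

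For adaptedness, split $W$ into $W_0=\lbrace a,\gamma_x^m(a),b,s^-_{a,0,x}\rbrace$ and $W_1=\lbrace b,\gamma_x^{-m}(b),s^+_{b,0,x},s^-_{b,-m,x}\rbrace$. Since $\hull(W_0)\cup\hull(W_1)\subset\hull(W)$, it suffices to show the following: $\hull(W_0)$ is adapted to one colour of edge rectangles containing $x$, and $\hull(W_1)$ is adapted to the other.

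First, $m\in 2\mathbb{N}$ guarantees, by Lemma~\ref{lem:staircase}~(2), that $\gamma_x^m(a)$ meets the same staircase $S$ as $a$. Then Lemma~\ref{lem:staircase}~(3) shows that the separators chosen relative to $\gamma_x^m(a)$ and to $a$ are interchangeable, exactly as in the $\SSW$ case.

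If $S$ is $\SSW$ or $\SNE$, the argument of Lemma~\ref{lem:red:edge:rects} applies verbatim, after reflecting cardinal directions in the $\SNE$ case. That argument goes as follows. Translate a red edge rectangle $P\ni x$ by a power of $\gamma_x$, or of $\gamma_x^2$ if there is a twist, so that its corner $w$ in $S$ lies on the boundary geodesic of $S$ between $a$ and $\gamma_x^m(a)$. Then argue that the other corner $z$ is met by $\hull(W_0)$. Otherwise a separator would lie in an orthant configuration forcing, via Lemma~\ref{lem:geodesic:between}, a geodesic from $w$ to $z$ through that separator, so $\hull(w,z)$ would not be an edge rectangle. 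By convexity of $\hull(W_0)$ (Proposition~\ref{prop:SS:convex:hull:is:convex}), the translate then lies in $\hull(W_0)$.

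If $S$ is $\SNW$ or $\SSE$, the same argument handles blue edge rectangles instead. Next, $b=s^+_{a,m,x}$ lies in a staircase adjacent to $S$, so $b$ meets a staircase of the opposite colour type. Thus $W_1$ is precisely the set $W_0$ built from the starting cusp $b$ with exponent $-m$. After replacing $\gamma_x$ by $\gamma_x^{-1}$ where necessary, the same lemma shows that $\hull(W_1)$ handles the remaining colour.

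The main obstacle is the bookkeeping of directions in this symmetrization. Two issues need care. The first is that $\gamma_x$ is the positive generator, so the translation direction along the staircase flips between the pairs $\lbrace\SSW,\SNE\rbrace$ and $\lbrace\SNW,\SSE\rbrace$. This is why $W_1$ uses $\gamma_x^{-m}(b)$. The second is that when $\gamma_x$ acts with a twist, only even powers preserve staircases. I would verify these by checking that the boundary segment of $S$ between the two cusps $a,\gamma_x^{\pm m}(a)$ is a fundamental domain for $\langle\gamma_x^2\rangle$ acting on $\Omega(S)$ (up to covering), which is what the translation step needs. I would also check that the separator definitions are symmetric under the reflections exchanging the four quadrants.
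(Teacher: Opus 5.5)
Your outline is the paper's proof. You split $\hull(W)$ into $\hull(W_0)$ and $\hull(W_1)$, choose the colour of $\hull(W_0)$ by the staircase $a$ meets, and use the fact that $b$ lies in an adjacent staircase to get the other colour; finiteness follows from \cite[Corollary 6.26]{SchleimSegLoom}. The problem is your justification of the $W_1$ half, which is wrong as stated.

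$W_1$ is not the $W_0$-pattern applied to $b$ with exponent $-m$. That pattern gives $\lbrace b,\gamma_x^{-m}(b),s^+_{b,-m,x},s^-_{b,0,x}\rbrace$, whose separator indices are swapped relative to $W_1$, and replacing $\gamma_x$ by $\gamma_x^{-1}$ does not fix this. What is true is the following. Put $b'=\gamma_x^{-m}(b)$, which meets the same staircase as $b$ because $m$ is even. Then $s^+_{b,0,x}=s^+_{b',m,x}$ and $s^-_{b,-m,x}=s^-_{b',0,x}$. Hence $W_1=\lbrace b',\gamma_x^{m}(b'),s^+_{b',m,x},s^-_{b',0,x}\rbrace$ is literally the $W_0$-construction for $b'$, with the same positive generator and the same exponent $m$. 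The argument of Lemma~\ref{lem:red:edge:rects} then applies verbatim with the colours swapped.

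There is no flip of translation direction. By Lemma~\ref{lem:staircase}(3), $\gamma_x^2$ increases the dynamic order $<_S$ in every staircase $S$ at $x$. Since a taller $R_c$ is a thinner one, the $<_S$-largest cusp of the segment is always horizontally closest to $x$ and the $<_S$-smallest is always vertically closest. Only the rotational sense around $x$ differs between $\lbrace\SSW,\SNE\rbrace$ and $\lbrace\SNW,\SSE\rbrace$, and it plays no role.

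This is why the horizontal separator must belong to the larger endpoint and the vertical separator to the smaller one. Only then does $s^+$ lie horizontally between $x$ and every cusp $w$ of the segment. That is what places $w$ and $z$ in non-adjacent orthants at $s^+$ when you invoke Lemma~\ref{lem:geodesic:between}. Treating $\gamma_x^{-1}$ as positive would attach the horizontal separator to the smaller endpoint. Such a cusp need not lie horizontally between $x$ and the cusps near the other endpoint, so that step would fail.

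For the same reason, the separators for $a$ and $\gamma_x^m(a)$ are not ``interchangeable''. A horizontal separator for $\gamma_x^m(a)$ is one for $a$, and a vertical separator for $a$ is one for $\gamma_x^m(a)$, but not conversely.

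Finally, using $\gamma_x^{-m}(b)$ rather than $\gamma_x^{m}(b)$ is a choice, not forced by any flip. The set $\lbrace b,\gamma_x^m(b),s^+_{b,m,x},s^-_{b,0,x}\rbrace$ would serve equally well.
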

	\begin{proof} By Lemma \ref{lem:staircase} (2), if $m$ is even then $\gamma_x^m(a)$ must meet the same staircase at $x$ as~$a$. 
		Set $A_0 = \hull(a, \gamma_x^m(a), s^+_{a,m,x}, s^-_{a, 0, x})$. Lemma \ref{lem:red:edge:rects} establishes that if $a$ meets the south-west staircase at $x$ then $A_0$ is red-adapted to $x$. Observe that the same conclusion holds when $a$ meets the north-east staircase at $x$. On the other hand, when $a$ meets either the north-west or the south-east staircase at $x$ then $A_0$ is blue-adapted to $x$.
		
		Since $b = s^+_{a,m,x}$ is a horizontal $(\gamma_x^m(a),x)$-separator (see Definition \ref{def:separator:cusp}), it meets a staircase at $x$ that is adjacent to the staircase at $x$ that meets $a$. It follows that $A_1 =  \hull(b, \gamma_x^{-m}(b), s^+_{b,0,x}, s^-_{b,-m,x})$ is blue-adapted to $x$ when $a$ meets the south-west or the north-east staircase at $x$, and red-adapted to $x$ when $a$ meets the north-west or the south-east staircase at $x$. In either case, we get that $\hull(W)$ contains a subset that is red-adapted to $x$ and a subset that is blue-adapted to $x$. Thus $\hull(W)$ is adapted to~$x$.
\end{proof}

	\section{Drilling veering triangulations}\label{sec:algorithm}

	In this section we finally describe all steps in the construction of a veering parent of $\V$, taking as input $\V$ and a flow cycle $\zeta$ of $\V$. Recall from Theorem~\ref{thm:flow:lines} that $\zeta$ represents a closed orbit~$\gamma$ of the flow encoded by $\V$. If $\zeta$ is non-peripheral, the output is the veering parent~$\V_\gamma$. Otherwise, the algorithm simply returns $\V$; see Theorem \ref{thm:algorithm:correct}. 
	
	We begin by outlining the steps of the algorithm in the flowchart shown in Figure~\ref{fig:flowchart}.  Since throughout the algorithm we work only with the flow cycle $\zeta$, rather than with the orbit $\gamma$ that it encodes, we write $\liftZeta$ instead of $\liftGamma$ (which is justified by Theorem~\ref{thm:flow:lines}), and $\V_\zeta$ instead of $\V_\gamma$. We also speak about a continent adapted to $\zeta$ rather than to~$\gamma$, \mbox{$\zeta$-simplex} rectangles instead of $\gamma$-simplex rectangles, and   $\zeta$-(tetrahedron, face) pairs instead of \mbox{$\gamma$-(tetrahedron, face)} pairs. In this section, $k$ always denotes the length of $\zeta$.

	\begin{figure}[h]
		\centering
		\includegraphics[width=\textwidth]{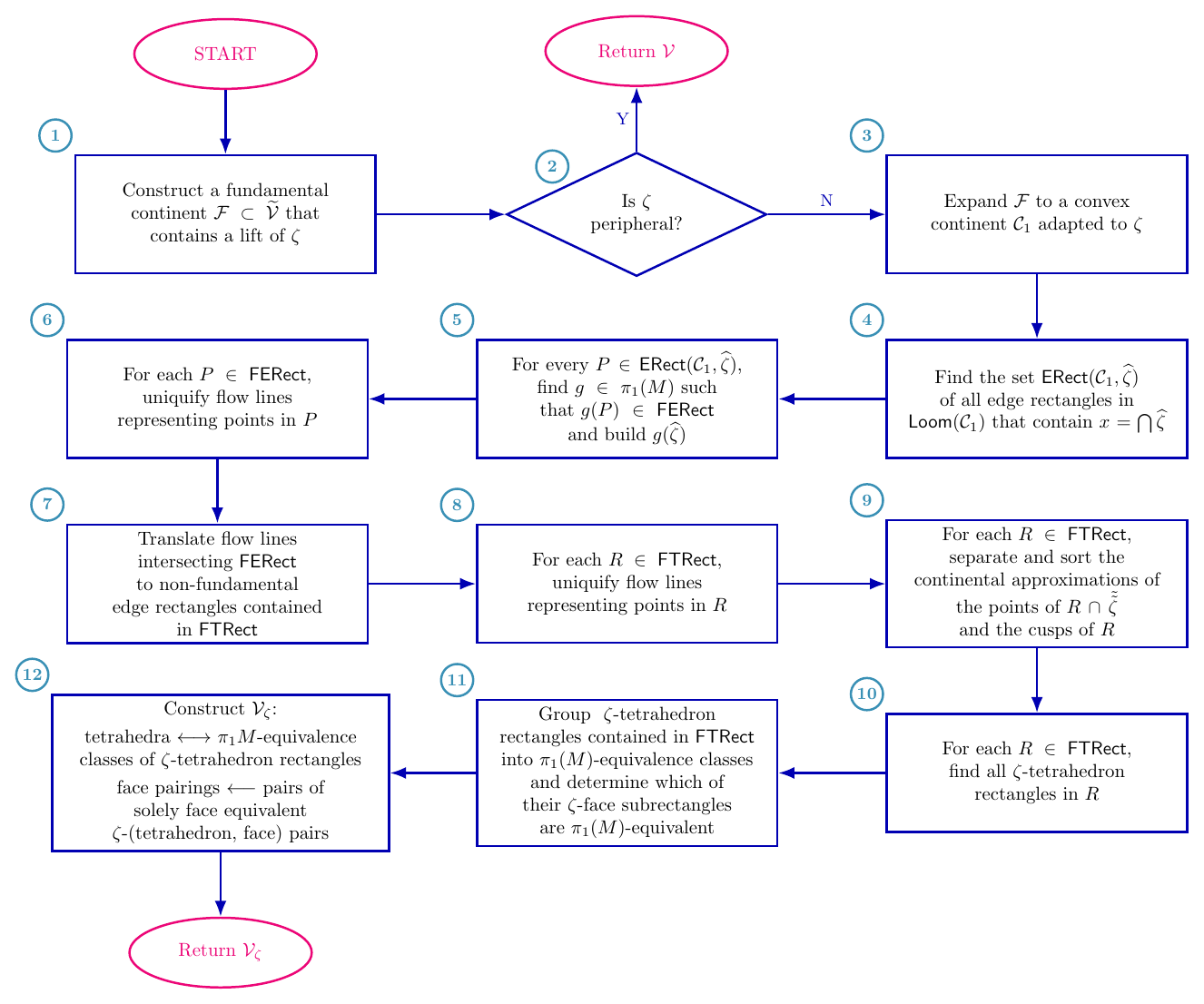}
		\caption{Steps in the construction of a veering parent of $\V$, given $\V$ and a flow cycle $\zeta$ of $\V$ as input. $\FERect$ and $\FTRect$ denote the sets of fundamental edge rectangles and fundamental tetrahedron rectangles, respectively.}
			\label{fig:flowchart}
	\end{figure}

	A \emph{fundamental continent} $\F \subset \univV$ mentioned in \textbf{Step 1}  means a continent that contains a fundamental domain for the action of $\pi_1(M)$ on $\univV$. Such a continent is guaranteed to intersect some elevation $\eleZeta$ of $\zeta$. By Theorem \ref{thm:flow:lines}, there is a periodic point $x \in \loom$ such that $x = \bigcap\eleZeta$. This is the point to which we will be adapting the continent in later stages of the algorithm. 
	
	More generally, given $y \in \liftZeta$ by $\eleZeta_y$ we denote a flow line that is an elevation of $\zeta$ and satisfies $\bigcap \eleZeta_y = y$. In particular, $\eleZeta = \eleZeta_x$. This notation is slightly  abusive, because there may be more than one  elevation of $\zeta$ that represents $y$; see Proposition \ref{prop:square} and the discussion below~it, as well as Section \ref{subsec:fellow-travel}.
	
	The loomifications of the tetrahedra of the fundamental domain contained in $\mathcal{F}$ will form the set of fundamental tetrahedron rectangles. We call the widest edges of the fundamental tetrahedron rectangles the \emph{fundamental edge rectangles}. In Figure \ref{fig:flowchart}, the sets of these simplex rectangles in $\loom$ are  denoted by $\FTRect$ and $\FERect$, respectively.
	
	The fundamental continent is repeatedly \emph{expanded} in later stages of the algorithm. By expanding a continent $\C$ we mean constructing a continent $\C' \subset \univV$ such that $\C \subset \C'$. Each expansion is uniquely determined by a finite set $X$ of tetrahedra in $\univV \bez \C$ to be included in $\C'$, and $\C'$ is the smallest convex continent containing $\C \cup X$. The existence of such a continent follows from Proposition \ref{prop:continents:exists} and Lemma \ref{lem:exist:convex:continent}. We refer the reader to \cite[Sections 5.15 and 5.19]{SchleimSegLinks} for details.
	
	Typically, every step between Step 2 through Step 9 requires an expansion of the continent. Each time we specify which additional tetrahedra are to be added, keeping in mind that the requirement to work exclusively with convex continents, discussed in Section \ref{subsec:convexity}, forces many more to be included. 
	
	We do not keep track how many times the continent is expanded, nor by how many tetrahedra. In particular, after Step 3 any continent that is an expansion of $\C_1$ is simply denoted by $\C$. 
	
	In \textbf{Step 2} we check if $\zeta$ is \emph{peripheral}. Peripheral flow cycles cannot be used to construct any nontrivial veering parent of $\V$. If $\eleZeta = (R_i)_{i\in \zz}$ is indexed so that $R_0 \subset \loomif(\F)$ then Step~2 expands $\F$  to a continent $\C_0$ that contains both $\loomif^{-1}(R_0)$ and $\loomif^{-1}(R_k)$, where~$k$ denotes the length of $\zeta$. Further details of how the check of peripherality is carried out using $\C_0$ can be found in Section~\ref{subsec:peripheral}. If $\zeta$ is peripheral, the algorithm terminates and returns $\V$ unchanged. Otherwise, we proceed to Step 3.  
	
	In \textbf{Step 3} we expand $\C_0$ to a convex continent $\C_1$ that is \emph{adapted} to $x = \bigcap \eleZeta$, meaning that $\loomif(\C_1)$ is adapted to $x$ in the sense of Definition \ref{def:adapted:set}. To accomplish this, we use the ideas introduced in Section~\ref{subsec:adapted:set}, but the precise subroutine requires further explanation. This appears in Section~\ref{subsec:adapted:continent}.

	In \textbf{Step 4} the initial adapted continent $\C_1$ is used to find the set $\ERect(\C_1, \eleZeta)$ of all edge rectangles in $\loomif(\C_1)$ that contain $x = \bigcap \eleZeta$. 
	This set is used in \textbf{Step 5} to translate the flow line $\eleZeta$ to flow lines that represent points of $\liftZeta$ contained inside the fundamental edge rectangles. The process of translating flow lines is explained in Section~\ref{subsec:translate}.


	It may happen that two distinct translates of the same flow line represent the same point of the loom space. In \textbf{Step 6} we \emph{uniquify} the flow lines representing the points of $\liftZeta$ that lie inside fundamental edge rectangles. This means keeping only one flow line  for each such point.
	The details of the procedure are described in Section~\ref{subsec:fellow-travel}.  Without uniquifying flow lines, the later Step~9 of the algorithm might fail to terminate.
	
	In \textbf{Step 7} we translate flow lines representing points of $\liftZeta$ inside the fundamental edge rectangles (found in Step 5 and uniquified in Step~6) into flow lines representing points of $\liftZeta$ inside non-fundamental edge rectangles contained in fundamental tetrahedron rectangles. These flow lines are uniquified in \textbf{Step 8}.

	In practice, we never work with the entire flow lines, but only with their restrictions to the continent currently under consideration. These restrictions are called \emph{flow intervals}. 
	Given $y \in \liftZeta$ we denote the flow interval obtained by restricting $\eleZeta_y$ to $\C$ by $\eleZeta_y^\C$.  If $\eleZeta_y^\C \neq \emptyset$  then $\bigcap \eleZeta_y^\C$ is a rectangle that contains $y$, whose sides are contained in some continental cusp leaves. We call $\bigcap \eleZeta_y^\C$ a \emph{continental approximation} of $y$. 
	
	In \textbf{Step 9} we further expand the continent so that, within each fundamental tetrahedron rectangle~$R$, the continental approximations of all points of $\liftZeta$ contained in~$R$ are sufficiently small to unambiguously determine the horizontal and vertical orders on the  set of cusps of $R$ and punctures of $R \bez \liftZeta$; see Section \ref{subsec:separating} for details. 
	These orders are then used in \textbf{Step 10} to find all $\zeta$-tetrahedron rectangles in~$R$. 
	
	In \textbf{Step 11} we group all found fundamental $\zeta$-tetrahedron rectangles and their $\zeta$-face subrectangles into $\pi_1(M)$-equivalence classes using~Lemma \ref{lem:equivalent:rectangles}. This is used in \textbf{Step 12} to construct a veering triangulation  with tetrahedra corresponding to the $\pi_1(M)$-equivalence classes of fundamental $\zeta$-tetrahedron rectangles and face pairings determined by pairs of solely face equivalent $\zeta$-(tetrahedron, face) pairs.

In the forthcoming subsections, we explain how some of the steps of the algorithm are carried out and prove that their implementations achieve the intended goals. Before doing so, we establish the correctness of the algorithm under the assumption that each step can be successfully executed.
	
\begin{theorem}\label{thm:algorithm:correct}
Given a veering triangulation $\V$ and a flow cycle $\zeta$ of $\V$, the algorithm presented in Figure~\ref{fig:flowchart} terminates and outputs
\begin{itemize}
	\item $\V$ if $\zeta$ is peripheral;
	\item the veering parent $\V_\zeta$ if $\zeta$ is non-peripheral.
\end{itemize}
\end{theorem}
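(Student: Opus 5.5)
The proof will be a step-by-step reduction to results already established, under the stated assumption that each step of Figure~\ref{fig:flowchart} can be executed as described in the forthcoming subsections. I will argue termination and correctness separately.

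\textbf{Peripheral case and Steps 1--4.} By Proposition~\ref{prop:continents:exists} and Lemma~\ref{lem:exist:convex:continent}, a convex fundamental continent $\F$ exists and can be found in finitely many steps. Since $\F$ contains a fundamental domain, it meets some elevation $\eleZeta$ of $\zeta$. By Theorem~\ref{thm:flow:lines} and Lemma~\ref{lem:stabilizers}, $\bigcap\eleZeta$ is either a cusp or a periodic point $x$. The peripherality check of Step~2 will distinguish these two cases using $R_0$ and $R_k$. If $\zeta$ is peripheral, the algorithm returns $\V$, which is correct because the corresponding orbit already lies in $\Lambda$. Otherwise $x\in\liftZeta$, and Step~3 produces a convex continent $\C_1$ with $\loomif(\C_1)\supset\hull(W)$, where $W$ is as in Proposition~\ref{prop:adapted:general}. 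That proposition shows that $\hull(W)$ is adapted to $x$ and covered by finitely many edge rectangles, so only finitely many tetrahedra need to be added and Step~3 terminates. Since $\C_1$ is finite, Step~4 is a finite enumeration.

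\textbf{Steps 5--8.} By Lemma~\ref{lem:adapted}, for every fundamental edge rectangle $P$ and every $y\in P\cap\liftZeta$ there is $g$ with $g(P)\subset\loomif(\C_1)$ and $x\in g(P)$. Consequently, translating $\eleZeta$ by the inverses of the finitely many such $g$ finds every point of $\liftZeta$ in $\FERect$. Every tetrahedron rectangle is covered by its edge subrectangles, and each edge subrectangle of a fundamental tetrahedron rectangle is a $\pi_1(M)$-translate of a fundamental edge rectangle (Observation~\ref{obs:identification:equivalences}). Therefore Step~7 finds all of $R\cap\liftZeta$ for each $R\in\FTRect$; these sets are finite by Proposition~\ref{prop:orbit:space}. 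Uniquification (Steps~6 and~8) leaves exactly one flow line per point, using Proposition~\ref{prop:square} together with the procedure of Section~\ref{subsec:fellow-travel}.

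\textbf{Steps 9--12, the main obstacle.} I expect Step~9 to be the hard part. It requires showing two things: that finitely many expansions along the relevant flow lines produce a convex continent satisfying conditions~(1)--(3), and that the algorithm can detect when this has happened. For termination I will use Lemma~\ref{lem:expanding:until:on:the:same:side}: the approximations $\bigcap\eleZeta_y^\C$ shrink to $y$ (Theorem~\ref{thm:flow:lines}), so eventually they avoid the cusp leaves of $R$ and are pairwise unlinked. Since only finitely many points are involved, finitely many expansions suffice. Detection follows from Proposition~\ref{prop:conv:cont:circ:order}, Lemma~\ref{lem:theta:l:function}, and Corollary~\ref{cor:from:circ:to:total}, because in a convex continent the orders of the continental cusp leaves bounding these approximations are combinatorially determined. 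Once the horizontal and vertical orders on $\Ideal(R)$ are known, Step~10 recovers all $\zeta$-tetrahedron rectangles in each $R$. Lemma~\ref{lem:equivalent:rectangles} computes their $\pi_1(M)$-equivalence classes from signatures, and Proposition~\ref{prop:finite:veering:parent} then identifies the triangulation built in Step~12 with $\V_\zeta=\V_\gamma$, through Proposition~\ref{prop:parent:from:intermediate:cover}.
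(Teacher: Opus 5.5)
Your decomposition is essentially the paper's. Lemma~\ref{lem:adapted} together with the Step~4$\to$5$\to$7 translation chain shows that every point of $R\cap\liftZeta$ gets a flow line meeting the continent. Steps~6 and~8 then leave one flow line per point. Step~9 is handled by Lemma~\ref{lem:expanding:until:on:the:same:side} together with the combinatorially determined circular orders on convex continents. Finally, Lemma~\ref{lem:equivalent:rectangles} and Proposition~\ref{prop:finite:veering:parent} identify the output with $\V_\zeta$.

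The one step whose justification fails is termination of Step~3. That $\hull(W)$ is covered by finitely many edge rectangles only shows that \emph{some} finite continent contains it. It says nothing about the loop the algorithm actually runs. That loop adds tetrahedra only along $\eleZeta$ (plus whatever convexification forces), and it does not know the separators in $W$ in advance. They are discovered by Procedure~\ref{proc:Q:cx:search}, which keeps expanding upwards or downwards along $\eleZeta$ until some face rectangle passes the test of Lemma~\ref{lem:recognizing:Q:cx}. A process confined to one flow line has no a priori reason ever to reach a prescribed finite region of $\univV$.

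The missing ingredient is Lemma~\ref{lem:finite:expansion:meeting:cusps}, as used in Corollary~\ref{cor:expanding:along:flowline:gives:adapted}. By Observation~\ref{obs:tets:get:thinner} and Theorem~\ref{thm:flow:lines}, the rectangles $\bigcap_{i=0}^{N}R_i$ thin down onto $\ell_x^+$, and $\bigcap_{i=-N}^{0}R_i$ shrink down onto $\ell_x^-$. By Lemma~\ref{lem:staircase}(1), each staircase at $x$ has cusps whose cusp leaves cross $\ell_x^\pm$ arbitrarily close to $x$. Hence finitely many expansions along $\eleZeta$ make $\loomif(\C)$ meet, for each cusp $w$ of a given finite set lying in a staircase $S$ at $x$, cusps of $S$ on both sides of $w$ in the dynamic order. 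Convexity of $\overline{\loomif(\C)}$ (Proposition~\ref{prop:convex:disk}) then forces $w$ in. Applied to the cusps of the face rectangles $Q^\pm_{c,i,x}$, all of which lie in staircases at $x$, this is what makes each search loop of Procedure~\ref{proc:Q:cx:search} stop, and hence what makes Step~3 terminate.

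Two smaller points. First, Step~4 is not a pure enumeration over the finite continent $\C_1$. Deciding whether $x\in P$ may require further expansions along $\eleZeta$ until $\eleZeta$ is comparable with the boundary cusp leaves of $P$; that this stops is again Lemma~\ref{lem:expanding:until:on:the:same:side}. Second, in Step~9 you should say explicitly that the shrinking argument is applied to one flow line per point, which is why Steps~6 and~8 must come first. Two distinct translates of $\eleZeta$ can represent the same point (Proposition~\ref{prop:square}, Lemma~\ref{lem:fellow:travel}), and their approximations never separate. Without uniquification, Step~9 would not terminate.
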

\begin{proof}
It is clear from the flowchart in Figure \ref{fig:flowchart} that the algorithm terminates at the first decision node (Step~2) and outputs $\V$ whenever it detects that $\zeta$ is peripheral. Detecting peripherality is explained in Section \ref{subsec:peripheral}.

Suppose that $\zeta$ is non-peripheral. The fundamental continent $\mathcal{F}$ constructed in Step~1 must intersect at least one flow line $\eleZeta$. Set $x = \bigcap \eleZeta$.

We claim that at the end of Step 9 the obtained continent $\C$ is such that for every fundamental tetrahedron rectangle $R$:
\begin{enumerate}[label=(\thetheorem.\arabic*), leftmargin=3.2em]
	\item $\eleZeta_y^\C \neq \emptyset$ for every $y \in R\cap \liftZeta$. \label{cond:meets:all:relevant:flowlines}
	\item  No cusp leaf originating from a cusp of $R$ intersects $\bigcap \eleZeta_y^\C$ for some $y \in R \cap \eleZeta$. \label{cond:separated:from:cusp:leaves}
	\item For distinct $y,z \in R \cap \liftZeta$ no leaf intersects the closure of both $\bigcap \eleZeta_y^\C$ and $\bigcap \eleZeta_z^\C$. \label{cond:separated:from:each:other}
\end{enumerate}
	
Condition	\ref{cond:meets:all:relevant:flowlines} means that for every point of $y \in R \cap \liftZeta$ the continent $\C$ is large enough to meet a flow line $\eleZeta_y$ that represents $y$. The flow line $\eleZeta_y$ exists by Theorem~\ref{thm:flow:lines}. Since $y \in \liftZeta$, there is $g \in \pi_1(M)$ such that $y = g(x)$. Furthermore, $y$ is contained in some edge rectangle $P \subset R$. Thus $g^{-1}(P)$ is an edge rectangle that contains $x$. 

In Step 3 we build a continent $\C_1$ that is adapted to $x$. The fact that this can be achieved in finitely many steps is proved in Corollary \ref{cor:expanding:along:flowline:gives:adapted}. Since $\loomif(\C_1)$ is adapted to~$x$ and $x \in g^{-1}(P)$, there is $m \in \zz$ such that $\gamma_x^mg^{-1}(P) \subset \loomif(\C_1)$. Clearly, $x \in \gamma_x^m g^{-1}(P)$ and thus  the edge rectangle $\gamma_x^mg^{-1}(P)$ belongs to the set $\ERect(\C_1, \eleZeta)$ that we find in Step~4. 

In Step 5, the continent is expanded to meet the flow line that represents the translate of $x$ contained in the fundamental edge rectangle that is $\pi_1(M)$-equivalent to $P$. In Step~7, the continent is expanded to meet the translate of $\eleZeta$ that represents $y$. It follows that Condition \ref{cond:meets:all:relevant:flowlines} holds at the end of Step 7, and thus at the end of Step~9.

Conditions \ref{cond:separated:from:cusp:leaves} and \ref{cond:separated:from:each:other} imply that the continental approximations of the points of $R \cap \liftZeta$ are sufficiently small so that the horizontal and vertical orders on the set of cusps of $R$ and the punctures of $R \bez \liftZeta$ are completely unambiguous, despite the fact that we do not know the precise location of points of $R \cap \liftZeta$. In Figure~\ref{fig:flowchart}, the process of achieving these conditions is referred to as \emph{separating the continental approximations of $R \cap \liftZeta$ and the cusps of~$R$}. This process is carried out in Step~9, and its implementation is described in Section~\ref{subsec:separating}. While separating the continental approximations, we also sort them according to the horizontal and vertical orders on $R$. 

An important subtlety here is that distinct translates of the same flow line may represent the same point of $\liftZeta$; see Proposition \ref{prop:square} and the discussion below it. When this happens, we get distinct continental approximations of the same point of $R \cap \liftZeta$ that cannot be separated. To avoid getting stuck in an infinite loop in Step~9, we must first ensure that each point of $R \cap \liftZeta$ is represented by exactly one translate of $\eleZeta$. This is carried out in Step~6 for flow lines representing points inside fundamental edge rectangles (to avoid translating more flow lines than necessary in Step 7) and in Step~8 for flow lines representing points inside fundamental tetrahedron rectangles. In Figure \ref{fig:flowchart} this process is called \emph{uniquifying} flow lines. It consists of detecting distinct translates of $\eleZeta$ that represent the same point and discarding all but one of them; see Section \ref{subsec:fellow-travel} for details. Since flow lines are uniquified before Step 9, Step 9 terminates.

	The horizontal and vertical orders on the cusps of $R$ and punctures of $R \bez \liftZeta$, obtained in Step~9, are used in Step~10 to find all $\zeta$-tetrahedron rectangles in $R$. In Step~11 we use Lemma \ref{lem:equivalent:rectangles} to  determine the $\pi_1(M)$-equivalence classes of all found fundamental $\zeta$-tetrahedron and $\zeta$-face rectangles. By Proposition \ref{prop:finite:veering:parent} and Theorem~\ref{thm:flow:lines}, the veering triangulation whose tetrahedra are in bijective correspondence with the $\pi_1(M)$-equivalence classes of fundamental $\zeta$-tetrahedron rectangles and whose face pairings are determined by pairs of solely face equivalent $\zeta$-(tetrahedron, face) pairs, constructed in Step 12, is the veering parent $\V_\zeta$. 
\end{proof}

	\subsection{Peripheral flow cycles}\label{subsec:peripheral}

	We say that a flow cycle~$\zeta$ is \emph{peripheral} if any (and thus all) of its elevations represents a cusp of~$\loom$. In this case, $\zeta$ does not encode an orbit of the drilled flow~$\Psi \bez \Lambda$, but rather an orbit already contained in the perfect set~$\Lambda$. Naturally, we cannot obtain a nontrivial veering parent of~$\V$ by drilling out such an orbit. Therefore, we need an efficient way to detect peripheral flow cycles, allowing us to terminate the algorithm early when the input is of this type.
	
	Peripheral flow cycles can be detected without passing to the universal cover. However, since we  work with subsets of the universal cover anyway, it makes sense to make use of them to detect peripherality as well. For this, we use the element $g_{\eleZeta} \in \pi_1(M)$ associated to an elevation $\eleZeta$ of $\zeta$, introduced in Section \ref{subsec:flow:cycles:and:lines}.

	\begin{lemma}\label{lem:test:peripheral}
		A flow cycle $\zeta$ is peripheral if and only if for some elevation $\eleZeta = (R_i)_{i\in \zz}$ of $\zeta$ some cusp of $R_0$ is fixed by $g_{\eleZeta}$.
	\end{lemma}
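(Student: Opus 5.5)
Both directions go through the element $g_{\eleZeta}$, which is nontrivial by Proposition~\ref{prop:nontrivial} and stabilizes $\eleZeta$ by construction. By Lemma~\ref{lem:stabilizers}, it therefore fixes the point or cusp $\bigcap\eleZeta$ of $\completedLoom$. The characterization then comes from comparing this fixed point with the cusps of $R_0$, using Lemma~\ref{lem:loom:stabilizers}: a nontrivial element cannot fix two distinct points of $\completedLoom$.

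For the forward direction, suppose $\zeta$ is peripheral. Then any elevation $\eleZeta=(R_i)_{i\in\zz}$ represents a cusp $c$, so by Theorem~\ref{thm:flow:lines} all the $R_i$ share the common cusp $c$. In particular $c$ is a cusp of $R_0$. Since $g_{\eleZeta}\in\Stab(\eleZeta)$, Lemma~\ref{lem:stabilizers} gives $g_{\eleZeta}(c)=c$. Here $\bigcap\eleZeta$ is interpreted in $\completedLoom$ as the singular point corresponding to $c$, exactly as the lemma allows.

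For the converse, suppose some cusp $c$ of $R_0$ is fixed by $g_{\eleZeta}$. Let $z=\bigcap\eleZeta\in\completedLoom$, which is either a point of $\loom$ or a cusp. By Lemma~\ref{lem:stabilizers}, $g_{\eleZeta}$ fixes $z$. Since $g_{\eleZeta}$ is nontrivial, Lemma~\ref{lem:loom:stabilizers} forces $z=c$: otherwise $g_{\eleZeta}$ would lie in the intersection of the stabilizers of two distinct points of $\completedLoom$, which is trivial. But $c$ is a cusp, so $\eleZeta$ represents a cusp and $\zeta$ is peripheral. The other elevations are $\pi_1(M)$-translates of $\eleZeta$ and represent the translated cusp, which justifies the phrase ``any (and thus all)'' in the definition.

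The step needing the most care is the converse. One has to check that Lemma~\ref{lem:loom:stabilizers} applies when one of the two fixed objects is a cusp and the other a material point. It does, because that lemma is stated for $\completedLoom$, where cusps are points. One also has to confirm that ``represents $c$'' in Theorem~\ref{thm:flow:lines} coincides with $\bigcap\eleZeta$ being the singular point $c$ in $\completedLoom$, as stated after that theorem. A subsidiary point is that $R_0$ need not contain $z$ when $z$ is a cusp. This causes no trouble, since we only need $c$ to be a cusp of $R_0$ fixed by $g_{\eleZeta}$, and then the argument identifies $c$ with $z$ directly.
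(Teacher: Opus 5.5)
Your proof is correct. The forward direction matches the paper's, but the converse takes a different route.

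The paper argues locally along the flow line. Since $c$ is a cusp of both $R_0$ and $R_k=g_{\eleZeta}(R_0)$, the nesting $R_0\cap R_k\subset R_0\cap R_j$ from Observation~\ref{obs:tets:get:thinner} gives that each $R_j$ with $0\le j<k$ meets $c$. Periodicity under $g_{\eleZeta}$ propagates this to every $R_i$, and the dichotomy in Theorem~\ref{thm:flow:lines} then says $\eleZeta$ represents $c$. You instead use the fixed-point rigidity of Lemma~\ref{lem:loom:stabilizers}: $g_{\eleZeta}$ fixes both $c$ and $\bigcap\eleZeta$, and being nontrivial it cannot fix two distinct points of $\completedLoom$.

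Your version has three advantages. It is shorter. It sidesteps having to check that $R_0\cap R_k$ itself meets $c$, which the paper's nesting step tacitly uses. And it proves slightly more: for non-peripheral $\zeta$, $g_{\eleZeta}$ fixes no cusp of $\loom$ at all, so the hypothesis that the fixed cusp lies on $R_0$ plays no role in the converse.

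The price is reliance on global inputs. These are Lemma~\ref{lem:loom:stabilizers}, which rests on the absence of perfect fits relative to $\Lambda$, and the nontriviality of $g_{\eleZeta}$ from Proposition~\ref{prop:nontrivial}. The paper's argument uses only the combinatorics of the flow line and its translation by $g_{\eleZeta}$. It is therefore closer in spirit to the shared-ideal-vertex test on $t_0$ and $t_k$ carried out in Section~\ref{subsec:peripheral:implement}.
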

	\begin{proof}
		Suppose that $\zeta$ is peripheral. Pick some elevation $\eleZeta$ of $\zeta$. The flow line $\eleZeta$ necessarily represents some cusp $c$ of $\loom$.
		Thus, by Lemma \ref{lem:stabilizers}, $g_{\eleZeta}$ fixes~$c$. Since all rectangles of $\eleZeta$ meet $c$, $c$ is a cusp of $R_0$ that is fixed by $g_{\eleZeta}$. 
		
		Conversely, suppose that $c$ is a cusp of $R_0$ that is fixed by $g_{\eleZeta}$. Let $k$ be the length of~$\zeta$.
		Then $R_k = g_{\eleZeta}(R_0)$ also meets $c$.
		Fix an integer $0 \leq j < k$.
		Since $\eleZeta$ is a flow line, we have $R_0 \cap R_k \subset R_0 \cap R_j$; see Observation \ref{obs:tets:get:thinner}.
		It follows that $R_j$ meets $c$.
		Because $g_{\eleZeta}(c)=c$, we deduce that $g_{\eleZeta}^m(R_j)$ meets $c$ for every $m \in \zz$.
		Now, for every $i \in \zz$ there exist integers $0 \le j < k$ and $m \in \zz$
		such that $R_i = g_{\eleZeta}^m(R_j)$.
		Therefore every rectangle of $\eleZeta$ meets $c$.
		By Theorem \ref{thm:flow:lines}, $\eleZeta$ represents  $c$ and thus $\zeta$ is peripheral.
	\end{proof}

\subsubsection{Implementation}\label{subsec:peripheral:implement}
In \textbf{Step 1} we construct a fundamental continent $\F \subset \univV$. It is guaranteed that some elevation $\eleZeta = (R_i)_{i \in \zz}$ of $\zeta$ intersects~$\F$. 	After possibly reindexing~$\eleZeta$, we may assume that $R_0 \subset \loomif(\F)$. 
In order to use Lemma \ref{lem:test:peripheral}, in \textbf{Step 2}  we expand~$\F$ to a continent $\C_0$ that contains both $t_0 = \loomif^{-1}(R_0)$ and $t_k = \loomif^{-1}(R_k)$, where $k$ denotes the length of $\zeta$. Both $t_0$ and $t_k$ project to the same tetrahedron $t$ of $\V$ under the universal covering map. By Lemma \ref{lem:test:peripheral}, $\zeta$ is peripheral if and only if $t_0$ and $t_k$ share an ideal vertex $c$ of $\univV$ such that $c$ maps to the same vertex of $t$ under both $t_0 \rightarrow t$ and $t_k \rightarrow t$. If that is the case, the algorithm terminates and returns $\V$. Otherwise, we proceed to \textbf{Step 3}.

	\subsection{Adapting the continent to $\zeta$} \label{subsec:adapted:continent}

	From now on, assume that $\zeta$ is non-peripheral and thus $x = \bigcap \eleZeta \in \loom$.
	In \textbf{Step 3}, we construct a continent that is adapted to $x$. 
	First, we define a specific type of continent expansion that we will repeatedly use throughout the algorithm.
	
	\begin{definition}\label{def:expanding:along:flowline}
		Let $\rho = (t_i)_{i \in \zz}$ be a flow line in $\univV$. For a convex continent $\C \subset \univV$, let 
		\begin{equation*}
		M = \max\lbrace i \in \zz \ | \ t_i \in \C \rbrace,\hspace{2cm}
		m = \min \lbrace i \in \zz \ | \ t_i \in \C \rbrace.
		\end{equation*}
		We say that a convex continent $\C'$ is obtained from~$\C$ by 
		\begin{itemize}
			\item \emph{expanding upwards along $\rho$}  if $\C'$ is the smallest convex continent containing \mbox{$\C \cup \lbrace t_i \rbrace_{i=M+1}^{M+N}$} for some $N \in \mathbb{N}$. If $N = 1$, we denote $\C'$ by $\C^{\rho^\uparrow}$.
			\item \emph{expanding downwards along $\rho$} if $\C'$ is the smallest convex continent containing $\C \cup \lbrace t_i \rbrace_{i=m-n}^{m}$ for some $n \in \mathbb{N}$. If $N = 1$, we denote $\C'$ by $\C_{\rho_\downarrow}$.
			\item \emph{expanding along $\rho$} if $\C'$ is obtained from $\C$ by expanding it upwards and downwards along $\rho$.
		\end{itemize}
We call the tetrahedron $t_M$ the \emph{uppermost tetrahedron} of $\C \cap \rho$, and the tetrahedron $t_m$ the \emph{lowermost tetrahedron} of $\C \cap \rho$. 
	\end{definition}
For an upward or downward expansion up to index $i \in \mathbb{Z}$, we use the notation $\C^{\rho^\uparrow}(i)$ and $\C_{\rho_\downarrow}(i)$, respectively. For instance, the convex continent $\C_0$ constructed in Step 2 from the fundamental continent $\F$ can be expressed as $\C_0 = \F^{\eleZeta^\uparrow}(k)$. This notation is used in Figure \ref{fig:flowchart:adapting}.

	\begin{lemma}\label{lem:finite:expansion:meeting:cusps}
		Let $\rho = (t_i)_{i\in \zz}$ be a non-peripheral periodic flow line in $\univV$. Set $x = \bigcap\rho$. Let $\C \subset \univV$ be a continent such that $t_0 \in \C$. 
		For any finite set~$W$ of cusps, each meeting some staircase at $x$, there exists a finite expansion $\C_W$ of~$\C$ along~$\rho$ such that $\loomif(\C_W)$ meets every cusp of $W$.
	\end{lemma}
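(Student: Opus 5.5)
Since $W$ is finite and expansions of continents are monotone under inclusion, it suffices to treat a single cusp $c \in W$ meeting a staircase $S$ at $x$: if for each $c$ we find an expansion $\C_c$ of $\C$ along $\rho$ with $c \in \Omega(\loomif(\C_c))$, then the smallest convex continent containing $\bigcup_{c\in W}\C_c$ is again an expansion of $\C$ along $\rho$ (take the maximum of the upward and downward indices). Its loomification contains each $\loomif(\C_c)$, so it meets every cusp of $W$.

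Fix $c$ meeting $S$. Let $R_c \subset S$ be the rectangle with corners at $x$ and $c$. The plan is to show that some tetrahedron rectangle $R_i = \loomif(t_i)$ of the flow line has $c$ among its cusps, or at least meets $c$. The expansion of $\C$ along $\rho$ up to index $i$ (upwards if $i>0$, downwards if $i<0$) then contains $t_i$, and hence meets $c$. By Observation \ref{obs:tets:get:thinner}, the rectangles $R_i$ get thinner and taller as $i \to +\infty$ and wider and shorter as $i \to -\infty$. Their intersection is the single point $x$ by Theorem \ref{thm:flow:lines}, since $\rho$ is non-peripheral.

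The key step is the following. Say $S$ is the north-east staircase and the stable $c$-leaf $\ell^+_{c,x}$ crosses $R_c$. Consider the half-leaves of $\loom^\pm$ at $x$ bounding $S$. The cusps of the $R_i$ lying on the eastern side of the $R_i$ produce stable cusp leaves accumulating onto the northern unstable half-leaf at $x$ as $i\to+\infty$, because the widths of $R_i$ shrink to $x$. Using periodicity, $g_\rho$ translates $\rho$ by $k$ and lies in $\langle\gamma_x\rangle$ by Lemma \ref{lem:stabilizers}. So the cusps of $R_{i+jk}$ are the $g_\rho^j$-translates of the cusps of $R_i$. By Lemma \ref{lem:staircase}(3), $\langle g_\rho^2\rangle$ acts on $\Omega(S)$ by translations in the dynamic order, in both directions. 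Hence, after choosing $j$ of appropriate sign, the cusps of $R_{i+jk}$ on the relevant staircase sweep past $c$ in the dynamic order.

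The remaining step is to convert this into $R_{i}$ meeting $c$ for some $i$. Take a cusp $d$ of some $R_{i_0}$ lying in $S$, and $j$ with $g_\rho^{2j}(d) >_S c$, say. The rectangle $R_{i_0+2jk}$ has $g_\rho^{2j}(d)$ as a cusp and contains $x$, so it contains the rectangle spanned by $x$ and $g_\rho^{2j}(d)$. Its closure then meets the staircase region containing $R_c$ in one direction. Pairing this with an index from the opposite end gives a rectangle $R_{i}$, or more safely the finite union $\loomif(\C')$ of a convex continent containing $t_{i_1},\dots,t_{i_2}$ together with $\C$, whose convex closure contains the hull of $x$-adjacent cusps bracketing $c$. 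Convexity of $\overline{\loomif(\C')}$ (Proposition \ref{prop:convex:disk}) together with Lemma \ref{lem:geodesic:between} should then force $c \in \Omega(\C')$. The main obstacle is precisely this last deduction: ensuring that a cusp lying in the hull of cusps of the continent is actually a cusp of the continent. I would try to handle it via \cite[Lemma 6.33]{SchleimSegLoom}, which describes $\overline{\loomif(\C')}$ as the hull of its cusps, with the boundary cusps being exactly $\Omega(\C')$.
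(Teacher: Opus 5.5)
Your reduction to a single cusp and your final step agree with the paper. Once $\loomif(\C')$ meets two cusps $d_-<_S c<_S d_+$ of $\Omega(S)$, the geodesic along the boundary of $S$ from $d_-$ to $d_+$ passes through $c$. That geodesic lies in the convex set $\overline{\loomif(\C')}$ (Proposition~\ref{prop:convex:disk}), and this finishes the proof. The statement ``$\loomif(\C')$ meets $c$'' only means that $c$ lies in the closure of $\loomif(\C')$ in $\completedLoom$. So the ``main obstacle'' you flag at the end, showing that $c$ is a vertex of a tetrahedron of $\C'$ via \cite[Lemma 6.33]{SchleimSegLoom}, is not needed.

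\textbf{The gap: where the bracketing cusps come from.} Your argument rests on a cusp $d\in\Omega(S)$ of some flow-line rectangle $R_{i_0}$, which you then push past $c$ using $g_\rho^{\pm 2j}$. You give no reason such a $d$ exists, and the structure of flow lines does not force one.

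For instance, suppose $\rho$ goes straight up through blue fans at every step. Then each tallest edge subrectangle $P_i$ of $R_i$ is blue. It also contains $x$, because it is as tall as $R_i$ and $R_{i+1}$ has the same stable leaves as $P_i$. Consequently the north and west cusps of every $R_i$ lie in $\SNW$, and the south and east cusps lie in $\SSE$. So no $R_i$ meets any cusp of $\SNE$ or $\SSW$. The same example rules out your initial plan of finding a single $R_i$ meeting $c$.

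\textbf{How the paper avoids this.} It never needs the $R_i$ themselves to have cusps in $S$.
\begin{itemize}
\item By Lemma~\ref{lem:staircase}~(1), choose $c'\in\Omega(S)$ whose stable leaf $\ell^+_{c',x}$ crosses the unstable half-leaf at $x$ inside $R_0$, between $x$ and $\ell^+_{w,x}$. Then $w<_S c'$.
\item For large $N$, the rectangle $R_N$ is crossed by $\ell^-_{c',x}$ but not by $\ell^+_{c',x}$ (Observation~\ref{obs:tets:get:thinner} and Theorem~\ref{thm:flow:lines}).
\item Hence $R_0$ and $R_N$ contain points in the two orthants at $c'$ that are adjacent to the orthant containing $x$.
\item Lemma~\ref{lem:geodesic:between} and convexity of hulls then show that the hull of the cusps of $R_0,\dots,R_N$ meets $c'$.
\item The symmetric argument as $n\to-\infty$ produces a cusp $c''<_S w$.
\end{itemize}
The $g_\rho$-action is never used. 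Your translation step would only become usable after such a seed cusp has been produced by this hull argument, and at that point it is redundant.
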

	\begin{proof}For $i \in \zz$ let $R_i = \loomif(t_i)$. If $W = \emptyset$, $\C_W = \C$. Otherwise, let $w \in W$. Denote the staircase at $x$ that meets~$w$ by $S$. By Observation \ref{obs:tets:get:thinner} and Lemma \ref{lem:staircase} (1), there exists $N_1(w) \in \mathbb{N}$ such that the convex hull of the cusps of $\bigcup\limits_{i=0}^{N_1(w)} R_i$ meets some cusp $c_{w,1} \in \Omega(S)$ that is  in-between $x$ and $w$ in the horizontal order on $S$. Similarly, there exists $N_2(w)\in \mathbb{N}$ such that the convex hull of the cusps of $\bigcup\limits_{i=-N_2(w)}^{0} R_i$ meets some cusp $c_{w,2} \in \Omega(S)$ that is in-between $x$ and $w$ in the vertical order on $S$.
		
		Since $W$ is nonempty and finite, the set $\lbrace N_j(w)  \ | \  w \in W\rbrace $ admits a maximum for each $j=1,2$. We denote this maximum by $N_j$.
		
	 Let $\C_W$ be the smallest convex continent that contains $\C \cup \lbrace t_i \rbrace_{i=-N2}^{N_1}$. Since $\C$ contains~$t_0$, $\C_W$ is obtained from $\C$ by expanding it along $\rho$. 
		
		For every $w \in W$, $\loomif(\C_W)$ meets two cusps $c_{w,1}$ and $c_{w,2}$ lying in the same staircase~$S$ at $x$ as $w$, and such that $w$ is between $c_{w,1}$ and $c_{w,2}$ in the dynamic linear order on $\Omega(S)$. 
		Since there is a geodesic connecting $c_{w,1}$ and $c_{w,2}$ that passes through $w$, and the closure of $\loomif(\C_W)$ is convex by Proposition \ref{prop:convex:disk}, $\loomif(\C_W)$ meets~$w$. 
	\end{proof}
	
		\begin{corollary}\label{cor:expanding:along:flowline:gives:adapted}
		Let $\C_0$ be the convex continent constructed in Step 2. 
		Expanding $\C_0$ finitely many times along~$\eleZeta$ eventually yields a convex continent that is adapted to~$x$.
	\end{corollary}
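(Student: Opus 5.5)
The plan is to combine Proposition~\ref{prop:adapted:general} with Lemma~\ref{lem:finite:expansion:meeting:cusps}. First, fix a concrete finite set of cusps $W$ whose convex hull is adapted to $x$. Then expand $\C_0$ along $\eleZeta$ until its loomification meets every cusp of $W$. Convexity of the closure of the loomification (Proposition~\ref{prop:convex:disk}) then forces $\hull(W)$ into $\overline{\loomif(\C)}$.

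\textbf{Choosing $W$.} Pick any cusp $a$ of a tetrahedron rectangle of $\eleZeta$ meeting a staircase at $x$; such cusps exist since $R_0$ contains $x$ and its cusps lie in staircases at $x$. Take $m=2$, so that $\gamma_x^2(a)$ meets the same staircase by Lemma~\ref{lem:staircase}~(2). By Remark~\ref{rem:infinitely:many:seps} we can choose separators $b=s^+_{a,2,x}$, $s^-_{a,0,x}$, $s^+_{b,0,x}$ and $s^-_{b,-2,x}$. Every element of
\[W=\lbrace a,\gamma_x^2(a),b,s^-_{a,0,x},\gamma_x^{-2}(b),s^+_{b,0,x},s^-_{b,-2,x}\rbrace\]
meets some staircase at $x$, by the definition of external separators and by Lemma~\ref{lem:staircase}~(2). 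Proposition~\ref{prop:adapted:general} then says that $\hull(W)$ is adapted to $x$.

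\textbf{Expanding the continent.} Apply Lemma~\ref{lem:finite:expansion:meeting:cusps} to $\rho=\eleZeta$, $\C=\C_0$ (which contains $t_0$) and this $W$. This gives a finite expansion $\C_W$ along $\eleZeta$ such that $\loomif(\C_W)$ meets every cusp of $W$, and we take its convex closure as in Definition~\ref{def:expanding:along:flowline}. Since $W\subset\Omega(\loomif(\C_W))$ and $\overline{\loomif(\C_W)}$ is convex by Proposition~\ref{prop:convex:disk}, every geodesic between points of $W$ lies in $\overline{\loomif(\C_W)}$. Hence $\hull(W)\subset\overline{\loomif(\C_W)}$.

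\textbf{Main obstacle.} Adaptedness is a statement about open edge rectangles lying in $\loomif(\C_W)$, not merely in its closure. So the remaining step is to check that an edge rectangle $P$ with $\overline{P}\subset\overline{\loomif(\C_W)}$ satisfies $P\subset\loomif(\C_W)$. I would get this from the disk description in Proposition~\ref{prop:convex:disk}: $\loomif(\C_W)$ is the interior of a closed disk minus finitely many boundary points, and an open rectangle contained in the closure lies in the interior.

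A second, more algorithmic subtlety is that the separators in $W$ are not known in advance. The lemma proves existence, so finitely many expansions suffice. In the implementation one expands until separators of the required kind become visible in the continent, using the circular orders of Proposition~\ref{prop:conv:cont:circ:order}. Lemma~\ref{lem:staircase}~(1) guarantees that such separators appear after finitely many expansions along $\eleZeta$, since the loomifications of $R_i$ shrink toward $x$ along both half-leaves.
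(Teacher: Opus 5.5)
Your proposal is correct and follows the paper's proof essentially verbatim: you take $W$ from Proposition~\ref{prop:adapted:general}, expand $\C_0$ along $\eleZeta$ via Lemma~\ref{lem:finite:expansion:meeting:cusps} until $\loomif(\C_W)$ meets every cusp of $W$, and use the convexity from Proposition~\ref{prop:convex:disk} to get $\hull(W)\subset\overline{\loomif(\C_W)}$. Your remark about passing from the closure to the open set $\loomif(\C_W)$ makes explicit a point the paper leaves implicit, and your final algorithmic paragraph is not needed for this existence statement.
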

	\begin{proof}
		In Proposition \ref{prop:adapted:general} we proved that there exists a finite and nonempty set $W$ of cusps of $\loom$, each meeting some staircase at $x$, such that $\hull(W)$ is adapted to $x$. 
		Thus, by Lemma \ref{lem:finite:expansion:meeting:cusps}, there exists a finite expansion $\C_W$ of $\C_0$ along $\eleZeta$ such that $\loomif(\C_W)$ meets every cusp of $W$.  Since $\overline{\loomif(\C_W)}$ is convex by Proposition \ref{prop:convex:disk},  it must contain $\hull(W)$. Therefore~$\C_W$ is adapted to $x$.
	\end{proof}
	
	It follows  that Step 3 of the algorithm presented in Figure \ref{fig:flowchart} can be achieved by expanding $\C_0$ along $\eleZeta$ until it meets a certain finite set $W$ of cusps. Recall from Proposition~\ref{prop:adapted:general} that we can choose 
	\[W = \left\lbrace a, \gamma_x^m(a), b=s^+_{a,m,x}, s^-_{a,0,x}, \gamma_x^{-m}(b), s^+_{b,0,x}, s^-_{b,-m,x} \right\rbrace,\]
	where $a \in \Omega_x$ and $m \in 2\mathbb{N}$ are  arbitrary, and, for a cusp $c \in \Omega_x$ and $i \in \zz$, $s^\pm_{c,i,x}$ is any external horizontal (+) or vertical ($-$) $(\gamma_x^i(c),x)$-separator; see Definition \ref{def:separator:cusp}.

	 Recall from Remark \ref{rem:infinitely:many:seps} that for any $c \in \Omega_x$ and $i \in \zz$ there are infinitely many choices for the cusps $s^\pm_{c,i,x}$. In our algorithm we find the most optimal ones by searching through face rectangles with ideal corner at $\gamma_x^i(c)$. To  explain the procedure, we first introduce some terminology and notation associated to face rectangles.
	 As before, denote the leaves of $\loom^\pm$ passing through $x$ by $\ell_x^\pm$.
	 
	 \begin{definition}
	 	Let $Q$ be a face rectangle. We say that a cusp leaf~$\ell$ is
	 	\begin{enumerate}
	 		\item an \emph{internal cusp leaf} of $Q$ if $\ell$ originates from a cusp of $Q$ and intersects~$Q$.
	 		\item a \emph{non-corner boundary leaf} of $Q$ if $\ell$ originates from a non-corner cusp of $Q$ and intersects the boundary of $Q$ in a leaf segment of nonzero length. 
	 	\end{enumerate}
	 \end{definition}
	 Recall from Definition \ref{defn:rectangle} that rectangles are open.  Thus each face rectangle $Q$ admits exactly one stable internal cusp leaf and one unstable internal cusp leaf. These two leaves always originate from distinct cusps of $Q$. On the other hand, $Q$ admits exactly two stable non-corner boundary leaves, both originating from the same cusp of $Q$; an analogous statement holds for unstable non-corner boundary leaves. 
	 
	 \begin{definition}\label{def:face:rect:separating}
	 	Let $c \in \Omega_x$. We say that a face rectangle $Q$ is  \emph{horizontally $(c,x)$-separating}  (respectively, \emph{vertically $(c,x)$-separating}) if $Q$ has ideal corner at $c$, $x \in Q$ and $\ell_x^+$ (respectively, $\ell_x^-$) separates the non-corner cusps of $Q$; see Figure \ref{fig:cx:separating}.
	 \end{definition}
 
 	 \begin{figure}[h]
	 \centering
	 \labellist
	 \hair 2pt \small
	 \pinlabel $c$ [tr] at 2 10
	 \pinlabel $s^+$ [tr] at 160 270
	 \endlabellist
	 \includegraphics[width = 0.2\textwidth]{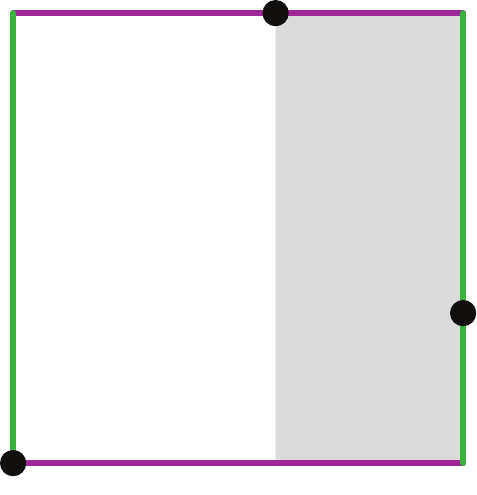}
 	\caption{Horizontally $(c,x)$-separating face rectangle. The cusp $s^+$ is the origin of the stable internal cusp leaf of the rectangle. The shaded region indicates the possible position of $x$.}
 	\label{fig:cx:separating}
 \end{figure}

Observe that if $Q$ is a horizontally $(c,x)$-separating face rectangle, for some $c \in \Omega_x$, then the origin $s^+$ of the stable internal cusp leaf of $Q$ is an external horizontal $(c,x)$-separator. 
  Indeed, if $S$ is the staircase at $x$ that meets $c$ then $s^+$ meets the staircase at $x$ that is on the same side of $\ell_x^+$ and on the other side of $\ell_x^-$ as $S$; see Figure \ref{fig:cx:separating}. Moreover, $s^+$ is between $c$ and $x$ in the horizontal order on $Q$ and thus in the horizontal order on $\Sigma_x$. Analogously, the origin $s^-$ of the unstable internal cusp leaf of a vertically $(c,x)$-separating face rectangle is an external vertical $(c,x)$-separator. 

	\begin{lemma}\label{lem:face:rectangle}
For any $c \in \Omega_x$ there is a unique horizontally $(c,x)$-separating face rectangle and a unique vertically $(c,x)$-separating face rectangle. 
	\end{lemma}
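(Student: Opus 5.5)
Fix $c \in \Omega_x$ and let $S$ be the staircase at $x$ meeting $c$. By symmetry (the orientation conventions and the four staircases play interchangeable roles) it suffices to treat the horizontal case with $S = \SNE$, so $c$ lies north-east of $x$. The plan is to build the face rectangle explicitly by expanding an edge rectangle, and then prove uniqueness by showing that any horizontally $(c,x)$-separating face rectangle must coincide with this one.

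\textbf{Existence.} Let $R_c \subset S$ be the rectangle with corners $x$ and $c$. Consider the open region bounded by the stable $c$-leaf $\ell^+_{c,x}$ and the unstable $c$-leaf $\ell^-_{c,x}$ on the side containing $x$, i.e. the orthant at $c$ containing $R_c$. Expand $R_c$ to the west, keeping its east side on $\ell^+_{c,x}$ and its north side on $\ell^-_{c,x}$. By Lemma~\ref{lem:expansions}, the maximal western expansion $R'$ has exactly one cusp $w$ in the interior of its west side, and $w$ lies on the opposite side of $\ell_x^+$ from $c$. Next, take the maximal southern expansion of $R'$. Again by Lemma~\ref{lem:expansions}, this produces a rectangle $Q$ with a cusp $s$ in the interior of its south side, with ideal corner $c$ at the north-east, and with exactly three material corners. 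This $Q$ is a face rectangle in the sense of Definition~\ref{def:face:rect}. It contains $x$, since $x \in \overline{R_c}$ and $x$ is in the interior after expansion.

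The key point is that $\ell_x^+$ separates the two non-corner cusps. The cusp $s$ lies on the south side, and we need it to lie east of $\ell_x^+$, while $w$ lies west of it. This is exactly where the order of expansions matters. If $s$ were west of $\ell_x^+$, I would instead perform the southern expansion first and the western one second, and argue via the dynamic structure of staircases (Lemma~\ref{lem:staircase}) that one of the two orders gives the required configuration. More cleanly, I would choose the construction so that the first expansion is the one that stops at a cusp inside the strip between $\ell_x^+$ and $\ell^+_{c,x}$. Such a cusp exists by Lemma~\ref{lem:staircase}~(1) applied to the south-east staircase. The resulting rectangle is the face rectangle.

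\textbf{Uniqueness.} Suppose $Q$ and $Q'$ are both horizontally $(c,x)$-separating. Both have ideal corner $c$, and both have sides along initial segments of the same two $c$-leaves bounding the orthant containing $x$. Hence $Q \cap Q'$ is a rectangle containing $x$ with a corner at $c$, and by convexity of rectangles one is thinner or shorter than the other.

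Compare the non-corner cusps. The non-corner cusp east of $\ell_x^+$ in $Q$ lies on the south side of $Q$, in the strip between $\ell_x^+$ and $\ell^+_{c,x}$. If $Q'$ extended further south across that strip, this cusp would lie in the interior of $Q'$, a contradiction. The same argument applied with the roles of $Q$ and $Q'$ exchanged, and likewise to the west cusp and the heights, forces $Q = Q'$.

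\textbf{Main obstacle.} I expect the main obstacle to be the existence step: pinning down which expansion yields cusps on opposite sides of $\ell_x^+$. The approach I would try first is to characterize $Q$ as the maximal vertical expansion of the face subrectangle of the tetrahedron rectangle obtained from the edge rectangle spanned by $c$ and the first cusp of the adjacent staircase.
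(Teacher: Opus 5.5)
Your uniqueness argument works. Two horizontally $(c,x)$-separating face rectangles lie in the orthant at $c$ containing $x$ and share initial segments of $\ell^+_{c,x}$ and $\ell^-_{c,x}$. Their non-corner cusp east of $\ell_x^+$ must lie on the south side. Comparing heights via that cusp, and then widths via the cusp on the west side, forces the two rectangles to be equal.

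The gap is in existence. Your main construction, west first and then south, does not produce a horizontally separating rectangle in general. In fact it always produces the \emph{vertically} separating one. The cusp $w$ lies in the interior of the west side of $R'$, which runs between $\ell^-_{c,x}$ and $\ell^-_x$, so $w$ is north of $\ell^-_x$. The cusp $s$ produced by the southern expansion lies south of $\ell^-_x$. Thus $\ell^-_x$ separates $w$ and $s$, and your $Q$ is horizontally separating only in the special case where the horizontally and vertically separating rectangles coincide. So the ``bad'' case is the generic one, and the sentence ``one of the two orders gives the required configuration'' is not an argument. Nor is Lemma~\ref{lem:staircase}~(1) relevant: what you need is that an expansion \emph{stops} at a cusp in the strip, not merely that such a cusp exists.

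The fix is to reverse the order, and it needs nothing beyond Lemma~\ref{lem:expansions}.
\begin{enumerate}
\item The maximal southern expansion of $R_c$ has its west side on $\ell^+_x$ and its east side on $\ell^+_{c,x}$. Hence the cusp $s$ in the interior of its south side lies strictly between $\ell^+_x$ and $\ell^+_{c,x}$, i.e.\ east of $\ell^+_x$.
\item The subsequent maximal western expansion acquires a cusp $w$ in the interior of the new west side, which is west of $\ell^+_x$. Meanwhile $s$ stays in the interior of the south side, since a western expansion keeps the east side and enlarges the south side.
\item The north and east sides remain on $\ell^-_{c,x}$ and $\ell^+_{c,x}$, which carry no other cusps.
\end{enumerate}
The result is a face rectangle with ideal corner $c$, containing $x$ in its interior, whose non-corner cusps are separated by $\ell^+_x$. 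By symmetry, your original order produces the vertically separating rectangle, so the two orders together give existence in both cases.

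For comparison, the paper handles existence and uniqueness in one stroke inside the staircase $\sigma$ at $c$ containing $x$. There, face rectangles with ideal corner $c$ are the interiors of $\hull(a,b,c)$ for consecutive $a,b \in \Omega_c(\sigma)$, and exactly one consecutive pair is separated by $\ell^+_x$.
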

	\begin{proof}
		We prove the existence and uniqueness of a horizontally $(c,x)$-separating face rectangle. The proof of the other statement is completely analogous.
		Since $c$ and $x$ span a rectangle, there is a staircase $\sigma$ at $c$ that contains~$x$. 
		As $\ell_x^+$ is not a cusp leaf, there is a pair of cusps $a,b \in \Omega_c(\sigma)$ that are consecutive in the dynamic linear order on $\Omega_c(\sigma)$ and separated by~$\ell_x^+$. Set $Q$ to be the interior of $\hull(a,b,c)$. Then $Q$ is a face rectangle with ideal corner at $c$ and non-corner cusps $a$, $b$ that are separated by $\ell_x^+$.  Furthermore, $x \in Q$, because $Q \subset \sigma$, $x \in \sigma$ and $\ell_x^+$ intersects $Q$.
		
	The uniqueness follows from the fact that only one staircase at $c$ contains $x$, and only one pair of consecutive cusps in that staircase can be separated by $\ell_x^+$.
	\end{proof}

	 	Recall the element $g_{\eleZeta}\in \pi_1(M)$ associated to $\eleZeta$ that we defined in Section \ref{subsec:flow:cycles:and:lines}. By Lemma \ref{lem:stabilizers}, $g_{\eleZeta}$ fixes $x$. 
	 	Given any cusp $c \in \Omega_x$, and $i \in \zz$, denote the unique horizontally (respectively, vertically) $(g_{\eleZeta}^i(c),x)$-separating face rectangle by $Q^+_{c,i,x}$ (respectively, $Q^-_{c,i,x}$), and the origin of its stable (respectively, unstable) internal cusp leaf  by $s^\pm_{c,i,x}$. This notation is analogous to the one we used in Section \ref{subsec:adapted:set}, except that, by Proposition~\ref{prop:square}, $g_{\eleZeta}$ may be homotopic to  $\gamma_x^2$ instead of $\gamma_x$.

	 	\begin{lemma}\label{lem:special:face:rectangles}
	 		Let $a\in \Omega_x$. Set $b = s^+_{a,2,x}$. If $\loomif(\C)$ contains 
	 		$Q^+_{a,2, x}$,
	 		$Q^-_{a,0,x}$,
	 		$Q^+_{b,0,x}$, and $Q^-_{b,-2, x}$ then $\C$ is adapted to $x$. 
	 	\end{lemma}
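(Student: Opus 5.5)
We reduce the statement to Proposition~\ref{prop:adapted:general}, using the convexity of $\overline{\loomif(\C)}$ from Proposition~\ref{prop:convex:disk}. The plan is to exhibit a set $W$ of the form required by Proposition~\ref{prop:adapted:general} such that every cusp of $W$ is met by $\loomif(\C)$. Convexity then gives $\hull(W)\subset\overline{\loomif(\C)}$. Since edge rectangles are open, every edge rectangle contained in $\hull(W)$ is in fact contained in $\loomif(\C)$, which is the interior of its closure. So $\loomif(\C)$ is adapted to $x$.

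\textbf{Step 1: identify $g_{\eleZeta}^2$.} By Proposition~\ref{prop:square} and Lemma~\ref{lem:stabilizers}, $g_{\eleZeta}$ equals $\gamma_x^{j}$ with $j\in\{1,2\}$, up to sign. The sign is positive: $g_{\eleZeta}$ sends $R_0$ to $R_k$, and by Observation~\ref{obs:tets:get:thinner} $R_k$ is strictly taller and thinner than its predecessors near $x$, which matches the normalization defining the positive generator. Hence $g_{\eleZeta}^2=\gamma_x^{m}$ with $m\in\{2,4\}\subset 2\mathbb{N}$, and $g_{\eleZeta}^{-2}=\gamma_x^{-m}$.

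\textbf{Step 2: read off the cusps of $W$.} Take $W=\{a,\gamma_x^m(a),b,s^-_{a,0,x},\gamma_x^{-m}(b),s^+_{b,0,x},s^-_{b,-2,x}\}$. Here $b=s^+_{a,2,x}$ is the origin of the stable internal cusp leaf of $Q^+_{a,2,x}$. By the observation following Definition~\ref{def:face:rect:separating}, $b$ is an external horizontal $(\gamma_x^m(a),x)$-separator. Similarly, $s^-_{a,0,x}$ is an external vertical $(a,x)$-separator, $s^+_{b,0,x}$ is an external horizontal $(b,x)$-separator, and $s^-_{b,-2,x}$ is an external vertical $(\gamma_x^{-m}(b),x)$-separator. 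Since $b$ meets an adjacent staircase, $b\in\Omega_x$, so Lemma~\ref{lem:face:rectangle} applies to it and the rectangles $Q^\pm_{b,\cdot,x}$ are well defined. Thus $W$ is an admissible choice in Proposition~\ref{prop:adapted:general}.

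\textbf{Step 3: check that $\loomif(\C)$ meets every cusp of $W$.} Each of the four given face rectangles lies in $\loomif(\C)$, so $\loomif(\C)$ meets all three cusps of each of them. This covers the following cusps.
\begin{itemize}
\item $Q^+_{a,2,x}$ has ideal corner $\gamma_x^m(a)$ and contributes $b$.
\item $Q^-_{a,0,x}$ has ideal corner $a$ and contributes $s^-_{a,0,x}$.
\item $Q^+_{b,0,x}$ has ideal corner $b$ and contributes $s^+_{b,0,x}$.
\item $Q^-_{b,-2,x}$ has ideal corner $\gamma_x^{-m}(b)$ and contributes $s^-_{b,-2,x}$.
\end{itemize}
A cusp of a rectangle contained in $\loomif(\C)$ is met by $\loomif(\C)$ in the sense of Section~\ref{subsec:geodesics}, since it lies in the closure. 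Hence all seven cusps of $W$ are met, and convexity (Proposition~\ref{prop:convex:disk}) gives $\hull(W)\subset\overline{\loomif(\C)}$.

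\textbf{Main obstacle.} The delicate point is Step 1: showing that $g_{\eleZeta}$ is a \emph{positive} power of $\gamma_x$, and that its square is $\gamma_x^m$ with $m$ even, so that Lemma~\ref{lem:staircase}(2) keeps $\gamma_x^m(a)$ in the same staircase as $a$. A second concern is passing from $\hull(W)\subset\overline{\loomif(\C)}$ to edge rectangles lying inside the open set $\loomif(\C)$. The first I would handle by the height comparison $R_k=g_{\eleZeta}(R_0)$; the second by openness of rectangles.
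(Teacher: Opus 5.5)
Your proposal is correct and follows the paper's proof. You use the same seven-cusp set $W$, observe that each cusp of $W$ is a cusp of one of the four given face rectangles, and conclude via Proposition~\ref{prop:adapted:general} together with the convexity of $\overline{\loomif(\C)}$ from Proposition~\ref{prop:convex:disk}.

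Two of your steps make explicit points that the paper leaves implicit. First, in Step~1 you check that $g_{\eleZeta}$ is a \emph{positive} power of $\gamma_x$, using $g_{\eleZeta}(R_0)=R_k$, so that $g_{\eleZeta}^2=\gamma_x^m$ with $m\in 2\mathbb{N}$. Second, you remark on passing from $\overline{\loomif(\C)}$ to the open set $\loomif(\C)$; note, though, that your claim that $\loomif(\C)$ is the interior of its closure is asserted rather than proved.
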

 	\begin{proof}
 		By Proposition \ref{prop:square}, $g_{\eleZeta}$ is homotopic either to $\gamma_x$ or $\gamma_x^2$. In either case, Lemma~\ref{lem:stabilizers}~(2) implies that $a$ and $g_{\eleZeta}^2(a)$ lie in the same staircase at $x$. Set
 			\[W = \left\lbrace a, g_{\eleZeta}^2(a), b=s^+_{a,2,x}, s^-_{a,0,x}, g_{\eleZeta}^{-2}(b), s^+_{b,0,x}, s^-_{b,-2,x} \right\rbrace.\]
 			Then $\hull(W)$ is adapted to $x$ by Proposition \ref{prop:adapted:general}.

Suppose that $\loomif(\C)$ contains $Q^+_{a,2, x}$,
$Q^-_{a,0,x}$,
$Q^+_{b,0,x}$, and $Q^-_{b,-2, x}$. Since each $w \in W$ is a cusp of at least one of these face rectangles, $\C$ clearly meets $W$. Convexity of the closure of $\loomif(\C)$ (Proposition \ref{prop:convex:disk}) then implies that $\hull(W) \subset \overline{\loomif(\C)}$. Hence $\C$ is adapted to $x$.
 	\end{proof}

	 	In Figure \ref{fig:flowchart:adapting} we present a flowchart of an algorithm that expands the continent $\C_0$, constructed in Step 2, into a convex continent that is adapted to $x$. Lemma \ref{lem:finite:expansion:meeting:cusps} implies that this algorithm terminates in finite time. The fact that it returns a continent adapted to $x$ follows from Lemma \ref{lem:special:face:rectangles}.

	 \begin{figure}[h]
	 	\centering
	 	\includegraphics[width=\textwidth]{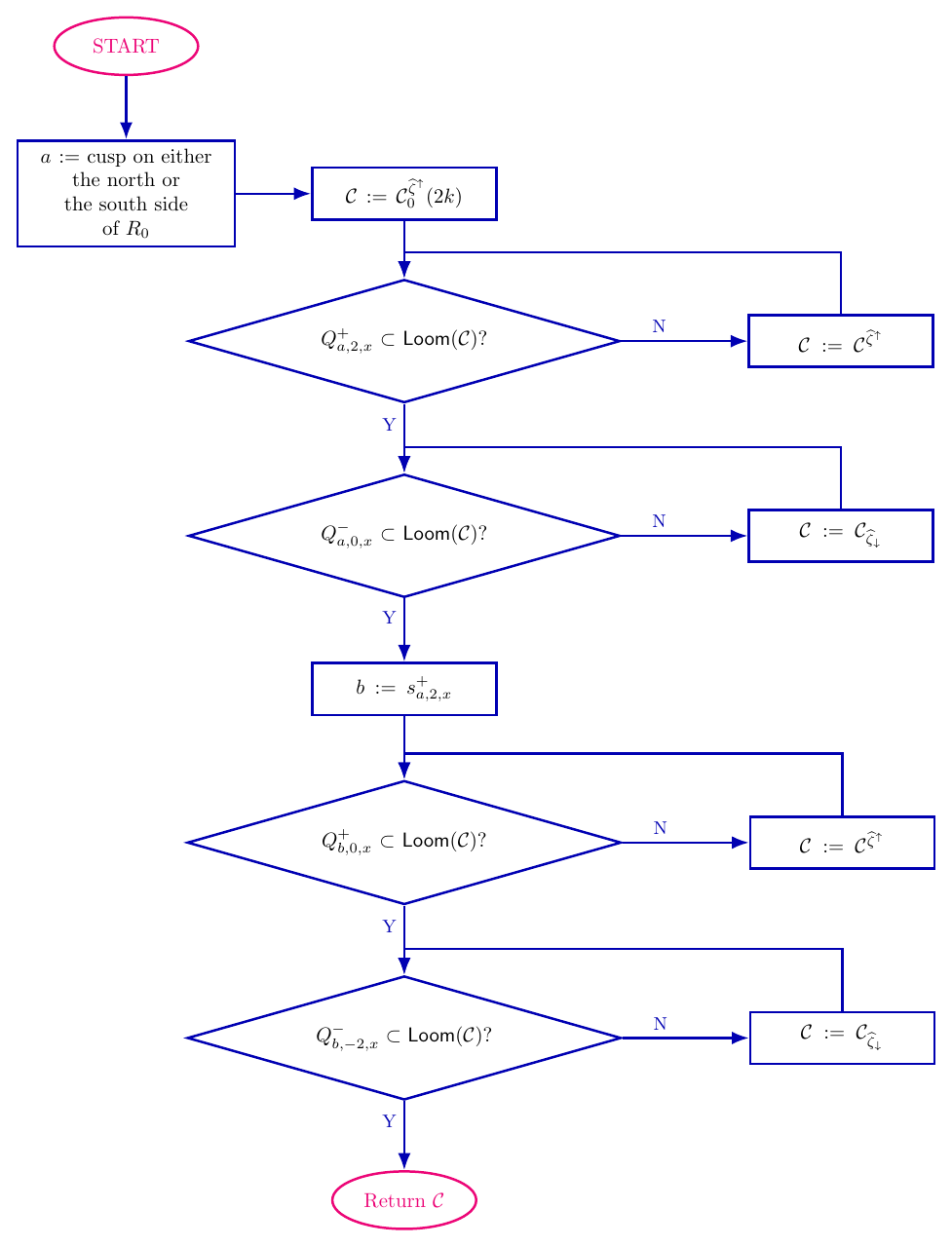}
	 	\caption{Flowchart for Step 3 of the algorithm in Figure \ref{fig:flowchart}. The input for this step is the convex continent $\C_0$ constructed in Step 2 and the flow cycle $\zeta$. The output is a convex continent adapted to $x = \bigcap \eleZeta$.}
	 	\label{fig:flowchart:adapting}
	 \end{figure}

	 It remains to explain how the checks in the decision nodes of Figure~\ref{fig:flowchart} are carried out. Since for any cusp $c \in \Omega_x$ we have
	 $Q^\pm_{c,i,x}=Q^\pm_{g_{\eleZeta}^i(c),0,x}$, the index $i$ can always be absorbed into the choice of cusp. In what follows, we therefore study only the rectangles $Q^\pm_{c,0,x}$, for which we use the simplified notation $Q^\pm_{c,x}$. 
	 
	 First, we characterize $Q^\pm_{c,x}$ using only tetrahedron rectangles making up the flow line $\eleZeta=(R_i)_{i\in\zz}$ instead of the precise position of $x$.

	\begin{lemma}\label{lem:recognizing:Q:cx}
		Let $c \in \Omega_x$. Let $Q$ be a face rectangle with ideal corner at $c$.
		Then $Q = Q^+_{c, x}$ if and only if 
		\begin{enumerate}
			\item $Q$ is contained in the same orthant at $c$ as $x$.
			\item There exists $N \in \mathbb{N}$ such that for all $n \geq N$: 
			\begin{enumerate}
				\item The stable internal cusp leaf of $Q$ separates all cusps of $R_n$ from $c$.\label{cond:lemQ:cx:separation}
				\item All cusps of $R_n$ and $c$  are on the same side of each  stable non-corner boundary cusp leaf of $Q$.\label{cond:lemQ:cx:same:side}
			\end{enumerate}
		\end{enumerate}
	An analogous characterization of $Q^-_{c, x}$ holds, with stable leaves replaced by unstable leaves, and $n \geq N$ replaced by $n \leq -N$.
	\end{lemma}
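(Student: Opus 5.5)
The plan is to reduce the statement to the defining property of $Q^+_{c,x}$ from Definition~\ref{def:face:rect:separating}: $Q$ has ideal corner $c$, $x\in Q$, and $\ell_x^+$ separates the two non-corner cusps of $Q$. By Lemma~\ref{lem:face:rectangle} this rectangle is unique. So it suffices to show that, for a face rectangle $Q$ with ideal corner $c$, conditions (1) and (2) together are equivalent to: $x\in Q$ and $\ell_x^+$ separates the non-corner cusps $a,b$ of $Q$. The bridge between the flow line and $x$ is Theorem~\ref{thm:flow:lines} together with Observation~\ref{obs:tets:get:thinner}: the rectangles $R_n$ form a nested-in-the-stable-direction sequence with $\bigcap_n R_n=x$. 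Hence, as $n\to+\infty$, the set of stable leaves meeting $R_n$ shrinks to $\{\ell_x^+\}$. Concretely, the western and eastern cusps of $R_n$ have stable cusp leaves accumulating on $\ell_x^+$ from the west and east respectively, while the vertical extent of $R_n$ grows.

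First I would establish the following claim. For any stable leaf $\ell\ne\ell_x^+$ there is $N$ such that for $n\ge N$ all cusps of $R_n$ lie on the same side of $\ell$ as $x$. To prove it, note that $R_n$ is contained in the region between the stable cusp leaves through its west and east cusps. These leaves converge to $\ell_x^+$ because $\bigcap R_n=x$ and cusps do not accumulate (Proposition~\ref{prop:orbit:space}~\eqref{orbit_space:no:accumulation:points}). Since $x\in \bigcap R_n$, eventually $\ell$ misses $R_n$ and its cusps entirely.

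For the forward direction, suppose $Q=Q^+_{c,x}$. Condition (1) holds because $x\in Q$. For (2a), the stable internal cusp leaf $\ell_s$ of $Q$ originates at the non-corner cusp $s^+$ and crosses $Q$. Since $\ell_x^+$ separates $a$ from $b$, the leaf $\ell_s$ lies strictly between $c$ and $\ell_x^+$ in the horizontal order on $Q$, so $\ell_s$ separates $x$ from $c$. By the claim, eventually all cusps of $R_n$ are on $x$'s side of $\ell_s$, which gives (2a). For (2b), the two stable non-corner boundary leaves of $Q$ lie on the sides of $Q$ opposite to $c$, so $x$ and $c$ lie on the same side of each; the claim then gives (2b).

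For the converse, assume (1) and (2). I would first show that $x\in\overline Q$ horizontally, that is, $\ell_x^+$ meets $Q$. By (2b) and the limit, $\ell_x^+$ is not beyond the stable boundary leaves. By (2a), $\ell_x^+$ lies on the far side of the internal stable leaf from $c$, hence it passes between that internal leaf and the boundary leaf. This is exactly the condition that $\ell_x^+$ separates the non-corner cusps. Vertical containment then follows from (1): $Q$ lies in the orthant at $c$ containing $x$, and a face rectangle with ideal corner $c$ is a subrectangle of the staircase $\sigma$ at $c$. With $\ell_x^+$ between $a$ and $b$, the description in the proof of Lemma~\ref{lem:face:rectangle} forces $Q=\hull(a,b,c)^\circ$ with $a,b$ consecutive. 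Uniqueness then identifies $Q$ with $Q^+_{c,x}$.

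The main obstacle is this converse step. It requires carefully arguing that conditions stated only asymptotically about cusps of $R_n$ translate into the exact position of $\ell_x^+$ relative to $Q$. In particular, one must rule out the case that $\ell_x^+$ coincides with a boundary leaf; this is excluded since $\ell_x^+$ is not a cusp leaf. One must also handle the vertical coordinate, for which (1) is essential. The unstable statement follows by the symmetric argument with $n\to-\infty$.
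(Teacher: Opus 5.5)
Your decomposition matches the paper's: you reduce to Definition~\ref{def:face:rect:separating} and the uniqueness in Lemma~\ref{lem:face:rectangle}, and you use that the widths of the $R_n$ shrink to $\ell_x^+$ as $n\to+\infty$. The forward direction is fine.

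The gap is in the converse. You deduce from (2) alone that $\ell_x^+$ meets $Q$ and separates the non-corner cusps $a$ (the origin of the internal leaf $\eta$) and $b$ (the origin of $\beta_1,\beta_2$), and only afterwards use (1) for ``vertical containment''. Via the limit, (2) only places $\ell_x^+$ on the non-$c$ side of $\eta$ and on the $c$ side of $\beta_1$ and $\beta_2$. Because the stable leaf space is non-Hausdorff, this intersection of half-planes contains stable leaves that never cross $Q$. For instance, take leaves passing near $a$ through orthants at $a$ that lie on the non-$c$ side of $\eta$ but are cut off from $Q$ by a second stable $a$-leaf. For such a leaf, $a$ and $b$ lie on the same side. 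A point $x$ on such a leaf would still satisfy (2a) and (2b), since eventually the $R_n$ lie on $x$'s side of $\eta,\beta_1,\beta_2$, yet $x\notin Q$. So ``this is exactly the condition that $\ell_x^+$ separates the non-corner cusps'' is false as stated. Treating stable leaves as linearly ordered is only legitimate inside a region you already know contains $x$.

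The missing input is what you assigned to the vertical step, and it must come first. Since $c\in\Omega_x$, the points $x$ and $c$ span a rectangle, so $x$ lies in a staircase at $c$. By (1), this is the staircase $\sigma$ at $c$ that contains $Q$.

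Inside $\sigma$ the stable leaves are ordered by horizontal position relative to $c$. The leaf $\eta$ and one of the $\beta_i$ cut $\sigma$ at the horizontal positions of $a$ and $b$, and $a,b$ are consecutive cusps of $\sigma$. The side conditions from (2) then say that $x$ lies in the part of $\sigma$ strictly between these two leaves, and that part is contained in $Q$. This gives $x\in Q$ and that $\ell_x^+$ separates $a$ from $b$, hence $Q=Q^+_{c,x}$.

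The paper argues in exactly this order: first $x\in\sigma$, then (2), then $x\in Q$. With that reordering, your argument goes through.
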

\begin{proof}
	Denote the stable internal cusp leaf of $Q$ by $\eta$ and the stable non-corner boundary leaves of $Q$ by $\beta_1, \beta_2$ 
	
	Suppose that $Q = Q^+_{c,x}$. Since $Q$ has ideal corner at $c$ and $x \in Q$, $Q$ is contained in the same orthant at $c$ as $x$. Hence (1) holds.

By Observation \ref{obs:tets:get:thinner} and Theorem \ref{thm:flow:lines},  $\left\lbrace\bigcap\limits_{i=0}^{n} R_i\right\rbrace_{n \in \mathbb{N}}$ is a strictly descending family of rectangles whose intersection is a line segment contained in $\ell_x^+$.
Since, by Definition \ref{def:face:rect:separating}, $\eta$ separates $x$ from~$c$, there exists $N_1 \in \mathbb{N}$ such that for every $n \geq N_1$ the leaf $\eta$ separates all cusps of $R_n$ from~$c$. 
Similarly, since $x$ and $c$ are on the same side of $\beta_1$ and $\beta_2$, there exists $N_2 \in \mathbb{N}$ such that for every $n \geq N_2$ all cusps of $R_n$ and $c$ are on the same side of $\beta_i$ for $i=1,2$. Setting $N = \max\lbrace N_1, N_2 \rbrace$ proves (2). 

Conversely, suppose that conditions (1) and (2) hold. We need to show that $x \in Q$ and~$\eta$ separates $x$ from $c$. Since $Q$ is a face rectangle with ideal corner at $c$, it lies in some staircase $\sigma$ at $c$. Because $x$ spans a rectangle with $c$ and, by (1), lies in the same orthant at $c$ as $Q$, it follows that $x \in \sigma$.

Condition (2) and Observation \ref{obs:tets:get:thinner} imply that $\ell_x^+$ separates $\eta$ from $\beta_1$ and $\beta_2$. Thus $\eta$ separates $c$ from $x$. Since $x \in \sigma$, we conclude that $x \in Q$. Thus $Q = Q^+_{c, x}$.
\end{proof}

Lemma~\ref{lem:recognizing:Q:cx} leads to the following iterative procedure for finding $Q^+_{c, x}$. 
\begin{procedure} \label{proc:Q:cx:search} (Iterative search for $Q^+_{c, x}$ given $c \in \Omega_x$ and a continent $\C$ meeting $c$)\hfill
\begin{enumerate}[leftmargin=3.8em, label=(\theprocedure.\arabic*),
	ref=\theprocedure.\arabic*]
	\item Find the orthant $O_{c,x}$ at $c$ that contains $x$. \label{proc:orthant}
	\item Find the (finite) collection $\Q^\C_{c,x}$ of face rectangles with ideal corner at $c$ that are contained in $O_{c, x} \cap \loomif(\C)$. \label{proc:step:face:rects}
	\item For each $Q \in \Q^\C_{c, x}$, check whether:\label{proc:step:conditions}
	\begin{enumerate}[label=(\alph*), ref=\theprocedure.\arabic{enumi}~(\alph*)]
		\item the stable internal cusp leaf of $Q$ separates $c$ from all cusps of the uppermost tetrahedron of $\C \cap \eleZeta$, and \label{cond:separates:from:uppermost}
		\item all cusps of the uppermost tetrahedron of $\C$ and $c$ lie on the same side of each stable non-corner boundary cusp leaf of $Q$.\label{cond:same:side:stable:bdy}
	\end{enumerate}
	If such a rectangle $Q$ exists, then $Q = Q^+_{c,x}$ by Lemma~\ref{lem:recognizing:Q:cx}.\\
Otherwise, expand~$\C$ upwards along $\eleZeta$ and repeat Steps \ref{proc:step:face:rects} and \ref{proc:step:conditions}.
\end{enumerate}
\end{procedure}

Since all cusps of $Q^+_{c,x}$ meet some staircase at $x$, Lemma~\ref{lem:finite:expansion:meeting:cusps} implies that after finitely many upwards expansions along $\eleZeta$ we must reach a continent whose loomification contains $Q^+_{c,x}$. Thus Procedure \ref{proc:Q:cx:search} terminates.

The first and third decision loops in the diagram on Figure \ref{fig:flowchart:adapting} are carried out using Procedure \ref{proc:Q:cx:search}. The second and fourth decision loops are analogous, with stable cusp leaves replaced by unstable cusp leaves, the uppermost tetrahedron of $\C \cap \eleZeta$ replaced by its lowermost tetrahedron, and upwards expansion of $\C$ along $\eleZeta$ replaced by downwards expansion of $\C$ along $\eleZeta$.

\subsubsection{Implementation}\label{subsubsec:implementation:adapting}
Recall from Section \ref{subsec:linear:orders} that given a continent $\C$ the loom-compatible circular order $\Theta$ on $\Omega(\C) \cup \E(\C)$ determines, for each continental cusp leaf $\ell$, a side function $\Theta_\ell: \Omega(\C) \rightarrow \lbrace -1, 0, 1 \rbrace$. By Lemma \ref{lem:theta:l:function}, $\Theta_\ell$ can be used to determine the relative positions of cusps of $\C$ with respect to $\ell$. We use these side functions in implementations of Procedure \ref{proc:Q:cx:search}, as well as Steps 4 and~9 of the algorithm.
Since, as explained in Section \ref{subsec:convexity}, in order to effectively determine $\Theta$ we need $\C$ to be convex, from Step~3 onward  it becomes crucial  to work exclusively with convex continents.

\begin{remark}\label{rem:expanding:to:continental}
We may write $\Theta_\ell(\cdot)$ for a cusp leaf $\ell$ that is not guaranteed to be continental with respect to a given continent $\C$. This notation is justified by the fact that if $\C'$ is an expansion of $\C$, then the corresponding circular order
$\Theta' : (\Omega(\C') \cup \E(\C'))^3 \to \lbrace -1, 0, 1 \rbrace$ agrees with $\Theta$ on $(\Omega(\C) \cup \E(\C))^3 $. Accordingly, by $\Theta_\ell(\cdot)$ we mean the value of $\Theta'_\ell(\cdot)$ after an appropriate expansion of $\C$. 

\end{remark}

\subsubsection*{Subprocedure (\ref{proc:orthant}): Finding the orthant at $c \in \Omega_x$ that contains $x$}
Let~$O_{c,x}$ be the orthant at~$c$ that contains $x$. To find $O_{c,x}$  we first find a simplex rectangle~$X_c$ that is guaranteed to contain $x$ and meets $c$ along the interior of one of its horizontal sides. There is a unique stable $c$-leaf $\mu$ that intersects $X_c$ and this leaf must be the stable boundary of $O_{c,x}$. Furthermore, the two unstable $c$-leaves $\nu_1, \nu_2$ intersecting $\partial X_c$ are the only two possible options for the unstable boundary of $O_{c,x}$. 

\begin{table}[H]
	\centering
	\begin{tabular}{|c|c|c|c|c|}
		\hline 
		$c$ & $g_\zeta^{2}(a)$ & $a$ & $b = s^+_{a,2,x}$ & $g_{\eleZeta}^{-2}(b)$\\ \hline
		$X_c$ & $R_{2k}$ & $R_0$ & $Q^+_{a,2,x}$ & $Q^+_{a,0,x}$ \\
\hline 

	\end{tabular}
\vspace{2pt}
	\caption{Simplex rectangle $X_c$ that contains $x$ and meets $c$ along the interior of one of its horizontal sides for $c \in \left\lbrace a, g_{\eleZeta}^{2}(a), b = s^+_{a,2,c}, g_{\eleZeta}^{-2}(b) \right\rbrace$. The stable $c$-leaf  intersecting $X_c$ forms the stable boundary of $O_{c,x}$.}
	\label{tab:orthants}
\end{table}

We list $X_c$ for  $c \in\left \lbrace a, g_{\eleZeta}^2(a), b =  s^+_{a,2,x}, g^{-2}_{\eleZeta}(b)\right\rbrace$ in Table \ref{tab:orthants}. This gives us  the stable boundary $\mu$ of $O_{c,x}$. Denote the unstable boundary of $O_{c,x}$ by~$\nu$. By Lemma~\ref{lem:theta:l:function}~(3), for any cusp $d$ other than $c$ that meets $O_{c,x}$, 
\[\Theta_{\mu}(d)\cdot\Theta_\nu(d) = -1 \text{ and }  
\Theta_{\nu_1}(d)\cdot\Theta_{\nu_2}(d) = -1.\]
Therefore we can determine which one of $\nu_1$ and $\nu_2$ is the unstable boundary $\nu$ of $O_{c,x}$ by first finding a cusp $d$ that is guaranteed to meet $O_{c,x}$ and then computing the values of $\Theta_{\nu_1}(d)$, $\Theta_{\nu_2}(d)$, and $\Theta_\mu(d)$. It remains to explain how to find the cusp $d$.

When $c \in \lbrace a, g_{\eleZeta}^2(a)\rbrace$ then $c$ meets the tallest edge subrectangle of $R_j$ for some $j \in \zz$. In this case, there is exactly one cusp $d$ that meets the tallest edge subrectangle $R_{j+1}$ and satisfies $\Theta_{\nu_1}(d) \cdot \Theta_{\nu_2}(d) = -1$; see Figure \ref{fig:d:O:c:x}. Then $d\neq c$ and $d$ meets $O_{c,x}$.

When $c \in \lbrace b, g_{\eleZeta}^{-2}(b)\rbrace$ then $c$ is the origin of the stable internal cusp leaf of $Q^+_{a,j,x}$, for some $j \in \zz$. We set $d$ to be the other non-corner cusp of $Q^+_{a,j,x}$. Then $d \neq c$ and, by Definition \ref{def:face:rect:separating}, $d$ is guaranteed to meet $O_{c,x}$.

		\begin{figure}[h]
	\centering
	\subfloat{
		\labellist
		\hair 3pt \small
		\pinlabel $c$ [tl] at 117 117
		\pinlabel $R_j$ [t] at 247 117
		\pinlabel $\mu$ [l] at 117 430
		\pinlabel $\nu$ [t] at 20 117
		\pinlabel $R_{j+1}$ [r] at 50 380
		\pinlabel $d$ [b] at 85 409
		\endlabellist
		\includegraphics[width = 0.2\textwidth]{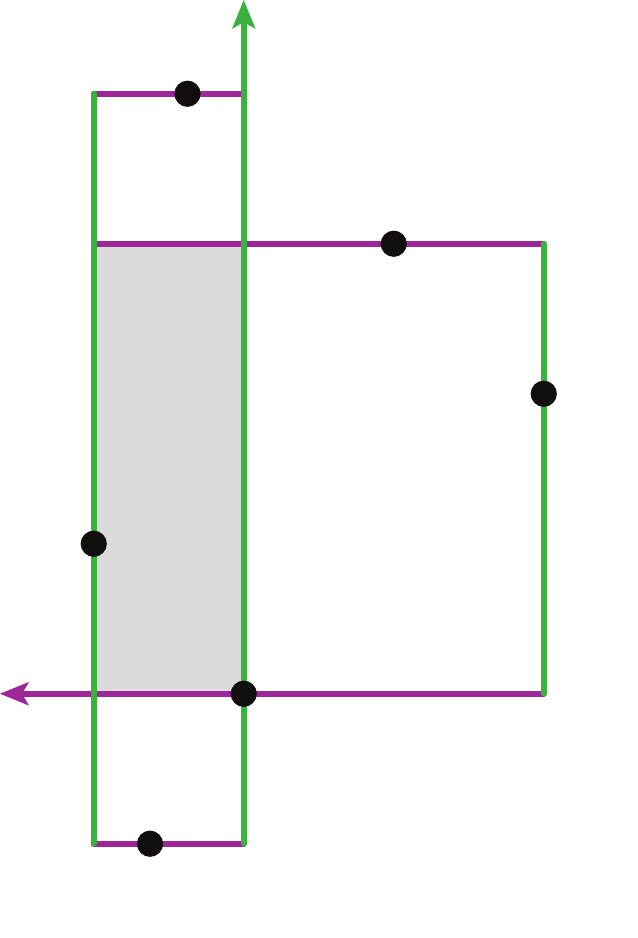}
	}
	\subfloat{
		\labellist
		\hair 3pt \small
		\pinlabel $c$ [tr] at 117 117
		\pinlabel $R_j$ [t] at 247 117
		\pinlabel $\mu$ [l] at 117 430
		\pinlabel $\nu$ [t] at 285 117
		\pinlabel $R_{j+1}$ [l] at 189 380
		\pinlabel $d$ [b] at 169 409
		\endlabellist
		\includegraphics[width = 0.2\textwidth]{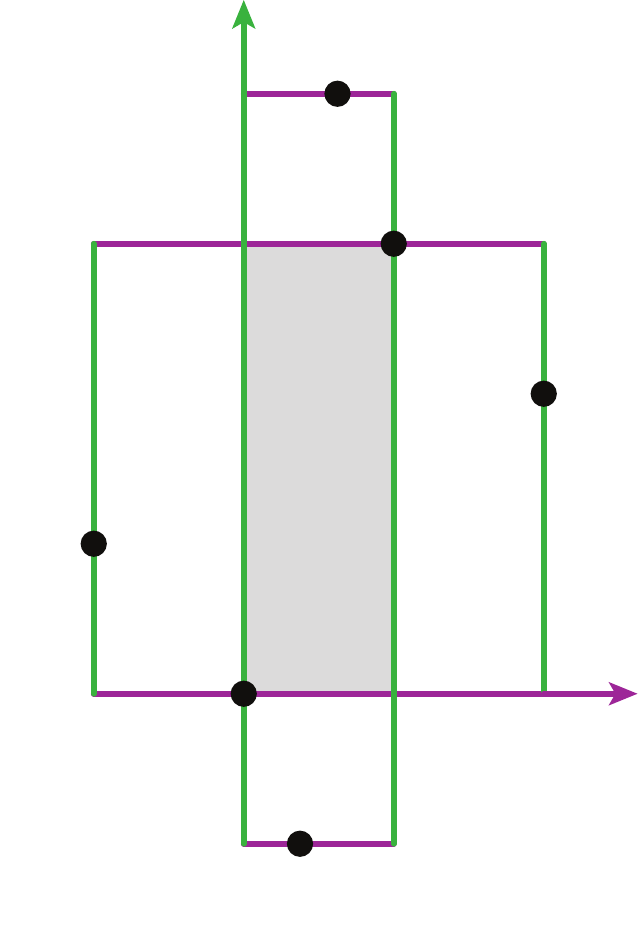}
	}
	\subfloat{
		\labellist
		\hair 3pt \small
		\pinlabel $c$ [tl] at 117 117
		\pinlabel $R_j$ [t] at 65 117
		\pinlabel $\mu$ [l] at 117 430
		\pinlabel $\nu$ [t] at 285 117
		\pinlabel $R_{j+1}$ [l] at 261 380
		\pinlabel $d$ [b] at 228 409
		\endlabellist
		\includegraphics[width = 0.2\textwidth]{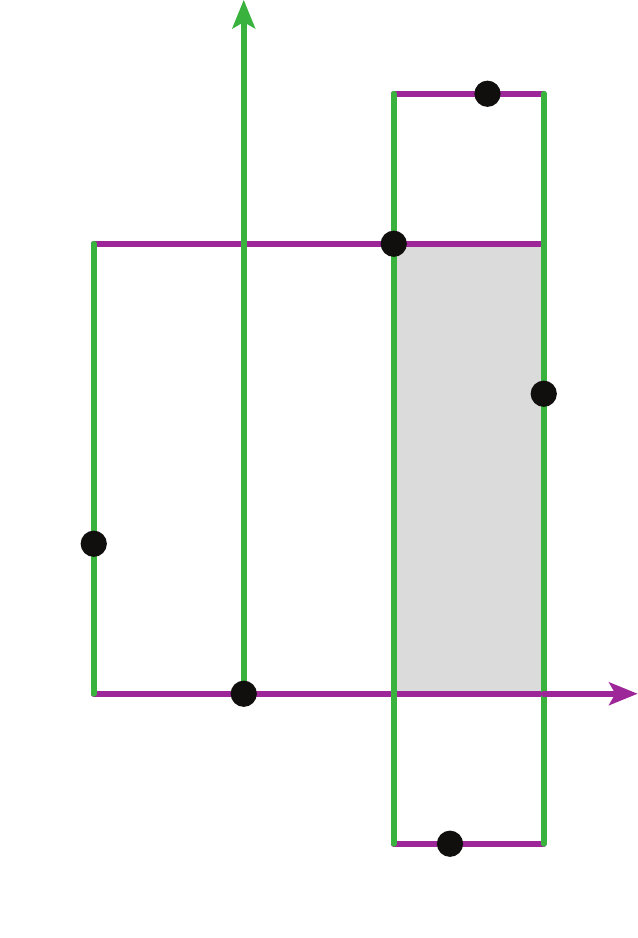}
	}
	
	\subfloat{
		\labellist
		\hair 3pt \small
		\pinlabel $c$ [bl] at 189 333
		\pinlabel $R_j$ [t] at 247 117
		\pinlabel $\mu$ [l] at 189 30
		\pinlabel $\nu$ [t] at 20 333
		\pinlabel $R_{j+1}$ [r] at 50 380
		\pinlabel $d$ [t] at 82 46
		\endlabellist
		\includegraphics[width = 0.2\textwidth]{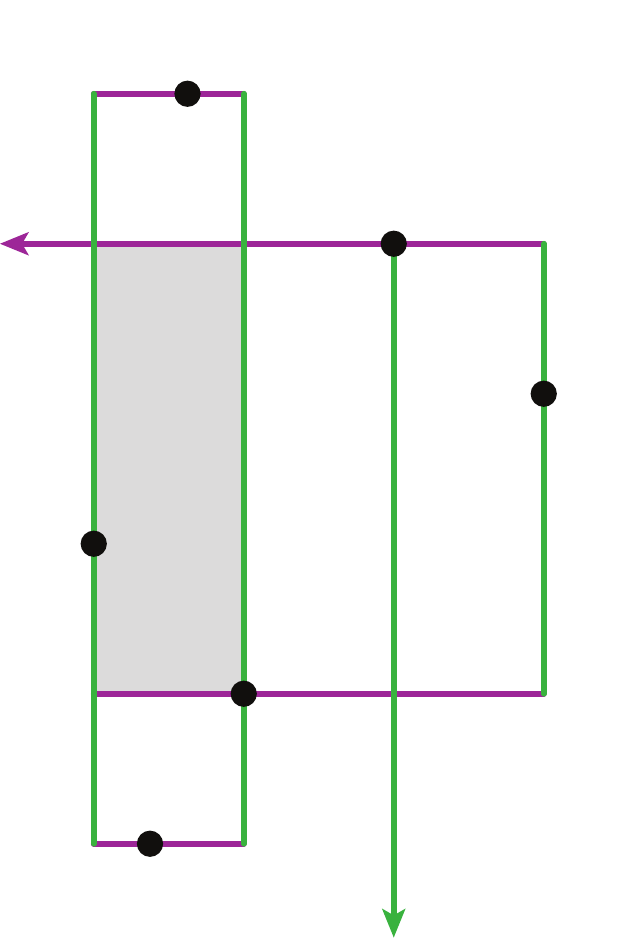}
	}
	\subfloat{
		\labellist
		\hair 3pt \small
		\pinlabel $c$ [bl] at 189 333
		\pinlabel $R_j$ [t] at 247 117
		\pinlabel $\mu$ [l] at 189 30
		\pinlabel $\nu$ [t] at 20 333
		\pinlabel $R_{j+1}$ [l] at 189 380
		\pinlabel $d$ [t] at 129 46
		\endlabellist
		\includegraphics[width = 0.2\textwidth]{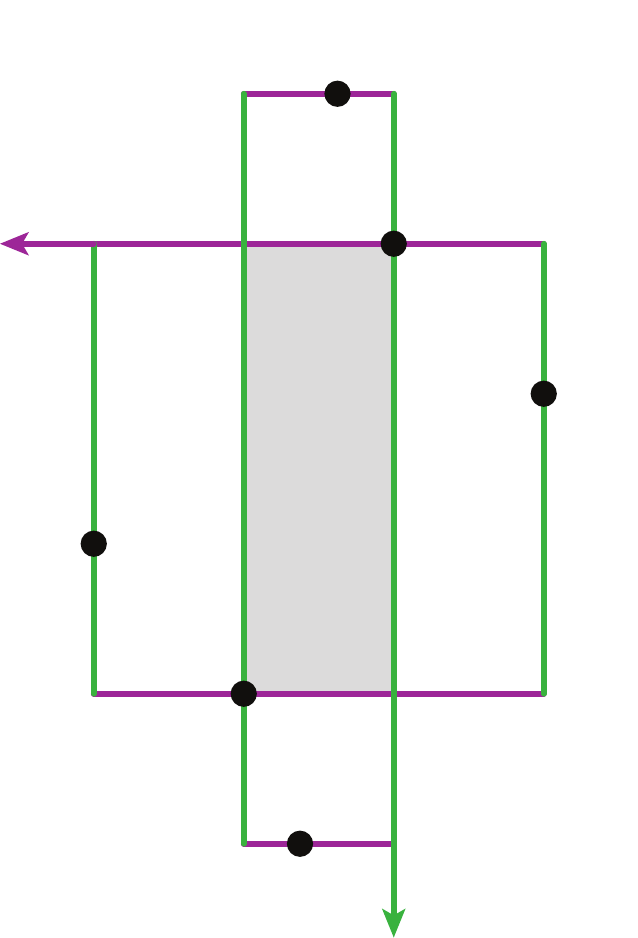}
	}
	\subfloat{
		\labellist
		\hair 3pt \small
		\pinlabel $c$ [br] at 189 333
		\pinlabel $R_j$ [t] at 65 117
		\pinlabel $\mu$ [r] at 189 30
		\pinlabel $\nu$ [t] at 285 333
		\pinlabel $R_{j+1}$ [l] at 261 380
		\pinlabel $d$ [t] at 228 46
		\endlabellist
		\includegraphics[width = 0.2\textwidth]{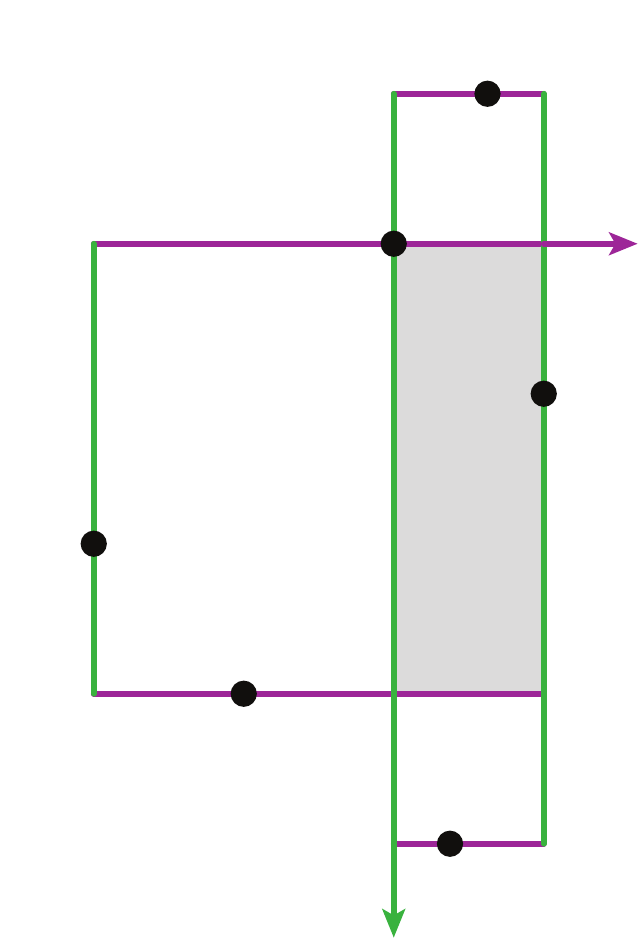}
	}

	\caption{Finding a cusp $d$ that meets  $O_{c,x}$ when $c$ meets the horizontal side of $R_j$ for some $j \in \zz$. The shaded region indicates the possible position of $x$.}
	\label{fig:d:O:c:x}
\end{figure}

\subsubsection*{Subprocedure (\ref{proc:step:face:rects}): Finding face rectangles with ideal corner at $c$ contained in~$O_{c,x}$}
Let \mbox{$\mu$, $\nu$} be the stable and the unstable boundary of $O_{c,x}$, respectively. 
Given a face rectangle~$Q$ with ideal corner at $c$ the containment $Q \subset O_{c,x}$ holds if and only if both non-corner cusps of~$Q$ lie strictly between $\mu$ and $\nu$. 
By Lemma \ref{lem:theta:l:function} (3), this is equivalent to requiring that $\Theta_{\mu}(d) \cdot\Theta_{\nu}(d) = -1$ for every non-corner cusp $d$ of $Q$.

Part \eqref{proc:step:face:rects} of Procedure~\ref{proc:Q:cx:search} is implemented by checking the above condition for each face rectangle in $\loomif(\C)$ with ideal corner at $c$ and retaining those for which it holds. Since there are only finitely many face rectangles in $\loomif(\C)$, the search is finite.

\subsubsection*{Subprocedure (\ref{proc:step:conditions}): Searching for $Q^\pm_{c,x}$} 
Let  $\Q^\C_{c,x}$ be the collection of face rectangles with ideal corner at $c$ that are contained in $O_{c,x} \cap \loomif(\C)$. This collection is found through Part \eqref{proc:step:face:rects} of Procedure~\ref{proc:Q:cx:search}. In Part \eqref{proc:step:conditions} we go through every $Q \in \Q^\C_{c,x}$  and check if any of them satisfies conditions \eqref{cond:separates:from:uppermost} and \eqref{cond:same:side:stable:bdy}.  

As in the previous two steps of Procedure \ref{proc:Q:cx:search}, the relevant conditions can be checked using the side functions $\Theta_\ell$. Let $Q \in \Q^\C_{c,x}$.
Denote the stable internal cusp leaf of $Q$ by $\eta$ and the two stable non-corner boundary cusp leaves of $Q$ by $\beta_1, \beta_2$. By Lemma \ref{lem:theta:l:function}~(1), condition \eqref{cond:separates:from:uppermost} is satisfied if and only if $\Theta_\eta(c)\cdot \Theta_\eta(d) = -1$ for every cusp $d$ of the uppermost tetrahedron $\tau$ of $\eleZeta \cap \C$. Similarly, condition \eqref{cond:same:side:stable:bdy} is satisfied if and only if $\Theta_{\beta_i}(c) \cdot \Theta_{\beta_i}(d) = 1 $ for every cusp $d$ of $\tau$ and every $i=1,2$.

If $Q$ satisfies \eqref{cond:separates:from:uppermost} and \eqref{cond:same:side:stable:bdy} then Lemma \ref{lem:recognizing:Q:cx} implies that $Q = Q^+_{c,x}$. If no $Q \in \Q^\C_{c,x}$  satisfies both these conditions, we expand the the continent upwards along~$\eleZeta$, and repeat Steps \eqref{proc:step:face:rects} and \eqref{proc:step:conditions} of Procedure~\ref{proc:Q:cx:search}.  

\begin{remark}
Since expanding the continent upwards makes the loomification of the uppermost tetrahedron of the corresponding flow interval strictly thinner (Observation~\ref{obs:tets:get:thinner}), it suffices to check the separation condition \eqref{cond:separates:from:uppermost} via the weaker condition $\Theta_\eta(c) \neq \Theta_\eta(d)$ for every cusp $d$ of the uppermost tetrahedron of $\eleZeta \cap \C$.
 This yields a slight speed-up of the algorithm.
\end{remark}

The conditions for $Q = Q^-_{c,x}$ are analogous, with the stable cusp leaves $\eta$, $\beta_1, \beta_2$ of~$Q$ replaced by its unstable cusp leaves, and the cusps of the uppermost tetrahedron of $\C \cap \eleZeta$ replaced by the cusps of its lowermost tetrahedron. Furthermore, when no $Q \in \Q^\C_{c,x}$  satisfies the conditions, the continent needs to be expanded downwards along $\eleZeta$.

		\subsection{Locating continental approximations inside edge rectangles}\label{subsec:locating}
		In \textbf{Step 4} we need to find all edge rectangles in $\loomif(\C_1)$ that contain $x = \bigcap \eleZeta$. Since we only see the flow interval $\eleZeta^{\C_1} \subset \C_1$, the exact position of $x$ is unknown. Given an edge rectangle $P \subset \loomif(\C_1)$, there are three possibilities for the relationship between the continental approximation $N_1(x) = \bigcap \eleZeta^{\C_1}$ of $x$ and $P$
		\begin{enumerate}
			\item $N_1(x) \cap P = \emptyset$,
			\item $N_1(x) \subset P$,
			\item $N_1(x) \cap P \neq \emptyset$ and $N_1(x) \not\subset P$.
		\end{enumerate} 
	
	In case (1), $x \notin P$. In case (2), $x \in P$. If case (3) holds, it is not possible to determine if $x \in P$ using only $\C_1$. This motivates the following definition.
	
	\begin{definition}\label{def:leaf:line:comparable}
		Let $\C$ be a convex continent, and let $\rho$ be a flow line such that $\rho^\C \neq \emptyset$. Given a stable (respectively, unstable) continental cusp leaf $\ell$ we say that $\rho$ and~$\ell$ are \emph{comparable} in $\C$ if all cusps of the uppermost (respectively, lowermost) tetrahedron of~$\rho^\C$ are on the same side of $\ell$.
	\end{definition}

	\begin{lemma}\label{lem:expanding:until:on:the:same:side}
		Let $\C$ be a convex continent in $\univV$, and let $\rho$ be a periodic flow line such that $\rho^\C \neq \emptyset$.  For any stable (respectively, unstable) continental cusp leaf $\ell$, expanding~$\C$ upwards (respectively, downwards) along $\rho$ eventually yields a continent in which $\rho$ and $\ell$ are comparable.
	\end{lemma}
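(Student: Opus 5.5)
We treat the stable case; the unstable case is symmetric, with downward expansions, lowermost tetrahedra and negative indices. Write $\rho = (R_i)_{i\in\zz}$ and set $x = \bigcap \rho$.

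First we dispose of the peripheral case. If $\rho$ represents a cusp $c$, then every $R_i$ meets $c$. If $\ell$ originates at $c$, then all cusps of each $R_i$ other than $c$ lie in a single staircase at $c$, and $\ell$ either avoids that staircase or not. The main case, and the one the algorithm uses, is $x \in \loom$; we argue there and only remark that in the cusp case Observation~\ref{obs:tets:get:thinner} gives the same shrinking. So assume $x\in\loom$ is periodic and $\ell$ is a stable cusp leaf originating at $c$. By Proposition~\ref{prop:orbit:space}~\eqref{orbit_space:one:periodic}, no leaf contains both a cusp and the periodic point $x$, so $\ell \neq \ell_x^+$. Since distinct stable leaves are disjoint, $\ell$ lies strictly on one side of $\ell^+_x$. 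Hence there is an open neighbourhood of the stable leaf $\ell_x^+$, in the leaf space sense, that is disjoint from $\ell$.

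The key step is to show that the cusps of $R_n$ eventually lie in that neighbourhood, on the far side of $\ell$ or at least not separated by it. By Observation~\ref{obs:tets:get:thinner} and Theorem~\ref{thm:flow:lines}, as used in the proof of Lemma~\ref{lem:recognizing:Q:cx}, the rectangles $\bigcap_{i=0}^n R_i$ form a strictly descending family whose intersection is a segment of $\ell_x^+$. Consequently the stable leaves meeting $R_n$ shrink to $\ell_x^+$. We make this precise using periodicity: with $g=g_\rho$ translating $\rho$ by $k$, we have $R_{i+mk} = g^m(R_i)$. By Proposition~\ref{prop:orbit:space}~\eqref{orbit_space:fixed:points}, $g^m$ contracts toward $x$ transversally to $\ell_x^+$. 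So the horizontal extent of $R_{jk}$, measured along $\ell_x^-$, tends to $x$, and eventually $R_n$ is thinner than a rectangle through $x$ that $\ell$ does not cross.

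It remains to handle the cusps. The two cusps on the vertical sides of $R_n$ lie on stable leaves meeting the closure of $R_n$, which are then all on one side of $\ell$. The cusps on the horizontal sides lie on stable leaves crossing $R_n$, so they too are on that side. Taking $N$ large and expanding $\C$ upwards along $\rho$ until $t_N\in\C$ makes $R_N$ the loomification of the uppermost tetrahedron. By Remark~\ref{rem:expanding:to:continental}, $\ell$ remains continental, or becomes so, with consistent side functions. Since the set of cusps on one side of $\ell$ is detected by $\Theta_\ell$ (Lemma~\ref{lem:theta:l:function}~(1)), comparability holds.

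The main obstacle is the rigorous version of the shrinking step: concluding from the descending family with segment intersection that all four cusps of $R_n$, not only the rectangle itself, eventually avoid being separated by $\ell$. The cusps sit on the boundary, and $\ell$ could a priori pass through a corner region. We would handle this with the strict-inclusion chain $R_{n+2}$ intersecting both horizontal sides of $R_{n+1}$, which forces the closure of $R_{n+2}$ inside the horizontal extent of $R_{n+1}$, together with the discreteness of cusps from Proposition~\ref{prop:orbit:space}~\eqref{orbit_space:no:accumulation:points}.
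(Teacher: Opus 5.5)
You follow the paper's route: its proof only notes that $\bigcap\rho$ lies on no cusp leaf, hence strictly on one side of $\ell$, and then invokes Observation~\ref{obs:tets:get:thinner}. Your shrinking step, which gives $\ell\cap R_n=\emptyset$ for all large $n$, is fine. The gap is in the cusp step, which you flag yourself.

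Comparability requires every cusp of the uppermost tetrahedron to lie \emph{strictly} on one side of $\ell$. Since $\Theta_\ell(\iota(\ell))=0$ (Lemma~\ref{lem:theta:l:function}), you must rule out that $c=\iota(\ell)$ is a cusp of $R_n$, and nothing in your argument does this. Your sentence on vertical-side cusps breaks down exactly when $\ell$ is one of the two stable $c$-leaves carrying a vertical side of $R_n$. Your sentence on horizontal-side cusps breaks down when $c$ lies on a horizontal side and $\ell$ is a stable $c$-leaf other than the one crossing $R_n$. In both situations $\ell$ misses $R_n$, yet comparability fails.

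The remedy you sketch is false: the closure of $R_{n+2}$ need not lie in the horizontal extent of $R_{n+1}$. Suppose the south cusp $s$ of $R_{n+1}$ lies west of its north cusp, so the tallest edge rectangle is red. Then the flow line may go sideways through the blue edge joining $s$ to the west cusp $w$. In that case $R_{n+2}$ has $w$ on its west side, carried by the same two stable $w$-leaves as the west side of $R_{n+1}$. Discreteness of cusps does not help, since a single fixed cusp is at issue.

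The missing observation is short. If $c\in\Omega(R_n)$, then, as $x\in R_n$, the rectangle $R_n$ contains the rectangle with opposite corners $x$ and $c$. This rectangle is nondegenerate, because $\ell_x^\pm$ are not cusp leaves. It does not depend on $n$, and the stable leaves crossing it form a nonempty set not containing $\ell_x^+$. The sets of stable leaves crossing the $R_n$ are nested with intersection $\{\ell_x^+\}$, so $c\in\Omega(R_n)$ can happen for only finitely many $n$.

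Once $\ell\cap R_n=\emptyset$ and $c\notin\Omega(R_n)$, each cusp of $R_n$ has a cusp neighbourhood that is disjoint from $\ell$ and meets the connected set $R_n$. Hence all cusps of $R_n$ lie on the side of $\ell$ containing $R_n$, which is comparability.

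Finally, drop the remark that the peripheral case works the same way. If $\rho$ represents a cusp $c$ and $\ell$ is a stable $c$-leaf, then $c$ is a cusp of every uppermost tetrahedron, so comparability never holds. The lemma, like the paper's proof, really concerns the case $\bigcap\rho\in\loom$.
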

\begin{proof}
Since $\rho$ is periodic, Proposition \ref{prop:orbit:space} \eqref{orbit_space:one:periodic} implies that $p = \bigcap \rho$ does not lie on a cusp leaf. In particular, $p$ lies strictly on one side of $\ell$. The statement therefore follows from Observation \ref{obs:tets:get:thinner}.
\end{proof}

	Lemma \ref{lem:expanding:until:on:the:same:side} implies that if case~(3) holds then expanding $\C_1$  along $\eleZeta$ eventually yields a continent for which the corresponding continental approximation of $x$ lies either entirely inside $P$ or entirely outside $P$. Then we can deduce that $x \in P$ or $x \notin P$, respectively.

		\subsubsection{Implementation}
		
	Let $P$ be an edge rectangle in $\loomif(\C_1)$. In Lemma \ref{lem:x:notin:P}, we give a sufficient condition for $x\notin P$ in terms of the \emph{boundary cusp leaves} of $P$, that is, the cusp leaves containing the sides of $P$. To use this condition to establish that $x\notin P$, a boundary cusp leaf $\ell$ of $P$ must be comparable with $\eleZeta$. If this is not yet the case in~$\C_1$ (or an expansion of $\C_1$), we continue expanding along $\eleZeta$ until comparability is achieved. Lemma \ref{lem:expanding:until:on:the:same:side} guarantees that this process terminates after finitely many steps.



	\begin{lemma}\label{lem:x:notin:P}
		Let $\ell$ be a boundary cusp leaf of an edge rectangle $P \subset \loomif(\C_1)$. Let~$\C$ be an expansion of  $\C_1$ in which $\eleZeta$ and $\ell$ are comparable.
		If $\ell$ is stable, let $c$ be any cusp of the uppermost tetrahedron of $\eleZeta^\C$. Otherwise, let $c$ be any cusp of the lowermost tetrahedron of $\eleZeta^\C$.
		Denote by $d$ the cusp of $P$ that is not the origin of $\ell$.  If $\Theta_\ell(c) \cdot\Theta_\ell(d) = -1$ then $x\notin P$.
	\end{lemma}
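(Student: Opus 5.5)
The plan is to turn the side-function condition into a statement about which side of $\ell$ the point $x$ lies on, and then contrast it with the side on which $P$ lies. Throughout, assume $\ell$ is stable; the unstable case is the same argument with the lowermost tetrahedron in place of the uppermost one.

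First, locate $P$ relative to $\ell$. Since $\ell$ is a boundary cusp leaf of $P$, it contains a side of $P$ and originates at one cusp of $P$, say $\iota(\ell)$. Call the other cusp $d$. The rectangle $P$ is open and $\ell$ contains one of its sides, so $P$ lies entirely in one component of $\loom\bez\ell$. That component is the one met by the opposite ideal corner $d$, because $d$ lies in the closure of $P$ and not on $\ell$ (no leaf joins two cusps, by Proposition~\ref{prop:orbit:space}~\eqref{orbit_space:one:periodic}). Passing to an expansion so that $\ell$ is continental, as in Remark~\ref{rem:expanding:to:continental}, and applying Lemma~\ref{lem:theta:l:function}~(1), we get that $P$ lies strictly on the side of $\ell$ labelled by $\Theta_\ell(d)$. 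Here we use that a leaf crossing the convex disk $\loomif(\C)$ splits it into two pieces, and that the two sides of $\ell$ in $\loom$ restrict to these pieces.

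Second, locate $x$ relative to $\ell$. Let $\tau$ be the uppermost tetrahedron of $\eleZeta^\C$. Comparability means that all cusps of $\loomif(\tau)$ lie on one side of $\ell$, and in particular $c$ is one of them. By Observation~\ref{obs:tets:get:thinner} and Theorem~\ref{thm:flow:lines}, $x$ lies in $\loomif(\tau)$, and indeed in $\bigcap\eleZeta^\C$. I claim $\ell$ does not intersect $\loomif(\tau)$. For a tetrahedron rectangle, a stable leaf crossing it separates its two vertical-side cusps. Moreover, if $\ell$ originated at a cusp of $\loomif(\tau)$, that cusp would have $\Theta_\ell = 0$ and comparability would fail; if we interpret comparability as requiring the values to be nonzero and equal, this case is excluded, and otherwise it needs a separate small check. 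Hence the whole rectangle $\loomif(\tau)$, including $x$, lies on the side of $\ell$ containing $c$, and that side is labelled by $\Theta_\ell(c)$.

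Finally, $\Theta_\ell(c)\cdot\Theta_\ell(d)=-1$ says that $\ell$ strictly separates $c$ from $d$. So $x$ and $P$ lie in different components of $\loom\bez\ell$, and therefore $x\notin P$.

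The main obstacle is the second step: showing that a stable leaf with all cusps of a tetrahedron rectangle on one side misses that rectangle. This follows from the structure of tetrahedron rectangles shown in Figure~\ref{fig:tet:rects}: every stable leaf through the rectangle separates its west cusp from its east cusp. The other delicate point is the degenerate case where $\ell$ originates at a cusp of $\loomif(\tau)$, which the interpretation of comparability has to rule out.
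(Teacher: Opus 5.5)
Your proof is correct and follows the same route as the paper. Comparability places $x\in\loomif(\tau)$ on the side of $\ell$ containing $c$, while $P$ lies on the side containing $d$, so $\Theta_\ell(c)\cdot\Theta_\ell(d)=-1$ forces $\ell$ to separate $x$ from $P$. You also supply two details the paper leaves implicit: that $\ell$ misses $\loomif(\tau)$, and that $P$ lies on $d$'s side. The degenerate case you flag needs no separate treatment: any stable leaf crossing $\loomif(\tau)$ strictly separates its west and east cusps, and this contradicts comparability under either reading.
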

\begin{proof}
If $\ell$ is a stable leaf, let $\tau$ be the uppermost tetrahedron of $\eleZeta^\C$. Otherwise, let $\tau$  be the lowermost tetrahedron of $\eleZeta^\C$.  Since $\eleZeta$ and $\ell$ are comparable in $\C$, all cusps of $\tau$ are on the same side of $\ell$. Thus the value of $\Theta_\ell(c)$ does not depend on the choice of $c \in \Omega(\tau)$. Observe also that $\Theta_\ell(d)\neq 0$ and thus $\Theta_\ell(c)\cdot \Theta_\ell(d) \in \lbrace -1, 1\rbrace$.
	
 Since $x \in \loomif(\tau)$, $x$ is on the same side of $\ell$ as the fixed cusp $c$ of~$\tau$.  Therefore, by Lemma \ref{lem:theta:l:function} (1), if $\Theta_\ell(c)\cdot \Theta_\ell(d)=-1$ then $\ell$ separates $d$ from $x$. Hence $x \notin P$. 
	\end{proof}

For a boundary cusp leaf $\ell$ of $P$, the condition $\Theta_\ell(c)\cdot \Theta_\ell(d)=-1$ from Lemma~\ref{lem:x:notin:P} certifies that $\ell$ separates $x$ from the boundary cusp leaf of $P$ that is parallel to $\ell$. Using that interpretation, the algorithm for determining if $x \in P$ can be described as follows.

Let $s_1,s_2$ be the stable boundary leaves of $P$, and let $u_1,u_2$ be the unstable boundary leaves of $P$. First use Lemma \ref{lem:x:notin:P} to check whether $s_1$ separates $x$ from $s_2$. If so, then $x \notin P$. Otherwise, check whether $s_2$ separates $x$ from $s_1$. If so, then again $x \notin P$. We then perform the analogous checks for $u_1$ and $u_2$: if either $u_1$ separates $x$ from $u_2$ or $u_2$ separates $x$ from $u_1$, then $x \notin P$. If none of these four separation conditions occurs, then $x \in P$.

	\subsection{Translating flow lines}\label{subsec:translate}
In \textbf{Step 5} we \emph{translate} the initial flow line $\eleZeta$ into other flow lines that represent points of $\liftZeta$ lying inside fundamental edge rectangles. In \textbf{Step 7} we further translate these flow lines into flow lines that represent points of $\liftZeta$ lying inside non-fundamental edge rectangles contained in fundamental tetrahedron rectangles. 

As in all steps of the algorithm, we work with convex continents in $\univV$ rather than the entire $\univV$, and thus with flow intervals rather than entire flow lines. However, for a flow line~$\rho$ and $g \in \pi_1(M)$, the restriction $g(\rho)^\C$ of $g(\rho)$ to a continent $\C$ need not be a translate of the flow interval $\rho^\C$. We therefore explain the construction in terms of translating entire flow lines; the flow intervals that we actually work with are obtained by restricting the resulting flow lines to the continent under consideration. In the process, the continent may also need to be expanded to include certain tetrahedra that it does not initially contain.

Both Steps~5 and~7 involve the following situation. We are given an elevation $\eleZeta'$ of~$\zeta$, an edge rectangle $P'$ for which we have certified that $\bigcap\eleZeta'\in P'$, and a translate $P''$ of~$P'$. There is an element $g\in\pi_1(M)$ such that $P''=g(P')$. This element, however, is not given as an abstract element of $\pi_1(M)$, but rather is encoded by a path of tetrahedra in $\univV$. Specifically, let $R'$ and $R''$ be the maximal vertical expansions of $P'$ and $P''$, respectively. Both $R'$ and $R''$ are tetrahedron rectangles and thus determine tetrahedra $t'$ and $t''$ of $\univV$, respectively.  The element $g$ may be represented by a path from $t'$ to $t''$. Notice that $t'$ does not need to belong to the flow line $\eleZeta'$.

Our goal is to construct the flow line $g(\eleZeta')$. A subtlety arises from the fact that the same tetrahedron of $\univV$ may belong to several elevations of $\zeta$, so we need to keep track of the particular elevation under consideration. For this purpose, fix an enumeration of the edges of each tetrahedron of~$\V$. Then, whenever an edge $e$ of $\V$ belongs to a tetrahedron~$t$ of $\V$, it has a well-defined number in this enumeration, which we call the \emph{index of $e$ in $t$}. Lift these indices to the universal cover $\univV$.

Recall from Section~\ref{subsec:peripheral} that we fixed an indexing of the tetrahedra of the initial flow line $\eleZeta=(t_i)_{i\in\zz}$ such that $t_0\subset\F\subset\C$. Given any other elevation $\eleZeta'=(t_i')_{i\in\zz}$ of $\zeta$, we say that its indexing is \emph{compatible} with the fixed indexing of $\eleZeta$ if, for every $i\in\zz$,
\begin{itemize}
	\item $t_i'$ is a translate of $t_i$,
	\item the bottom diagonal of $t_{i+1}'$ has the same index in $t_i'$ as the bottom diagonal of $t_{i+1}$ has in $t_i$.
\end{itemize}
Every elevation $\eleZeta'$ of $\zeta$ admits an indexing compatible with that of $\eleZeta$. Moreover, if $(t_i')_{i\in\zz}$ and $(s_i')_{i\in\zz}$ are two compatible indexings of $\eleZeta'$, then Definition \ref{defn:flow:cycle}  implies that there exists an element $h$ of the stabilizer of $\eleZeta'$ such that
$s_i'=h(t_i')$ for every $i\in\zz$.

Returning to the construction of the flow line $g(\eleZeta')$, let $(t_i')_{i\in\zz}$ be an indexing of $\eleZeta'$ compatible with the fixed indexing of $\eleZeta$. Choose a path $\delta$ from $t'$ to $t_0'$, and let $t_0''$ be the terminal tetrahedron of the translate of $\delta$ starting at $t''$. Since $t''=g(t')$, we must have $t_0''=g(t_0')$. The flow line $g(\eleZeta')$ is therefore obtained by extending $t_0''$ to a flow line $(t_i'')_{i\in\zz}$ whose indexing is compatible with that of $(t_i')_{i\in\zz}$.
	
	\subsection{Uniquifying flow lines}\label{subsec:fellow-travel}
	
	In \textbf{Steps 6} and \textbf{8} we \emph{uniquify} flow lines that represent points of $\liftZeta$ contained inside the same simplex rectangle $R$. There are two aspects of this process. First, in earlier steps of the algorithm we translate flow lines representing points in various translates of $R$ to obtain all flow lines representing points of $R$. As a result, the same flow line is often constructed multiple times, arising as a translate of different flow lines. One purpose of Steps~6 and~8 is to remove these duplicate copies.
	
	Given two flow intervals $\rho_1^\C$, $\rho_2^\C$ that are elevations of $\zeta$, one can directly determine whether they are equal as sets of tetrahedra of $\C$. 
	A more efficient procedure uses the compatible indexings of elevations of $\zeta$ introduced in Section \ref{subsec:translate}. Recall that $\zeta$ has length~$k$. If $(t_i')_{i\in\zz}$ is a compatibly indexed elevation of $\zeta$, we call the residue of $i$ modulo~$k$ the \emph{$\zeta$-index} of $t_i'$. Then $\rho_1^\C=\rho_2^\C$ if and only if their uppermost tetrahedra coincide and have the same $\zeta$-index.

A more subtle aspect of uniquifing flow lines stems from the observation made below Proposition~\ref{prop:square} that distinct translates of the same flow line may represent the same point of $\loom$. Such translates give rise to different continental approximations of the same point that cannot be separated in Step~9. This would prevent the algorithm from terminating. Therefore, we must identify such flow lines earlier in the process and retain only one representative for each point of $ R \cap \liftZeta$.

	\begin{definition}
		Let $\rho, \rho'$ be distinct flow lines in $(\loom, \loom^+, \loom^-)$. We say that $\rho$, $\rho'$ \emph{fellow-travel} if  $\rho, \rho'$ represent the same point or cusp of $\loom$.
	\end{definition}

Flow lines that are elevations of different flow cycles may also fellow-travel, as Theorem~\ref{thm:flow:lines} does not guarantee the uniqueness of a flow cycle of $\V$ associated with a closed orbit $\gamma$ of $\Psi$. Indeed, a single closed orbit may be encoded by multiple distinct flow cycles,  but only finitely many \cite[Theorem~6.1(3)]{LMT_flow}. 
	
	The following lemma gives a criterion for recognizing fellow-travelling translates of the same periodic non-peripheral flow line. As in Section~\ref{subsec:flow:cycles:and:lines}, $g_{\eleZeta}$, denotes a generator of the stabilizer of $\eleZeta$.

	\begin{lemma}\label{lem:fellow:travel}
		Let $\zeta$ be a flow cycle that is non-peripheral. Let $\eleZeta, \eleZeta'$ be distinct elevations of $\zeta$. Let  $h \in \pi_1(M)$ be such that $h(\eleZeta) = \eleZeta'$. Then $\eleZeta, \eleZeta'$ fellow-travel if and only if $hg_{\eleZeta} = g_{\eleZeta}h$.
	\end{lemma}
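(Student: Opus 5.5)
The plan is to reduce fellow-travelling to a statement about stabilizers of points of $\loom$, and then use that these stabilizers are cyclic. Set $x = \bigcap \eleZeta$; this is a point of $\loom$ (not a cusp) because $\zeta$ is non-peripheral. Since $h(\eleZeta)=\eleZeta'$ and $h$ acts injectively, we get $\bigcap \eleZeta' = h(x)$, exactly as in the proof of Lemma~\ref{lem:stabilizers}. Hence $\eleZeta$ and $\eleZeta'$ fellow-travel if and only if $h(x)=x$, that is, if and only if $h\in\Stab(x)=\langle\gamma_x\rangle$. So it suffices to show that $h\in\langle \gamma_x\rangle$ if and only if $h$ commutes with $g_{\eleZeta}$.

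For the forward direction, assume $h\in\langle\gamma_x\rangle$. By Lemma~\ref{lem:stabilizers} we have $g_{\eleZeta}\in\Stab(\eleZeta)\subseteq\Stab(x)=\langle\gamma_x\rangle$. This group is infinite cyclic by Proposition~\ref{prop:orbit:space}~\eqref{orbit_space:fixed:points}, which transfers to $\completedLoom$ as noted before Lemma~\ref{lem:loom:stabilizers}. Since a cyclic group is abelian, $h$ and $g_{\eleZeta}$ commute.

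For the converse, assume $hg_{\eleZeta}=g_{\eleZeta}h$. Then
\[
g_{\eleZeta}(h(x)) = h(g_{\eleZeta}(x)) = h(x),
\]
so $g_{\eleZeta}$ fixes both $x$ and $h(x)$. By Proposition~\ref{prop:nontrivial}, $g_{\eleZeta}$ is nontrivial. If $h(x)\neq x$, then the stabilizers of the two distinct points $x$ and $h(x)$ of $\completedLoom$ would share the nontrivial element $g_{\eleZeta}$. This contradicts Lemma~\ref{lem:loom:stabilizers}. Therefore $h(x)=x$, and the flow lines fellow-travel.

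The argument is short, and the main care points are bookkeeping rather than new ideas. First, one must check that $\bigcap$ commutes with the action; this is the computation already done in Lemma~\ref{lem:stabilizers}. Second, one must check that $x$ is a genuine point of $\loom$, which is where non-peripherality enters. Non-peripherality is essential here: if $x$ were a cusp, its stabilizer would be a peripheral $\zz^2$ rather than a cyclic group. The abelian-stabilizer step in the forward direction would survive even then, but the identification of fellow-travelling with $h(x)=x$ together with cyclicity is exactly what the lemma uses. I expect the only real subtlety to be confirming that Lemma~\ref{lem:loom:stabilizers} applies to the point $h(x)$ regardless of whether $\eleZeta'$ is distinct from $\eleZeta$. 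That distinctness plays no role beyond making the statement meaningful.
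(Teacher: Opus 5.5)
Your proof is correct, and your converse direction is the paper's. The paper writes it as $g_{\eleZeta'}=hg_{\eleZeta}h^{-1}=g_{\eleZeta}$, so that $g_{\eleZeta}$ fixes both represented points, and then applies Lemma~\ref{lem:loom:stabilizers}.

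Your forward direction takes a genuinely different route. The paper never uses that fellow-travelling means $h(x)=x$. Instead it notes that $g_{\eleZeta}$ and $g_{\eleZeta'}=hg_{\eleZeta}h^{-1}$ both lie in the infinite cyclic group $\Stab(x)$. It then uses the abelianization $\pi_1(M)\to H_1(M;\zz)$ to conclude $g_{\eleZeta}=g_{\eleZeta'}$, and reads off commutation from $g_{\eleZeta'}h=hg_{\eleZeta}$. You observe instead that $\bigcap\eleZeta'=h(x)$, so fellow-travelling places $h$ itself in $\Stab(x)$ alongside $g_{\eleZeta}$, and commutation is immediate.

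Your route is shorter and uses only that $\Stab(x)$ is abelian. It also avoids a point the paper does not address. Writing $g_{\eleZeta}=\gamma_x^a$ and $g_{\eleZeta'}=\gamma_x^b$, equal images in $H_1(M;\zz)$ give $a[\gamma_x]=b[\gamma_x]$. This forces $a=b$ only if $[\gamma_x]$ has infinite order in $H_1(M;\zz)$.

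Nothing is lost by your shortcut. The paper's intermediate conclusion $g_{\eleZeta}=g_{\eleZeta'}$ follows at once from yours, since $g_{\eleZeta'}=hg_{\eleZeta}h^{-1}=g_{\eleZeta}$.
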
		
	\begin{proof} To simplify notation, set $g = g_{\eleZeta}$ and $g' = g_{\eleZeta'}$. Then $g'h = hg$, because $\eleZeta' = h(\eleZeta)$.
		
		Suppose that $\eleZeta$, $\eleZeta'$ fellow-travel. Since $\zeta$ is non-peripheral, there is $y \in \loom$ such that $y = \bigcap \eleZeta = \bigcap \eleZeta'$. By Lemma \ref{lem:stabilizers}, both $g$ and $g'$ fix $y$. 
		Recall from Proposition \ref{prop:orbit:space} \eqref{orbit_space:fixed:points} that the stabilizer of $y$ is infinite cyclic. Therefore $\langle g \rangle$ and $\langle g' \rangle$ are nontrivial subgroups of the same infinite cyclic group, and thus they intersect nontrivially. However, the equality $g'h=hg$ implies that $g$ and $g'$ map to the same element under the abelianization map 
		\mbox{$\pi_1(M)\rightarrow H_1(M;\zz)$}. It follows  that $g = g'$ and thus $hg = gh $. 
		
		Conversely, if $hg = gh$ then $g' = hgh^{-1} = g$. Therefore the points represented by $\eleZeta$ and~$\eleZeta'$ are both stabilized by $g$. Lemma \ref{lem:loom:stabilizers} then implies that $\eleZeta, \eleZeta'$ represent the same point of $\loom$. Thus  $\eleZeta, \eleZeta'$ fellow-travel.
	\end{proof}

\subsubsection{Implementation}

Fix a tetrahedron $t$ of $\zeta$. To decide whether two elevations $\eleZeta$,~$\eleZeta'$ of $\zeta$ fellow-travel, choose lifts $\widehat{t}\subset\eleZeta\cap\C$, $\widehat{t}'\subset\eleZeta'\cap\C$ of $t$, and a path $\alpha$ in $\C$ from $\widehat{t}$ to $\widehat{t}'$. Let $\beta_1$ be the path obtained by first following~$\alpha$ and then moving one cycle up along~$\eleZeta'$, and let $\beta_2$ be the path obtained by first moving one cycle up along $\eleZeta$ and then following the translate $g_{\eleZeta}(\alpha)$ of $\alpha$. The continent $\C$ may need to be expanded to contain both $\beta_1$ and $\beta_2$. We claim that $\eleZeta, \eleZeta'$ fellow-travel if and only if the terminal tetrahedra of $\beta_1$ and $\beta_2$ agree; this is the condition that we actually check in the algorithm.

To justify the claim, let $h\in\pi_1(M)$ be the element represented by $\alpha$, so that \mbox{$\widehat{t}'=h(\widehat{t})$}.
Since the translation by one cycle along $\eleZeta'$ corresponds to applying $g_{\eleZeta'} = hg_{\eleZeta}h^{-1}$, the terminal tetrahedron of~$\beta_1$ is $g_{\eleZeta'}h(\widehat{t}) = hg_{\eleZeta}(\widehat{t})$. On the other hand, the terminal tetrahedron of $\beta_2$ is $g_{\eleZeta}(\widehat{t'}) = g_{\eleZeta}h(\widehat{t})$.
Thus the terminal tetrahedra of $\beta_1$ and $\beta_2$ coincide if and only if $hg_{\eleZeta}(\widehat{t})=g_{\eleZeta}h(\widehat{t})$. 
Since $\pi_1(M)$ acts freely on the tetrahedra of $\univV$ (see Construction \ref{const:finite:veering}) this is equivalent to
$hg_{\eleZeta}=g_{\eleZeta}h$.
By Lemma~\ref{lem:fellow:travel}, this holds if and only if $\eleZeta$ and $\eleZeta'$ fellow-travel.

	\subsection{Separating and sorting flow intervals}\label{subsec:separating}

	After \textbf{Step 8} we are guaranteed that for each fundamental tetrahedron rectangle $R$ and for every point $y \in R \cap \liftZeta$ we have precisely one flow interval $\eleZeta_y^\C$ whose corresponding flow line $\eleZeta_y$ represents $y$. As in Section \ref{subsec:locating}, after possibly expanding $\C$ along these flow lines, we can ensure that each continental approximation $N_\C(y) = \bigcap \eleZeta_y^\C$ is contained in $R$. We further aim to obtain a continent $\C$ with the following properties:
	\begin{itemize}[leftmargin=3.2em]
		\item[\ref{cond:separated:from:cusp:leaves}] For every $y \in R \cap \liftZeta$, no cusp leaf originating at a cusp of $R$ intersects $N_\C(y)$. 
		\item[\ref{cond:separated:from:each:other}] For every $y,z \in R \cap \liftZeta$, no leaf intersects both $\overline{N_\C(y)}$ and $\overline{N_\C(z)}$. 
	\end{itemize}

These properties were previously mentioned in the proof of Theorem \ref{thm:algorithm:correct}.

	If $\C$ satisfies \ref{cond:separated:from:cusp:leaves} and \ref{cond:separated:from:each:other}, we say that $\C$ \emph{separates the flow lines intersecting $R$ and the cusps of $R$}.  
	In this case, the horizontal and vertical orders on 
	$\Omega(R) \cup \lbrace p_y \ | \ y \in R \cap \liftZeta\rbrace $, induced by the cardinal directions in~$R$, do not depend on the choice of $p_y \in N_\C(y)$. Thus we can unambiguously determine the horizontal and vertical orders on the set $\Ideal(R)$  of cusps of $R$ and punctures of $R \bez \liftZeta$, without knowing the exact position of any $y \in R \cap \liftZeta$.
	This is sufficient and necessary to find all $\zeta$-tetrahedron rectangles contained in $R$ in \textbf{Step~10}.

	The existence of a continent that separates the flow lines intersecting $R$ and the cusps of~$R$ follows from the ideas that we introduced in Section~\ref{subsec:locating}.
	First observe that Lemma~\ref{lem:expanding:until:on:the:same:side} implies that by expanding $\C$ along the flow lines $\eleZeta_y$, for $y \in R \cap \liftZeta$, we eventually obtain a continent in which these flow lines are comparable with the cusp leaves originating from the cusps of $R$. This gives \ref{cond:separated:from:cusp:leaves}. 
	Furthermore, building on Definition~\ref{def:leaf:line:comparable}, we define what it means for two flow lines to be comparable in~$\C$.
	
	\begin{definition}
	Let $\C$ be a convex continent. Let $\rho_1, \rho_2$ be flow lines such that $\rho_i^\C \neq \emptyset$ for each $i \in \lbrace 1, 2\rbrace$.
	We say that $\rho_1, \rho_2$ are \emph{comparable} in $\C$ if 
	\begin{itemize}
		\item For $i,j \in \lbrace 1, 2\rbrace$, with $i\neq j$, the flow line $\rho_i$ is comparable in $\C$ with every stable boundary cusp leaf of the uppermost tetrahedron of $\rho_j^\C$,
		\item For $i,j \in \lbrace 1, 2\rbrace$, with $i\neq j$, the flow line  $\rho_i$ is comparable in $\C$ with every unstable boundary cusp leaf of the lowermost tetrahedron of $\rho_j^\C$.
	\end{itemize}
	\end{definition}
Lemma \ref{lem:expanding:until:on:the:same:side} implies that if $\rho_1, \rho_2$ are periodic, do not fellow-travel, and intersect $\C$ then expanding $\C$ along $\rho_1$ and $\rho_2$ eventually yields a continent in which $\rho_1$ and $\rho_2$ are comparable. 
Since the flow lines $\eleZeta_y$, for $y \in R \cap \liftZeta$, satisfy all these conditions relative to the continent $\C$ constructed up Step 9, expanding $\C$ along these flow lines eventually gives \ref{cond:separated:from:each:other}.

\subsubsection{Implementation}

Let $\C$ be a convex continent for which \ref{cond:meets:all:relevant:flowlines}, \ref{cond:separated:from:cusp:leaves} and \ref{cond:separated:from:each:other} holds. It can be  constructed as described in the final paragraphs of Section \ref{subsec:locating}. Here we focus on ordering the points of $R \cap \liftZeta$ for a fundamental tetrahedron rectangle $R \subset \loomif(\C)$. Once again, we use the side functions $\Theta_\ell:\Omega(\C) \rightarrow \lbrace -1, 0, 1\rbrace$ introduced in \eqref{function:on:cusps}. Fix the cardinal directions in $R$ according to Convention \ref{convention:cardinal:directions}. Let $\ell_i^\pm$ be the stable/unstable cusp leaves originating from the cusps of $R$ and intersecting~$R$, ordered so that $\ell_1^+$ is west of $\ell_2^+$ and $\ell_1^-$ is south of $\ell_2^-$. Furthermore, denote the west and the south cusp of $R$ by $w$ and $s$, respectively. 

For $y \in R\cap \liftZeta$, let $c$ be any cusp of the uppermost tetrahedron of $\eleZeta_y^\C$. Then the position of $y$ relative to the cusp leaves $\ell_i^+$ is uniquely determined by the triple $(\Theta_{\ell_1^+}(c), \Theta_{\ell_2^+}(c), \Theta_{\ell_1^+}(w))$. Observe that the last entry is independent of $y$. Its role is to detect whether $\ell_1^+$ originates from the south or the north cusp of $R$, which in turn affects the interpretation of the first two entries; see Lemma \ref{lem:theta:l:function}. The choice of the cusp $c$ does not affect the triple, because $\eleZeta_y$ and $\ell_i^\pm$ are comparable in $\C$. 
Analogously, the position of $y$ relative to the cusp leaves $\ell_i^-$ is uniquely determined by the triple $(\Theta_{\ell_1^-}(d), \Theta_{\ell_2^-}(d), \Theta_{\ell_1^-}(s))$, where $d$ is any cusp of the lowermost tetrahedron of~$\eleZeta_y^\C$. It follows that the side functions of the continental cusp leaves can be used to determine which of the nine subrectangles of $R$ cut out by the leaves $\ell_i^\pm$ contains $y$.

To compare the relative horizontal position of distinct $y, z \in R \cap \liftZeta$ it suffices to compute $\Theta_{\ell^+}(c)\cdot \Theta_{\ell^+}(w)$, where $c$ is any cusp of the uppermost tetrahedron of $\eleZeta_z^\C$ and $\ell^+$ is any stable boundary cusp leaf originating from a cusp of the uppermost tetrahedron of $\eleZeta_z^\C$. By Lemma~\ref{lem:theta:l:function}~(1), this product records whether $c$ and $w$ are on the same side of $\ell^+$. Since $\eleZeta_y$ and $\eleZeta_z$ are comparable in~$\C$, the value of $\Theta_{\ell^+}(c)\cdot \Theta_{\ell^+}(w)$ informs about whether $y$ and $w$ are on the same side of $\ell$. This uniquely determines the relative horizontal position of $y$ and $z$.

Analogously, the relative vertical position of $y$ and $z$ is uniquely determined by the product $\Theta_{\ell^-}(d)\cdot \Theta_{\ell^-}(s)$, where $d$ is any cusp of the lowermost tetrahedron of $\eleZeta_y^\C$ and $\ell^-$ is  any unstable boundary cusp leaf originating from a cusp of the lowermost tetrahedron of $\eleZeta_z^\C$.

	It follows that once a continent separating the flow lines intersecting $R$ and the cusps of $R$ has been constructed, determining the horizontal and vertical orders on $\Ideal(R)$ reduces to computing $\Theta_\ell(c)$ for finitely many cusps $c$ of $\C$ and finitely many continental cusp leaves $\ell$. Consequently, these orders can be determined in finite time.

	\section{Applications to pseudo-Anosov flows} \label{sec:applications}
	
	In this section, we present the results of several computational experiments performed using the drilling algorithm described in Section~\ref{sec:algorithm}. More detailed data from these experiments is available in the `Drilling data' section of the veering census \cite{VeeringCensus}.  We also propose a number of conjectures motivated by the obtained results.

	\subsection{Layered parents of non-layered veering triangulations}\label{subsec:layered:parents}

			In Definition \ref{def:layering} we used landscapes to define layered subtriangulations of a taut triangulation of $\rr^3$. This notion can be adapted to finite taut triangulations as follows. 
			
			\begin{definition}\label{def:layered:tri}
				Let $(\T, \alpha)$ be a finite taut triangulation of a three-manifold $M$. We say that $(\T, \alpha)$ is \emph{layered} if $(\T, \alpha)$ carries a fiber of some fibration of $M$ over the circle.
			\end{definition}
	In particular, finite taut layered triangulations exist only on three-manifolds that are fibered over the circle. 
	For a proof that Definition \ref{def:layered:tri} is an adaptation of Definition \ref{def:layering} see \cite[Proposition 4.5]{Landry-stable-loops}. We refer the reader to \cite[Section~2.3]{Parlak-mutations} and \cite[Theorem 5.15]{LMT} for alternative characterizations of layeredness.
	
	Let $\V$ be a finite layered veering triangulation of $M$. 
	Denote by $\Psi$ a transitive pseudo-Anosov flow on some Dehn filling $N$ of $M$, and by $\Lambda$ a perfect set for $\Psi$ such that $\V = \V(\Psi, \Lambda)$. 
	The horizontal branched surface of $\V$ is positively transverse to $\Psi \bez \Lambda$ by \cite[Theorem 5.1]{LMT_flow}. Thus if $S$ is carried by~$\V$ then $S$ is positively transverse to $\Psi \bez \Lambda$. If~$S$ is furthermore a fiber of a fibration of $M$ over the circle, then the drilled flow \mbox{$\Psi \bez \Lambda$} is orbit equivalent  to the suspension flow of a pseudo-Anosov homeomorphism of $S$.
	
	
	Among all pseudo-Anosov flows, suspension flows are by far the best understood. Their key advantage is that the dynamics of the entire flow is encoded by the underlying pseudo-Anosov homeomorphism.
	It was therefore of interest to try to restrict a general pseudo-Anosov flow to a suspension flow on a complement of finitely many closed orbits. Fried proved that this is always possible for transitive Anosov flows \cite[Theorem~1]{Fried_drill}. Brunella further extended this result to transitive pseudo-Anosov flows \cite[Theorem~1]{Brunella_surfaces}. 

	In \cite{Tsang-branched-surfaces} Tsang used the machinery of veering triangulations to refine the results of Fried and Brunella by obtaining explicit bounds on the number of orbits that need to be drilled out, their combinatorial length, and the Euler characteristic of the resulting \emph{Birkhoff section} (the fiber after drilling; see \cite[Definition 2.14]{Tsang-Birkhoff}). 
	
\begin{theorem}\cite[Theorem 6.3]{Tsang-Birkhoff} \label{thm:tsang:layered:parent}
	Let $\V$ be finite a veering triangulation constructed from a transitive pseudo-Anosov flow $\Psi$ and a perfect set $\Lambda$ for $\Psi$. Suppose that $\V$ has~$n$ tetrahedra.
	Then there exists a closed orbit $\gamma$ of $\Psi \bez \Lambda$, whose square is represented by a flow cycle $\zeta$ of $\V$ of length at most $2\cdot10^6n^{12}$, and such that the veering parent $\V_\gamma$ is layered. \qed
\end{theorem}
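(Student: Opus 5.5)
This result is quoted from \cite[Theorem 6.3]{Tsang-Birkhoff}; it is not proved in the present paper, so the plan below sketches how one would expect such a result to be established. The key idea is to produce a Birkhoff section from a surface carried by the horizontal branched surface of $\V$ after drilling. By Theorem~\ref{thm:AT} and the discussion in Section~\ref{subsec:layered:parents}, the veering parent $\V_\gamma$ is layered if and only if the drilled flow $\Psi \bez \Lambda_\gamma$ admits a global cross-section. Equivalently, the relevant homology class must lie in the interior of the cone dual to the cone of homology directions of closed orbits. So I would reduce the statement to a single task: find one closed orbit $\gamma$ such that every closed orbit of $\Psi\bez\Lambda$ intersects positively some carried surface whose boundary lies on $\gamma$ and $\Lambda$.

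Concretely, I would proceed in four steps.

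First, work combinatorially with the flow graph of $\V$, whose directed cycles are exactly the flow cycles (Definition~\ref{defn:flow:cycle}). Its vertices are the $n$ tetrahedra, so its cycle space is generated by cycles of length at most $n$.

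Second, build a single long flow cycle $\zeta$ that passes through every edge of the flow graph. This uses strong connectivity of the flow graph, which follows from transitivity of $\Psi$. The length of $\zeta$ would then be polynomial in $n$.

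Third, take the orbit $\gamma$ encoded by $\zeta$, or by its square to avoid the twisted case of Proposition~\ref{prop:square}, which explains the appearance of $\gamma^2$ in the statement. Construct a surface with boundary on $\gamma$ by resolving a sum of the carried faces crossed by $\zeta$ together with partial Birkhoff annuli. The surface should intersect every flow cycle positively, so that after drilling $\gamma$ it becomes a fiber carried by $\V_\gamma$.

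Fourth, verify that the drilled veering triangulation carries this fiber. This relies on the correspondence between sections and layered triangulations from \cite{LMT}.

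I expect the main obstacle to be the third step. One has to guarantee positivity of intersection with \emph{all} closed orbits, including those shadowing $\gamma$ and the orbits in $\Lambda$, while keeping the surface embedded. Controlling the combinatorial complexity of this construction, both the number of cut-and-paste operations and the amount by which $\zeta$ must be lengthened, is where the crude polynomial bound $2\cdot10^6n^{12}$ would come from. I would first attempt this via a ``spiral'' construction along $\zeta$, bounding each correction by $O(n^2)$ tetrahedra.
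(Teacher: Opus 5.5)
The paper gives no proof here; the theorem is quoted from \cite{Tsang-Birkhoff}. So your outline has to stand on its own, and it does not. All of the content sits in your third step, which is asserted rather than argued. That step asks for an embedded surface with boundary on $\Lambda\cup\gamma$ which is positively transverse to the flow in its interior and meets every closed orbit of $\Psi\bez\Lambda_\gamma$, together with the peripheral directions at the new cusp. Note that this should be $\Psi\bez\Lambda_\gamma$, not $\Psi\bez\Lambda$ as your reduction says, since $\gamma$ itself has become a boundary orbit. Producing such a surface \emph{is} the existence of a Birkhoff section bounded by $\Lambda\cup\gamma$. Once you have it, your fourth step is just the known correspondence between cross-sections of the drilled flow and layeredness. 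Such a surface also cannot be carried by $\V$: surfaces carried by $\V$ are transverse to $\Psi\bez\Lambda$ and have boundary only at the cusps of $M$, so it could only be carried by $\V_\gamma$. The proposal neither constructs the surface nor verifies positivity, and it gives no route to the bound $2\cdot 10^6 n^{12}$.

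Worse, the selection rule in your second step cannot be what makes the third step work. Suppose $\gamma$ bounds such a surface with total multiplicity $k\neq 0$. Then $k[\gamma]$ lies in the image of $H_1(\partial M)\to H_1(M)$, i.e.\ $[\gamma]=0$ in $H_1(M,\partial M;\mathbb{Q})$, and for non-layered $\V$ this condition is genuinely necessary. To see this, suppose instead $[\gamma]\neq 0$ in $H_1(M,\partial M;\mathbb{Q})$. Then the meridian of $\gamma$ is rationally null-homologous in $M_\gamma$, so $H_1(M_\gamma;\mathbb{Q})\to H_1(M;\mathbb{Q})$ is an isomorphism. This isomorphism identifies the cone of homology directions of $\Psi\bez\Lambda_\gamma$ with that of $\Psi\bez\Lambda$. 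Indeed, closed orbits other than $\gamma$ keep their classes, and the only new directions, coming from the cusp at $\gamma$, are positive multiples of $[\gamma]$. By the Fried criterion of your first step, $\V_\gamma$ is then layered if and only if $\V$ is.

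Traversing every edge of the flow graph does not control $[\gamma]$. Let $\zeta_0$ traverse every edge, and let $c$ be a flow cycle with $[c]\neq 0$ in $H_1(M,\partial M;\mathbb{Q})$. Every concatenation $\zeta_0 c^N$, based at a tetrahedron of $c$, still traverses every edge. Yet $[\zeta_0]+N[c]\neq 0$ for all but at most one $N$. So $\gamma$ has to be selected using much finer information. In this paper's language, that is the positions of the points of $\liftGamma$ relative to the cusps of the fundamental tetrahedron rectangles, which by Proposition~\ref{prop:finite:veering:parent} determine $\V_\gamma$. One must then prove that the selected orbit works and has controlled length. Supplying that selection and proof is the actual content of \cite{Tsang-Birkhoff}.
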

	The condition on the length of the flow cycle representing $\gamma^2$ instead of $\gamma$ arises due to the fact that a closed orbit of $\Psi$ is guaranteed to be represented by a flow cycle only after taking its square; see the discussion at the end of Section \ref{sec:flow:cycles} or \cite[Theorem 6.1(4)]{LMT_flow}.

	On one hand, Theorem~\ref{thm:tsang:layered:parent} is extremely efficient: it shows that a layered veering parent can be constructed by drilling out just one additional orbit of the flow. On the other hand, the upper bound on the length of that orbit is impractically large. 
	
	Computational problems in low-dimensional topology are often much easier on many individual examples than the general theoretical bounds might suggest. We therefore used the drilling algorithm described in Section~\ref{sec:algorithm} to study the problem of constructing layered parents of non-layered veering triangulations experimentally. Our approach is independent of Tsang's methods and instead relies on an exhaustive brute-force search. For each non-layered veering triangulation in the veering census, we enumerated all flow cycles of length 1 and checked whether any of them yielded a layered parent. If not, we repeated the process with all flow cycles of length at most~2, and so on. We present the results in Table \ref{tab:layered:parents} and summarize them in Experimental Result \ref{experimental:layered:parents}.

	\begin{table}[h]
		\centering
		\setlength{\tabcolsep}{3pt}
		\begin{tabular}{|c|c|c|c|c|c|c|c|c|c|c|c|c|c|}
			\hline
			$n$ & $\NL_n$ & $\NL_n^1$ & $\NL_n^2$ & $\NL_n^3$ & $\NL_n^4$ & $\NL_n^5$ & $\NL_n^6$ & $\NL_n^7$ & $\NL_n^8$ & $\NL_n^9$ & $\NL_n^{10}$ & $\NL_n^{11}$ & $\NL_n^{12}$ \\
			\hline
			5 & 4 & 4 & 0 & 0 & 0 & 0 & 0 & 0 & 0 & 0 & 0 & 0 & 0 \\ \hline
			6 & 13 & 13 & 0 & 0 & 0 & 0 & 0 & 0 & 0 & 0 & 0 & 0 & 0 \\ \hline
			7  & 24 & 24 & 0 & 0 & 0 & 0 & 0 & 0 & 0 & 0 & 0 & 0 & 0\\ \hline
			8  & 60 & 59 & 0 & 1 & 0 & 0 & 0 & 0 & 0 & 0 & 0 & 0 & 0 \\ \hline
			9  & 120 & 118 & 0 & 0 & 1 & 1 & 0 & 0 & 0 & 0 & 0 & 0 & 0 \\ \hline
			10 & 253  & 244 & 5 & 1 & 1 & 0 & 1 & 1 & 0 & 0 & 0 & 0 & 0 \\ \hline
			11  & 492 & 474 & 9 & 2 & 6 & 0 & 0 & 1 & 0 & 0 & 0 & 0 & 0 \\ \hline
			12  & 1034 & 960 & 43 & 9 & 15 & 4 & 2 & 1 & 0 & 0 & 0 & 0 & 0 \\ \hline
			13  & 2075 & 1888 & 113 & 26 & 29 & 10 & 3 & 3 & 2 & 1 & 0 & 0 & 0 \\ \hline
			14 & 4263 & 3668 & 368 & 83 & 71 & 43 & 14 & 6 & 4 & 5 & 1 & 0 & 0 \\ \hline
			15 & 8786 & 7363 & 906 & 200 & 127 & 118 & 36 & 14 & 8 & 12 & 2 & 0 & 0 \\ \hline
			16 & 18157 & 14392 & 2391 & 581 & 297 & 281 & 114 & 45 & 27 & 18 & 7 & 2 & 2 \\ \hline
		\end{tabular}
		\caption{Shortest flow cycles of non-layered veering triangulations with up to 16 tetrahedra that yield a layered veering parent. For $n \in \mathbb{N}$,  by $\NL_n$ we denote the number of non-layered veering triangulations with exactly $n$ tetrahedra. This data can be found in the veering census \cite{VeeringCensus}. Since there are no non-layered veering triangulations with fewer than five tetrahedra, the rows are indexed starting from $n=5$. Furthermore, for $n$ as above and $k \in \mathbb{N}$, by $\NL_n^k$ we denote the number of non-layered veering triangulations with exactly $n$ tetrahedra for which the shortest flow cycle yielding a layered veering parent has length $k$. For veering triangulations with up to 16 tetrahedra $k$ never exceeds 12.}
		\label{tab:layered:parents}
	\end{table}

\Needspace{5\baselineskip}
\begin{experimental}\label{experimental:layered:parents} \hfill \nopagebreak
	\begin{enumerate}
		\item 
		Every non-layered veering triangulation with $n\leq 16$ tetrahedra admits a flow cycle of length at most $n-3$ that yields a layered veering parent.
		
		
		\item Among all non-layered veering triangulations with at most $16$ tetrahedra, approximately $82.78\%$ admit a flow cycle of length $1$ that yields a layered veering parent. Approximately $93.66\%$ admit such a flow cycle of length at most $2$. \qed
	\end{enumerate}
\end{experimental}

As in Table \ref{tab:layered:parents}, for $n,k \in \mathbb{N}$ let $\NL_n^k$ be the number of non-layered veering triangulations with exactly $n$ tetrahedra for which the shortest flow cycle yielding a layered veering parent has length $k$. Furthermore, set
\[M_n = \max \lbrace k \in \mathbb{N} \ | \ \NL_n^k > 0\rbrace. \]
The observed growth of $M_n$ with $n$ is very slow compared to the growth allowed by the theoretical upper bound of $10^6n^{12}$ that appears in Theorem~\ref{thm:tsang:layered:parent}. Of course, as the number of tetrahedra increases, more complicated combinatorial configurations may arise, potentially requiring substantially longer flow cycles to obtain a layered veering parent. Thus our experimental data for $n \leq 16$ may not be representative of the general behaviour. However, Tsang himself remarked that the methods used to derive the theoretical bound are unlikely to yield a sharp estimate; see \cite[Section~7.2]{Tsang-Birkhoff}.

Motivated by the data in Table~\ref{tab:layered:parents}, we propose the following conjecture.
	\begin{conjecture}\label{conj:layered}
Every non-layered veering triangulation with $n$ tetrahedra admits a layered veering parent obtained by drilling out a single flow cycle of length at most $n$.
	\end{conjecture}

A careful analysis of the layered veering parents produced by our brute-force search may lead to new methods for constructing Birkhoff sections of pseudo-Anosov flows that are substantially different from the existing methods described in \cite{Fried_drill, Brunella_surfaces, Tsang-Birkhoff}.
	
	\subsection{Geometric parents of non-geometric veering triangulations}
	
	Every three-manifold that admits a veering triangulation is hyperbolic \cite[Theorem 1.5]{veer_strict-angles}. However, the combinatorics of the triangulation is often not compatible with the hyperbolic metric in the following sense.
	
	\begin{definition}
	Let $\T$ be an oriented ideal triangulation of a hyperbolic three-manifold~$M$. We say that $\T$ is \emph{geometric} if every tetrahedron of $\T$ can be equipped with a hyperbolic structure of strictly positive volume so that the face pairings are realized by hyperbolic isometries and the resulting metric on the interior of $M$ is complete.
	\end{definition}
	
	One can determine whether $\T$ is geometric by solving the \emph{gluing and completeness equations} associated to $\T$; see \cite[Section~3.1]{FKP-survey-hyperbolic}.
	
	The first known example of a non-geometric veering triangulation appears in \cite[Section~7]{Hodgson_non-geom}. Later, Futer-Taylor-Worden proved that layered veering triangulations are generically non-geometric \cite[Theorem 1.1]{FTW-random-veering}. It is expected that the same is true for non-layered veering triangulations.


There are 931 non-geometric veering triangulations in the veering census, 220 of which are layered. We used the drilling algorithm described in Section \ref{sec:algorithm} and SnapPy \cite{snappea} to determine if they have geometric veering parents. We present the results in Table \ref{tab:geometric:parents} and summarize them in Experimental Result \ref{experimental:geometric}.

	\begin{table}[h]
	\centering
	\setlength{\tabcolsep}{3pt}
	\begin{tabular}{|c|c|c|c|c|c|}
		\hline
		$n$ & $\NN_n$ & $\NN_n^1$ & $\NN_n^2$ & $\NN_n^3$ & $\NN_n^4$ \\
		\hline
		9  & 1 & 1 & 0 & 0 & 0  \\ \hline
		10 & 3 & 3 & 0 & 0 & 0  \\ \hline
		11  & 9 & 8 &1 & 0 & 0 \\ \hline
		12  & 22 & 18 & 3 & 1 & 0  \\ \hline
		13  & 52 & 40 & 12 & 0 & 0 \\ \hline
		14 & 110  & 79 & 31 & 0 & 0  \\ \hline
		15 & 232 & 153 & 77 & 2 & 0\\ \hline
		16 &  502 & 311 & 184 & 5 & 2 \\ \hline
	\end{tabular}
	\caption{Shortest flow cycles of non-geometric veering triangulations with up to 16 tetrahedra that yield a geometric veering parent. For $n \in \mathbb{N}$,  let $\NN_n$ be the number of non-geometric veering triangulations with exactly $n$ tetrahedra. This data can be found in the veering census \cite{VeeringCensus}. Since there are no non-geometric veering triangulations with fewer than nine tetrahedra, the rows are indexed starting from $n=9$. Furthermore, for $n$ as above and $k \in \mathbb{N}$, by $\NN_n^k$ we denote the number of non-geometric veering triangulations with exactly $n$ tetrahedra for which the shortest flow cycle yielding a geometric veering parent has length~$k$. For veering triangulations with up to 16 tetrahedra~$k$ never exceeds~4.}
	\label{tab:geometric:parents}
\end{table}
	
	\begin{experimental}\label{experimental:geometric}
		Every non-geometric veering triangulation with up to 16 tetrahedra admits a flow cycle $\zeta$ such that the veering parent $\V_\zeta$ is geometric. \qed
	\end{experimental}

	In all the cases that we examined, the shortest flow cycle required to obtain a geometric veering parent was quite short: its length was at most $n/3$, where $n$ denotes the number of tetrahedra of the initial triangulation. Furthermore, in most of these cases we found several distinct short flow cycles that yield geometric veering parents. This motivates the following conjecture.
	
	\begin{conjecture}\label{conj:geometric}
		Every transitive pseudo-Anosov flow can be encoded by a geometric veering triangulation.
	\end{conjecture}

The relationship between the dynamics of a (drilled) pseudo-Anosov flow on a hyperbolic three-manifold and its hyperbolic geometry is poorly understood beyond coarse geometric properties \cite{Fenley_QG_flows, Huang-Taylor-hyperbolic}. The appeal of the above conjecture lies in the possibility of making the two more compatible with each other after drilling out finitely many closed orbits.

	\subsection{Orbits that are not ambiently isotopic to the freely homotopic geodesics}\label{subsec:orbit:geodesic}
By a theorem of Zeghib \cite[Theorem A]{Zeghib-hyperbolic}, a closed hyperbolic 3-manifold cannot admit a one-dimensional foliation of class $C^0$ with geodesic leaves.  Consequently, the orbits of a pseudo-Anosov flow $\Psi$ on a hyperbolic 3-manifold cannot form a geodesic foliation. It is therefore natural to ask whether a given closed orbit of $\Psi$ is geodesic and, if not, how far it is from being one. For instance, Fenley proved that orbits of $\Psi$ are quasi-geodesics if and only if no closed orbit of $\Psi$ is non-trivially freely homotopic to itself and there is a uniform upper bound on the number of closed orbits of $\Psi$ in any given free homotopy class \cite[Main Theorem]{Fenley_QG_flows}.

Here we use our drilling algorithm to study  if the orbits of $\Psi$ differ from geodesics topologically. Since we work with veering triangulations, we consider closed orbits of $\Psi \bez \Lambda$ for some perfect set $\Lambda$ for $\Psi$. Under certain conditions, the implementation of our algorithm allows us to determine whether such an orbit is ambiently isotopic to the unique geodesic in its free homotopy class using the following approach.

Let $M$ be a three-manifold that admits a finite veering triangulation $\V$.  Let $\gamma$ be a closed orbit of the drilled pseudo-Anosov flow underlying~$\V$. Then $\gamma$ is represented by a flow cycle $\zeta$ of $\V$; see Theorem \ref{thm:flow:lines}. Moreover, since $M$ is hyperbolic \cite[Theorem~1.5]{veer_strict-angles}, there is a unique closed geodesic $\gamma^\ast$ in $M$ that is freely homotopic to $\gamma$. Observe that $\gamma$ and~$\gamma^\ast$ are not ambiently isotopic if 
 $M_\gamma = M \bez \gamma$ and $M_\gamma^\ast = M \bez \gamma^\ast$ are not homeomorphic.
 %
 %

The manifold $M_\gamma$ is always hyperbolic because it is triangulated by the veering parent $\V_\zeta = \V_\gamma$. The manifold $M_\gamma^\ast$ is hyperbolic provided that  $\gamma^\ast$ is embedded \cite[Theorem~1.2.1]{Kojima-hyperbolic-drilling}.  In this case, the Mostow–Prasad rigidity theorem \cite[Theorem~15.3.5]{Martelli} implies that it suffices to show that $M_\gamma$ and $M_\gamma^\ast$ are not isometric to conclude that  $\gamma$ and $\gamma^\ast$ are not ambiently isotopic. If we have any ideal triangulation of $M_\gamma^\ast$, this can be checked using  SnapPy \cite{snappea}.

To construct such a triangulation, we first use $\zeta$ to find a word representing the homotopy class of $\gamma$ in some generating set of $\pi_1(M)$. We  then attempt to drill out the corresponding geodesic $\gamma^\ast$ using the \texttt{drill\_word} function in SnapPy \cite{snappea}. The current implementation of this computation may fail for various reasons, including when~$\V$ is not geometric or when SnapPy fails to establish that $\gamma^\ast$ is embedded. However, when it succeeds, it produces an ideal triangulation $\T_\zeta^\ast = \T_\gamma^\ast$ of~$M_\gamma^\ast$. 

\begin{experimental}\label{experimental:geodesic}
	Every drilled pseudo-Anosov flow underlying a geometric veering triangulation  with up to 10 tetrahedra admits a closed orbit  that is not ambiently isotopic to the unique geodesic in its free homotopy class. \qed
\end{experimental}

The above result is obtained by finding, for every geometric veering triangulation $\V$ with at most 10 tetrahedra, a flow cycle $\zeta$ such that the construction of $\T_\zeta^\ast$ succeeds and the manifolds triangulated by $\V_\zeta$ and $\T_\zeta^\ast$ are not isometric. Table \ref{tab:geodesic} records the lengths of the flow cycles found in this way.

	\begin{table}[h]
	\centering
	\setlength{\tabcolsep}{3pt}
	\begin{tabular}{|c|c|c|c|c|c|c|c|c|c|c|c|}
		\hline
		$n$ & $\NG_n$ & $\NG_n^1$ & $\NG_n^2$ & $\NG_n^3$ & $\NG_n^4$ & $\NG_n^5$ & $\NG_n^6$ & $\NG_n^7$ & $\NG_n^8$ & $\NG_n^9$ & $\NG_n^{10}$ \\
		\hline
		2 &  2& 0 & 0 & 0 & 0 & 2 & 0 & 0 & 0 & 0 & 0 \\\hline 
		3 & 3	&0  & 0 & 1 &  1 &  1 &  0 &  0 &  0 &  0 &  0\\\hline 
		4 & 12 & 0 &  0 &  2 &  5 &  3 &  2 &  0 &  0 &  0 &  0\\\hline 
		5 & 20 &0 & 0 & 4 & 10 & 5 & 1 & 0 & 0 & 0 & 0\\\hline 
		6 & 50 & 0 & 1 & 5 & 24 & 14 & 4 & 0 & 2 & 0 & 0\\\hline 
		7 & 85 & 0 & 1 & 9 & 47 & 22 & 6 & 0 & 0 & 0 & 0\\\hline 
		8 & 202 & 0 & 1 & 15 & 98 & 61 & 21 & 4 & 1 & 0 & 1\\\hline 
		9 & 354 &  0 & 1 & 22 & 170 & 122 & 35 & 4 & 0 & 0 & 0\\\hline 
		10& 742 & 0 & 1 & 37 & 304 & 286 & 92 & 15 & 5 & 1 & 1\\\hline 
	\end{tabular}
	\caption{Shortest flow cycles of geometric veering triangulations with up to~10 tetrahedra representing closed orbits that are not ambiently isotopic to the geodesics in their free homotopy classes. For $n \in \mathbb{N}$,  by $\NG_n$ we denote the number of geometric veering triangulations with exactly $n$ tetrahedra. This data can be found in the veering census \cite{VeeringCensus}. Since there are no veering triangulations with fewer than two tetrahedra, the rows are indexed starting from $n=2$. Furthermore, for $n$ as above and $k \in \mathbb{N}$, by $\NG_n^k$ we denote the number of geometric veering triangulations with exactly $n$ tetrahedra for which the shortest flow cycle giving an orbit not freely homotopic to a geodesic has length $k$. For veering triangulations with up to 10 tetrahedra $k$ never exceeds 10.}
	\label{tab:geodesic}
\end{table}

Experimental Result \ref{experimental:geodesic} motivates  the following conjecture.

\begin{conjecture}\label{conj:geodesic}
	Every  (possibly drilled) pseudo-Anosov flow on a hyperbolic three-manifold admits closed orbits that are not ambiently isotopic to the unique geodesic in their respective free homotopy classes.
\end{conjecture}

The intuition behind Conjecture \ref{conj:geodesic} is that, since the orbits of $\Psi$ do not follow geodesics exactly, the accumulated deviation along sufficiently long and complicated orbits should eventually prevent the orbit from being ambiently isotopic to the geodesic that is freely homotopic to it. This is somewhat similar in spirit to the so-called Frisch-Wasserman-Delbr\"uck conjecture which states that a sufficiently long simple closed curve in $\rr^3$ is almost surely knotted; see \cite[Theorem 1]{Diao-random-knots} for the proof of this conjecture for curves that are Gaussian random polygons.

Experimental Result~\ref{experimental:geodesic} more directly suggests a conjecture concerning only drilled pseudo-Anosov flows. Extending it to pseudo-Anosov flows on closed hyperbolic three-manifolds is not straightforward even when the orbits that are drilled out are short. This is because the relationship between the hyperbolic structures before and after drilling is currently unknown when drilling along orbits of the flow rather than geodesics.
Nonetheless, it seems reasonable to expect such a relationship to exist, and the infinitely many possible choices of a perfect set $\Lambda$  for a given pseudo-Anosov flow $\Psi$ make it plausible that one can choose $\Lambda$ so that some closed orbit of $\Psi\bez\Lambda$ is not ambiently isotopic to its geodesic representative either in $N\bez\Lambda$ or in $N$.

\begin{example}[Figure-eight knot exterior] \label{ex:figure:eight:geodesic}
	Let $M$ be the figure-eight knot complement. Then $M$ is a hyperbolic once-punctured torus bundle. Let $\V$ be the veering triangulation of~$M$ that encodes the suspension flow $\Psi$ on $M$. Then $\V$ has two tetrahedra \cite[Example~3.2]{SchleimSegLinks} and, by Experimental Result \ref{experimental:geodesic}, admits flow cycles representing closed orbits of $\Psi$ that are not ambiently isotopic to the geodesic representatives of their free homotopy classes.
	
	For one such cycle $\zeta$, Figure~\ref{fig:fig8_geodesic} shows the fundamental tetrahedron rectangles for $\V$  with the points represented by the elevations of $\zeta$ marked. Let $\gamma$ be the corresponding closed orbit of $\Psi$.
	The manifold~$M_\gamma$ underlying the veering parent $\V_\gamma$ appears in the snappy census as \texttt{o9\_40888}, while the manifold $M_\gamma^\ast$ obtained by drilling along the geodesic $\gamma^\ast$ appears in the SnapPy census as \texttt{L14n33639}.  Their volumes are approximately 8.11 and 10.96, respectively. In particular, they are certainly not isometric and hence they are not homeomorphic. Further details about the veering parent $\V_\gamma$ can be found in Appendix~\ref{appendix:geodesic}.

	\begin{figure}[h]
		\includegraphics[scale=1.5]{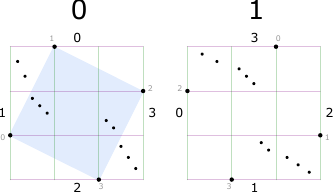}
		\caption{Drilled tetrahedron rectangles for a veering triangulation $\V$ of the figure-eight knot complement and a flow cycle $\zeta$ of $\V$ of length 5 that represents a closed orbit that is not ambiently isotopic to the geodesic in its homotopy class. The shaded square corresponds to the once-punctured torus fiber.}
		\label{fig:fig8_geodesic}
	\end{figure}
	
	The shaded square $S$ contained in the tetrahedron rectangle $R$ on the left of Figure~\ref{fig:fig8_geodesic} can be identified with a once-puntured torus fiber of $M$. Recall that the positions of the points of $R \cap \liftGamma$ shown in Figure~\ref{fig:fig8_geodesic} are determined correctly only up to their horizontal and vertical orders relative to one another and to the cusps of $R$. This means that a point that appears to lie in $S \cap \liftGamma$ may in fact lie outside $S$.
	However, the north-west red edge subrectangle  of~$R$ is $\pi_1(M)$-equivalent to the south-east edge subrectangle of~$R$, and the south-west blue edge subrectangle of $R$ is $\pi_1(M)$-equivalent to the north-east blue edge subrectangle of~$R$. Therefore moving a point of $R \cap \liftGamma$ out of $S$ across the north-west boundary of $S$ necessarily moves some other point of $R \cap \liftGamma$ into $S$ across the south-east boundary of $S$, and similarly for moving points across other sides of $S$. Thus we can conclude that $\liftGamma$ intersects $S$ in exactly five points.
	It follows that $\gamma$ is the suspension of a period-5 point of the monodromy $\varphi$ of the fibration of $M$. Since  $\varphi$ is known to be induced by the linear map 
	\[A = \begin{bmatrix}
	2 & 1\\ 1 & 1
	\end{bmatrix},\]
	it is even possible to determine which period-5 orbit of $\varphi$ suspends to $\gamma$, up to the ambiguity arising from self orbit equivalences of $\Psi$. We leave this computation to the interested reader.

\end{example}

Layne Hall brought to our attention that Sally Kuhlmann had also found period-$5$ orbits of the suspension flow on the figure-eight knot complement that are not ambiently isotopic to the geodesics in their respective free homotopy classes, using a completely different technique \cite[Appendix~D]{Kuhlmann}. An advantage of our implementation of the drilling algorithm is that it allows us to carry out this comparison for longer periodic orbits and for other pseudo-Anosov flows, including those that are not suspension flows.

	\subsection{A conjecture of Ghys}\label{subsec:ghys}
	In this section we define the \emph{veering ancestry graph} and discuss its relationship with a conjecture of Ghys concerning Anosov flows with orientable invariant foliations.
	Recall that if $\Psi$ is a pseudo-Anosov flow $\Psi$ on a closed 3-manifold $N$ and $\Gamma\subset \Per(\Psi)$  then $\Psi \bez \Gamma$ denotes the restriction of $\Psi$ to $N \bez \Gamma$. 

	\begin{conjecture}[Ghys] \label{conj:Ghys}
		If $\Psi, \Psi'$ are transitive Anosov flows with orientable invariant foliations then there are finite collections $\Gamma \subset \Per(\Psi)$, $\Gamma' \subset \Per(\Psi')$ such that the restricted flows $\Psi \bez \Gamma$ and $\Psi' \bez \Gamma'$ are orbit equivalent.
	\end{conjecture}
	
	If some finite drillings of flows are orbit equivalent we also say that the flows are \emph{almost orbit equivalent}. The above conjecture of Ghys is currently open in its full generality. However, we already know that it holds for  suspensions of toral automorphisms.
	
	\begin{theorem}\cite[Theorem A]{dehornoy-almost-equiv} \label{thm:dehornoy}
		Any two Anosov suspensions with orientable invariant foliations are almost orbit equivalent. \qed
	\end{theorem}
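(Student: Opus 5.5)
The statement to address is Theorem~\ref{thm:dehornoy}, which is quoted from Dehornoy; it is not something proved within this paper. The natural plan is therefore to explain how one would establish it directly.

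\emph{Setup.} An Anosov suspension with orientable invariant foliations is the suspension flow $\Psi_A$ of a hyperbolic matrix $A \in SL_2(\zz)$ with positive eigenvalues, i.e. $\operatorname{tr} A > 2$. The goal is to connect any two such matrices $A, B$ through a chain of almost orbit equivalences.

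\emph{Step 1: reduce to drillings of the figure-eight suspension.} Periodic orbits of $\Psi_A$ are suspensions of periodic points of $A$ on $T^2$. Drilling one fixed point turns $\Psi_A$ into the suspension of a pseudo-Anosov map of the once-punctured torus. That map is conjugate to a positive word in the generators
\[
L = \begin{bmatrix} 1 & 0 \\ 1 & 1 \end{bmatrix}, \qquad R = \begin{bmatrix} 1 & 1 \\ 0 & 1 \end{bmatrix},
\]
namely the cutting sequence of $A$.

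\emph{Step 2: change one letter at the cost of drilling.} The key claim to prove is that for suitable words $W_1, W_2$ the flows for $W_1 L W_2$ and $W_1 R W_2$ become orbit equivalent after drilling finitely many periodic orbits. I would model a single letter as a Dehn twist along a curve on the fiber. After drilling a suitable periodic orbit, or a finite collection of periodic points, the manifold contains a transverse annulus or torus along which the flows can be cut and reglued. This is in the spirit of Fried/Goodman surgery, or of drilling followed by Dehn filling in the complement as in Theorem~\ref{thm:unique:drilled:flow}. Equivalently, one seeks a common Birkhoff section for both drilled flows whose first-return maps agree.

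\emph{Step 3: combine.} Letter substitutions connect any two positive words. Almost orbit equivalence is transitive, since drilling additional orbits on both sides preserves an existing orbit equivalence. Together these give the theorem.

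\emph{Main obstacle.} Step 2 is the crux. One must exhibit explicit periodic orbits and a surface that serves as a common Birkhoff section after drilling, and one must control the dynamics near the drilled orbits so that the drilled flows really are orbit equivalent, not merely homeomorphic as manifolds. My first attempt would use the two-orbit construction: choose two periodic orbits whose union bounds a Birkhoff annulus, then realize the letter change as a shear along that annulus.
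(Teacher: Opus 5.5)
The paper gives no proof of this statement. It is quoted from Dehornoy and used only as an input, together with Theorem~\ref{thm:connected:cpts}, in the corollary after Experimental Result~\ref{exp:ancestry:experiments}. So your outline must stand on its own, and it does not.

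\textbf{Step 2 is asserted, not proved, and the proposed mechanism does not fit.} You rightly call Step~2 the crux; it is in fact the entire content of the theorem. Replacing $L$ by $R$ amounts to inserting $L^{-1}R=\begin{bmatrix}1&1\\-1&0\end{bmatrix}$. This matrix has trace $1$, hence order $6$. It is a positive twist along one curve composed with a negative twist along another, not a single Dehn twist, so one shear along one annulus cannot realise it. It can also destroy hyperbolicity, for example $R^aL\mapsto R^{a+1}$.

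More fundamentally, cutting along an annulus in a fiber and regluing by a twist changes the return dynamics of every orbit through that annulus, which is an open dense set of orbits. So it yields no orbit equivalence between complements of finitely many orbits. It merely replaces one suspension by another, and whether those two are almost equivalent is exactly the question. An almost equivalence has to come from one of two sources:
\begin{itemize}
\item surgeries supported on periodic orbits (Fried surgery), or
\item as you say at the end, a Birkhoff section of a drilled flow whose first-return map you compute and identify as a conjugate of the target matrix.
\end{itemize}
Your last paragraph names the second option as a hope. It identifies neither the periodic orbits, nor the section, nor its genus, nor its return map. That construction and computation is precisely what the cited argument has to supply.

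\textbf{Step 3 is wrong as stated.} Letter substitutions preserve word length. The cyclic word attached to a conjugacy class is unique up to rotation, so substitutions can never connect, say, $RL$ (trace $3$) with $RRL$ (trace $4$). The elementary move you need is \emph{insertion}: an almost equivalence between the suspensions of $W$ and $WX$, for every positive word $W$ containing both letters and each $X\in\{R,L\}$. Inserting at the end suffices, because cyclic rotation is conjugation. Read in both directions, insertion also realises your substitutions, via $W_1LW_2\to W_1LRW_2\to W_1RW_2$. It also reduces every word to $RL$ through words that contain both letters, that is, through hyperbolic matrices.

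Step~1 and your transitivity argument for almost orbit equivalence are fine. The latter is the same device used in the proof of Theorem~\ref{thm:connected:cpts}.
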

	
	Our goal is to restate Conjecture \ref{conj:Ghys} in the language of veering triangulations. 
	
	\begin{definition}
		The \emph{veering ancestry graph} is a directed graph $\A$ defined as follows.
		\begin{itemize}
			\item The vertices of $\A$ correspond to finite veering triangulations.
			\item There is an edge of $\A$ from a veering triangulation $\V_1$ to a veering triangulation~$\V_2$ if and only if~$\V_2$ is obtained from $\V_1$ via a drilling along a single flow cycle. 
		\end{itemize}
	\end{definition}

\begin{remark}\label{remark:parent:terminology}
	With the above convention on the orientation of the edges of $\A$, a veering parent of $\V$ is in fact a descendant of $\V$ in the graph-theoretic sense. Our use of the term \emph{parent} clearly does not refer to this orientation. Instead, it follows  terminology commonly used in the study of  Dehn surgery, where a $3$-manifold obtained by drilling another $3$-manifold along a simple closed curve is called a \emph{surgery parent} of that manifold; see for instance \cite{Adams-Dehn-filling}.
\end{remark}
	
	The graph $\A$ is clearly infinite, and has infinite out-degree at every vertex. However, some vertices have no incoming edges; for instance, this holds for all vertices corresponding to veering triangulations with a single ideal vertex.
	
	The graph $\A$ is also disconnected. For instance, veering triangulations encoding flows with transversely orientable invariant foliations (called \emph{edge-orientable} veering triangulations; see \cite[Section 3.4]{parlak-alex}) cannot be drilled to veering triangulations that encode flows with invariant foliations that are not tranversely orientable, and vice versa. 
	
	Furthermore, even two edge-orientable veering triangulations need not be connected by a path in $\A$. This can be seen by analyzing the boundary data induced by a veering structure on a triangulation. Specifically, every veering triangulation determines a decomposition of each boundary component of the underlying manifold into an even number of annuli, called \emph{ladders}.
	We refer the reader to \cite[Section 2]{explicit_angle} for a precise definition. The curves that bound the ladders are called \emph{ladderpole curves}. If the Dehn filling slope along which we fill a boundary component $B$ of a veering triangulation $\V$ intersects the collection of ladderpole curves of $B$ $k$ times then the core curve of the filling solid torus becomes a \mbox{$\frac{k}{2}$-prong} singularity of the pseudo-Anosov flow constructed from~$\V$ and $s$ via Theorem~\ref{thm:AT}. Thus if $\V$ encodes an Anosov flow, each of its boundary components has either two or four ladders. Furthermore, two ladders can occur only if the invariant foliations of the flow are not transversely orientable. Thus a veering triangulation encodes an Anosov flow with orientable invariant foliations if and only if it is edge-orientable and has four ladders in each boundary component.
	
	Moreover, our drilling procedure introduces only boundary components with two or four ladders, depending on whether the orbit that we drill is twisted or not. It follows that if a veering triangulation has a boundary component with $2k$-ladders for some $k\geq 3$ then it cannot be connected  in the veering ancestry graph to a veering triangulation that does not admit a boundary component with $2k$-ladders. Consequently, $\A$ must have infinitely many components. 

	In Theorem \ref{thm:connected:cpts} we prove that the connected components of $\A$ bijectively correspond to almost orbit equivalence classes of transitive pseudo-Anosov flows.
	First, we introduce necessary notation and terminology. Given vertices $v,w$ of a directed graph $G$, we write $v \rightsquigarrow w$ if there exists a directed path from $v$ to $w$. A vertex $v$ of $G$ is \emph{confluent} if for all pairs $v_1, v_2$ such that $v \rightsquigarrow v_1$ and $v \rightsquigarrow v_2$ there exists a vertex $w$ of $G$ such that $v_1 \rightsquigarrow w$ and $v_2 \rightsquigarrow w$. We say that $G$ is \emph{confluent} if every vertex of $G$ is confluent. 
	
	\begin{lemma}\label{lem:confluence}
		$\A$ is confluent.
	\end{lemma}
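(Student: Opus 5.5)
The plan is to translate directed paths in $\A$ into statements about perfect sets of a single flow, and then take unions. Fix a vertex $\V$ of $\A$. By Theorem~\ref{thm:AT} there is a transitive pseudo-Anosov flow $\Psi$ on a closed manifold $N$ and a perfect set $\Lambda$ with $\V = \V(\Psi,\Lambda)$.

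The first step is to show by induction on path length that if $\V \rightsquigarrow \V_1$, then $\V_1 = \V(\Psi, \Lambda \cup \Gamma_1)$ for some finite set $\Gamma_1$ of closed orbits of $\Psi$ not in $\Lambda$. For a single edge, $\V \to \V'$ means $\V' = \V_\zeta$ for a non-peripheral flow cycle $\zeta$. By Theorem~\ref{thm:flow:lines}, $\zeta$ represents a closed orbit $\gamma$ of $\Psi \bez \Lambda$, and by definition $\V_\zeta = \V(\Psi, \Lambda\cup\{\gamma\})$. A peripheral $\zeta$ returns $\V$ itself, giving $\Gamma_1 = \emptyset$.

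For the inductive step, suppose $\V' = \V(\Psi, \Lambda')$ and we drill $\V'$ further. A flow cycle of $\V'$ encodes a closed orbit of the flow that $\V'$ determines. Because $\V'$ is built from the same $\Psi$, that flow is $\Psi \bez \Lambda'$. By Theorems~\ref{thm:unique:drilled:flow} and~\ref{thm:tri:flow:id}, the drilled flow is independent of the filling slopes and agrees with $\Psi\bez\Lambda'$ up to orbit equivalence isotopic to the identity. Hence the new closed orbit is an orbit of $\Psi$ itself, and we may enlarge $\Lambda'$ by it. This identification of orbits across the orbit equivalence is the main point needing care. I would handle it by working throughout with the fixed flow $\Psi$: via Proposition~\ref{prop:parent:from:intermediate:cover}, the loom space of $\V'$ is $\loom(\Psi,\Lambda')$, so flow lines of $\V'$ represent points of that orbit space, which are orbits of $\Psi$.

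Now let $\V \rightsquigarrow \V_1$ and $\V \rightsquigarrow \V_2$, with $\V_j = \V(\Psi,\Lambda\cup\Gamma_j)$ for $j=1,2$. Set $\Lambda_3 = \Lambda\cup\Gamma_1\cup\Gamma_2$. It is a finite set of closed orbits containing $\Lambda$, so it contains $\Sing(\Psi)$, and every perfect fit rectangle meets $\widetilde\Lambda\subset\widetilde\Lambda_3$. Hence $\Lambda_3$ is perfect, and we take $W = \V(\Psi,\Lambda_3)$. To get $\V_1 \rightsquigarrow W$, enumerate $\Gamma_2\bez\Gamma_1 = \{\gamma_1,\dots,\gamma_r\}$ and drill one orbit at a time. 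Each intermediate set $\Lambda\cup\Gamma_1\cup\{\gamma_1,\dots,\gamma_i\}$ is perfect by the same argument. Each $\gamma_{i+1}$ is a closed orbit of the current drilled flow, so by Theorem~\ref{thm:flow:lines} and \cite[Theorem 6.1(4)]{LMT_flow} it is represented by a flow cycle, possibly only through its square; that flow cycle still determines the same orbit. This gives an edge from $\V(\Psi,\cdot)$ to its parent, and the symmetric argument gives $\V_2\rightsquigarrow W$.

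The main obstacle is well-definedness: $\V$ might arise from several flows or perfect sets, and the parent $\V_\zeta$ must be determined by $\V$ and $\zeta$ alone. Theorem~\ref{thm:AT} together with Theorem~\ref{thm:unique:drilled:flow} addresses this, because different admissible slopes yield the same drilled flow. The argument then only ever needs the single flow $\Psi$ chosen at the start.
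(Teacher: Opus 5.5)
Your proposal is correct and follows the same route as the paper: realize $\V=\V(\Psi,\Lambda)$ via Theorem~\ref{thm:AT}, write the two descendants as $\V(\Psi,\Lambda\cup\Gamma_i)$, observe that $\Lambda\cup\Gamma_1\cup\Gamma_2$ is again a perfect set, and take $\mathcal{W}=\V(\Psi,\Lambda\cup\Gamma_1\cup\Gamma_2)$. The extra details you supply are exactly the steps the paper's short proof leaves implicit: the induction along directed paths, the check that the union is perfect, and drilling the missing orbits one at a time using flow cycles that represent them.
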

	\begin{proof}
		Suppose that $\V_1, \V_2$ are two drillings of a veering triangulation $\V$. By Theorem~\ref{thm:AT}, there is a transitive pseudo-Anosov flow $\Psi$ and a perfect set $\Lambda$ for $\Psi$ such that $\V = \V(\Psi, \Lambda)$. Furthermore, for $i=1,2$ there are $\Gamma_i\subset \Per(\Psi)$ such that $\V_i = \V(\Psi, \Lambda \cup \Gamma_i)$. The union $\Lambda \cup \Gamma_1 \cup \Gamma_2$ is a perfect set for $\Psi$, and thus there is a veering triangulation $\mathcal{W} = \V(\Psi, \Lambda \cup \Gamma_1 \cup \Gamma_2)$. Since $\mathcal{W}$ is a drilling of both $\V_1$ and $\V_2$, we get directed paths $\V_1 \rightsquigarrow \mathcal{W}$ and $\V_2 \rightsquigarrow \mathcal{W}$. 
	\end{proof}
	Let $\beta(\V)$ be the number of ideal vertices of a veering triangulation $\V$, and let $\tau(\V)$ be the number of tetrahedra of $\V$. 
	Assigning to each vertex $\V$ of $\A$ a pair $(\beta(\V),\tau(\V))\in\mathbb{N}^2$ endows $\A$ with a bi-grading; see Figure \ref{fig:drilling:graph} that illustrates a subgraph of~$\A$ arranged in a way that reflects this bi-grading. 
	
		\begin{figure}[h]
		\includegraphics[scale=0.34]{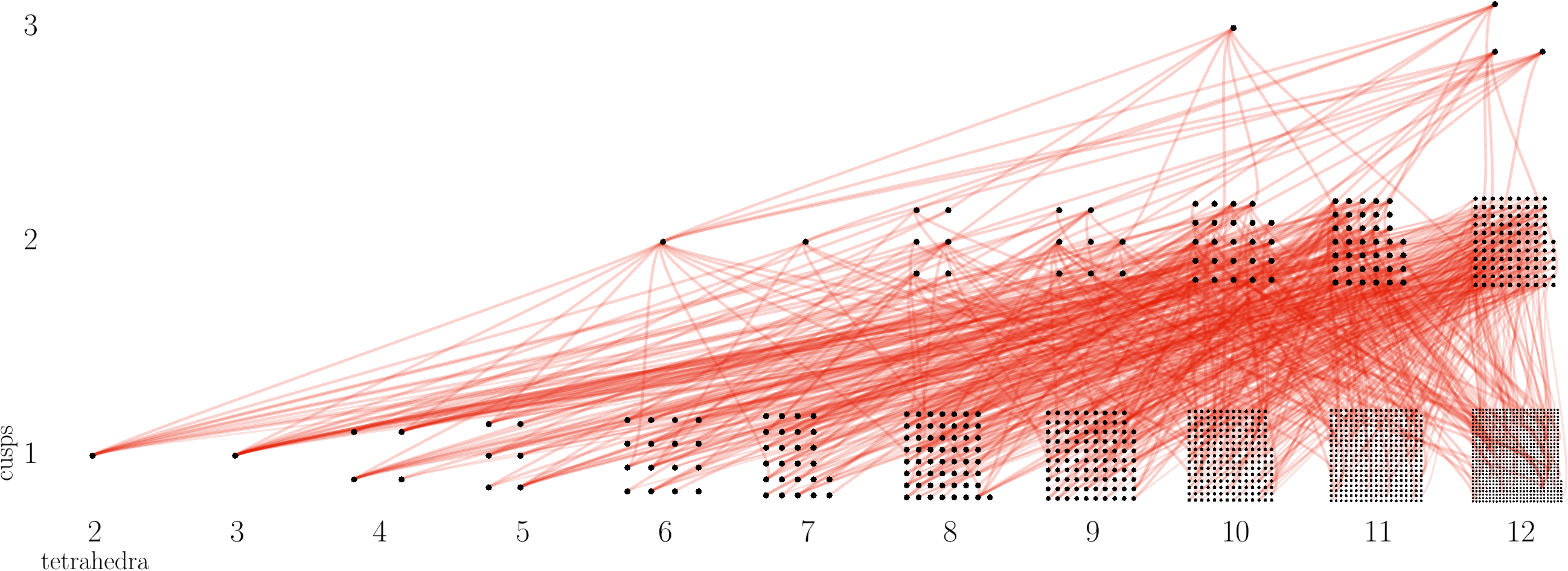}
		\caption{A subgraph of the veering ancestry graph. Each vertex represents an edge-orientable veering triangulation with at most 12 tetrahedra and exactly four ladders in each boundary component. Each edge represents a drilling along a flow cycle of length at most 7 connecting two such triangulations. Vertices are arranged according to the number of tetrahedra (horizontal axis) and the number of ideal vertices (vertical axis) of the corresponding triangulations. Since edges are oriented from triangulations with fewer cusps to those with more cusps, they all point upwards. We therefore omit the orientations from the figure.}
		\label{fig:drilling:graph}
	\end{figure}
	
	Observe that if there is an edge in $\A$ from $\V$ to $\V'$, then $\beta(\V')=\beta(\V)+1$. In other words, the edge orientation records the direction in which $\beta(\cdot)$ increases. In contrast, there is no such constraint for $\tau$;  depending on the edge, $\tau(\V')$ may be larger than, smaller than, or equal to $\tau(\V)$.
	
	Let $\delta = (\V_1, \V_2, \ldots, \V_n)$ be a (not necessarily directed) path in $\A$. Then for every integer $1 < i \leq n$, either there is an edge of $\A$ from $\V_{i-1}$ to $\V_i$ or an edge from $\V_i$ to $\V_{i-1}$. For $1<i<n$ we say that $\V_i$ is an \emph{internal local minimum} for $\delta$ if $\beta(\V_{i+1}) = \beta(\V_{i-1}) = \beta(\V_i) + 1$. That is, both $\V_{i-1}$ and $\V_{i+1}$ are single drillings of $\V_i$. Each internal local minimum $\V_i$ of $\delta$ determines two maximal directed subpaths of $\alpha$: one beginning with $(\V_i,\V_{i-1},\ldots)$ and the other beginning with $(\V_i,\V_{i+1},\ldots)$. We call the terminal endpoints of these subpaths the \emph{local peaks of $\delta$ around} $\V_i$. 
	
	\begin{theorem}\label{thm:connected:cpts}
		Let $\Psi, \Psi'$ be transitive pseudo-Anosov flows, and let $\Lambda, \Lambda'$ be arbitrary perfect sets for $\Psi, \Psi'$, respectively. Set $\V = \V(\Psi, \Lambda)$ and $\V' = \V(\Psi', \Lambda')$. Then $\Psi, \Psi'$ are almost orbit equivalent if and only if $\V, \V'$ are in the same connected component of $\A$. 
	\end{theorem}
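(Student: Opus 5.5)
We prove the two implications separately. The forward direction is essentially immediate from the definitions; the reverse direction needs the confluence of $\A$ (Lemma~\ref{lem:confluence}) together with the fact that $\V$ remembers its drilled flow (Theorems~\ref{thm:AT}, \ref{thm:unique:drilled:flow} and \ref{thm:tri:flow:id}).

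\emph{($\Rightarrow$)} Suppose $\Psi\bez\Gamma$ and $\Psi'\bez\Gamma'$ are orbit equivalent for finite $\Gamma\subset\Per(\Psi)$ and $\Gamma'\subset\Per(\Psi')$. Enlarging $\Gamma$ and $\Gamma'$ by corresponding orbits, we may assume $\Lambda\subset\Gamma$ and $\Lambda'\subset\Gamma'$.
\begin{itemize}
\item Under the orbit equivalence $h$, the image $h(\Lambda\cup\Gamma)$ together with $\Gamma'$ gives a common perfect set. More precisely, set $\Gamma_1=\Lambda\cup\Gamma\cup h^{-1}(\Lambda')$. Then $\Psi\bez\Gamma_1$ is orbit equivalent to $\Psi'\bez\Gamma_1'$ with $\Gamma_1'=h(\Gamma_1\bez\Lambda)\cup\Lambda'$.
\item Supersets of perfect sets are perfect, so $\Gamma_1$ is perfect for $\Psi$ and $\Gamma_1'$ is perfect for $\Psi'$.
\item The loom spaces of $(\Psi,\Gamma_1)$ and $(\Psi',\Gamma_1')$ are the orbit spaces of orbit-equivalent drilled flows. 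Hence they are equivariantly isomorphic and give the same veering triangulation $\mathcal W$.
\end{itemize}
Drilling the finitely many orbits of $\Gamma_1\bez\Lambda$ one at a time gives a directed path $\V\rightsquigarrow\mathcal W$. Each intermediate set is perfect, and each step is a veering parent, which is realised by a single flow cycle by Theorem~\ref{thm:flow:lines}; squares of twisted orbits are harmless because the parent only depends on the orbit. Similarly $\V'\rightsquigarrow\mathcal W$.

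\emph{($\Leftarrow$)} Let $\delta$ be an undirected path from $\V$ to $\V'$ in $\A$. First, every edge $\V_1\to\V_2$ of $\A$ relates the drilled flows: $\V_2=\V(\Psi_1,\Lambda_1\cup\{\gamma\})$ where $\V_1=\V(\Psi_1,\Lambda_1)$. Consequently, the drilled flow underlying $\V_2$ is the drilled flow underlying $\V_1$ with one more orbit removed. This uses Theorems~\ref{thm:AT}, \ref{thm:unique:drilled:flow} and \ref{thm:tri:flow:id}: a veering triangulation determines its drilled flow $\Psi\bez\Lambda$ up to orbit equivalence, independently of the filling slopes. Define two vertices to be equivalent if their drilled flows are almost orbit equivalent. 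Almost orbit equivalence is an equivalence relation, since we can further drill corresponding orbits to make compositions compatible. Each edge therefore joins equivalent vertices, and so by induction along $\delta$, $\Psi\bez\Lambda$ and $\Psi'\bez\Lambda'$ are almost orbit equivalent. Removing $\Lambda$ and $\Lambda'$ shows that $\Psi$ and $\Psi'$ are almost orbit equivalent.

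Alternatively, one can reduce $\delta$ to a single peak: replace each internal local minimum and its two local peaks by a common descendant, using Lemma~\ref{lem:confluence}. Induction on the number of internal local minima then yields $\V\rightsquigarrow\mathcal W\leftsquigarrow\V'$. The transitivity argument above avoids this reduction.

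The main obstacle is the transitivity step, in particular the identification of drilled flows across an edge. We must check that the drilled flow is an invariant of the veering triangulation up to orbit equivalence (Theorem~\ref{thm:unique:drilled:flow}). We must also check that composing orbit equivalences of drillings remains an orbit equivalence after removing additional orbits. This holds because orbit equivalences send periodic orbits to periodic orbits, so any finite set removed on one side can be matched on the other.
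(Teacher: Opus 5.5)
Your proof is correct. The forward direction is the paper's argument. You enlarge the removed sets by corresponding orbits so that they contain $\Lambda$ and $\Lambda'$; these enlarged sets are perfect, and the restricted orbit equivalence identifies the two loom spaces equivariantly. Hence $\V$ and $\V'$ both drill to the same $\W$. Your remark that one can drill one orbit at a time, using a flow cycle representing $\gamma$ or $\gamma^2$, spells out a step the paper leaves implicit.

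The bullet defining $\Gamma_1,\Gamma_1'$ needs repair, though. Once $\Lambda\subset\Gamma$ and $\Lambda'\subset\Gamma'$, the map $h$ is only defined on $N\bez\Gamma$. So $h^{-1}(\Lambda')$ and $h(\Gamma_1\bez\Lambda)$ make no sense, and there is no ``common'' perfect set, since the two sets belong to different flows. Simply take $\Gamma_1=\Gamma$ and $\Gamma_1'=\Gamma'$ at that point. Alternatively, skip the preliminary enlargement and use the paper's $\Delta=\Lambda\cup\Gamma\cup h^{-1}(\Lambda'\bez\Gamma')$ and $\Delta'=\Lambda'\cup\Gamma'\cup h(\Lambda\bez\Gamma)$.

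For the reverse direction, your main argument takes a genuinely different route.
\begin{itemize}
\item The paper uses Lemma~\ref{lem:confluence} to eliminate the internal local minima of the path. This reduces it to $\V\rightsquigarrow\W$ followed by the reverse of a directed path from $\V'$ to $\W$. It then invokes Theorems~\ref{thm:AT}, \ref{thm:unique:drilled:flow} and~\ref{thm:tri:flow:id} once, at the common ancestor $\W$.
\item You invoke those theorems at every vertex, attaching to each triangulation a drilled flow well defined up to orbit equivalence. You observe that an edge changes this flow only by removing one closed orbit, and finish by transitivity of almost orbit equivalence.
\end{itemize}
The two proofs use the same trick in dual places. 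The paper's confluence comes from taking the union of two perfect sets on the triangulation side. Your transitivity comes from taking the union of the removed orbits on the flow side.

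Your version is shorter and avoids manipulating paths in $\A$. The paper's version shows directly, by a combinatorial argument in $\A$, that connectivity is always witnessed by a single common veering ancestor. In your approach this also follows, but only a posteriori, by combining the reverse direction with the forward one.
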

	\begin{proof}
		Let $N, N'$ be the closed three-manifolds that support $\Psi, \Psi'$,  respectively. Suppose that $\Psi, \Psi'$ are almost orbit equivalent. Then there are $\Gamma \subset \Per(\Psi)$, $\Gamma' \subset \Per(\Psi')$ for which there exists an orbit equivalence $h: (N\bez \Gamma) \rightarrow (N' \bez \Gamma')$. Set
		\begin{gather*}
		\Delta =\Lambda \cup  \Gamma\cup h^{-1}(\Lambda' \bez \Gamma')\\
		\Delta' = \Lambda'  \cup \Gamma' \cup h(\Lambda \bez \Gamma).
		\end{gather*}
		Then $\Delta$, $\Delta'$ are perfect sets for $\Psi, \Psi'$, respectively. Set $\W = \V(\Psi, \Delta)$ and $\W' = \V(\Psi', \Delta')$. Since $\W$ is a veering ancestor of $\V$, and $\W'$ is a veering ancestor of $\V'$, there are directed paths in $\A$ from $\V$ to $\W$, and from $\V'$ to $\W'$. 
		
		Observe that $h$ restricts to an orbit equivalence between $\Psi \bez \Delta$ and $\Psi' \bez \Delta'$. Therefore it induces a $\pi_1(N - \Delta)$-equivariant homeomorphism between the loom space associated to $(\Psi, \Delta)$ and the loom space associated to $(\Psi', \Delta')$. It follows that $\W = \W'$; see Construction \ref{const:finite:veering}. Consequently, we get a path from $\V$ to $\V'$ that passes through $\W$. 
		
		Conversely, suppose that $\V, \V'$ are connected by a path $\delta$ in $\A$. By Lemma \ref{lem:confluence}, for each internal local minimum $\mathcal{W}$ of $\delta$, the local peaks of $\delta$ around $\W$ have a common ancestor. This means that we can replace $\delta$ by a path $\delta'$ that also connects $\V$ to $\V'$ but has strictly less internal local minima. After repeating this procedure finitely many times, we eventually get a path between $\V$ and $\V'$ that does not have any internal local minima. Such a path splits into a directed subpath $\V \rightsquigarrow\W$ and the reverse of the directed subpath $\V' \rightsquigarrow\W$, for some $\W \in \A$.  This means that $\W$ is a common veering ancestor of $\V$ and~$\V'$. Hence we can express $\W$ as both $\V(\Psi, \Lambda \cup \Gamma)$ and $\V(\Psi', \Lambda' \cup \Gamma')$ for some $\Gamma \subset \Per(\Psi)$, $\Gamma' \subset \Per(\Psi')$.
		
		Let $\Phi$ and $\Phi'$ be the transitive Anosov flows obtained from $\W$ via Theorem~\ref{thm:AT}, using the slopes that recover $N$ and $N'$, respectively. Then $\Phi$ and $\Phi'$ agree on the complement of orbits corresponding to the cores of the Dehn filling solid tori. 
		Furthermore, by Theorem~\ref{thm:tri:flow:id}, $\Phi$ is orbit equivalent to $\Psi$ and $\Phi'$ is orbit equivalent to $\Psi'$.  Consequently, $\Psi \bez (\Lambda \cup \Gamma)$ and $\Psi' \bez (\Lambda' \cup \Gamma')$ are orbit equivalent. 
	\end{proof}
	
	By Theorem \ref{thm:connected:cpts}, Conjecture \ref{conj:Ghys} can be restated as follows.
	
	\begin{conjecture}[Ghys, restated] \label{conj:veering:graph:component}
		Any two edge-orientable veering triangulations with four ladders in each boundary component are in the same connected component of $\A$. 
	\end{conjecture}
	
	We used the drilling algorithm described in Section \ref{sec:algorithm} to check if experimental data supports the above conjecture or indicates that it may be false. Specifically, we constructed a subgraph  $\A^{+}_{16,4,7}$  of $\A$ whose vertices correspond to the following veering triangulations:
	\begin{enumerate}
		\item all edge-orientable veering triangulations with at most sixteen tetrahedra and four ladders in each boundary component;
		\item for each veering triangulation $\V$ from (1) and each non-peripheral flow cycle $\zeta$ of $\V$ of length at most seven, the veering parent $\V_\zeta$.
	\end{enumerate}
	Furthermore, $\A^{+}_{16,4,7}$ contains every edge of $\A$ that corresponds to a drilling $\V \to \V_\zeta$ for $\V$ as in (1) and a flow cycle $\zeta$ of $\V$ of length at most seven. Note that  24,343 vertices of $\A^+_{16,4,7}$ correspond to veering triangulations of type (1).
	
	\begin{experimental}\label{exp:ancestry:experiments}
		\hfill
		\begin{enumerate}[label=(\Roman*)] 
			\item $\A^{+}_{16,4,7}$  has 8,767,114 vertices and 2247 connected components.
			\item The largest connected component of $\A^{+}_{16,4,7}$  has 8,212,337 vertices. 
			\item Every connected component of $\A^{+}_{16,4,7}$ has a vertex corresponding to a veering triangulation encoding an Anosov suspension flow.\qed
		\end{enumerate}
	\end{experimental}
	
	The fact that the vast majority (93.67 \%) of vertices of $\A^{+}_{16,4,7}$  lie in the same connected component of $\A$ is a strong indication that all of them do. That is indeed the case.
	
	\begin{corollary}
		All vertices of $\A^{+}_{16,4,7}$  lie in the same connected component of $\A$.
	\end{corollary}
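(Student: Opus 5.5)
The plan is to reduce the corollary to Experimental Result \ref{exp:ancestry:experiments} (III), together with Theorem \ref{thm:dehornoy} and Theorem \ref{thm:connected:cpts}.

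Fix two vertices $\V_1,\V_2$ of $\A^{+}_{16,4,7}$. By part (III), the connected component of $\A^{+}_{16,4,7}$ containing $\V_i$ contains a vertex $\W_i$ encoding an Anosov suspension flow. Since $\A^{+}_{16,4,7}$ is a subgraph of $\A$, $\V_i$ and $\W_i$ lie in the same component of $\A$. So it suffices to show that $\W_1$ and $\W_2$ lie in the same component of $\A$.

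First I check that the suspension flows $\Phi_1,\Phi_2$ encoded by $\W_1,\W_2$ have orientable invariant foliations. Every vertex of $\A^{+}_{16,4,7}$ is either an edge-orientable triangulation with four ladders in each boundary component, or a veering parent of one. Drilling preserves edge-orientability, since the flow and its invariant foliations are unchanged. Drilling also only adds boundary components with two or four ladders. For vertices of type (1), the characterization preceding Conjecture \ref{conj:veering:graph:component} applies directly. For a parent $\V_\zeta$ of a type (1) vertex $\V$, I avoid the ladder count altogether. By Theorem \ref{thm:connected:cpts}, the flow encoded by $\V_\zeta$, with the filling slopes recovering the closed manifold of $\V$, is almost orbit equivalent to the Anosov flow with orientable foliations encoded by $\V$. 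So in either case $\W_i$ is in the same component of $\A$ as a triangulation encoding an Anosov flow $\Psi_i$ with orientable foliations. Here I read (III) as meaning that the suspension flow in question is such a flow; this interpretation is the step I expect to need the most care.

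Next, by Theorem \ref{thm:dehornoy}, any two Anosov suspensions with orientable invariant foliations are almost orbit equivalent, so $\Phi_1$ and $\Phi_2$ are almost orbit equivalent. Applying Theorem \ref{thm:connected:cpts} with any perfect sets realizing $\W_1,\W_2$, I conclude that $\W_1$ and $\W_2$ lie in the same component of $\A$. Chaining $\V_1\sim\W_1\sim\W_2\sim\V_2$ finishes the argument.

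The main subtlety is making sure that the vertex supplied by (III) encodes a suspension with \emph{orientable} invariant foliations, so that Theorem \ref{thm:dehornoy} applies. I would handle this by noting that edge-orientability is inherited along edges of $\A$, so every vertex of $\A^{+}_{16,4,7}$ is edge-orientable and thus encodes flows with transversely orientable invariant foliations.
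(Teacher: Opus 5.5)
Your proof is correct and follows the same route as the paper. It uses Experimental Result~\ref{exp:ancestry:experiments}~(III) to find a suspension vertex in each component, and edge-orientability of every vertex to guarantee that the suspension flows have transversely orientable foliations. Theorem~\ref{thm:dehornoy} and Theorem~\ref{thm:connected:cpts} then connect the components.

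The detour in your middle paragraph is unnecessary: applying Theorem~\ref{thm:connected:cpts} to $\V_\zeta$ with the filling slopes of $\V$ says nothing about the suspension flow, which may be realized by different slopes. The edge-orientability observation in your final paragraph is exactly what the paper uses, and it is all that is needed.
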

	\begin{proof}
		Let $C_1, C_1$ be any connected components of $\A^{+}_{16,4,7}$. 
		By Part (III) of Experimental Result~\ref{exp:ancestry:experiments},  for $i\in \lbrace 1,2 \rbrace$, $C_i$ has a vertex $\V_i$ corresponding to a veering triangulation encoding some Anosov suspension flow $\Psi_i$. Since all veering triangulations in $\A^{+}_{16,4,7}$  are edge-orientable,~$\Psi_i$ must have transversely orientable invariant foliations. Therefore Theorem~\ref{thm:dehornoy}  implies that $\Psi_1$ and $\Psi_2$ are almost orbit equivalent. By Theorem \ref{thm:connected:cpts}, $\V_1, \V_2$ are connected by a path in $\A$. Consequently, $C_1$ and $C_2$ lie in the same connected component of $\A$. 
	\end{proof}

	Finally, we introduce new terminology that will be used to generalize the combinatorial version of Ghys' conjecture to all edge-orientable veering triangulations.

	\begin{definition}
		Let $\V$ be a veering triangulation. The \emph{ladder type} of $\V$ is a sequence $(l_k)_{k \in \mathbb{N}}$, such that for every $k \in \mathbb{N}$ the term $l_k$ is equal to the number of ideal vertices of~$\V$ admitting exactly $2k$ ladders.
	\end{definition}
	
		We say that a ladder type $(l_k)_{k \in \mathbb{N}}$ is \emph{even} if $l_{2i-1} = 0$ for every $i \in \mathbb{N}$. Furthermore, two ladder types $(l_k)_{k \in \mathbb{N}}$ and $(l'_k)_{k \in \mathbb{N}}$ are said to be \emph{tail equivalent} if $l_k = l'_k$ for all $k \ge 3$.

	\begin{question}[Conjecture of Ghys generalized]\label{question:ghys:generalized}
		Let $\A^+$ be the subgraph of $\A$ with vertices corresponding to all finite edge-orientable veering triangulations and edges corresponding to all possible single drillings between them.  Are connected components of $\A^+$ in a bijective correspondence with the tail equivalence classes of even ladder types?
	\end{question}
	\section{Further questions} \label{sec:questions}
	
	In this section, we discuss several problems that are closely related to the drilling algorithm but lie outside the scope of this paper.
	
	\subsection{Running time}\label{subsec:q:time}
	
	Although Section~\ref{sec:algorithm} gives a complete description of the drilling algorithm, we do not analyze its computational complexity. Determining it would require estimates for the number of tetrahedra in the adapted continent $\C_1$ constructed in Step~3, and for both the number and the size of the continent expansions performed during \mbox{Steps~4 -- 9} of the algorithm (step numbers as in Figure~\ref{fig:flowchart}). These in turn rely on a detailed understanding of the process of channelization introduced by Schleimer and the second author in \cite[Section~5.19]{SchleimSegLinks}.
	
	\begin{question}
		Given a veering triangulation $\V$ with $n$ tetrahedra and a flow cycle $\zeta$ of length $k$, how does the running time of the algorithm in Figure~\ref{fig:flowchart} depend on $n$ and $k$?
	\end{question}

Experimentally, the algorithm is feasible for small veering triangulations and relatively short flow cycles. In particular, we used it to construct the subgraph $\A^+_{16,4,7}$ of the veering ancestry graph, mentioned in Experimental Result~\ref{exp:ancestry:experiments}, which contains almost nine million triangulations. However, this computation only required drilling flow cycles of length at most~7 from veering triangulations with at most~16 tetrahedra. Understanding the computational complexity of the algorithm and identifying potential improvements would be particularly important if there were interesting applications that require drilling long flow cycles.
	\subsection{Combinatorics of veering parents}\label{subsec:q:combinatorics}
		We distinguish two types of veering tetrahedra: \emph{toggles} (also called \emph{hinges} in the literature; see, for example, \cite[Section 2.1]{LMT_flow} or \cite[Section 2]{explicit_angle}), whose diagonals have different colors, and \emph{fans}, whose diagonals have the same color. Taking the colors of the diagonals into account gives four types of veering tetrahedra: toggles with red bottom diagonal, toggles with blue bottom diagonal, and analogously for fans; see Figure \ref{fig:tet:rects}.
		
	Recall from Definition \ref{defn:rectangle} that a flow cycle~$\zeta$ of a veering triangulation $\V$ can pass through a tetrahedron $t$ of $\V$ in three different ways: from the bottom diagonal to the top diagonal, or from the bottom diagonal to either of the two equatorial edges whose color is opposite to that of the top diagonal. We say that $\zeta$ goes \emph{straight up} through $t$ in the first case, and \emph{sideways} through $t$ in the latter two cases. 

	Let $\V_\zeta$ be obtained from $\V$ by drilling $\zeta$. Recall from Proposition \ref{prop:finite:veering:parent} that the tetrahedra of $\V_\zeta$ correspond to the $\pi_1(M)$-equivalence classes of $\zeta$-tetrahedron rectangles contained in the fundamental tetrahedron rectangles of~$\V$. Furthermore, their face identifications are induced by those of the tetrahedra of $\V$. Thus the combinatorics of $\V_\zeta$ is determined by the positions of the points of $\liftZeta$ within the fundamental tetrahedron rectangles relative to their cusps. These positions, in turn, depend on various combinatorial features of $\zeta$ and~$\V$, such as:
	\begin{itemize}
		\item the degrees of the edges of $\V$,
	\item how often $\zeta$ goes straight up through tetrahedra of each type,
	\item how often $\zeta$ goes sideways through tetrahedra of each type, 
	\item how often $\zeta$ switches between going straight up and going sideways.
	\end{itemize}
	
Beyond knowing that these features affect the combinatorics of $\V_\zeta$, we currently have little quantitative understanding of this dependence. This leads to the following question.

\begin{question}\label{question:combinatorics}
Let $\zeta$ be a flow cycle of a finite veering triangulation $\V$. To what extent do combinatorial features of $\V$ and $\zeta$, such as those listed above, determine combinatorial features of $\V_\zeta$?
\end{question}

Of particular interest would be determining, or at least bounding, the numbers of tetrahedra of each type in~$\V_\zeta$ without having to construct $\V_\zeta$ explicitly. Such bounds could help determine whether a given veering triangulation can arise as a parent of another. They could also be useful for studying the existence of veering parents with a prescribed toggle to fan ratio.
For instance, one could ask whether every veering triangulation admits a parent consisting only of toggle tetrahedra. Our experiments do not suggest that this is always the case. If it was, Conjecture \ref{conj:geometric} would follow, since, by \cite[Lemma 4.1]{explicit_angle}, veering triangulations without any fans are geometric.

\subsection{Decreasing the number of tetrahedra}\label{further:descreasing}

In Appendix \ref{app:ex:decreasing}, we present an example of a veering parent with fewer tetrahedra than the original veering triangulation. Drillings that decrease the number of tetrahedra are also visible in Figure \ref{fig:drilling:graph}, where they correspond to edges with negative slope. We suspect that, at least in some cases, this phenomenon is related to \emph{vertical surgery} introduced by Tsang in \cite[Section~4.2]{Tsang-branched-surfaces}.

A vertical surgery can be performed along any \emph{vertical surgery curve} of a veering triangulation. Such curves are closely related to flow cycles; compare the local characterization in \cite[Proposition 2.11]{Parlak-vanishing} with the flow graph of a veering triangulation shown in \cite[Figure 4]{LMT_flow}. Performing vertical surgery on a veering triangulation $\V$ produces another veering triangulation $\V'$ that has more tetrahedra than $\V$  \cite[Proposition 4.8]{Tsang-branched-surfaces}. Moreover, by choosing sufficiently large surgery coefficient, the number of tetrahedra of~$\V'$ can be made arbitrarily large.

Conjecturally, vertical surgery combinatorializes certain cases of \emph{Goodman--Fried surgery} along periodic orbits of pseudo-Anosov flows; see  \cite{Fried_drill, Goodman} for different descriptions of Goodman-Fried surgery. This would suggest that there exist flow cycles $\zeta'$ of $\V'$ and $\zeta$ of $\V$ such that drilling them yields the same veering parent. 
In particular, if the surgery coefficient yielding $\V'$ from $\V$ was large enough, drilling~$\zeta'$ would have to result in a veering triangulation with fewer tetrahedra than $\V'$. This picture is only conjectural and  may not fully explain the phenomenon of drilling decreasing the number of tetrahedra.

\begin{question}
	Suppose that $\V'$ admits a veering parent with fewer tetrahedra than $\V'$. Does it follow that $\V'$ can be obtained by vertical surgery from a veering triangulation $\V$ with fewer tetrahedra than $\V'$?
\end{question}

By Proposition \ref{prop:tsang:perfect:set}, if $\Psi$ is a transitive pseudo-Anosov flow with $m$ singular orbits then any minimal perfect set $\Lambda$ for $\Psi$ has either $m$ (when $m \geq 1$ and $\Psi$ does not have perfect fits) or $m+1$ elements (when $m=0$ or  $\Psi$ has perfect fits). While choosing a minimal perfect set minimizes complexity at the level of the flow, the example discussed in Appendix \ref{app:ex:decreasing} shows that it does not necessarily minimize the complexity of the obtained triangulation. This gives another natural combinatorial invariant of a transitive pseudo-Anosov flow: the minimum number of tetrahedra in a veering triangulation encoding $\Psi$ via Construction \ref{const:finite:veering}. We call that number the \emph{veering complexity} of $\Psi$.

	\begin{question}
		Can the veering complexity of a transitive pseudo-Anosov flow be bounded in terms of other topological or dynamical invariants of the flow?
	\end{question}

\subsection{Surgery and drilling in the hyperbolic and veering settings}\label{subsec:q:surgery}
There is an interesting contrast between the geometric and veering behavior under surgery and drilling. Specifically, let $\zeta$ be a flow cycle of a veering triangulation $\V$ that is ambiently isotopic to a vertical surgery curve $c$ of $\V$. Denote the manifold underlying the veering parent $\V_\zeta$ of $\V$ by $M_\zeta$, the veering triangulation obtained from $\V$ by $\frac{1}{k}$-vertical surgery along~$c$ by~$\V_k$, and the manifold underlying $\V_k$ by $M_k$.

By \cite[Theorem~6.29]{Purcell-knots}, the manifolds $M_k$ converge geometrically to $M_\zeta$ as $k\to\infty$. In contrast, the combinatorial complexity of the triangulations $\V_k$ grows without bound, instead of stabilizing to that of $\V_\zeta$. However, all the `additional' tetrahedra introduced by vertical surgery are fans of the same color; see \cite[Section~4.2]{Tsang-branched-surfaces}.

\begin{question}
Does the combinatorics of $\V_k$, for sufficiently large $k$, inform about that of~$\V_\zeta$?
\end{question}

Since the change from $\V$ to $\V_k$ is local and well-understood, understanding the relationship between the combinatorics of $\V_k$ and $\V_\zeta$ could provide a way to approach Question \ref{question:combinatorics}.

	\subsection{Recognizing veering ancestors and connectedness in the veering ancestry graph}\label{subsec:q:veering:graph}
	
	The veering census \cite{VeeringCensus} collects information about all veering triangulations with up to 16 tetrahedra. In view of Theorem \ref{thm:AT}, one might hope to interpret it as a census of pseudo-Anosov flows. Unfortunately, many non-isomorphic veering triangulations encode the same flow. This is, for instance, the case for pairs of veering triagulations that can be connected by a directed path in the veering ancestry graph.
	At the moment, we have no way of recognizing such pairs, other than searching for a directed path between them by brute force. Ruling out the existence of such a path is also difficult, except in the obvious cases when the triangulations have the same number of cusps, are not tail equivalent (as defined in Section \ref{subsec:ghys}), or when only one of the triangulations is edge-orientable. 
	
 The goal of trying to determine how many orbit inequivalent pseudo-Anosov flows are represented in the veering census motivates the following question.
	
\begin{question}\label{question:recognizing:directed:path}
Can it be combinatorially recognized if two tail equivalent veering triangulations with different numbers of cusps and the same edge-orientability type are connected by a directed path in the veering ancestry graph?
\end{question}

By Theorem \ref{thm:connected:cpts}, veering triangulations lying in the same connected component of the veering ancestry graph generally encode flows that are only almost orbit equivalent. They may, however, also encode orbit equivalent flows. This happens when the Dehn filling data imposed by the two triangulations on their common veering ancestor are compatible. By this we mean that, up to orbit equivalence of the drilled flow underlying the common ancestor, whenever both triangulations require a Dehn filling of the same boundary component, the corresponding filling slopes  agree. There is no restriction on a boundary component that is filled in only one of the triangulations.
	
Properties preserved under both drilling and filling can provide obstructions to connectedness in the veering ancestry graph. So far, the only obstructions we know come from edge-orientability and tail equivalence class. Question \ref{question:ghys:generalized}, which generalizes Ghys' conjecture, asks if different tail equivalence classes are the only obstruction for two edge-orientable veering triangulations to lie in the same connected component of the veering ancestry graph. More generally, we ask:

\begin{question}
	Is there an invariant of veering triangulations that characterizes the connected components of the veering ancestry graph?
\end{question}
	
	\subsection{Filling veering triangulations}\label{subsec:q:filling}

	Let $M$ be a three-manifold with a veering triangulation $\V$, let $b$ be a boundary component of $M$, and let $s$ be a slope on $b$. Denote by $M(s)$ the manifold obtained from $M$ by Dehn filling $b$ along $s$. A \emph{veering filling of $\V$ along $s$} is a veering triangulation $\V(s)$ of $M(s)$ such that drilling $\V(s)$ along the core of the filling solid torus (in the sense of Section \ref{sec:parents}) recovers $\V$.
	
	While topological Dehn filling is always possible, veering filling is much more restrictive. The obstructions come both from the conditions that appear in the definition of a perfect set for a pseudo-Anosov flow (Definition \ref{def:perfect:set}) and from the requirement that the geometric intersection number of the filling slope with the ladderpole curves induced by $\V$ on $b$ must be at least~4 \cite[Theorem~5.1]{Tsang-Agol}. 
	
	Suppose that $\V$ has at least two boundary components,  induces at most four ladders on $b$, and~that $s$ intersects the ladderpole curves of $b$ at least four times. Then, by Theorem~\ref{thm:AT}, there exists a transitive pseudo-Anosov flow $\Psi$ and a perfect set $\Lambda$ for $\Psi$ such that
	\begin{itemize}
	\item $\V = \V(\Psi,\Lambda)$;
	\item the manifold underlying $\Psi$ is a closed Dehn filling of $M$ with filling slope $s$ on $b$;
	\item $|\Lambda| \geq 2$ and there is a non-singular orbit $\gamma_b \in \Lambda$ corresponding to the boundary component $b$ of $M$.
	\end{itemize}
	
	The difficulty in deciding whether a veering filling $\V(s)$ of $\V$ exists lies in determining whether $\Psi$ has no perfect fits relative to $\Lambda \bez {\gamma_b}$. This is generally a nontrivial task as it is not completely understood how Dehn filling of a given slope affects the orbit space of a drilled flow; see \cite{Bonatti-surgeries} for a discussion of how Goodman--Fried surgery affects the orbit spaces of certain Anosov flows and \cite{Hall-perfect-fits} for an algorithm for deciding whether a pseudo-Anosov flow has perfect fits. 
	
\begin{question}
	Is there a combinatorial criterion for when a veering filling $\V(s)$ exists, given a boundary component $b$ of the 3-manifold underlying $\V$ and a slope $s$ on $b$?
\end{question}
	
	Beyond deciding whether $\V(s)$ exists, there is a separate problem of constructing $\V(s)$ explicitly. This is especially interesting from the point of view of exploring the veering ancestry graph, as an explicit filling construction would allow us to move both forwards and backwards along its directed edges.
	
	\begin{question}
		Is there a direct combinatorial procedure for constructing $\V(s)$, whenever it exists?
	\end{question}
	
	\bibliographystyle{abbrv}
	\bibliography{mybib}
	
	\newpage

	\appendix

	\section{Examples}\label{appendix:examples}
	
	In this appendix we discuss in detail three examples of veering parents, with the information for each example organized in a table. We begin by explaining how to read these tables.

	\subsection{Explanation of table entries}\label{subsec:tables}
	
	\subsubsection*{Convention} 
	Unless otherwise stated, indices arising from the computational description of a triangulation start from 0.
	\subsubsection{Taut signatures}
	Every taut triangulation can be described by a string of the following form 
	\begin{equation*}\label{string}
	\texttt{[isoSig]\underline{ }[string of 0's, 1's, and 2's]}.
	\end{equation*}
	The \texttt{isoSig} part refers to the \emph{isomorphism signature} of the triangulation, introduced by Burton in \cite[Section 3]{Burton_isoSig}.  They proposed a systematic way to order all tetrahedra in the triangulation and their vertices, starting from a choice of an initial tetrahedron and one of the $4!$ possible numberings of its vertices by the elements of $\lbrace0,1,2,3\rbrace$. If the triangulation has $n$ tetrahedra, this gives $24n$ different \emph{canonical labellings}, each of which can be encoded by a string of letters and numbers. The isomorphism signature is the lexicographically smallest of those strings. Thus two triangulations have the same isomorphism signature if and only if they are combinatorially isomorphic --- that is, differ only by reordering their tetrahedra and renumbering their vertices. 
	
	If the triangulation has $n$ tetrahedra, the numerical string has length $n$. It specifies which edges of each tetrahedron (with vertices enumerated by the lexicographically smallest canonical labelling) are its diagonals. 
	Specifically, the string has 0, 1, or 2 at position $0\leq i \leq n-1$ if the vertices of the diagonal edges of the $i$-th tetrahedron are respectively $\lbrace 0,1 \rbrace$ and $\lbrace 2,3 \rbrace$, $\lbrace 0,2 \rbrace$ and $\lbrace 1,3 \rbrace$, or $\lbrace 0,3 \rbrace$ and  $\lbrace 1,2 \rbrace$. Because the labels are propagated throughout the triangulation according to a fixed rule, this information suffices to encode the taut structure up to reversing the coorientation of all faces.
	
	The canonical labelling associated to the isomorphism signature gives us a fixed order on the tetrahedra of the triangulation and on their vertices. With this labelling, some tetrahedra may be negatively oriented. For each such tetrahedron, we swap the labels of the two vertices of its lexicographically smaller diagonal. After this change, all tetrahedra are positively oriented. We refer to the resulting labelling as the \emph{oriented isoSig-labelling} of the triangulation. This labelling is used in all pictures available in the veering census~\cite{VeeringCensus}.
	
	As mentioned in Section \ref{subsec:veer:tri}, every taut triangulation admits at most one veering structure. Thus, if a taut triangulation is veering, its taut signature uniquely encodes not only the taut structure (up to reversing the coorientations of all faces), but also the associated veering structure.

	\subsubsection{Encoding flow cycles} \label{subsec:labelling}
	Let $\V$ be a veering triangulation with $n$ tetrahedra.
	Let~$\zeta$ be a flow cycle of $\V$ of length $k \in \mathbb{N}$. According to  Definition \ref{defn:flow:cycle}, $\zeta$ is a sequence $(t_1, t_2, \ldots, t_k)$ of tetrahedra of $\V$ such that for each $i \in \{1,2, \ldots, k\}$, the tetrahedron $t_{i+1}$  is either immediately above the top diagonal of $t_i$ or immediately above one of the equatorial edges of $t_i$ that has a different color than the top diagonal of $t_i$, where the indices are understood modulo~$k$. 
	
	We encode a flow cycle by a sequence of pairs of integers as follows. We enumerate the tetrahedra of~$\V$ and their vertices  according to the oriented isoSig-labelling.
	Any edge~$e$ of a tetrahedron~$t$ connects two vertices $i,j$ of $t$ for some $0 \leq i < j \leq 3$. We label~$e$ with the lexicographic index of $\lbrace i,j \rbrace$ among all two-element subsets of $\lbrace 0,1,2,3 \rbrace$. For example, edge 0 of $t$ is the edge of $t$ that connects vertices 0 and 1 of $t$, and edge 4 of $t$ is the edge of $t$ that connects vertices 1 and 3 of $t$.

	With these numbering conventions, a flow cycle $\zeta = (t_1, t_2, \ldots, t_k)$ can be uniquely represented by a sequence of pairs $((a_i, b_i))_{i=1}^k$, where $0 \leq a_i \leq n-1$ is the index of the tetrahedron $t_i$ in the triangulation $\V$, and $0 \leq b_i \leq 5$ is the lexicographic index of the edge of $t_i$ through which $\zeta$ passes from $t_i$ to $t_{i+1}$. Conversely, to verify if a sequence $((a_i, b_i))_{i=1}^k$ of pairs of integers satisfying $0 \leq a_i \leq n-1$ and $0 \leq b_i \leq 5$ encodes a flow cycle it suffices to check, for each $i=1,2,\ldots, k$, if the $b_i$-th edge of the $a_i$-th tetrahedron of $\V$ is the bottom diagonal of the $a_{i+1}$-st tetrahedron of $\V$.

	\subsubsection{$\gamma$-tetrahedron rectangles inside fundamental tetrahedron rectangles}\label{subsub:zeta-rect}
	 Let $(\loom, \loom^+, \loom^-)$ be the loom space such that $\loomif(\univV) = \loom$. We represent each fundamental tetrahedron rectangle in $(\loom, \loom^+, \loom^-)$ by a square $(0,1)^2$ whose closure has cusps at $\left(0, \frac{p}{3}\right)$, $\left(1, \frac{q}{3}\right)$, $\left(\frac{r}{3}, 0\right)$, $\left(\frac{s}{3},1\right)$, where $p,q,r,s \in \lbrace 1,2 \rbrace$ and $p\neq q$, $r\neq s$; see an example in Figure \ref{fig:fig8_tet_rects}.

	\begin{figure}[h]
		\includegraphics[scale=1]{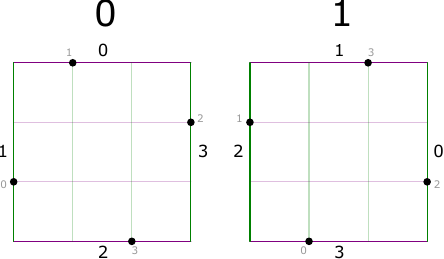}
		\caption{Square representatives of the fundamental tetrahedron rectangles in the loom space $(\loom, \loom^+, \loom^-)$ associated to the 2-tetrahedron veering triangulation of the figure-eight knot complement $M$. The black dots represent the cusps, and the small gray numbers next to them indicate the corresponding vertex numbers of the tetrahedron. 
			There are four $\pi_1(M)$-equivalence classes of face subrectangles, labelled from 0 to 3. The label of the equivalence class of a face subrectangle $Q \subset R$ appears next to the side of $R$ that is also a side of $Q$.  }
		\label{fig:fig8_tet_rects}
	\end{figure}

	Let $\zeta$ be a flow cycle of $\V$ that is non-peripheral and let $\gamma$ be the corresponding periodic orbit of a flow encoded by $\V$. Fix a fundamental tetrahedron rectangle $R$ in $(\loom, \loom^+, \loom^-)$. Recall that after Step~9 of the drilling algorithm (Figure \ref{fig:flowchart}), each point $x \in R \cap \liftGamma$ is approximated by a small rectangle $N(x) \subset R$ containing $x$. Furthermore, no cusp leaf of $\loom^\pm$ originating at a cusp of $R$ intersects $N(x)$ for any $x \in R \cap \liftGamma$, and no leaf of $\loom^\pm$ intersects  $N(x), N(x')$ for distinct  $x,x' \in R \cap \liftGamma$. 
	Therefore the horizontal and vertical orders on the set consisting of the cusps of $R$ and one representative point from each approximating rectangle are independent of the choice of representative points. 
	
	In our pictures, each rectangular approximation is represented by a single point whose position records only the position of the corresponding rectangle relative to the other approximating rectangles in $R$ and the cusps of $R$. For simplicity, we position these points as follows.
	The leaves of $\loom^+$ originating at the cusps of $R$ and intersecting~$R$ determine three vertical strips $V_i\subset R$, $i\in\lbrace0,1,2\rbrace$, while the leaves of $\loom^-$ originating at the cusps of $R$ and intersecting~$R$ determine three horizontal strips $H_j\subset R$, $j\in\lbrace0,1,2\rbrace$. Under the identification $(0,1)^2 \approx R$, these subrectangles correspond to
	\[
	V_i=\left(\frac{i}{3},\frac{i+1}{3}\right)\times(0,1)
	\text{ \ \ \ and  \ \ \ }
	H_j=(0,1)\times\left(\frac{j}{3},\frac{j+1}{3}\right).
	\]
	
	Set $v_ i = |V_i \cap \liftGamma|$ and $h_j = |H_j \cap \liftGamma|$. 
	For $k \in \lbrace 1, 2, \ldots, v_i\rbrace$, we represent the $k$-th point of $V_i \cap \liftGamma$, with respect to the horizontal order on $V_i$,  by a point with the $x$-coordinate equal to $\frac{i}{3} + \frac{k}{3(v_i+1)}$. Similarly, for $l \in \lbrace 1,2,\ldots, h_j\rbrace$, we represent the $l$-th point of $H_j \cap \liftGamma$, with respect to the vertical order on $H_j$, by a point whose $y$-coordinate is $\frac{j}{3} + \frac{l}{3(h_j+1)}$.
	Since for every point of $x \in R \cap \liftGamma$ there are  $i,j\in \{0,1,2\}$ such that $x \in H_i \cap V_j$,  this convention uniquely determines a collection of points in $(0,1)^2$ whose horizontal and vertical orders agree with those of the corresponding points of $R \cap \liftGamma$.
	
	In Figure~\ref{fig:fig8_drilled} we present our approximation of the points of $\liftGamma$ inside the fundamental tetrahedron rectangles associated to the veering triangulation of the figure-eight knot complement, with respect to the period-$5$ orbit $\gamma$ of its suspension flow discussed in Example~\ref{ex:figure:eight:geodesic}. 
	We also indicate all the resulting fundamental $\gamma$-tetrahedron rectangles. 

	\begin{figure}[h]
		\includegraphics[scale=1.3]{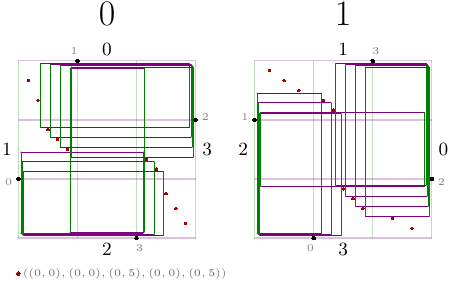}
		\caption{Our representation of the points of $R\cap\liftGamma$ where $\gamma$ is the orbit represented by the flow cycle $((0, 0), (0, 0), (0, 5), (0, 0), (0, 5))$. The resulting \mbox{$\gamma$-tetrahedron} rectangles are indicated with a slight inward offset from their actual boundaries to improve readability of the figure.}
		\label{fig:fig8_drilled}
	\end{figure}

	\subsubsection{Describing $\gamma$-simplex rectangles}
		For $i\in \lbrace 0, 1, \ldots, n-1\rbrace$ let $R_i$ be the fundamental tetrahedron rectangle in $(\loom, \loom^+, \loom^-)$ corresponding to the tetrahedron of~$\V$ with index $i$. Let $Q_\gamma\subset R_i$ be a $\gamma$-simplex rectangle. Enumerate the ideal points of $R_i\bez\liftGamma$, including its cusps, from left to right, starting from $0$. The rectangle $Q_\gamma$ meets $2\leq k\leq 4$ of these points. Let $(i_1,\ldots,i_k)$ be the tuple of their labels, arranged in increasing order. We call $i(i_1,\ldots,i_k)$ the \emph{LR-signature} of $Q_\gamma$.
		 
		 The tuple  $(i_1,\ldots,i_k)$ is similar to the west-east signature introduced in Section~\ref{subsec:equiv:classes}. For the purpose of describing examples we use the left-to-right ordering because, when  cardinal directions are determined by Convention~\ref{convention:cardinal:directions} and a veering triangulation is not edge-orientable, its fundamental tetrahedron rectangles cannot be drawn consistently with west on the left and south at the bottom. Thanks to the left-to-right convention we do not have to indicate in our pictures the cardinal directions  for each fundamental tetrahedron rectangle.
		 
		Since our pictures of fundamental tetrahedron rectangles already contain vertex, face, and tetrahedron numbers, we do not display the labels coming from the left-to-right ordering on the ideal points of $R_i \bez \liftGamma$. These labels can be easily recovered by enumerating all ideal points of $R_i\bez\liftGamma$ from left to right, starting from $0$.
	
	\subsubsection{Equivalence classes of $\gamma$-tetrahedron and pairs of solely face equivalent $\gamma$-(tetrahedron, face) pairs}

	Let $M$ be the manifold underlying $\V$. As in Section~\ref{subsec:finite:parent}, by fundamental $\gamma$-simplex rectangles we mean $\gamma$-simplex rectangles contained in fundamental tetrahedron rectangles. By Proposition~\ref{prop:finite:veering:parent}, the tetrahedra of the veering parent~$\V_\gamma$ are in bijective correspondence with the $\pi_1(M)$-equivalence classes of fundamental \mbox{$\gamma$-tetrahedron} rectangles. In our tables, we enumerate these equivalence classes starting from $0$ and we list the LR-signatures of all fundamental \mbox{$\gamma$-tetrahedron} rectangles belonging to each of them. For each equivalence class, we choose the fundamental $\gamma$-tetrahedron rectangle with the lexicographically smallest LR-signature as the preferred representative of the class.  We furthermore mark an LR-signature with a superscript $\rho$ if the $\pi_1(M)$-equivalence identifying the corresponding rectangle with the preferred representative reverses the left-to-right order. An unmarked LR-signature indicates that this order is preserved.

	Recall from Section \ref{subsec:finite:parent} that a $\gamma$-(tetrahedron, face) pair is a pair $(R_\gamma, Q_\gamma)$ such that~$R_\gamma$ is a $\gamma$-tetrahedron rectangle and $Q_\gamma$ is a $\gamma$-face subrectangle of $R_\gamma$. We call such a pair fundamental if $R_\gamma$ is fundamental. Furthermore, a pair $\{(R_\gamma,Q_\gamma),(R_\gamma',Q_\gamma')\}$ of  \mbox{$\gamma$-(tetrahedron, face)} pairs is solely face equivalent if $Q_\gamma, Q_\gamma'$ are $\pi_1(M)$-equivalent, but this equivalence does not extend to a $\pi_1(M)$-equivalence between $R_\gamma$ and $R_\gamma'$. By Proposition \ref{prop:finite:veering:parent}, every face pairing of $\V_\gamma$ can be represented by a pair of fundamental solely face equivalent $\gamma$-(tetrahedron, face) pairs. In the tables we record the information about these pairs as follows.

	Let $Q_\gamma$ be a $\gamma$-face subrectangle of a fundamental $\gamma$-tetrahedron rectangle $R_\gamma$. Suppose that $i(i_1,i_2,i_3)$  is the LR-signature of $Q_\gamma$. 
	Recall from Lemma \ref{lem:ploom}  \eqref{ploom:face:rectangles} that there is a  \mbox{$\gamma$-tetrahedron} rectangle $R_\gamma' \neq R_\gamma$ such that $Q_\gamma \subset R_\gamma'$. If $R_\gamma'$ is fundamental, then the pairs $(R_\gamma, Q_\gamma)$ and $(R_\gamma', Q_\gamma)$ are solely face equivalent. The LR-signatures of $R_\gamma, R_\gamma'$ must be of the form
	$i (i_1,i_2,i_3,i_4)^{\mathrm{sort}}$, $i(i_1,i_2,i_3,i_4')^{\mathrm{sort}}$, for some distinct nonnegative integers $i_4,i_4'$,
	where $(a_1,\ldots,a_k)^{\mathrm{sort}}$ denotes the tuple obtained by arranging the entries of $(a_1,\ldots,a_k)$ in increasing order.
	We record the face pairing induced by the pair $\lbrace (R_\gamma, Q_\gamma), (R_\gamma', Q_\gamma)\rbrace$ by $\lbrace i(i_1,i_2,i_3); i_4, i_4'\rbrace$. 
	
	If $R_\gamma'$ is not fundamental then, by Lemma~\ref{lem:everything:in:tet:rect}, $R_\gamma'$ is $\pi_1(M)$-equivalent to a fundamental $\gamma$-tetrahedron rectangle $R_\gamma''$ containing a $\gamma$-face subrectangle $Q_\gamma''$ such that the pairs $(R_\gamma, Q_\gamma)$ and $(R_\gamma'', Q_\gamma'')$ are solely face equivalent.  If $Q_\gamma''$ is contained in two fundamental $\gamma$-tetrahedron rectangles, we record the corresponding face pairing of $\V_\gamma$  using the convention described in the previous paragraph. Otherwise, we record the face pairing of~$\V_\gamma$ induced by the pair $\lbrace(R_\gamma, Q_\gamma), (R_\gamma'', Q_\gamma'')\rbrace $ by the pair $\lbrace i(i_1,i_2,i_3), j(j_1,j_2,j_3)\rbrace$, where $j (j_1, j_2, j_3)$ is the LR-signature of $Q_\gamma''$. 	If this equivalence reverses the left-to-right order, we mark the lexicographically larger LR-signature with a superscript~$\rho$. Note that in this case there is no need to specify the fundamental $\gamma$-tetrahedron rectangles containing $Q_\gamma$ and $Q_\gamma''$, because each of them is contained in only one such rectangle.


\subsubsection{Verifying the table entries}
		Now we explain how the reader can verify the data recorded in Tables~\ref{tab:drilling:fig8:sister}, \ref{tab:drilling:m004}, and \ref{tab:drilling:decreasing}. Each of these tables contains a picture of punctured fundamental tetrahedron rectangles, with the equivalence classes of face rectangles indicated in the same way as in Figure~\ref{fig:fig8_tet_rects}. These equivalence classes  are key for determining equivalence classes of fundamental $\gamma$-simplex rectangles.
		
		For a concrete illustration, we focus on the example presented in Table~\ref{tab:drilling:m004}. The \mbox{$\pi_1(M)$-equivalence} between the face rectangles labelled 0 induces the following bijection between the punctures they contain, labelled according to their left-to-right order in tetrahedron rectangles 0 and 1, respectively:
		\[
		\begin{pmatrix}
		0 & 1 & 2 & 3& 4 & 5 & 6 & 8 & 9 & 13\\
		4 & 5 & 6 & 7& 8 & 9 & 10& 11& 12 & 13
		\end{pmatrix}.
		\]
		
		This gives $\pi_1(M)$-equivalences between the following pairs of fundamental $\gamma$-tetrahedron rectangles: $0(2, 3, 6, 13)$ and $1(6, 7, 10, 13)$ (equivalence class 5), $0(3, 4, 6, 13)$ and \linebreak $1(7, 8, 10, 13)$ (equivalence class 6), $0(4, 5, 6, 13)$ and $1(8, 9, 10, 13)$ (equivalence class 7), and $0(5, 6, 8, 13)$ and $1(9, 10, 11, 13)$ (equivalence class 8). The remaining equivalence classes of fundamental $\gamma$-tetrahedron rectangles that contain more than one element are induced by face rectangles labelled 2.

		The $\gamma$-face rectangle $0(0,7,8)$ is contained in fundamental $\gamma$-tetrahedron rectangles $0(0,5,7,8)$ and $0(0, 7, 8, 9)$. This gives a pair of solely face equivalent $\gamma$-(tetrahedron, face) pairs recorded as $\lbrace 0(0,7,8);5,9\rbrace$. 

		The $\gamma$-face rectangle $0(0,5,8)$ is contained in only one fundamental $\gamma$-tetrahedron rectangle $0(0,5,7,8)$. The other $\gamma$-tetrahedron rectangle containing $0(0,5,8)$ spans west of the fundamental tetrahedron rectangle 0. Thus we map $0(0, 5, 8)$ through the \mbox{$\pi_1(M)$-equivalence} arising from the face rectangle labelled 0. This gives $\gamma$-face rectangle $1(4, 9, 11)$. Since it is not contained in any fundamental \mbox{$\gamma$-tetrahedron} rectangle, we need to trace further $\pi_1(M)$-equivalences.
		The $\pi_1(M)$-equivalence between the two face rectangles labelled 3 identifies $1(4, 9, 11)$ with $0(7,10,11)$. Again, the resulting $\gamma$-face rectangle is not contained in any fundamental \mbox{$\gamma$-tetrahedron} rectangle.
		Applying the $\pi_1(M)$-equivalence between the two face rectangles labelled 2 identifies $0(7,10,11)$ with $1(4,7,8)$. 
		The $\pi_1(M)$-equivalence between the two face rectangles labelled 3 identifies $1(4,7,8)$ with $0(7,8,9)$. This  $\gamma$-face rectangle is contained in the $\gamma$-tetrahedron rectangle $0(0,7,8,9)$. Moreover, while $0(0,5,8)$ is shorter than $0(0, 5, 7, 8)$, $0(7,8,9)$ is thinner than $0(0, 7, 8, 9)$. Thus we get a pair of solely face equivalent $\gamma$-(tetrahedron, face) pairs recorded as $\lbrace 0(0,5,8), 0(7,8,9) \rbrace$.
		
		One can use the information about the $\pi_1(M)$-equivalences classes of fundamental \mbox{$\gamma$-tetrahedron} rectangles and pairs of solely face equivalent $\gamma$-(tetrahedron, face) pairs to construct the veering parent $\V_\gamma$ as follows. 
		Suppose that the $i$-th equivalence class is represented by $j(j_1,j_2,j_3,j_4)$. Associate to it a tetrahedron  whose vertices are numbered from 0 to 3 so that vertex $k$ corresponds to $j_{k+1}$. Since $0(0,5,7,8)$ and $0(0,7,8,9)$ represent equivalence classes 1 and 2, respectively, the pair $\lbrace 0(0,7,8);5,9\rbrace$ discussed above gives the face pairing $1(0,2,3)\leftrightarrow 2(0,1,2)$, while the pair $\lbrace 0(0,5,8),0(7,8,9)\rbrace$ gives the face pairing $1(0,1,3)\leftrightarrow 2(1,2,3)$. Proceeding in this way with all the listed pairs of solely face equivalent $\gamma$-(tetrahedron, face) pairs recorded in the table yields a complete set of face pairings on the tetrahedra corresponding to the equivalence classes of $\gamma$-tetrahedron rectangles. This information can be entered into Regina~\cite{regina} to obtain the desired triangulation $\V_\gamma$ and verify the listed isomorphism signature. The full taut signature can be verified using a Python package Veering \cite{VeeringGitHub}.

	\newpage
	\subsection{Drilling a short cycle from the figure-eight knot sister}\hfill 
	
	\begin{table}[H]
		\begin{center}
			\renewcommand{\arraystretch}{1.3}
			\begin{tabular}{|c|}
				\hline
				\textbf{Initial taut signature} \\ \hline 
				\texttt{cPcbbbdxm\_10} \\[0.5em] \hline
				
				\textbf{Name of the manifold} \\ \hline
				figure-eight knot sister exterior, \texttt{m003, otet02\_00000} \\[0.5em] \hline
				
				\textbf{Flow cycle} \\ \hline
				$((0, 2))$ \\ \hline
				
				\textbf{Fundamental $\gamma$-tetrahedron rectangles} \\ \hline
				{\includegraphics[scale=1]{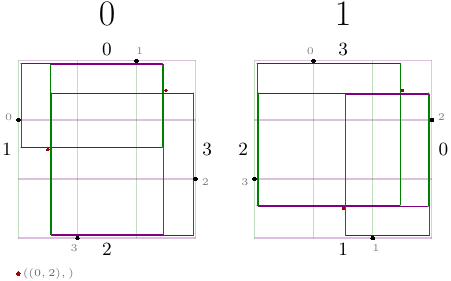}
					\put(0,140){}}  
				\\ \hline
				
				\textbf{Equivalence classes of fundamental $\gamma$-tetrahedron rectangles} \\ \hline
				
				
				\begin{tabular}{cl} 
					0:&  \{0(1, 2, 3, 4)\}\\
					1:& \{0(1, 2, 4, 5)\}\\ 
					2:& \{1(0, 1, 2, 4)\}\\ 
					3:& \{1(0, 2, 4, 5)\}\\ 
					4:& \{0(0, 1, 3, 4), 1(2, 3, 4, 5)$^\rho$\}\\
				\end{tabular}
				\\ \hline
				\textbf{Pairs of solely face equivalent $\gamma$-(tetrahedron, face) pairs} \\ \hline
				

			\begin{tabular}{l c l}
				\{0(1, 2, 4); 3, 5\} &&  \{0(1, 2, 5),	1(0, 1, 2)$^\rho$\} \\
				\{0(2, 3, 4), 1(0, 1, 4)\} && \{1(0, 2, 4); 1, 5\}\\
				\{0(1, 4, 5), 1(1, 2, 4)$^\rho$\} && \{0(1, 3, 4); 0, 2\}\\
				\{0(0, 3, 4), 0(1, 2, 3)$^\rho$\} && \{0(2, 4, 5), 1(0, 4, 5)\}\\
				\{0(0, 1, 4), 1(2, 4, 5)\}$^\rho$ && \{0(0, 1, 3), 1(0, 2, 5)\}\\
				\end{tabular}
\\\hline
%
				\textbf{Taut signature of the veering parent} \\ \hline
				\texttt{fLLQcbeddeehhbghh\_01110}\\ \hline
				
				\textbf{Names of the drilled manifold} \\ \hline
				Whitehead link sister/P(-2,3,8) exterior, \texttt{ m125, L13n5885, ooct01\_00000}\\ \hline
				
			\end{tabular}
		\end{center}
		
		\vspace{0.2cm}
		\caption{Drilling  \texttt{cPcbbbdxm\_10}   along $((0, 2))$.}
		\label{tab:drilling:fig8:sister}
	\end{table}
	\newpage 
		\subsection{Drilling an orbit that is not ambiently isotopic to the homotopic geodesic} \hfill
	\label{appendix:geodesic}
	\begin{table}[H]
		\begin{center}
			\renewcommand{\arraystretch}{1.2}
			\begin{tabular}{|c|}
				\hline
				\textbf{Initial taut signature} \\ \hline 
				\texttt{cPcbbbiht\_12}  \\[0.5em] \hline
				
				\textbf{Name of the manifold} \\ \hline
				figure-eight knot exterior, \texttt{m004,4\_1, K2\_1, K4a1, otet02\_00001} \\[0.5em] \hline
				
				\textbf{Flow cycle} \\ \hline
				$((0, 0), (0, 0), (0, 5), (0, 0), (0, 5))$ \\ \hline
				
				\textbf{Fundamental $\gamma$-tetrahedron rectangles} \\ \hline
				{\includegraphics[scale=1]{cPcbbbiht_12_00-00-05-00-05_fixed_orientations}
					\put(0,145){}}  
				\\ \hline
				
				\textbf{Equivalence classes of fundamental $\gamma$-tetrahedron rectangles} \\ \hline

					\begin{tabular}{c l} 
					0:&  \{0(5, 6, 7, 8)\}\\
					1:& \{0(0, 5, 7, 8), 1(0, 3, 4, 5)\}\\ 
					2:& \{0(0, 7, 8, 9), 1(0, 4, 5, 6)\}\\ 
					3:& \{0(0, 7, 9, 10), 1(0, 4, 6, 7)\}\\ 
					4:& \{1(0, 6, 7, 13)\}\\ 
					5:& \{0(2, 3, 6, 13), 1(6, 7, 10, 13)\}\\ 
					6:& \{0(3, 4, 6, 13), 1(7, 8, 10, 13)\}\\ 
					7:& \{0(4, 5, 6, 13), 1(8, 9, 10, 13)\}\\ 
					8:& \{0(5, 6, 8, 13), 1(9, 10, 11, 13)\}\\ 
									\end{tabular}

				\\ \hline
				\textbf{Pairs of solely face equivalent $\gamma$-(tetrahedron, face) pairs} \\ \hline

				\begin{tabular}{l c l c l}
				\{0(0, 7, 8); 5, 9\} && \{0(5, 7, 8); 0, 6\} && \{0(0, 7, 9); 8, 10\}\\
				\{0(0, 7, 10), 0(6, 7, 8)\} && \{0(0, 9, 10), 1(0, 6, 7)\} && \{0(0, 5, 8), 0(7, 8, 9)\}\\
				\{0(5, 6, 8); 7, 13\} && \{0(0, 8, 9), 0(7, 9, 10) \} && \{0(0, 5, 7), 1(0, 6, 13)\}\\
				\{0(2, 3, 13), 1(6, 7, 13)\} && \{0(2, 3, 6), 0(4, 5, 13)\} && \{0(2, 6, 13), 0(5, 6, 7)\}\\
				\{0(3, 6, 13); 2, 4\} && \{0(4, 6, 13); 3, 5\} && \{0(5, 6, 13); 4, 8\}\\
				\{0(3, 4, 6), 0(5, 8, 13)\} && \{0(6, 8, 13), 1(0, 7, 13)\} && \{0(3, 4, 13), 0(4, 5, 6)\}
				\end{tabular}
				
%
%
%
%
				
				%
				%
				%
				%

				\\\hline 
				
				\textbf{Taut signature of the veering parent} \\ \hline
				\texttt{jLLwQLQbeefgehiiixxxaaxxxcv\_102211100} \\ \hline
				
				\textbf{Name of the drilled manifold} \\ \hline
				\texttt{o9\_40888} \\ \hline
				\textbf{Comments} \\ \hline
				The encoded orbit is not ambiently isotopic to the geodesic in its homotopy class\\ \hline
			\end{tabular}
		\end{center}
		
		\vspace{0.2cm}
		\caption{Drilling  \texttt{cPcbbbiht\_12}  along $((0, 0), (0, 0), (0, 5), (0, 0), (0, 5))$.}
		\label{tab:drilling:m004}
	\end{table}
	
	\newpage
	\subsection{Decreasing the number of tetrahedra}\hfill \label{app:ex:decreasing} 
	
	\begin{table}[H]
		\begin{center}
			\renewcommand{\arraystretch}{1.3}
			\begin{tabular}{|c|}
				\hline
				\textbf{Initial taut signature} \\ \hline 
				\texttt{kLAMzwAkbcbdefgiijjhhwhhhhjsaw\_2112121221}  \\[0.5em] \hline
				
				\textbf{Name of the manifold} \\ \hline
				unknown \\[0.5em] \hline
				
				\textbf{Flow cycle} \\ \hline
				$((0, 3))$ \\ \hline
				
				\textbf{Fundamental $\gamma$-tetrahedron rectangles} \\ \hline
				{\includegraphics[scale=0.75]{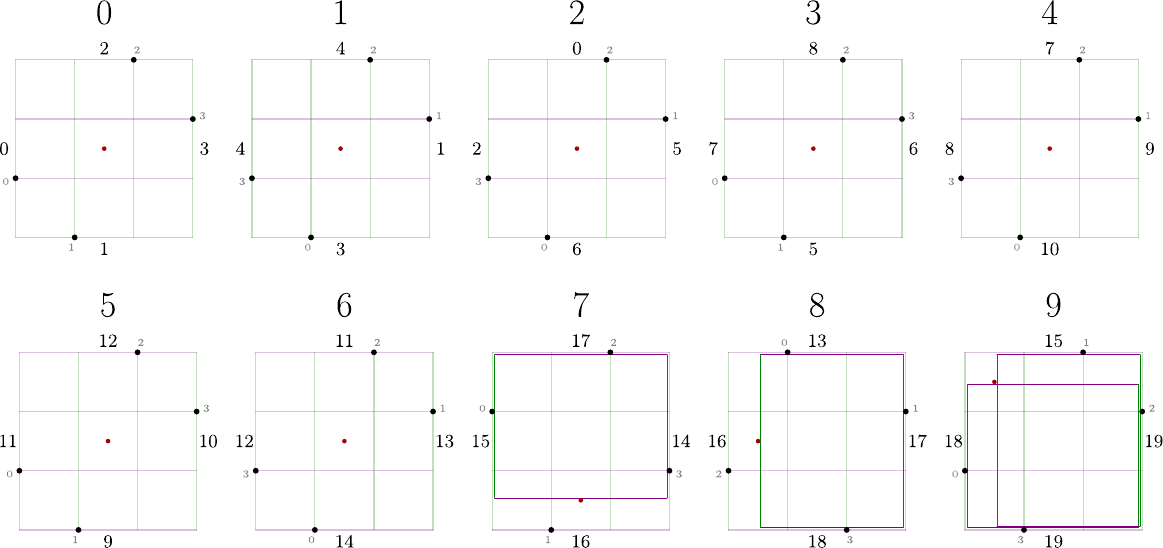}
					\put(0,200){}}  
				\\ \hline
				
				\textbf{Equivalence classes of fundamental $\gamma$-tetrahedron rectangles} \\ \hline
				




				\begin{tabular}{cl} 
				0: & \{7(0, 2, 3, 4)\}\\
				1: & \{8(1, 2, 3, 4)\} \\
				2: & \{9(0, 1, 2, 4)\} \\
				3: & \{9(1, 2, 3, 4)\} 
				\end{tabular}
				\\ \hline
				\textbf{Pairs of solely face equivalent $\gamma$-(tetrahedron, face) pairs} \\ \hline
				\begin{tabular}{l c l}
					\{7(2, 3, 4), 8(1, 2, 4)\} && \{7(0, 3, 4), 8(2, 3, 4)$^\rho$\}\\
					\{7(0, 2, 4), 9(0, 1, 2)$^\rho$\} && \{8(1, 3, 4), 9(1, 2, 3)\} \\
					\{7(0, 2, 3), 9(0, 1, 4)$^\rho$\} && \{9(1, 2, 4); 0, 3\}\\
					\{8(1, 2, 3), 9(1, 3, 4)\} && \{9(0, 2, 4), 9(2, 3, 4)$^\rho$\}
					\end{tabular}

				\\\hline 
				\textbf{Taut signature of the veering parent} \\ \hline
				\texttt{eLMkbcddddedde\_2100}\\ \hline
				
				\textbf{Name of the drilled manifold} \\ \hline
				\texttt{m203}, $6^2_2$ (or \texttt{L6a2}) exterior,  \texttt{otet04\_00001}\\ \hline
				
				\textbf{Comments} \\ \hline
				Decreasing the number of tetrahedra under drilling\\
				Layered parent of a non-layered (non-measurable) triangulation \\ \hline
			\end{tabular}
		\end{center}
		
		\vspace{0.2cm}
		\captionsetup{margin={-1mm,0mm}}
		\caption{ Drilling  \texttt{kLAMzwAkbcbdefgiijjhhwhhhhjsaw\_2112121221}   along $((0, 3))$.}
		\label{tab:drilling:decreasing}
	\end{table}


\end{document}